\documentclass[11pt]{article}

\usepackage{xcolor}
\usepackage{amssymb,amsthm}
\usepackage{amsmath}
\usepackage{mathtools}
\usepackage{subcaption}
\usepackage{algorithm}
\usepackage{algorithmic}
\usepackage{hyperref}
\usepackage{natbib}
\usepackage{graphicx}
\usepackage{booktabs}
\usepackage{longtable,array}
\usepackage{enumitem}

\setcitestyle{authoryear,round,comma,aysep={},yysep={,},notesep={, }}

\hypersetup{colorlinks=true,allcolors=blue}

\newcommand{\E}{\mathbb{E}}
\newcommand{\R}{\mathbb R}
\newcommand{\A}{\mathcal A}
\newcommand{\norm}[1]{\left\Vert #1 \right\Vert}

\newcommand{\op}{\operatorname{op}}
\newcommand{\argmax}{\operatorname*{arg\,max}}
\newcommand{\argmin}{\operatorname*{arg\,min}}
\newcommand{\Reg}{\operatorname{Reg}}
\DeclareMathOperator{\diam}{diam}

\usepackage{geometry}
\usepackage{microtype}
\usepackage{placeins}

\usepackage[nameinlink,capitalise,noabbrev]{cleveref}
\usepackage{setspace}

\newtheorem{theorem}{Theorem}[section]
\newtheorem{lemma}[theorem]{Lemma}
\newtheorem{proposition}[theorem]{Proposition}
\newtheorem{corollary}[theorem]{Corollary}
\newtheorem{assumption}{Assumption}

\theoremstyle{definition}
\newtheorem{criterion}{Criterion}
\newtheorem{criterionvariant}{Criterion}[criterion]

\theoremstyle{remark}
\newtheorem{remark}[theorem]{Remark}
\crefname{assumption}{Assumption}{Assumptions}
\Crefname{assumption}{Assumption}{Assumptions}
\crefname{criterion}{Criterion}{Criteria}
\Crefname{criterion}{Criterion}{Criteria}
\crefname{criterionvariant}{Criterion}{Criteria}
\Crefname{criterionvariant}{Criterion}{Criteria}

\DeclareMathOperator{\dist}{dist}
\newcommand{\Pp}{\mathbb P}
\newcommand{\Fr}{\mathrm{F}}

\title{Frequency-Uniform Log-Regret Reinforcement Learning
for Diffusions}
\author{%
  Jose Blanchet\thanks{Department of Management Science and Engineering, Stanford University. 
Emails: \texttt{\{jose.blanchet; lhao; renyuanxu\}@stanford.edu}. Authors are listed alphabetically. }
  \qquad\qquad
  Lutong Hao\footnotemark[1]
  \qquad\qquad
  Renyuan Xu\footnotemark[1]
}
\date{October 26, 2026}

\begin{document}
\maketitle
\vspace{-25pt}
\begin{abstract}
High-frequency control problems are often studied through time discretization, but refining the timestep $\delta$ increases the discrete
horizon $H=T/\delta$ and can make standard finite-horizon regret guarantees
deteriorate even when the physical horizon $T$ is fixed. We study episodic
value-based reinforcement learning for Euler discretizations of controlled
diffusions. The performance is measured as the cumulative regret over $K$ episodes relative to the
optimal policy of the corresponding Euler model. 
Under stability and regularity conditions on value function gradients, Bellman residual derivatives, 
and fitted critics, we prove an $\mathcal{O}(\log K)$ regret bound with constants uniform in $\delta$.
For dynamics with drift linear in
unknown parameters and unknown constant diffusion covariance, we verify
localized versions of these conditions for a constrained, model-based
controlled-diffusion fitted-$Q$ algorithm (CD-FQ) and obtain high-probability
$\mathcal{O}(\log K\log\log K)$ regret for fixed and finite $T$, again with constants uniform in $\delta$.
\end{abstract}

\section{Introduction}
\label{sec:intro}
A learning guarantee for a stochastic control system should distinguish between how long the system operates and how often its decisions are updated. In a service system controlled over a fixed planning period, halving the time between interventions doubles the number of decision stages but leaves the physical duration unchanged. A regret bound polynomial in the stage count can therefore predict an arbitrarily large learning cost as the decision grid is refined. This sensitivity matters when choosing how frequently to observe, learn, and act: it obscures whether a performance guarantee reflects the difficulty of controlling the system or the temporal resolution used to describe it. Our objective is to identify conditions under which value-based reinforcement learning retains fast regret guarantees as that resolution increases.

Sequential decision problems arise throughout science and engineering whenever
present actions affect the uncertain evolution of a system.
Examples include treatment planning \citep{wang2024threshold},
robotic control \citep{djeumou2023learn},
inventory \citep{dai2013brownian}, service operations
\citep{ward2008admission}, and financial decisions
\citep{pham2009continuous}. Reinforcement learning for queueing networks \citep{dai2022queueing} and optimal execution in financial markets \citep{wang2026execution} illustrate the value of exploiting the structure
of the underlying stochastic dynamics in the context of learning. This perspective motivates our focus on controlled
diffusions, which provide tractable approximations of complex operational
systems by capturing the aggregate effects of their random events
\citep{harrison1997dynamic,braverman2020taylor}.  These models also provide a foundation for stochastic
control \citep{ata2026diffusion} and continuous-time reinforcement learning
\citep{wang2020stochastic}.

We study episodic value-based learning for Euler discretizations of
controlled diffusions in continuous state and action spaces. Over a fixed
physical horizon $T$, a timestep $\delta$ produces $H=T/\delta$ decision
stages per episode. The learner uses data from previous episodes to select
policies and is evaluated by cumulative conditional expected regret relative
to the optimal policy of the same Euler model. 

Existing finite-horizon analyses can achieve logarithmic regret in the
number of episodes while retaining polynomial dependence on $H$. For
example, the leading fast-rate term in the online fitted-$Q$ guarantee of
\citet{duan2024taming} contains an $H^4$ factor. Substituting $H=T/\delta$ gives a polynomial penalty in $1/\delta$ at fixed $T$, because it tracks only the number of stages and not how one-step rewards and state increments scale with the timestep.
This raises a distinct question:
\begin{center}
\begin{minipage}{0.8\linewidth}
\centering\itshape
How can we design online learning algorithms that retain fast rate \\and log regret as the observation and decision
frequency increases?\par
\end{minipage}
\end{center}

The question is particularly delicate for value-based learning. Because an action is held for only one timestep, its effect on the one-step value and
the action-dependent differences in the ordinary $Q$-function are of order
$\delta$. Learning must therefore resolve a shrinking action signal while
controlling error propagation across $T/\delta$ stages, and keep the deployed policies informative. This scaling
also motivates advantage-rate representations and continuous-time
$q$-learning \citep{tallec2019robust,jia2023qlearning}.

\subsection{Our work and contributions}

We develop a regret framework that guides the design and analysis of a fitted-Q algorithm.

\paragraph{A discrete-to-continuous fast-regret framework.}
We formulate episodic value-based learning for a family of Euler models indexed
by the timestep $\delta$, separating episode count, physical horizon, and decision frequency, and placing discrete-time regret analysis and diffusion
control in a common framework. The benchmark is the optimum of the
corresponding Euler model, so statistical learning regret remains distinct
from continuous-time discretization error
(\cref{sec:model-global-regret}). Our bounds
concern statistical performance at fixed physical horizon; the number of observations and the computational effort can still grow as the grid is refined.

\paragraph{Discretization-uniform logarithmic regret.}
We formulate criteria on fitted critics that are sufficient for fast regret,
separating the policy-performance argument from algorithm-specific verification
(\cref{def:fitted-envelope-interface,def:true-residual-certificate}). 
Under quadratic advantage and value-gradient stability conditions, Bellman-residual derivatives of order $\delta/\sqrt{k}$ after $k$ episodes yield critic action-gradient errors of the same order, with constants governed by physical time. Quadratic policy loss then gives
\begin{eqnarray*}
  \mathcal{O}(\delta /\sqrt{k}) \,\, \textrm{action-gradient error} \Longrightarrow  \mathcal{O}(\delta /k) \,\, \textrm{loss per stage}  \Longrightarrow \mathcal{O}(T /k) \,\, \textrm{loss per episode}.  
\end{eqnarray*}
Summing over episodes gives $\mathcal{O}(\log K)$ regret for fixed $T$, with constants uniform in $\delta$ (\cref{thm:residual-regret-transfer}).  An explicit fitted-critic construction shows that exponential growth in $T$ of the logarithmic-regret coefficient is unavoidable for a guarantee uniform over this general class (\cref{prop:physical-time-lower-bound}).

\paragraph{End-to-end learning to verify $\widetilde{\mathcal{O}}(\log K)$ regret.}
For controlled diffusions with drift linear in unknown parameters and unknown
constant diffusion covariance, we develop a constrained, model-based fitted-$Q$
algorithm (CD-FQ) that permits finitely many separated optimal branch points. 
It estimates the drift parameter by projected least squares and the covariance from residual increments, simulates from the fitted Euler model, fits constrained critics backward from the terminal reward (a discrete analogue of solving the HJB equation), and chooses a policy greedily. Under regularity, exploration, approximation, and simulation-budget conditions, it achieves high-probability $\mathcal{O}(\log K\log\log K)$ regret with constants uniform in $\delta$ (\cref{thm:end-to-end-localized-regret}). The analysis jointly establishes critic accuracy and the continued self-exploration property of the greedy policies.

\paragraph{Verification and numerical evidence.}
A model with two optimal branches jointly verifies all assumptions of the learning theorem (\cref{subsec:controlled-multibranch-model}). Queueing experiments examine cumulative regret, stability under timestep refinement, and transfer of learned diffusion policies to finite many-server queues (\cref{subsec:halfin-whitt-numerics}). These experiments extend beyond the theorem's smoothness, boundedness, and action-space assumptions and assess the practical value of exploiting diffusion structure.

\subsection{Related work}
\paragraph{Stability and fast rates in discrete-time RL.}
The closest fast-rate analyses are \citet{duan2024taming} and
\citet{wang2026fast}. \citet{duan2024taming} obtain quadratic performance bounds and fast offline
and online rates through Bellman and occupation-measure stability. 
\citet{wang2026fast} study value-based RL in discounted stochastic
control with continuous actions. They prove an $\mathcal{O}(n^{-1})$ expected policy gap for the deployed policy, 
where \(n\) indexes estimator accuracy and may represent sample size, but without an end-to-end online algorithm. 
Our analysis shares a related quadratic error-to-loss mechanism and
tracks how derivative bounds on stagewise Bellman residuals propagate
through the finite-horizon recursion. Under the stated conditions, this gives logarithmic cumulative online regret
with constants uniform in $\delta$ at the fixed physical horizon $T$. 
The regret bounds in \citet{duan2024taming} retain polynomial dependence on
the number of decision stages $H$, which does not by itself establish
uniformity as $\delta $ decreases at fixed $T$.
Our bound establishes this uniformity while having exponential
dependence on $T$, sharp under our assumptions (\cref{prop:physical-time-lower-bound}).

\paragraph{Episodic learning in continuous time.}
The square-root regret bounds for jump processes in \citet{sqrt_regret}
and fast rates for structured diffusions in
\citet{basei2022logarithmic,szpruch2021exploration} highlight the role of
structure and information in continuous-time learning. In particular,
\citet{szpruch2021exploration} distinguish explicit exploration from
self-exploration, where greedy exploitation supplies informative data.
\citet{guo2021linearconvex} analyze episodic learning for
linear-convex jump-diffusions through stability of feedback controls, linking
least-squares estimation error to sublinear regret.
For exploratory finite-horizon linear-quadratic control,
\citet{giegrich2024convergence} establish convergence of policy-gradient
methods with guarantees that remain stable as the action frequency increases.
We develop this mechanism for fitted critics with approximation error and
multiple optimal branch points, extending the scope under different model and regularity assumptions.

\paragraph{Continuous-time $q$-learning and function approximation.}
For deterministic policies, \citet{cheng2025bridging} derive a continuous-time policy gradient via an advantage-rate function, leading to a model-free algorithm stable across time discretizations.
The exploratory-control framework of \citet{wang2020stochastic} and
continuous-time $q$-learning of \citet{jia2023qlearning} replace vanishing
one-step action gaps with instantaneous advantages. Recent work establishes
regret guarantees for exploratory policy improvement and $q$-learning
\citep{tang2024regret}, as well as model-free, off-policy guarantees for value
and advantage approximation \citep{mou2026ellipticity}. We instead study
cumulative online regret for deterministic greedy fitted-$Q$ policies and
verify the required derivative accuracy for a learned parametric diffusion.

\paragraph{Discretization and continuous-time approximation.}
A related line of work asks how well discrete-time procedures recover
their continuous-time counterparts. Approximating continuous-time stochastic control by discrete models has a classical foundation in numerical stochastic control \citep{kushner2001numerical}. PhiBE \citep{zhu2024phibe} and the
high-order Bellman schemes of \citet{mou2026bellman} evaluate policies from
discretely observed diffusion trajectories, and Optimal-PhiBE
\citep{zhu2025optimal} and PhiBE-Q-Learning \citep{zhu2026phibeq} carry this
PDE-based approach over to policy optimization and off-policy learning. On the
implementation side, \citet{jia2026accuracy} quantify the error of executing a
continuous-time stochastic policy by resampling actions on a grid, and
\citet{pham2026discretization} bound the gap between regularized discrete-time
Bellman problems and the underlying continuous-time control problem. In each
case the quantity of interest is an approximation error, whether of a value
function, policy, or execution scheme. We ask a different question: how
much cumulative regret an online learner incurs against the Euler optimum at
step size~$\delta$, with bounds that hold uniformly in~$\delta$.

\paragraph{Roadmap.} The rest of the paper is organized as follows. Section~\ref{sec:model-global-regret} develops the residual-to-regret framework. Section~\ref{sec:localized-algorithm} introduces our method and states its main regret guarantee, proved in Section~\ref{sec:main-theorem-proof}. Section~\ref{sec:localized-discussion} presents numerical experiments. The appendices provide supporting proofs and implementation details, with notation and constants summarized in Appendix~\ref{app:localized-notation} for ease of reference.

\section{From Bellman residuals to logarithmic regret}
\label{sec:model-global-regret}
We formulate the Euler learning problem and identify sufficient
conditions for discretization-uniform fast regret.  The argument
combines three ingredients: the benchmark inequalities in
\cref{ass:benchmark-error-bounds}, fitted-envelope regularity in
\cref{def:fitted-envelope-interface}, and Bellman residual derivative
control in \cref{def:true-residual-certificate}.  Together they give
the general regret-transfer result in \cref{thm:residual-regret-transfer}.
The later sections establish a localized
version of this mechanism for a constrained fitted-$Q$ algorithm (CD-FQ).

Fix a physical horizon \(T>0\), a state space \(\R^n\), and a nonempty,
compact, convex action set \(\A\subset\R^d\).  Consider the controlled
diffusion
\begin{equation*}
  \mathrm d\bar X_t^\alpha
  =\mu^\star(\bar X_t^\alpha,\alpha_t)\,\mathrm dt
   +(\Sigma^\star)^{1/2}\,\mathrm dB_t,
  \qquad \bar X_0^\alpha\sim p_0,\quad 0\leq t\leq T.
\end{equation*}

Here $B$ is an $n$-dimensional Brownian motion on a filtered
probability space $(\Omega,\mathcal F,(\mathcal F_t),\mathbb P)$
satisfying the usual conditions, $\bar X_0^\alpha$ is
$\mathcal F_0$-measurable, and
$\alpha=(\alpha_t)_{0\leq t\leq T}$ is an $\mathcal A$-valued,
$(\mathcal F_t)$-progressively measurable control process.
The drift \(\mu^\star:\R^n\times\A\to\R^n\) and constant covariance
\(\Sigma^\star\in\R^{n\times n}\), \(\Sigma^\star\succeq0\), are unknown.
The running reward is \(r(x,a)\), and the terminal reward is \(u(x)\).

Fix a maximum timestep $\delta_{\max}>0$. We call a timestep
$\delta$ admissible if $0<\delta\leq\delta_{\max}$ and
$T/\delta\in\mathbb N$. The ceiling is the range over which our assumptions hold uniformly; it does not depend on the episode budget or confidence levels and is fixed when comparing physical horizons.

Our learning model is the Euler discretization with an admissible
timestep $\delta$, $H:=T/\delta$, and grid times $t_h=h\delta$.
Freezing the drift at the observed state \(X_h\) and chosen action \(A_h\)
over \([t_h,t_{h+1})\) gives
\begin{equation*}
  X_{h+1}
  =X_h+\delta\mu^\star(X_h,A_h)
       +\sqrt{\delta}\,(\Sigma^\star)^{1/2}\xi_{h+1},
  \qquad X_0\sim p_0,
\end{equation*}
where \(\xi_{h+1}\sim\mathcal N(0,I_n)\) is independent of all information
available when \(A_h\) is chosen.  Thus the Euler transition kernel is
\begin{equation}
  P^\star(\,\cdot\mid x,a)
  =
  \mathcal N\left(
      x+\delta\mu^\star(x,a),\,
      \delta\Sigma^\star
  \right),
  \label{eq:true-kernel}
\end{equation}
with its \(\delta\)-dependence suppressed.  This kernel generally differs
from the exact sampled diffusion law.  The one-step reward
\(\delta r(x,a)\) approximates the running reward over one interval, and
\(u(X_H)\) is paid at time \(T\).

A deterministic Markov policy is a sequence
\(\pi=(\pi_0,\ldots,\pi_{H-1})\) of Borel maps
\(\pi_h:\R^n\to\A\), with \(A_h=\pi_h(X_h)\).
Each episode uses a fresh initial state from \(p_0\) and fresh transition
noises, independent of preceding episodes.  The expected Euler return is
\begin{equation*}
  J_\delta(\pi)
  =
  \E_\pi\left[
      \sum_{h=0}^{H-1}\delta r(X_h,A_h)+u(X_H)
  \right].
\end{equation*}

For any transition kernel $P$, write $\mathcal{P}$ for its associated
expectation operator:
\[
  (\mathcal{P}v)(x,a)
  :=\int_{\mathbb R^n}v(x')P(dx'\mid x,a).
\]
The Bellman recursion starts from \(V_H^\star=u\) and proceeds backward via
\begin{equation*}
  Q_h^\star(x,a)
    =\delta r(x,a)+(\mathcal{P}^\star V_{h+1}^\star)(x,a),
  \qquad
  V_h^\star(x)=\max_{a\in\A}Q_h^\star(x,a),
  \qquad h=H-1,\ldots,0.
\end{equation*}
The following standing assumption ensures that this recursion is well defined.

\begin{assumption}[Well-posedness, reward growth, and moments]
\label{ass:basic}
The following conditions hold uniformly over the admissible timesteps.
\begin{enumerate}[label=(\roman*),leftmargin=2.2em]
  \item There is a function $W:\mathbb R^n\to[1,\infty)$
  and constants $B_r,B_u\geq0$ such that
  \[
    |r(x,a)|\leq B_rW(x),\qquad |u(x)|\leq B_uW(x),
    \qquad \int W(x)\,p_0(dx)<\infty.
  \]
  \item For some $c_W\geq0$, the kernel \eqref{eq:true-kernel}
  satisfies
  \begin{equation}
    \int W(y)\,P^\star(dy\mid x,a)
    \leq(1+c_W\delta)W(x),
    \qquad x\in\mathbb R^n,\ a\in\mathcal A.
    \label{eq:weight-moment-drift}
  \end{equation}
  \item The functions $r$, $u$, $\mu^\star$, and $W$ are continuous,
  and so is \((x,a)\mapsto\int W(y)\,P^\star(dy\mid x,a)\).
\end{enumerate}
\end{assumption}

Under \cref{ass:basic}, the Bellman recursion gives finite,
continuous optimal critics and value functions satisfying
$|Q_h^\star(x,a)|,|V_h^\star(x)|\leq C_hW(x)$ for finite constants
$C_h$. The action maxima are attained, and an optimal deterministic Borel Markov policy exists.
Iterating \eqref{eq:weight-moment-drift} gives $\E_\pi W(X_h)\le e^{c_Wt_h}\int W\,dp_0$, 
so every value-to-go is integrable. $|J_\delta(\pi)|\le B_J:=(TB_r+B_u)e^{c_WT}\int W\,dp_0$ 
for all policies and timesteps, and 
\[J_\delta^\star=\int V_0^\star\,dp_0=\sup_\pi J_\delta(\pi).\] 

Bounded rewards are the case $W\equiv1$, $c_W=0$. Later on, \cref{lem:parametric-growth-moment} 
verifies the conditions with $W(x)=\sqrt{1+\|x\|^2}$ in the localized model.
\\

For the derivative-based analysis below, fix an open neighborhood
\(\mathcal A^{\mathrm o}\supset\mathcal A\).

\begin{assumption}[Differentiable drift and reward]
\label{ass:true-drift}
The drift \(\mu^\star\) and running reward \(r\) admit \(C^1\) extensions to
\(\R^n\times\mathcal A^{\mathrm o}\). There are finite constants
\(L_{\mu,x}\) and \(L_{\mu,a}\) such that

\begin{equation}
  \sup_{(x,a)\in\R^n\times\mathcal A^{\mathrm o}}
  \norm{\partial_x\mu^\star(x,a)}_{\op}
  \le L_{\mu,x},
  \qquad
  \sup_{(x,a)\in\R^n\times\mathcal A^{\mathrm o}}
  \norm{\partial_a\mu^\star(x,a)}_{\op}
  \le L_{\mu,a}.
  \label{eq:true-drift-jacobians}
\end{equation}
\end{assumption}

Before episode \(k\), the learner selects \(\pi_k\) using prior data and
internal randomization.  Conditional on this information,
\(J_\delta(\pi_k)\) averages over fresh episode randomness.
The cumulative regret is
\begin{equation}
  \Reg(K,\delta)
  =
  \sum_{k=1}^K
  \left\{J_\delta^\star-J_\delta(\pi_k)\right\}.
  \label{eq:regret}
\end{equation}
We study this Euler-model regret as \(K\) and \(\delta\) vary with \(T\)
fixed.
\\

In what follows, we adopt a \emph{value-based reinforcement learning} approach:
after constructing an 
estimated critic sequence with each \(\widehat Q_h\in C^1(\mathbb R^n\times\mathcal A^{\mathrm o})\):
\[
\widehat Q=(\widehat Q_0,\ldots,\widehat Q_{H-1}),
\]
define the following greedy policy $\widehat\pi$ based on $\widehat Q$: 
\[
\widehat\pi_h(x)\in\argmax_{a\in\mathcal A}\widehat Q_h(x,a).\]

\subsection{Policy loss from critic action-gradient error}

\subsubsection{Optimal advantage performance difference}
\label{subsec:optimal-advantage-pdi}
Our analysis starts from an exact performance-difference identity under the trajectory law of the candidate policy. The \emph{optimal-advantage loss} \(V_h^\star(x)-Q_h^\star(x,a)\ge0\) is the one-stage loss from taking action \(a\) at state \(x\), followed by optimal continuation.

\begin{lemma}[Optimal-advantage performance-difference identity]
\label{lem:optimal-advantage-pdi}
Under \cref{ass:basic}, every deterministic Borel Markov policy $\pi$
satisfies
\begin{equation}
    J_\delta^\star-J_\delta(\pi)
    =
    \sum_{h=0}^{H-1}
    \mathbb E_\pi
    \left[
        V_h^\star(X_h)-Q_h^\star
        \bigl(X_h,\pi_h(X_h)\bigr)
    \right].
    \label{eq:episode-optimal-advantage-pdi}
\end{equation}
\end{lemma}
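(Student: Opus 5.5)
The plan is a telescoping argument along the trajectory of \(\pi\), using the Bellman recursion for \((Q_h^\star,V_h^\star)\) and the Markov property of the Euler chain. Under \(\pi\), write \(A_h=\pi_h(X_h)\). For each \(h\), add and subtract \(Q_h^\star(X_h,A_h)\) and expand it with the Bellman equation:
\[
  V_h^\star(X_h)
  =\bigl[V_h^\star(X_h)-Q_h^\star(X_h,A_h)\bigr]
   +\delta r(X_h,A_h)
   +(\mathcal P^\star V_{h+1}^\star)(X_h,A_h).
\]
Conditional on \(\mathcal G_h:=\sigma(X_0,\xi_1,\ldots,\xi_h)\), the next state \(X_{h+1}\) has law \(P^\star(\cdot\mid X_h,A_h)\). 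This holds because \(\xi_{h+1}\) is independent of \(\mathcal G_h\), and \(A_h\) is a Borel function of \(X_h\). Hence
\[
  \E_\pi\bigl[V_{h+1}^\star(X_{h+1})\mid\mathcal G_h\bigr]
  =(\mathcal P^\star V_{h+1}^\star)(X_h,A_h)
\]
almost surely. Taking expectations gives the one-step identity
\[
  \E_\pi V_h^\star(X_h)
  =\E_\pi\bigl[V_h^\star(X_h)-Q_h^\star(X_h,A_h)\bigr]
   +\E_\pi[\delta r(X_h,A_h)]
   +\E_\pi V_{h+1}^\star(X_{h+1}).
\]

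Next, sum this identity over \(h=0,\ldots,H-1\). The terms \(\E_\pi V_h^\star(X_h)\) for \(1\le h\le H-1\) cancel. What remains is
\[
  \E_\pi V_0^\star(X_0)-\E_\pi V_H^\star(X_H)
  =\sum_h\E_\pi\bigl[V_h^\star-Q_h^\star\bigr]+\E_\pi\sum_h\delta r(X_h,A_h).
\]
Since \(V_H^\star=u\) and \(X_0\sim p_0\) regardless of \(\pi\), the left side is \(J_\delta^\star-\E_\pi u(X_H)\), using \(J_\delta^\star=\int V_0^\star\,dp_0\) as noted after \cref{ass:basic}. Moving the reward and terminal terms to the left produces \(J_\delta^\star-J_\delta(\pi)\). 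This is exactly \eqref{eq:episode-optimal-advantage-pdi}.

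The only real obstacle is justifying these manipulations: every expectation must be finite, and the conditional expectation step must be legitimate. For integrability, use the growth bounds \(|Q_h^\star|,|V_h^\star|\le C_hW\) and \(|r|\le B_rW\) from \cref{ass:basic}, together with \(\E_\pi W(X_h)\le e^{c_Wt_h}\int W\,dp_0<\infty\), obtained by iterating \eqref{eq:weight-moment-drift}. Then each of \(V_h^\star(X_h)\), \(Q_h^\star(X_h,A_h)\), \(r(X_h,A_h)\) and \(u(X_H)\) is integrable, so splitting expectations of sums is valid. For measurability, note that \(Q_h^\star\) and \(V_h^\star\) are continuous, hence Borel, and \(\pi_h\) is Borel. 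So \(x\mapsto Q_h^\star(x,\pi_h(x))\) is Borel. The conditional expectation identity is the standard disintegration for a Borel integrand that is integrable with respect to the kernel, and that integrability is guaranteed by \(\int W\,dP^\star\le(1+c_W\delta)W\). Finally, as a sanity check that holds automatically, each summand is nonnegative because \(V_h^\star(x)=\max_aQ_h^\star(x,a)\).
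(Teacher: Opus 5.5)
Your proof is correct. It reaches the identity by a somewhat different route from the paper.

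The paper works at the level of functions. It introduces the policy-evaluation functions $V_h^\pi(x)=\delta r(x,\pi_h(x))+(\mathcal P_h^\pi V_{h+1}^\pi)(x)$ and subtracts this recursion from the optimal Bellman recursion. For $D_h^\pi=V_h^\star-V_h^\pi$ this gives $D_h^\pi(x)=\mathcal D_h^\star(x,\pi_h(x))+(\mathcal P_h^\pi D_{h+1}^\pi)(x)$. Iterating this yields the \emph{statewise} identity \eqref{eq:statewise-optimal-advantage-pdi} from every stage $j$ and starting state $x$. Only then does the paper integrate against $p_0$, using $J_\delta(\pi)=\int V_0^\pi\,dp_0$.

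You never form $V_h^\pi$. Instead you telescope $\E_\pi V_h^\star(X_h)$ along the actual trajectory, using only the one-step conditional law of $X_{h+1}$ given $\mathcal G_h$ and the definition of $J_\delta(\pi)$ as an expected sum.

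Your route is slightly more economical for the episode-level statement. It avoids the separate Markov-property fact $J_\delta(\pi)=\int V_0^\pi\,dp_0$ and the Borel and growth bookkeeping for the functions $V_h^\pi$. The paper's recursion, in exchange, delivers the stronger conditional identity \eqref{eq:statewise-optimal-advantage-pdi}. Your argument would give that only after repeating the telescoping under $\E_{j,x}^\pi$.

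The integrability and measurability justifications are the same in both: the weighted bounds $C_hW$, the drift inequality \eqref{eq:weight-moment-drift}, and $\E_\pi W(X_h)\le e^{c_Wt_h}\int W\,dp_0$.
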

\begin{proof}
See \cref{app:performance-difference-proof}.
\end{proof}
This is the sign-reversed optimal-benchmark form of the classical
performance-difference lemma \citep{kakade2002approximately}. It involves 
only $V_h^\star$, $Q_h^\star$, and the law of $\pi$, so it needs no choice 
of optimal policy and allows nonunique maximizers.

\subsubsection{Benchmark inequalities and quadratic policy loss}
\label{subsec:quadratic-advantage}
By \cref{lem:optimal-advantage-pdi}, the suboptimality of a policy is
the sum of its stagewise optimal-advantage losses.  Two
benchmark properties of $Q^\star $ in \cref{ass:global-benchmark-geometry} make these losses small
for greedy policies.

Together with the fitted-envelope regularity and residual control in
\cref{def:fitted-envelope-interface,def:true-residual-certificate},
these properties give the fast-rate mechanism.  For a fixed physical
horizon \(T\), the backward recursion converts \(\mathcal{O}(\delta/\sqrt{k})\)
Bellman-residual derivatives into critic action-gradient error of the
same order.  The quadratic loss bound then squares this error at the
cost of a factor \(1/\delta\) before the \(H=T/\delta\) stages add up:
\[
\begin{aligned}
  &\text{residual derivative } \mathcal{O}\bigl(\frac{\delta}{\sqrt{k}}\bigr)
  \Rightarrow
  \text{action-gradient error } \mathcal{O}\bigl(\frac{\delta}{\sqrt{k}}\bigr)
  \\
  &\qquad\Rightarrow
  \text{stage loss } \mathcal{O}\bigl(\frac{\delta}{k}\bigr)
  \Rightarrow
  \text{episode loss } \mathcal{O}\bigl(\frac{T}k\bigr).
\end{aligned}
\]
Summing over \(K\) episodes gives logarithmic regret, uniformly in \(\delta\) when the benchmark, drift, and normalized residual constants are.\\

To include actions on the boundary of $\mathcal A$, define the
maximization normal cone
\[
  N_{\mathcal A}(a)
  :=\{v\in\mathbb R^d:\langle v,b-a\rangle\leq0
                       \text{ for every }b\in\mathcal A\}.
\]
If a differentiable function $q$ is maximized at $a$ over the convex
set $\mathcal A$, first-order optimality gives
$\nabla q(a)\in N_{\mathcal A}(a)$.

Using the extensions in \cref{ass:true-drift}, define the canonical
critic on the open action neighborhood by
\begin{equation}
  Q_h^\star(x,a)
  :=\delta r(x,a)+(\mathcal P^\star V_{h+1}^\star)(x,a),
  \qquad (x,a)\in\mathbb R^n\times\mathcal A^{\mathrm o},
  \label{eq:canonical-off-action-optimal-critic}
\end{equation}
where the Gaussian kernel uses the corresponding extension of the
drift; it agrees with the Bellman critic on $\mathbb R^n\times\mathcal A$. When it is differentiable, its normal-cone stationarity residual is
\begin{equation}
  d_h(x,a)
  :=\operatorname{dist}\bigl(\nabla_aQ_h^\star(x,a),
                              N_{\mathcal A}(a)\bigr).
  \label{eq:benchmark-stationarity-residual}
\end{equation}

\begin{assumption}[Quadratic advantage and value-gradient stability]
\label{ass:benchmark-error-bounds}
\label{ass:global-benchmark-geometry}
For every $h<H$, the canonical critic in
\eqref{eq:canonical-off-action-optimal-critic} is finite and belongs to
$C^1(\mathbb R^n\times\mathcal A^{\mathrm o})$, and its optimal envelope
$V_h^\star$ belongs to $C^1(\mathbb R^n)$.
There are constants $\bar m>0$ and $0\leq\kappa<\infty$, independent
of $h$ and the admissible timestep $\delta$, such that, for every
$x\in\mathbb R^n$ and $a\in\mathcal A$,
\begin{align}
  V_h^\star(x)-Q_h^\star(x,a)
  &\leq\frac{d_h(x,a)^2}{2\delta\bar m},
  \label{eq:quadratic-advantage-normal-cone}\\
  \|\nabla_xQ_h^\star(x,a)-\nabla V_h^\star(x)\|
  &\leq \kappa  d_h(x,a).
  \label{eq:value-gradient-stability}
\end{align}
\end{assumption}

The two inequalities play different roles:
The first converts stationarity error into
quadratic optimal-advantage loss; the second controls the effect of
action selection on the value gradient, so value errors can
propagate backward through the Bellman recursion in \cref{thm:residual-regret-transfer}.

At any optimal action $a^\star$, first-order optimality gives
$d_h(x,a^\star)=0$, so \eqref{eq:value-gradient-stability}  gives
\begin{equation}
  \nabla V_h^\star(x)=\nabla_xQ_h^\star(x,a^\star),
  \qquad a^\star\in\argmax_{a\in\mathcal A}Q_h^\star(x,a).
  \label{eq:optimal-envelope-gradient}
\end{equation}
These requirements allow nonunique optimal actions. They are stated here on the full action set; \cref{sec:localized-algorithm} uses them inside optimal bins by proving that fitted greedy actions enter those bins. Strong concavity with mixed-gradient control is one sufficient condition to \cref{ass:benchmark-error-bounds}:

\begin{proposition}[Strong concavity and mixed-gradient control]
\label{prop:strong-concavity-sufficient}
\label{prop:quadratic-advantage}
Suppose that for every $h<H$ the canonical critic in
\eqref{eq:canonical-off-action-optimal-critic} is finite and belongs to
$C^1(\mathbb R^n\times\mathcal A^{\mathrm o})$, and that there are
constants $\bar m>0$ and $0\leq\bar L_{xa}<\infty$, independent of $h$
and $\delta$, such that $Q_h^\star(x,\cdot)$ is
$\delta\bar m$-strongly concave and $\nabla_xQ_h^\star(x,\cdot)$ is
$\delta\bar L_{xa}$-Lipschitz on $\mathcal A$: for every
$x\in\mathbb R^n$ and $a,b\in\mathcal A$,
\begin{align*}
  \left\langle\nabla_aQ_h^\star(x,a)-\nabla_aQ_h^\star(x,b),
                   a-b\right\rangle
  &\leq-\delta\bar m\|a-b\|^2,\\
  \|\nabla_xQ_h^\star(x,a)-\nabla_xQ_h^\star(x,b)\|
  &\leq\delta\bar L_{xa}\|a-b\|.
\end{align*}
Then $V_h^\star\in C^1(\mathbb R^n)$ for every $h<H$ and satisfies \eqref{eq:optimal-envelope-gradient}.
\cref{ass:benchmark-error-bounds} holds with the same $\bar m$ and
$\kappa=\bar L_{xa}/\bar m$.
\end{proposition}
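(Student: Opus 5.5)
Under strong concavity each $Q_h^\star(x,\cdot)$ has a unique maximizer $a^\star(x)$ over the compact convex set $\mathcal A$. The plan is to prove the three conclusions in order: a quadratic bound on the advantage, a bound on the distance to the maximizer in terms of $d_h$, and then differentiability of $V_h^\star$ via a Danskin-type argument combined with the mixed-gradient Lipschitz bound.

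\emph{Quadratic advantage.} Fix $x$ and $a\in\mathcal A$, and let $v\in N_{\mathcal A}(a)$ attain $d_h(x,a)=\|\nabla_aQ_h^\star(x,a)-v\|$ (the cone is closed and convex). Strong concavity gives, for every $b\in\mathcal A$,
\[
Q_h^\star(x,b)\le Q_h^\star(x,a)+\langle\nabla_aQ_h^\star(x,a),b-a\rangle-\tfrac{\delta\bar m}{2}\|b-a\|^2 .
\]
Write $\nabla_aQ_h^\star(x,a)=v+w$ with $\|w\|=d_h$. Since $\langle v,b-a\rangle\le0$, we obtain $Q_h^\star(x,b)-Q_h^\star(x,a)\le d_h\|b-a\|-\tfrac{\delta\bar m}{2}\|b-a\|^2\le d_h^2/(2\delta\bar m)$. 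Taking $b=a^\star(x)$ yields \eqref{eq:quadratic-advantage-normal-cone}.

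\emph{Distance to the maximizer.} First-order optimality at $a^\star$ gives $\nabla_aQ_h^\star(x,a^\star)\in N_{\mathcal A}(a^\star)$, so $\langle\nabla_aQ_h^\star(x,a^\star),a-a^\star\rangle\le0$. Likewise $\langle v,a^\star-a\rangle\le0$. Adding these to the strong monotonicity inequality gives
\[
\delta\bar m\|a-a^\star\|^2\le-\langle\nabla_aQ_h^\star(x,a)-\nabla_aQ_h^\star(x,a^\star),a-a^\star\rangle\le\langle w,a^\star-a\rangle\le d_h\|a-a^\star\| .
\]
Hence $\|a-a^\star(x)\|\le d_h(x,a)/(\delta\bar m)$.

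\emph{Envelope differentiability (main obstacle).} I would apply Danskin's theorem: $Q_h^\star$ is $C^1$ jointly, $\mathcal A$ is compact, and the maximizer is unique. These give that $V_h^\star$ is differentiable with $\nabla V_h^\star(x)=\nabla_xQ_h^\star(x,a^\star(x))$. For a self-contained argument, take $y\to x$ and use
\[
Q_h^\star(y,a^\star(x))-Q_h^\star(x,a^\star(x))\le V_h^\star(y)-V_h^\star(x)\le Q_h^\star(y,a^\star(y))-Q_h^\star(x,a^\star(y)) .
\]
Apply the mean value theorem in $x$ to both sides. For the upper bound, note that $a^\star(\cdot)$ is continuous: the maximizer is unique, $\mathcal A$ is compact, and $Q_h^\star$ is continuous, so Berge's maximum theorem applies. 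Joint continuity of $\nabla_xQ_h^\star$ then makes the upper and lower bounds agree to first order. Continuity of $x\mapsto\nabla_xQ_h^\star(x,a^\star(x))$ gives $V_h^\star\in C^1$, and this is exactly \eqref{eq:optimal-envelope-gradient}. The delicate point is uniformity of the mean-value remainders, which is handled by local uniform continuity of $\nabla_xQ_h^\star$ on compact sets of the form $\text{(ball)}\times\mathcal A$.

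\emph{Value-gradient stability.} Finally,
\[
\|\nabla_xQ_h^\star(x,a)-\nabla V_h^\star(x)\|=\|\nabla_xQ_h^\star(x,a)-\nabla_xQ_h^\star(x,a^\star)\|\le\delta\bar L_{xa}\|a-a^\star\|\le(\bar L_{xa}/\bar m)\,d_h(x,a),
\]
which is \eqref{eq:value-gradient-stability} with $\kappa=\bar L_{xa}/\bar m$.
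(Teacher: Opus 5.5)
Your proposal is correct and follows essentially the same route as the paper: the function-value form of strong concavity combined with the normal-cone inequality for the quadratic advantage, strong monotonicity plus first-order optimality at the unique maximizer for the distance bound $\|a-a^\star\|\le d_h(x,a)/(\delta\bar m)$, continuity of the maximizer plus Danskin's theorem for $V_h^\star\in C^1$, and the mixed-gradient Lipschitz bound for value-gradient stability with $\kappa=\bar L_{xa}/\bar m$. The differences are cosmetic: you fix a normal vector $v$ attaining the distance (projection onto the closed convex cone) rather than taking an infimum at the end, and you cite Berge's maximum theorem where the paper gives a direct subsequence argument for continuity of $a^\star$.
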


\begin{proof}
See \cref{app:global-geometry-proof}.
\end{proof}

\begin{remark}[Explanation of the curvature scaling]
\label{rem:global-euler-scaling}
The factors of $\delta$ match the Hamiltonian expansion of the Euler critic. Under smoothness and uniform derivative bounds justifying the expansion and its action derivatives,
\begin{equation*}
  Q_h^\star(x,a)=V_{h+1}^\star(x)+\delta\left\{
    r(x,a)+\mu^\star(x,a)^\top\nabla V_{h+1}^\star(x)
    +\tfrac12\operatorname{tr}\bigl(\Sigma^\star\nabla^2V_{h+1}^\star(x)\bigr)
  \right\}+o(\delta).
\end{equation*}
Uniform action-curvature and mixed-derivative bounds on the bracketed term thus give the order-$\delta$ scales in \cref{prop:strong-concavity-sufficient} when the remainder derivatives are controlled.
\end{remark}

At greedy actions, the stationarity
residual $d_h$ is controlled by the critic's action-gradient error.

\begin{corollary}[Performance of the greedy policy]
\label{cor:quadratic-performance-gradient-bound}
Under \cref{ass:benchmark-error-bounds}, whenever the squared
gradient errors below are integrable,
\begin{equation*}
    J_\delta^\star-J_\delta(\widehat\pi)
    \leq
    \frac{1}{2\delta\bar m}
    \sum_{h=0}^{H-1}
    \mathbb E_{\widehat\pi}
    \left[
        \left\|
            \nabla_a
            \bigl(
                \widehat Q_h-Q_h^\star
            \bigr)
            \bigl(
                X_h,\widehat\pi_h(X_h)
            \bigr)
        \right\|^2
    \right].
\end{equation*}
\end{corollary}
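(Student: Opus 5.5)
The plan is to combine \cref{lem:optimal-advantage-pdi} with the quadratic advantage bound \eqref{eq:quadratic-advantage-normal-cone}, and then bound the stationarity residual at a greedy action by the critic's action-gradient error.

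\textbf{Step 1: reduce to stagewise losses.} Since $\widehat\pi$ is a deterministic Markov policy with greedy selections, \cref{lem:optimal-advantage-pdi} gives
\[
J_\delta^\star-J_\delta(\widehat\pi)=\sum_{h=0}^{H-1}\E_{\widehat\pi}\bigl[V_h^\star(X_h)-Q_h^\star(X_h,\widehat\pi_h(X_h))\bigr].
\]
This requires $\widehat\pi_h$ to be Borel. I would either take that as part of the setup, where a measurable selection exists because $\widehat Q_h$ is continuous and $\mathcal A$ is compact, or note that the statement presupposes it. Each integrand is nonnegative.

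\textbf{Step 2: pointwise bound on the stationarity residual.} Fix $x$ and set $\hat a=\widehat\pi_h(x)$. Since $\widehat Q_h(x,\cdot)\in C^1$ is maximized over the convex set $\mathcal A$ at $\hat a$, first-order optimality gives $\nabla_a\widehat Q_h(x,\hat a)\in N_{\mathcal A}(\hat a)$. The distance to a set is at most the distance to any one of its points, so
\[
d_h(x,\hat a)\le\bigl\|\nabla_aQ_h^\star(x,\hat a)-\nabla_a\widehat Q_h(x,\hat a)\bigr\|.
\]
Here the gradients are taken on $\mathcal A^{\mathrm o}$, which is legitimate because both critics are $C^1$ there. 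This is the step that justifies using the canonical extension \eqref{eq:canonical-off-action-optimal-critic}.

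\textbf{Step 3: combine.} Applying \eqref{eq:quadratic-advantage-normal-cone} at $(x,\hat a)$ gives
\[
V_h^\star(x)-Q_h^\star(x,\hat a)\le\frac{\|\nabla_a(\widehat Q_h-Q_h^\star)(x,\hat a)\|^2}{2\delta\bar m}.
\]
Substituting $x=X_h$ and taking expectations under $\widehat\pi$ bounds each stage term. Summing over $h$ then yields the claim.

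The only delicate points are measure-theoretic. The integrand in Step 3 must be measurable, which follows from continuity of the gradients together with Borel measurability of $\widehat\pi_h$. The expectations must be well defined: the left-hand side is integrable by \cref{ass:basic}, and the right-hand side is integrable by hypothesis. Because each stage integrand is dominated by a nonnegative integrable function, monotonicity of expectation applies stage by stage. I expect no substantive obstacle beyond this bookkeeping.
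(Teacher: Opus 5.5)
Your proposal is correct and follows the paper's proof essentially step for step. First-order optimality places $\nabla_a\widehat Q_h(x,\widehat\pi_h(x))$ in $N_{\mathcal A}(\widehat\pi_h(x))$, which bounds $d_h$ by the action-gradient error; the quadratic-advantage inequality and the performance-difference identity under the standing \cref{ass:basic} then finish the argument, with the same measurability and integrability bookkeeping as the paper.
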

\begin{proof}
Fix \(h<H\) and \(x\in\mathbb R^n\), and set
\(\widehat a=\widehat\pi_h(x)\).
Since \(\widehat Q_h(x,\cdot)\) is differentiable on an open
neighborhood of \(\mathcal A\) and is maximized at \(\widehat a\),
first-order optimality on the convex set $\mathcal A$ gives
$\nabla_a\widehat Q_h(x,\widehat a)\in N_{\mathcal A}(\widehat a)$.
Using this normal vector in the distance defining $d_h$ gives
\begin{equation*}
    d_h(x,\widehat a)
    \leq
    \left\|
        \nabla_a(\widehat Q_h-Q_h^\star)(x,\widehat a)
    \right\|.
\end{equation*}
This estimate requires no curvature assumption. Substitution into
\eqref{eq:quadratic-advantage-normal-cone} gives
\begin{equation}
    V_h^\star(x)-Q_h^\star(x,\widehat a)
    \leq
    \frac{1}{2\delta\bar m}
    \left\|
        \nabla_a
        \bigl(
            \widehat Q_h-Q_h^\star
        \bigr)
        (x,\widehat a)
    \right\|^2.
    \label{eq:quadratic-advantage-gradient-error}
\end{equation}
The squared gradient errors are Borel measurable because
$\widehat Q_h$ and $Q_h^\star$ are $C^1$ and $\widehat\pi_h$ is Borel.
Taking expectations in \eqref{eq:quadratic-advantage-gradient-error},
using the assumed integrability, and summing over $h$ gives the result
by \cref{lem:optimal-advantage-pdi}, which applies under the standing
\cref{ass:basic}.
\end{proof}

The factor $1/\delta$ is the scale of \eqref{eq:quadratic-advantage-normal-cone}. The performance estimate needs action-gradient accuracy only at selected actions along the learned policy's trajectories. In particular, it imposes no concavity on $\widehat Q_h$.

\subsection{Residual accuracy and regret transfer}
The performance estimate reduces regret control to bounding critic
action-gradient errors at the selected actions.  We bound these by derivatives of the Bellman residuals under the true kernel.
The Euler structure and value-gradient stability control the backward
propagation, independently of how the fitted critics were constructed.

\subsubsection{Fitted critics and true residual certificates}
\label{subsec:critic-residual-interface}

Let \(\mathcal G_k\) be the sigma-field generated by real data through
episode \(k\) and all auxiliary randomness in the fitted-\(Q\) update.
In this subsection and \cref{subsec:direct-log-regret}, we condition on
\(\mathcal G_k\), fixing a critic sequence
\(\widehat Q_k\), its greedy envelope, and the greedy policy
\(\pi_{k+1}\) for episode \(k+1\). Expectations are over a fresh episode
under the true kernel.

We separate fitted-envelope regularity (\cref{def:fitted-envelope-interface}) from residual accuracy (\cref{def:true-residual-certificate}); the localized construction keeps the former and replaces the latter by the two-scale certificate of \cref{def:two-scale-true-residual-certificate}, introduced in
\cref{subsec:two-scale-main-guarantee}.

\begin{criterion}[Fitted-envelope regularity]
\label{def:fitted-envelope-interface}
Fix an update index \(k\). A fitted critic sequence
\(
    \widehat Q_k
    =
    (\widehat Q_{k,0},\ldots,\widehat Q_{k,H-1})
\)
satisfies the \emph{fitted-envelope regularity criterion} if,
for every \(h<H\),
\begin{enumerate}[label=(\roman*),leftmargin=2.2em]
    \item \(\widehat Q_{k,h}\) has a \(C^1\) extension to
    \(\mathbb R^n\times\mathcal A^{\mathrm o}\);

    \item its full-action envelope belongs to
    \(C^1(\mathbb R^n)\):
    \[
    \widehat V_{k,h}(x):=\max_{a\in A}\widehat Q_{k,h}(x,a)
    \in C^1(\mathbb R^n).
    \]
\end{enumerate}
At the terminal stage, set \(\widehat V_{k,H}:=u.\)
\end{criterion}

\Cref{def:fitted-envelope-interface} supplies the differentiability used by the envelope identity and backward transfer; it is verified for the algorithm in \cref{prop:algorithm-output-interface}.

\begin{lemma}[Greedy policy and envelope identity]
\label{lem:fitted-envelope-identity}
Suppose \cref{def:fitted-envelope-interface} holds. Then there exists
a deterministic Borel Markov policy \(\pi_{k+1}\) satisfying, for every
\(h<H\) and \(x\in\mathbb R^n\),
\begin{equation}
    \pi_{k+1,h}(x)
    \in
    \argmax_{a\in\mathcal A}\widehat Q_{k,h}(x,a).
    \label{eq:fitted-greedy-policy}
\end{equation}
For each such \(h\) and \(x\), every maximizing action \(a\), including the \(\pi_{k+1,h}(x)\), also satisfies
\begin{equation}
    \nabla\widehat V_{k,h}(x)
    =
    \nabla_x\widehat Q_{k,h}(x,a).
    \label{eq:selected-envelope-identity}
\end{equation}
\end{lemma}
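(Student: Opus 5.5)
Two claims must be shown: a Borel measurable selection of maximizers exists, and at every maximizer the envelope gradient equals the state gradient of the critic. Both use only \cref{def:fitted-envelope-interface}, compactness of $\mathcal A$, and continuity of $\widehat Q_{k,h}$.

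\emph{Existence of a Borel greedy policy.} Fix $h<H$. By \cref{def:fitted-envelope-interface}(i), $\widehat Q_{k,h}$ is continuous on $\mathbb R^n\times\mathcal A$. Since $\mathcal A$ is compact and nonempty, the argmax correspondence
\[
\Phi_h(x)=\argmax_{a\in\mathcal A}\widehat Q_{k,h}(x,a)
\]
is nonempty and compact-valued. By Berge's maximum theorem it is also upper hemicontinuous, so its graph $\{(x,a):\widehat Q_{k,h}(x,a)=\widehat V_{k,h}(x)\}$ is closed. A measurable maximum theorem (e.g.\ Aliprantis--Border, Thm.~18.19) then yields a Borel selector $\pi_{k+1,h}$; the Kuratowski--Ryll-Nardzewski selection theorem applied to the closed-valued, weakly measurable $\Phi_h$ gives the same conclusion. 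Collecting these selectors over $h$ gives the deterministic Borel Markov policy \eqref{eq:fitted-greedy-policy}.

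\emph{Envelope identity.} Fix $x$ and any maximizer $a\in\Phi_h(x)$. Define $g(y):=\widehat V_{k,h}(y)-\widehat Q_{k,h}(y,a)$ for $y\in\mathbb R^n$. Since $a\in\mathcal A$ is feasible, $g\ge0$ everywhere, and $g(x)=0$, so $x$ is a global minimizer of $g$. By \cref{def:fitted-envelope-interface}(ii) $\widehat V_{k,h}$ is differentiable at $x$, and by (i) $\widehat Q_{k,h}(\cdot,a)$ is differentiable at $x$, because $\mathcal A\subset\mathcal A^{\mathrm o}$ and the $C^1$ extension gives an honest $x$-gradient there. An unconstrained interior minimum of a differentiable function forces $\nabla g(x)=0$, which is exactly \eqref{eq:selected-envelope-identity}. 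This holds for every maximizer, in particular for $\pi_{k+1,h}(x)$.

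The envelope identity is immediate once differentiability of $\widehat V_{k,h}$ is assumed; that is the reason for imposing (ii) instead of deriving it. The only step needing care is the measurable selection, specifically checking the hypotheses of the measurable maximum theorem: joint continuity of $\widehat Q_{k,h}$ on $\mathbb R^n\times\mathcal A$ (a Carathéodory function) and a constant compact feasible set. Both hold here, so the selection step is standard.
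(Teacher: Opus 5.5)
Your proof is correct. The selection step matches the paper's: joint continuity and compactness, then the measurable maximum theorem. The envelope identity, however, you prove by a more elementary route.

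The paper invokes Danskin's directional formula, $D\widehat V_{k,h}(x;v)=\max_{a}\langle\nabla_x\widehat Q_{k,h}(x,a),v\rangle$ over the active set, equates it with $\langle\nabla\widehat V_{k,h}(x),v\rangle$, and tests with $\pm v$. You instead note that $x$ is an unconstrained minimizer of $\widehat V_{k,h}-\widehat Q_{k,h}(\cdot,a)$ and apply the first-order condition.

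The only part of Danskin's formula the paper's argument actually uses is the one-sided bound $\langle\nabla_x\widehat Q_{k,h}(x,a),v\rangle\le D\widehat V_{k,h}(x;v)$ for each active $a$. That bound is your observation written in directional form. Your version therefore makes clear that the identity needs only pointwise differentiability of $\widehat V_{k,h}$ and of $\widehat Q_{k,h}(\cdot,a)$ at $x$. It does not need joint continuity of $\nabla_x\widehat Q_{k,h}$ or the other hypotheses behind Danskin's theorem.

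What the paper's Danskin framing buys is the directional-derivative formula itself. The paper reuses that formula in the proof of \cref{prop:algorithm-output-interface}, where differentiability of the envelope is not assumed but must be derived from a common active state gradient. For the lemma as stated, \cref{def:fitted-envelope-interface}(ii) already supplies differentiability, so your argument suffices.
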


\begin{proof}
See \cref{app:fitted-envelope-proof}.
\end{proof}

Define the true Bellman residual of the realized fitted critic by
\begin{equation}
    e_{k,h}(x,a)
    :=
    \delta r(x,a)
    +
    (\mathcal{P}^\star\widehat V_{k,h+1})(x,a)
    -
    \widehat Q_{k,h}(x,a),
    \qquad h<H.
    \label{eq:true-residual}
\end{equation}
Its extension to $\mathbb R^n\times\mathcal A^{\mathrm o}$ is the same Bellman expression with the fixed extensions of drift, reward, and critic; \cref{def:true-residual-certificate,def:two-scale-true-residual-certificate} require it to be finite, so the next-stage fitted value is absolutely integrable, and $C^1$. 
Set
\begin{equation*}
    \Delta Q_{k,h}
    :=
    \widehat Q_{k,h}-Q_h^\star,
    \qquad
    \Delta V_{k,h}
    :=
    \widehat V_{k,h}-V_h^\star,
    \qquad
    \Delta V_{k,H}=0.
\end{equation*}
\stepcounter{criterion}
\begin{criterionvariant}[Bellman-residual derivative certificate]
\label{def:true-residual-certificate}
Suppose \cref{def:fitted-envelope-interface} holds, and
choose the greedy policy in \eqref{eq:fitted-greedy-policy}. For
nonnegative radii \(d_{x,k}\) and \(d_{a,k}\) independent of $\delta$, we say that the fitted
critic sequence satisfies a \emph{true-residual derivative certificate
at the selected greedy actions} if every \(e_{k,h}\) has a \(C^1\) extension to
\(\mathbb R^n\times\mathcal A^{\mathrm o}\) and
\begin{equation}
    \sup_{h<H}\sup_{x\in\mathbb R^n}
    \left\|
        \nabla_xe_{k,h}
        \bigl(x,\pi_{k+1,h}(x)\bigr)
    \right\|
    \leq
    \delta d_{x,k},
    \qquad
    \sup_{h<H}\sup_{x\in\mathbb R^n}
    \left\|
        \nabla_ae_{k,h}
        \bigl(x,\pi_{k+1,h}(x)\bigr)
    \right\|
    \leq
    \delta d_{a,k}.
    \label{eq:residual-derivative-certificate}
\end{equation}
The radii may be \(\mathcal G_k\)-measurable; after conditioning on
the post-update history, they are fixed numbers.
\end{criterionvariant}

\Cref{def:true-residual-certificate} controls the state and action
residual derivatives at the fitted greedy actions, and supplies the accuracy input to
\cref{thm:residual-regret-transfer}.  
Since $d_{a,k}$ and $d_{x,k}$ are independent of $\delta$, the derivative bounds 
scale linearly in $\delta$, matching the Euler scaling of the action and state derivatives in \cref{lem:kernel-differentiation}.
The localized extension uses
\cref{def:two-scale-true-residual-certificate} to establish bin entry
before applying local derivative control; that certificate is verified
in \cref{thm:two-scale-true-residual}.

\subsubsection{Residual derivatives to regret transfer}
\label{subsec:direct-log-regret}
We now connect the residual derivative bounds in
\cref{def:true-residual-certificate} to regret. In \cref{ass:benchmark-error-bounds}, value-gradient stability closes a backward recursion for the value-gradient error, and the quadratic advantage bound converts action-gradient error into policy loss.
The transfer holds for arbitrary finite residual radii; a
root-$k$ rate then gives logarithmic regret in
\cref{thm:residual-regret-transfer}.
The true residual and critic-error definitions give
\[
  \Delta Q_{k,h}=\mathcal{P}^\star\Delta V_{k,h+1}-e_{k,h}.
\]
This identity propagates critic error backward through the true Euler
kernel. Since the covariance is constant, differentiating the kernel 
acts only through its mean, so action derivatives carry a factor 
$\delta$ and state derivatives are $I_n+\mathcal{O}(\delta)$.

\begin{lemma}[Differentiation of the Euler kernel]
\label{lem:kernel-differentiation}
Let \(v\in C^1(\R^n)\) satisfy \(\norm{\nabla v}_\infty<\infty\).
Under \cref{ass:true-drift}, the function
\((x,a)\mapsto(\mathcal{P}^\star v)(x,a)\) belongs to
\(C^1(\R^n\times\mathcal A^{\mathrm o})\), and
\begin{align*}
  \nabla_x(\mathcal{P}^\star v)(x,a)
  &=
  \{I_n+\delta\partial_x\mu^\star(x,a)\}^\top
  \mathcal{P}^\star[\nabla v](x,a),\\
  \nabla_a(\mathcal{P}^\star v)(x,a)
  &=
  \delta\{\partial_a\mu^\star(x,a)\}^\top
  \mathcal{P}^\star[\nabla v](x,a).
\end{align*}
\end{lemma}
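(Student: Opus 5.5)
The plan is to write the Gaussian expectation in reparametrized form and differentiate under the integral. With \(m(x,a):=x+\delta\mu^\star(x,a)\) and \(S:=(\delta\Sigma^\star)^{1/2}\), we have
\[
  (\mathcal P^\star v)(x,a)=\E\bigl[v\bigl(m(x,a)+S\xi\bigr)\bigr],\qquad \xi\sim\mathcal N(0,I_n).
\]
This representation avoids any density argument, so it works even when \(\Sigma^\star\) is singular. By \cref{ass:true-drift}, \(m\) is \(C^1\) on \(\R^n\times\mathcal A^{\mathrm o}\), with
\[
  \partial_x m=I_n+\delta\,\partial_x\mu^\star,\qquad \partial_a m=\delta\,\partial_a\mu^\star .
\]

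Fix a point \((x,a)\). The integrand \(g(x,a,\xi):=v(m(x,a)+S\xi)\) is \(C^1\) in \((x,a)\) for each \(\xi\). The chain rule gives
\[
  \nabla_x g=(\partial_x m)^\top\nabla v(m+S\xi),\qquad \nabla_a g=(\partial_a m)^\top\nabla v(m+S\xi).
\]
Since \(\|\nabla v\|_\infty<\infty\), these are bounded in norm by \((1+\delta L_{\mu,x})\|\nabla v\|_\infty\) and \(\delta L_{\mu,a}\|\nabla v\|_\infty\), uniformly in \(\xi\) and in \((x,a)\).

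First I will check that \(\mathcal P^\star v\) is finite. The function \(v\) is Lipschitz, so \(|v(y)|\le|v(0)|+\|\nabla v\|_\infty\|y\|\), which is integrable against a Gaussian. Next I will justify exchanging derivative and expectation, using the mean value theorem along a segment in a convex neighbourhood of \((x,a)\) inside \(\R^n\times\mathcal A^{\mathrm o}\). The difference quotients are then dominated by the constant bound above, and pointwise convergence follows from the \(C^1\) property of \(g\). Dominated convergence therefore gives the partial derivatives, with \(\partial_x m\) and \(\partial_a m\) pulled out of the expectation because they do not depend on \(\xi\). This yields exactly the two displayed formulas, with \(\mathcal P^\star[\nabla v]\) understood componentwise.

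Finally, for the \(C^1\) claim I will show that \((x,a)\mapsto\mathcal P^\star[\nabla v](x,a)=\E[\nabla v(m(x,a)+S\xi)]\) is continuous. This is again dominated convergence, using continuity of \(\nabla v\) and of \(m\) together with the uniform bound \(\|\nabla v\|_\infty\). Multiplying by the continuous matrices \(\partial_x m\) and \(\partial_a m\) then gives continuous partial derivatives, hence \(C^1\).

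The only mildly delicate step is the domination for the difference quotients. The neighbourhood must stay inside the open set \(\mathcal A^{\mathrm o}\), which is available because \(\mathcal A^{\mathrm o}\) is open, and the bound must be uniform over \(\xi\); both are ensured by the global Jacobian bounds in \eqref{eq:true-drift-jacobians} and the bounded gradient of \(v\).
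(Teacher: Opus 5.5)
Your proposal is correct and follows the paper's proof essentially step for step. Both use the reparametrization $\mathbb E\, v\bigl(x+\delta\mu^\star(x,a)+\sqrt\delta(\Sigma^\star)^{1/2}\xi\bigr)$, bound the difference quotients uniformly by $(1+\delta L_{\mu,x})\|\nabla v\|_\infty$ and $\delta L_{\mu,a}\|\nabla v\|_\infty$ on a small neighbourhood inside the open domain, and apply dominated convergence twice: once for the derivative formulas and once for continuity of $\mathcal P^\star[\nabla v]$.
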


\begin{proof}
See \cref{app:kernel-differentiation-proof}.
\end{proof}

For \(\tau\in[0,T]\) and \(L\ge0\), define the physical-time
propagation factor
\begin{equation*}
  \Phi_\tau(L)
  :=
  \begin{cases}
    \dfrac{e^{L\tau}-1}{L},&L>0,\\[1ex]
    \tau,&L=0.
  \end{cases}
\end{equation*}

\begin{theorem}[Logarithmic regret from root-$k$ residual accuracy]
\label{thm:residual-regret-transfer}
Let \(K\ge 2\). Suppose
\cref{ass:benchmark-error-bounds,ass:true-drift} hold. Suppose further
that, simultaneously for every update \(k=1,\ldots,K-1\), the fitted
critic satisfies
\cref{def:fitted-envelope-interface,def:true-residual-certificate} with
radii obeying
\begin{equation}
  d_{x,k}\leq\frac{c_x}{\sqrt{k}},
  \qquad
  d_{a,k}\leq\frac{c_a}{\sqrt{k}},
  \qquad k=1,\ldots,K-1,
  \label{eq:root-k-residual-rate}
\end{equation}
for constants \(c_x,c_a\geq0\) independent of \(k,K,\delta\).
Set
\[
  L_E:=L_{\mu,x}+\kappa L_{\mu,a},
  \qquad
  C:=c_a+L_{\mu,a}\Phi_T(L_E)(c_x+\kappa c_a).
\]
Then
\begin{equation}
  \Reg(K,\delta)
  \leq2B_J+\frac{TC^2}{2\bar m}\sum_{k=1}^{K-1}\frac1k
  \leq2B_J+\frac{TC^2}{2\bar m}\bigl(1+\log(K-1)\bigr).
  \label{eq:direct-logarithmic-regret}
\end{equation}
For fixed \(T\), the bound is \(\mathcal{O}(\log K)\) uniformly over admissible
timesteps $\delta $.
\end{theorem}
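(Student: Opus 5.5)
We bound the first episode by the crude estimate $J_\delta^\star-J_\delta(\pi_1)\le 2B_J$. For each later episode $k+1$ with $k=1,\ldots,K-1$, we condition on $\mathcal G_k$ and show
\[
J_\delta^\star-J_\delta(\pi_{k+1})\le \frac{T C^2}{2\bar m k}.
\]
Summing over $k$ and using $\sum_{k=1}^{K-1}1/k\le 1+\log(K-1)$ then gives \eqref{eq:direct-logarithmic-regret}. By \cref{cor:quadratic-performance-gradient-bound}, the per-episode bound reduces to a uniform estimate on the action-gradient error at selected actions:
\[
\sup_x\bigl\|\nabla_a\Delta Q_{k,h}\bigl(x,\pi_{k+1,h}(x)\bigr)\bigr\|\le \delta C/\sqrt k .
\]
Summing the stage losses $\frac{1}{2\delta\bar m}\cdot\delta^2C^2/k$ over $H=T/\delta$ stages gives exactly $TC^2/(2\bar m k)$. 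Uniformity in $x$ also settles the integrability hypothesis of the corollary.

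The core step is a backward recursion for the sup-norm value-gradient error $g_h:=\sup_x\|\nabla\Delta V_{k,h}(x)\|$, with $g_H=0$ since $\widehat V_{k,H}=u=V_H^\star$. We differentiate the identity $\Delta Q_{k,h}=\mathcal P^\star\Delta V_{k,h+1}-e_{k,h}$ using \cref{lem:kernel-differentiation}. At $a=\pi_{k+1,h}(x)$ this gives
\[
\|\nabla_a\Delta Q_{k,h}\|\le \delta L_{\mu,a}g_{h+1}+\delta d_{a,k},\qquad
\|\nabla_x\Delta Q_{k,h}\|\le (1+\delta L_{\mu,x})g_{h+1}+\delta d_{x,k}.
\]
Next, \cref{lem:fitted-envelope-identity} gives $\nabla\widehat V_{k,h}(x)=\nabla_x\widehat Q_{k,h}(x,a)$ at the greedy action. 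Hence
\[
\nabla\Delta V_{k,h}(x)=\nabla_x\Delta Q_{k,h}(x,a)+\bigl[\nabla_xQ_h^\star(x,a)-\nabla V_h^\star(x)\bigr].
\]
By \eqref{eq:value-gradient-stability} and the normal-cone argument in the proof of the corollary, the bracket is at most $\kappa d_h(x,a)\le\kappa\|\nabla_a\Delta Q_{k,h}(x,a)\|$. Combining these yields
\[
g_h\le (1+\delta L_E)g_{h+1}+\delta(d_{x,k}+\kappa d_{a,k}).
\]
Unrolling, and using $(1+\delta L_E)^j\le e^{L_E j\delta}$ and $\delta\sum_{j<H}e^{L_E j\delta}\le\Phi_T(L_E)$ (a Riemann-sum bound on $\int_0^T e^{L_Et}dt$), gives
\[
g_h\le \Phi_T(L_E)\,(d_{x,k}+\kappa d_{a,k})\le \Phi_T(L_E)(c_x+\kappa c_a)/\sqrt k .
\]
Substituting into the action-gradient bound gives $\delta C/\sqrt k$.

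The main obstacle is technical. \cref{lem:kernel-differentiation} requires $\|\nabla\Delta V_{k,h+1}\|_\infty<\infty$, and we must justify applying it to the difference $\widehat V_{k,h+1}-V_{h+1}^\star$. I would handle this inductively alongside the recursion: at each stage the bound above shows $g_h<\infty$ given $g_{h+1}<\infty$. The lemma applies to $\Delta V$ as a single $C^1$ function once it has bounded gradient, and linearity of $\mathcal P^\star$ together with finiteness of both integrals (from \cref{ass:basic} and the certificate's finiteness requirement) justifies splitting the difference. Note also that the derivative bounds in the certificate hold only at selected actions. This is sufficient, because the recursion only ever evaluates at $a=\pi_{k+1,h}(x)$, and $g_{h+1}$ is a sup over states, so no bound at non-selected actions is needed.
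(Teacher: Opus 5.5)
Your proposal is correct and follows essentially the same route as the paper. Both arguments run a backward induction on the sup-norm value-gradient error through the kernel-differentiation lemma, the envelope identity, and value-gradient stability combined with greedy stationarity, then apply the quadratic-loss corollary stagewise and bound the first episode by $2B_J$. The only cosmetic difference is that you bound the geometric sum by a left Riemann sum of $e^{L_E t}$, whereas the paper compares it directly with $\Phi_{T-t_h}(L_E)\le\Phi_T(L_E)$.
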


\begin{proof}
\underline{\emph{Step 1: residual-to-gradient transfer.}}
Steps 1 and 2 use only that the radii are finite and nonnegative; the
rate \eqref{eq:root-k-residual-rate} enters in Step 3.  For each update,
define
\begin{equation*}
  b_{E,k}:=d_{x,k}+\kappa d_{a,k},
  \qquad
  c_k:=d_{a,k}+L_{\mu,a}\Phi_T(L_E)b_{E,k}.
\end{equation*}
Fix an update index \(k\) and condition on \(\mathcal G_k\) as in
\cref{subsec:critic-residual-interface}.
By \cref{def:fitted-envelope-interface,ass:benchmark-error-bounds},
$\Delta V_{k,h}\in C^1(\R^n)$ for $h<H$, and $\Delta V_{k,H}=0$.
Define the possibly infinite quantities
\[
  E_{k,h}
  :=
  \sup_{x\in\R^n}\norm{\nabla\Delta V_{k,h}(x)}.
\]
We prove finiteness and bounds by backward induction from $E_{k,H}=0$.

Assume that \(E_{k,h+1}<\infty\).
Then the $C^1$ value error satisfies
\[
  |\Delta V_{k,h+1}(y)|
  \leq |\Delta V_{k,h+1}(0)|+E_{k,h+1}\|y\|,
\]
so it is absolutely integrable under every Euler transition, including
those with $a\in\mathcal A^{\mathrm o}$.  The optimal value function
has finite expectation by \cref{ass:benchmark-error-bounds}; hence
$\widehat V_{k,h+1}=V_{h+1}^\star+\Delta V_{k,h+1}$ is also integrable.  This justifies the error identity on the
open domain before differentiating it. The kernel lemma needs only the bounded gradient of $\Delta V_{k,h+1}$, not of $\widehat V_{k,h+1}$.
Applying \cref{lem:kernel-differentiation} to
\(\Delta Q_{k,h}=\mathcal P^\star\Delta V_{k,h+1}-e_{k,h}\) gives, on
\(\R^n\times\mathcal A\),
\begin{align}
  \nabla_a\Delta Q_{k,h}(x,a)
  &=
  \delta\{\partial_a\mu^\star(x,a)\}^\top
  \mathcal{P}^\star[\nabla\Delta V_{k,h+1}](x,a)
  -\nabla_ae_{k,h}(x,a),
  \notag\\
  \nabla_x\Delta Q_{k,h}(x,a)
  &=
  \{I_n+\delta\partial_x\mu^\star(x,a)\}^\top
  \mathcal{P}^\star[\nabla\Delta V_{k,h+1}](x,a)
  -\nabla_xe_{k,h}(x,a).
  \label{eq:state-error-identity}
\end{align}
Fix \(x\in\R^n\) and write \(\widehat a=\pi_{k+1,h}(x)\).
The drift bounds \eqref{eq:true-drift-jacobians} and the residual bounds
\eqref{eq:residual-derivative-certificate} of
\cref{def:true-residual-certificate} imply
\begin{equation}
  \norm{
    \nabla_a\Delta Q_{k,h}
      (x,\widehat a)
  }
  \le
  \delta d_{a,k}
  +\delta L_{\mu,a}E_{k,h+1}.
  \label{eq:action-error-bound}
\end{equation}

By \cref{lem:fitted-envelope-identity} and
\eqref{eq:optimal-envelope-gradient},
\begin{align*}
  \nabla\Delta V_{k,h}(x)
  &=
  \nabla_x\Delta Q_{k,h}(x,\widehat a)
  +\nabla_xQ_h^\star(x,\widehat a)
  -\nabla V_h^\star(x).
\end{align*}
Value-gradient stability \eqref{eq:value-gradient-stability} and the
greedy stationarity argument in the proof of
\cref{cor:quadratic-performance-gradient-bound} give
\[
  \norm{
    \nabla_xQ_h^\star(x,\widehat a)
    -\nabla V_h^\star(x)
  }
  \le
  \kappa d_h(x,\widehat a)
  \le
  \kappa
  \norm{\nabla_a\Delta Q_{k,h}(x,\widehat a)}.
\]
Combining this with \eqref{eq:state-error-identity}, the same drift and
residual bounds, and \eqref{eq:action-error-bound} gives, for every $x$,
\begin{align*}
  \norm{\nabla\Delta V_{k,h}(x)}
  &\leq \delta d_{x,k}+(1+\delta L_{\mu,x})E_{k,h+1}
    +\delta\kappa
      \bigl(d_{a,k}+L_{\mu,a}E_{k,h+1}\bigr)\\
  &=\delta b_{E,k}+(1+\delta L_E)E_{k,h+1}.
\end{align*}
The right-hand side is finite and independent of $x$. Taking the
supremum therefore gives
\begin{equation}
  E_{k,h}\leq\delta b_{E,k}+(1+\delta L_E)E_{k,h+1}<\infty,
  \label{eq:E-recursion}
\end{equation}
closing the induction.
Iterating \eqref{eq:E-recursion} from \(E_{k,H}=0\) gives
\begin{equation}
  E_{k,h}
  \le
  \delta b_{E,k}
  \sum_{j=0}^{H-h-1}(1+\delta L_E)^j
  \le
  b_{E,k}\Phi_{T-t_h}(L_E)
  \le b_{E,k}\Phi_T(L_E).
  \label{eq:value-gradient-transfer}
\end{equation}
The geometric-sum bound also holds at \(L_E=0\) by definition of
\(\Phi\). Substitution into \eqref{eq:action-error-bound} gives
\begin{equation}
  \sup_{x\in\R^n}
  \norm{\nabla_a\Delta Q_{k,h}\bigl(x,\pi_{k+1,h}(x)\bigr)}
  \leq\delta\left\{d_{a,k}
    +L_{\mu,a}\Phi_{T-t_{h+1}}(L_E)b_{E,k}\right\}
  \leq\delta c_k.
  \label{eq:action-gradient-transfer}
\end{equation}

\underline{\emph{Step 2: one-episode loss and cumulative regret.}}
Conditional on $\mathcal G_k$, each squared action-gradient error is
Borel measurable and bounded by $\delta^2c_k^2$ by
\eqref{eq:action-gradient-transfer}, hence integrable under the fresh
episode law. Thus \cref{cor:quadratic-performance-gradient-bound}
applies and, using \(H\delta=T\), gives
\begin{equation*}
  J_\delta^\star-J_\delta(\pi_{k+1})
  \leq\frac{H\delta^2c_k^2}{2\delta\bar m}
  =\frac{T}{2\bar m}c_k^2.
\end{equation*}
The factor \(\delta^2\) therefore cancels both the factor
\(1/\delta\) in the quadratic advantage bound and the stage count
\(H=T/\delta\). Summing over updates and bounding the initial episode
by \(2B_J\) gives, for arbitrary finite radii,
\begin{equation}
  \Reg(K,\delta)
  \leq2B_J+\frac{T}{2\bar m}\sum_{k=1}^{K-1}c_k^2.
  \label{eq:arbitrary-radius-cumulative-regret}
\end{equation}

\underline{\emph{Step 3: root-$k$ rate.}}
By \eqref{eq:root-k-residual-rate},
\(b_{E,k}\leq(c_x+\kappa c_a)/\sqrt{k}\), hence \(c_k\leq C/\sqrt{k}\).
Substituting into \eqref{eq:arbitrary-radius-cumulative-regret} and
using \(\sum_{k=1}^{K-1}k^{-1}\leq1+\log(K-1)\) proves
\eqref{eq:direct-logarithmic-regret}.
\end{proof}

\begin{remark}[Scope of the benchmark inequalities]
\label{rem:value-gradient-stability-relaxation}
On the full action set, the quadratic-advantage inequality implies that any action with $d_h(x,a)=0$ is optimal. The proof of \cref{thm:residual-regret-transfer}, however, uses both inequalities in \cref{ass:benchmark-error-bounds} only at the fitted greedy actions. \Cref{sec:localized-algorithm} exploits this by establishing bin entry together with backward transfer, so each selected action lies where the inequalities hold.
\end{remark}

\begin{remark}[Dependence on physical time]
The factor in \eqref{eq:value-gradient-transfer} depends on the remaining physical time \(T-t_h\), not on the number of remaining stages, so the recursion introduces no polynomial factor in \(H\) when \(L_{\mu,x},L_{\mu,a},\kappa,d_{x,k},d_{a,k}\) are uniform in \(\delta\). For $L_E>0$, $\Phi_T(L_E)=\mathcal{O}(e^{L_E T})$, so the regret bound is $\mathcal{O}(T e^{2L_E T}\log K)$, which is discretization-uniform for fixed $T$. It extends to nonuniform grids with steps $\Delta_h\leq\delta$ (\cref{app:nonuniform-time-grids}). By \cref{prop:physical-time-lower-bound}, the exponential rate cannot be improved under the hypotheses of \cref{thm:residual-regret-transfer}.
\end{remark}

\Cref{prop:physical-time-lower-bound} shows that exponential growth of the large-$K$ logarithmic-regret coefficient is unavoidable under these general transfer conditions.

\begin{proposition}[Sharpness of the residual-to-regret transfer]
\label{prop:physical-time-lower-bound}
Fix $\lambda,m,c_x>0$. There exist a one-dimensional model with
structural constants independent of $T$ and $\delta$, and fitted critics
satisfying the hypotheses of \cref{thm:residual-regret-transfer} with
$L_E=\lambda$, $\bar m=m$, $\kappa=0$, $d_{x,k}=c_x/\sqrt{k}$, and $d_{a,k}=0$,
whose regret satisfies
\begin{equation}
  \lim_{\delta\downarrow0}\lim_{K\to\infty}
  \frac{\Reg(K,\delta)}{c_x^2\log K}
  =\frac1{2m}\int_0^T\Phi_t(\lambda)^2\,dt
  \sim\frac{e^{2\lambda T}}{4m\lambda^3}
  \quad(T\to\infty),
  \label{eq:physical-time-continuum-lower-bound}
\end{equation}
where the timestep limit is along admissible refinements at fixed $T$.
\end{proposition}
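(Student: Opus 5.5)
The plan is to build a linear–quadratic example in which every inequality in the proof of \cref{thm:residual-regret-transfer} holds with equality. The state error is injected as a linear residual, the value-gradient error grows geometrically through the drift, and the quadratic advantage is exact.

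\emph{Model.} Take $n=d=1$ and $\mu^\star(x,a)=\lambda x+a$, so that $L_{\mu,x}=\lambda$ and $L_{\mu,a}=1$. Take $\Sigma^\star=1$, $r(x,a)=-\tfrac m2 a^2$, $u\equiv0$ and $\mathcal A=[-R,R]$. For \cref{ass:basic}, use $W(x)=\sqrt{1+x^2}$ with $c_W$ depending only on $\lambda,R$ (this is the same type of check as \cref{lem:parametric-growth-moment}). Since $u=0$ and $r\le0$ with $r(\cdot,0)=0$, we get $V_h^\star\equiv0$ and $Q_h^\star(x,a)=-\delta\tfrac m2 a^2$. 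Hence $Q_h^\star(x,\cdot)$ is exactly $\delta m$-strongly concave and $\nabla_xQ_h^\star\equiv0$. \Cref{prop:strong-concavity-sufficient} then gives \cref{ass:benchmark-error-bounds} with $\bar m=m$ and $\kappa=0$, so $L_E=\lambda$.

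\emph{Fitted critics.} Set $\varepsilon_k:=c_x/\sqrt k$ and define the critics backward by
\[
\widehat Q_{k,h}(x,a):=\delta r(x,a)+(\mathcal P^\star\widehat V_{k,h+1})(x,a)+\delta\varepsilon_k x,\qquad \widehat V_{k,H}=u=0.
\]
This is equivalent to prescribing $e_{k,h}(x,a)=-\delta\varepsilon_k x$, so $\nabla_xe_{k,h}=-\delta\varepsilon_k$ and $\nabla_ae_{k,h}=0$. Thus \cref{def:true-residual-certificate} holds with $d_{x,k}=c_x/\sqrt k$ and $d_{a,k}=0$.

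By induction, $\widehat V_{k,h}(x)=g_h x+\beta_h$ is affine. Using the Gaussian mean $x+\delta(\lambda x+a)$ we get
\[
\widehat Q_{k,h}(x,a)=-\delta\tfrac m2a^2+\delta g_{h+1}a+\bigl((1+\delta\lambda)g_{h+1}+\delta\varepsilon_k\bigr)x+\beta_{h+1}.
\]
The greedy action is $\widehat a=g_{h+1}/m$, which is constant in $x$. The slope recursion is $g_h=(1+\delta\lambda)g_{h+1}+\delta\varepsilon_k$, so
\[
g_h=\varepsilon_k G_h,\qquad G_h:=\delta\sum_{j=0}^{H-h-1}(1+\delta\lambda)^j\le\Phi_{T-t_h}(\lambda).
\]
Choose $R>c_x\Phi_T(\lambda)/m$ so that $\widehat a$ is interior. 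The action set enters none of the structural constants $\lambda,m,\kappa,c_x$, so this choice is allowed. The envelopes are affine, hence $C^1$, so \cref{def:fitted-envelope-interface} holds. Both $\widehat Q_{k,h}$ and $e_{k,h}$ are polynomial and thus smooth on $\mathbb R\times\mathcal A^{\mathrm o}$. All hypotheses of \cref{thm:residual-regret-transfer} are therefore satisfied.

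\emph{Exact regret.} By \cref{lem:optimal-advantage-pdi}, with the stage loss $V_h^\star-Q_h^\star(x,\widehat a)=\delta\tfrac m2\widehat a^2=\delta g_{h+1}^2/(2m)$ deterministic, we get
\[
J_\delta^\star-J_\delta(\pi_{k+1})=\frac{\varepsilon_k^2}{2m}S_\delta,\qquad S_\delta:=\sum_{h=0}^{H-1}\delta G_{h+1}^2.
\]
Fix $\pi_1$ to be the zero policy, say, so that the first episode contributes a finite amount. Then
\[
\Reg(K,\delta)=O(1)+\frac{c_x^2S_\delta}{2m}\sum_{k=1}^{K-1}\frac1k.
\]
Dividing by $c_x^2\log K$ and letting $K\to\infty$ gives $S_\delta/(2m)$.

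\emph{Timestep limit and asymptotics.} It remains to show $S_\delta\to\int_0^T\Phi_t(\lambda)^2\,dt$ as $\delta\downarrow0$ along admissible grids. This is the main care point, but it is routine. We have $(1+\delta\lambda)^j=e^{\lambda j\delta}(1+O(\delta))$ uniformly for $j\delta\le T$, so $G_{h+1}\to\Phi_{T-t_{h+1}}(\lambda)$ uniformly in $h$. The sum $S_\delta$ is then a Riemann sum of the continuous function $t\mapsto\Phi_{T-t}(\lambda)^2$, and the substitution $t\mapsto T-t$ gives the stated integral. Finally, $\Phi_t(\lambda)^2=(e^{\lambda t}-1)^2/\lambda^2$, so
\[
\int_0^T\Phi_t^2\,dt=\frac{e^{2\lambda T}}{2\lambda^3}\bigl(1+o(1)\bigr)\quad(T\to\infty),
\]
and dividing by $2m$ yields $e^{2\lambda T}/(4m\lambda^3)$.
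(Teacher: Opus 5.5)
\textbf{Verdict.} Your construction is essentially the paper's, and the computations are correct: the same linear model, the critic perturbation $\delta\varepsilon_k x$, the affine envelopes with slope recursion $g_h=(1+\delta\lambda)g_{h+1}+\delta\varepsilon_k$, the exact quadratic stage loss, and the Riemann-sum limit. There is one genuine gap, in the choice of the action set.

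\textbf{The gap.} You take $\mathcal A=[-R,R]$ with $R>c_x\Phi_T(\lambda)/m$, so that the greedy action is always interior. The proposition, however, requires a model whose structural constants are independent of $T$. Your justification checks only $\lambda,m,\kappa,c_x$, but with your choice the model itself changes with $T$. So do the constants of \cref{ass:basic}: $B_r=mR^2/2$ grows like $e^{2\lambda T}$, and with your weight $W=\sqrt{1+x^2}$ you need $c_W\approx\lambda+R+\tfrac12$, so $c_W$ and $B_J$ blow up as well.

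This is not cosmetic. The sharpness claim says that, for a fixed model, the $\log K$ coefficient is forced to grow like $e^{2\lambda T}$ by the dynamics. With $\diam(\mathcal A)\asymp e^{\lambda T}$, your example is also consistent with a bound whose $\log K$ coefficient has no explicit exponential in $T$ but scales with $\diam(\mathcal A)^2$ or with $B_r$. So it does not establish what the proposition asserts.

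\textbf{The fix (the paper's route).} Keep $\mathcal A=[-1,1]$, and take $W\equiv1$, $c_W=0$, $B_r=m/2$, with for instance $X_0=0$. The bounded weight suffices because the rewards are bounded on a compact action set, so no growth weight is needed. Note also that \cref{lem:parametric-growth-moment} does not literally apply to the unbounded drift $\lambda x$ anyway.

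\begin{itemize}
\item Clipping does not affect the envelope. The term $\max_{a\in[-1,1]}\{\delta g_{h+1}a-\tfrac{\delta m}{2}a^2\}$ is independent of $x$, so the envelope is still affine and the slope recursion is unchanged.
\item The greedy action becomes $\min\{\varepsilon_kG_{h+1}/m,1\}$, and the stage loss becomes $\tfrac{\delta m}{2}\min\{\varepsilon_k^2G_{h+1}^2/m^2,1\}$.
\item Since $\varepsilon_k\to0$, for $k\ge k_0:=\max\{1,\lceil (c_xG_1/m)^2\rceil\}$ no clipping occurs. The episode loss is then exactly $\varepsilon_k^2S_\delta/(2m)$.
\item The finitely many earlier episodes each cost at most $mT/2$. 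They disappear after dividing by $c_x^2\log K$ and sending $K\to\infty$.
\end{itemize}

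The threshold $k_0$ may be exponentially large in $T$, but this is harmless because the $K$-limit is taken before $\delta\downarrow0$. The rest of your argument, namely the Riemann sum and the asymptotics of $\int_0^T\Phi_t(\lambda)^2\,dt$, goes through unchanged.
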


\begin{proof}
See \cref{app:physical-time-lower-bound-proof}.
\end{proof}

The rate is attained with $\kappa=0$. 
This is a sharpness result for
bounds derived from
the general residual-to-regret transfer, though not a
minimax lower bound over learning algorithms. Its unbounded-drift example used in the proof lies outside the localized
model class in \cref{sec:localized-algorithm}.

\section{Localized learning: algorithm and main result}
\label{sec:localized-algorithm}
\Cref{sec:model-global-regret} identifies sufficient benchmark
properties and residual accuracy for fast regret.  We now localize the mechanism for a concrete learning algorithm.
With several separated optimal branches, the benchmark inequalities
may hold only inside unknown bins.  The learner still maximizes over
the full action set, so the proof must establish that its fitted
greedy actions lie in regions where the inequalities apply.

The two-scale residual control in
\cref{def:two-scale-true-residual-certificate} addresses this requirement.
A coarse action-residual bound establishes bin
entry jointly with backward transfer.  Sharper residual bounds inside
the bins then give local gradient control and quadratic policy loss.
The local geometry also bounds distance to the optimal-action set,
which supports the coverage and self-exploration arguments across epochs.
The algorithm does not use the unknown bins; they enter only in the
analysis.
\Cref{tab:localized-notation} in \cref{app:localized-notation} collects
the notation and constants used here and in the proof.

\subsection{Model assumptions and setup}
\label{subsec:critic-and-approximation}

This subsection states the assumptions together with the notation
needed.
\cref{ass:localized-parametric-model,ass:localized-optimal-branches,ass:exploration-self-excitation} 
restrict the environment; \cref{ass:envelope-stable-critic-class,ass:predictable-critic-design,ass:localized-MC-budget} 
are choices the learner controls; \cref{ass:projected-two-scale-Bellman-approximation} links the two. 
Basic structural consequences are stated alongside these assumptions; verification of
fitted-output regularity is given in \cref{subsec:main-criteria-verification}.

\subsubsection{Experiment, epochs, and the parametric model.}
Fix a target number of episodes $K\geq2$ and one admissible timestep $0<\delta\leq\delta_{\max}$ throughout the localized construction.  All probabilities refer to this fixed-$\delta$ 
experiment; uniformity in $\delta$ refers only to deterministic constants.

Let $n_0\geq1$ be the planned
number of initial exploration episodes. If
$K\leq n_0$, the learner explores for all $K$ episodes and the trivial
bound $\Reg(K,\delta)\leq2B_JK$ applies.  Henceforth consider the nontrivial
case $1\leq n_0<K$.
After the exploration episodes, the learner uses the doubling schedule
\begin{equation}
  n_j=2^j n_0,\qquad j=0,1,\ldots,
  \label{eq:doubling-schedule}
\end{equation}
with epochs indexed by $j=0,1,\ldots,J_K$, where $J_K$ is the last
index for which $n_j<K$.  At the beginning of epoch $j$, the learner
has observed $n_j$ complete episodes, constructs a policy $\pi^{(j)}$,
and deploys it for the next $n_j$ episodes, truncating the final epoch
at episode $K$ if necessary.  Hence
\begin{equation}
  J_K+1=\left\lceil\log_2(K/n_0)\right\rceil
  \leq \bar J_K,
  \qquad
  \bar J_K:=1+\lceil\log_2K\rceil.
  \label{eq:epoch-count-bound}
\end{equation}
The deterministic bound $\bar J_K$ is used in confidence allocations,
avoiding circular dependence between $J_K$ and the number of exploration episodes.

\begin{assumption}[Parametric Euler dynamics and reward access]
\label{ass:localized-parametric-model}
There is a known feature map
$\Phi_\mu:\mathbb R^n\times\mathcal A\to\mathbb R^{n\times p}$ and an unknown
parameter $\theta^\star\in\mathbb R^p$ such that
\begin{equation*}
  \mu^\star(x,a)=\Phi_\mu(x,a)\theta^\star,
  \qquad
  \|\theta^\star\|\leq B_\theta.
\end{equation*}
The parameter bound $B_\theta$ is known.
The map $\Phi_\mu$ has a $C^1$ extension to
$\mathbb R^n\times\mathcal A^{\mathrm o}$ and satisfies
\begin{equation*}
  \|\Phi_\mu\|_\infty\leq B_{\Phi_\mu},
  \qquad
  \|\partial_x\Phi_\mu\|_\infty\leq L_{\Phi_\mu,x},
  \qquad
  \|\partial_a\Phi_\mu\|_\infty\leq L_{\Phi_\mu,a}.
\end{equation*}
The learner knows constants
$0<\lambda_0\leq\Lambda_0$, independent of the timestep, episode budget,
and confidence levels, such that $\lambda_0 I_n\preceq\Sigma^\star\preceq\Lambda_0 I_n$.

The stage reward $r$ and terminal reward $u$ are known to the learner.  The stage reward
$r$ has a $C^1$ extension to
$\mathbb R^n\times\mathcal A^{\mathrm o}$ with bounded first derivatives,
and the terminal reward $u \in C^{1,1}(\mathbb R^n)$ with bounded
first derivative.  
\end{assumption}

A linear-in-parameter drift with known features and an unknown bounded parameter is standard in continuous-time episodic RL \citep{basei2022logarithmic}, \citep[(H.3(1))]{guo2021linearconvex}. In discrete time the
counterpart is the linear MDP.

\begin{remark}[Regularity and a sufficient moment condition]
\label{lem:parametric-growth-moment}
Suppose \cref{ass:localized-parametric-model}
holds, then \cref{ass:true-drift} holds with
$L_{\mu,s}=B_\theta L_{\Phi_\mu,s}$, $s\in\{x,a\}$, and $\|\mu^\star\|_\infty\leq B_{\Phi_\mu}B_\theta$.
If, in addition, $\int\|x\|\,p_0(dx)<\infty$, then \cref{ass:basic}
holds with
\[
  W(x)=\sqrt{1+\|x\|^2},
  \qquad c_W=B_{\Phi_\mu}B_\theta+\frac{n\Lambda_0}{2},
\]
and the following time-homogeneous constants:
\(
  B_r=\sup_{a\in\mathcal A}|r(0,a)|+\|\nabla_xr\|_\infty,
B_u=|u(0)|+\|\nabla u\|_\infty.
\)
See proof in \cref{app:parametric-growth-moment-proof}.
\end{remark}

\begin{remark}[Stage-dependent extension]
\label{rem:stage-dependent-features}
All results extend directly to known stage-dependent features, 
with each feature map satisfying the same level and derivative bounds
uniformly in $h$.
\end{remark}

\subsubsection{Optimal-action sets and local bin geometry.}
For every $h<H$ and $x\in\mathbb R^n$, define the full optimal action set
\begin{equation*}
  \mathcal A_h^\star(x)
  :=
  \argmax_{a\in\mathcal A}Q_h^\star(x,a).
\end{equation*}

\begin{assumption}[Separated bins with local strong concavity]
\label{ass:localized-optimal-branches}
There exist constants
$M_\star<\infty$, $\bar m>0$, $\bar L_{xa}<\infty$,
$\gamma>0$, independent of $h$ and $\delta$, with the
following properties.  For every $(h,x)$, there are at most $M_\star$
closed convex bins
\(
  B_{h,\ell}^\star(x)\subseteq\mathcal A,\,
  \ell\in\mathcal I_h(x),
\)
whose graphs are Borel measurable.  Define
\begin{equation*}
  \mathcal U_h^\star(x)
  :=
  \bigcup_{\ell\in\mathcal I_h(x)}B_{h,\ell}^\star(x).
\end{equation*}
Then:
\begin{enumerate}[label=(\roman*),leftmargin=2.2em]
  \item Every optimal action belongs to the union, and every bin contains a
  unique globally optimal branch point:
  \begin{equation*}
    \mathcal A_h^\star(x)\subseteq\mathcal U_h^\star(x),
    \qquad
    \mathcal A_h^\star(x)\cap B_{h,\ell}^\star(x)
    =\{a_{h,\ell}^\star(x)\}.
  \end{equation*}

  \item On each bin, the benchmark is strongly concave and its state
  gradient is Lipschitz in the action: for all
  $a,b\in B_{h,\ell}^\star(x)$,
  \begin{align}
    \left\langle
      \nabla_aQ_h^\star(x,a)-\nabla_aQ_h^\star(x,b),
      a-b
    \right\rangle
    &\leq-\delta\bar m\|a-b\|^2,
    \notag\\
    \|\nabla_xQ_h^\star(x,a)-\nabla_xQ_h^\star(x,b)\|
    &\leq\delta\bar L_{xa}\|a-b\|.
    \label{eq:branch-mixed-gradient}
  \end{align}

  \item There is a normalized margin outside the union of all bins:
  \begin{equation}
    V_h^\star(x)
    -
    \sup_{a\in\mathcal A\setminus\mathcal U_h^\star(x)}
      Q_h^\star(x,a)
    \geq\delta\gamma,
    \label{eq:union-entry-margin}
  \end{equation}
  with the condition understood as vacuous when
  $\mathcal U_h^\star(x)=\mathcal A$.

\end{enumerate}
\end{assumption}

Part (ii) localizes the standard global action concavity of the linear-convex 
literature, which is the case $M_\star=1$ \citep[(H.1(2))]{szpruch2021exploration}, 
\citep[(H.1(3))]{guo2021linearconvex}; with $M_\star>1$, optimal actions may be nonunique. 
Part (iii) is a gap condition in the spirit of \citet[Assumption~3.3]{he2021logarithmic}, but weaker: 
it is required only outside the union of bins, and its $\delta$ normalization keeps it from 
degenerating as $\delta\downarrow0$.

Assumption \ref{ass:localized-optimal-branches} provides local sufficient 
conditions for the benchmark inequalities in \cref{ass:global-benchmark-geometry}. 
Together with the regularity established in Lemma \cref{lem:optimal-envelope-regularity}, 
it gives both inequalities on the union of optimal bins, with \(\kappa=\bar L_{xa}/\bar m\). 
Theorem \cref{thm:backward-union-entry} establishes that the fitted greedy actions enter this union.

\begin{lemma}[Optimal-envelope regularity from local geometry]
\label{lem:optimal-envelope-regularity}
Under
\cref{ass:localized-parametric-model,ass:localized-optimal-branches},
the following conclusions hold.
For each $h<H$ the optimal critic $Q_h^\star \in
C^1(\mathbb R^n\times\mathcal A^{\mathrm o})$, and its optimal action set
$\mathcal A_h^\star(x)$ is nonempty and compact, with a closed graph as a
correspondence in $x$.  Moreover,
$V_h^\star\in C^1(\mathbb R^n)$ and
\begin{equation}
  \nabla V_h^\star(x)
  =
  \nabla_xQ_h^\star(x,a)
  =
  \nabla_xQ_h^\star(x,a'),
  \qquad
  a,a'\in\mathcal A_h^\star(x).
  \label{eq:optimal-active-gradient-coherence}
\end{equation}
The common active gradient is continuous in $x$ and bounded uniformly
in $h$ and $\delta$.
\end{lemma}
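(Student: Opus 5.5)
The plan is a backward induction on $h$ from $V_H^\star=u$. We carry along the hypothesis that $V_{h+1}^\star\in C^1(\mathbb R^n)$ with $\nabla V_{h+1}^\star$ bounded and continuous, uniformly in $h$ and $\delta$. At $h+1=H$ this holds because $u\in C^{1,1}$ with bounded gradient (\cref{ass:localized-parametric-model}).

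\emph{Regularity of the critic.} Given the hypothesis at stage $h+1$, $V_{h+1}^\star$ has linear growth. It is therefore integrable under every Gaussian Euler kernel, and \cref{lem:kernel-differentiation} applies with $v=V_{h+1}^\star$; its drift hypothesis holds via \cref{lem:parametric-growth-moment}. Together with the $C^1$ extension of $r$, this gives $Q_h^\star=\delta r+\mathcal P^\star V_{h+1}^\star\in C^1(\mathbb R^n\times\mathcal A^{\mathrm o})$. The lemma also gives explicit gradients, which are continuous in $(x,a)$ by dominated convergence under the Gaussian with bounded $\nabla V_{h+1}^\star$. Moreover
\[
\|\nabla_x Q_h^\star\|\le \delta\|\nabla_x r\|_\infty+(1+\delta L_{\mu,x})\|\nabla V_{h+1}^\star\|_\infty .
\]
Since $\mathcal A$ is compact and $Q_h^\star$ is continuous, $\mathcal A_h^\star(x)$ is nonempty and compact. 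Berge's maximum theorem gives continuity of $V_h^\star$ and a closed graph (upper hemicontinuity) for $\mathcal A_h^\star$.

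\emph{Gradient coherence and differentiability of the envelope.} Take $a,a'\in\mathcal A_h^\star(x)$. By \cref{ass:localized-optimal-branches}(i), each lies in some bin, and each bin contains exactly one optimal point. If $a$ and $a'$ lie in the same bin, they coincide. For distinct branch points, I would use a Danskin-type argument: $V_h^\star(y)\ge Q_h^\star(y,a)$ for all $y$, with equality at $y=x$. Hence, for every direction $w$,
\[
\liminf_{t\downarrow0}\frac{V_h^\star(x+tw)-V_h^\star(x)}{t}\ge\langle\nabla_xQ_h^\star(x,a),w\rangle ,
\]
and the same holds with $a'$ in place of $a$. By Danskin's theorem (continuity of $\nabla_x Q_h^\star$ and compactness of $\mathcal A$), the directional derivative exists and equals $\max_{a\in\mathcal A_h^\star(x)}\langle\nabla_xQ_h^\star(x,a),w\rangle$. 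Differentiability of $V_h^\star$ is therefore equivalent to the set $\{\nabla_xQ_h^\star(x,a):a\in\mathcal A_h^\star(x)\}$ being a singleton.

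\emph{Main obstacle.} The hard step is proving this coherence across distinct branches. The only inputs available are the local strong concavity with mixed-gradient control in part (ii) and the margin in part (iii). My first attempt is an implicit-function argument inside each bin: strong concavity on the bin makes the local branch maximizer $a_\ell(y)$ continuous near $x$, so $V_h^\star(y)=\max_\ell Q_h^\star(y,a_\ell(y))$ locally. The margin in part (iii) keeps non-bin actions out, and the closed-graph property prevents new branches from appearing near $x$. This alone does not force the branch gradients to agree, since it only yields a finite maximum of $C^1$ functions. So I expect to need the structure of the Euler critic. Writing $\nabla_xQ_h^\star(x,a)=\delta\nabla_xr+(I+\delta\partial_x\mu^\star)^\top\mathcal P^\star[\nabla V_{h+1}^\star](x,a)$ and using the bound $\delta\bar L_{xa}\|a-a'\|$ only within a bin will not close the gap either. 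The fallback is to read the lemma's assertion as the content that \cref{ass:localized-optimal-branches} is intended to encode. Concretely, I would show that the set of $x$ with more than one branch active is where the active branch values cross, and argue, via the Danskin formula and semiconvexity of a finite max, that wherever the maximum of distinct $C^1$ branch values fails to be differentiable, the quadratic advantage inequality on bins is contradicted. I am uncertain whether this works, and I would check it first.

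Once coherence holds, Danskin's theorem gives that $V_h^\star$ is differentiable with $\nabla V_h^\star(x)=\nabla_xQ_h^\star(x,a)$ for any optimal $a$. Continuity of this gradient follows from upper hemicontinuity of $\mathcal A_h^\star$ and continuity of $\nabla_xQ_h^\star$. The uniform bound comes from iterating the gradient estimate above:
\[
\|\nabla V_h^\star\|_\infty\le e^{L_{\mu,x}T}\bigl(\|\nabla u\|_\infty+T\|\nabla_x r\|_\infty\bigr),
\]
which is independent of $h$ and $\delta$. This closes the induction.
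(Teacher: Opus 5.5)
\paragraph{Gap: coherence across branches.} Your steps around the crux are sound and essentially match the paper: the gradient/Lipschitz induction, \cref{lem:kernel-differentiation}, Berge, continuity of the gradient via the closed graph, and the $e^{L_{\mu,x}T}$ bound. The gap is the coherence step, which is the whole content of the lemma. You leave it unproved. The fallback you sketch, semiconvexity of a finite max of $C^1$ branch values contradicting the quadratic-advantage inequality, is not the mechanism.

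Your picture of persistent local branches whose values cross at $x$ is exactly what \cref{ass:localized-optimal-branches} excludes. Parts (i)--(ii) alone are compatible with such kinks. What excludes them is the margin (iii), combined with the requirement in (i) that every bin \emph{at $y$} contain a globally optimal point \emph{at $y$}. You note correctly that the closed graph prevents new optimal actions from appearing near $x$. What you need is the opposite direction: optimal actions at $x$ cannot disappear at nearby states (lower hemicontinuity).

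\paragraph{The missing argument.} Fix $a\in\mathcal A_h^\star(x)$. By continuity, $V_h^\star(y)-Q_h^\star(y,a)\to0$ as $y\to x$. So for $y$ near $x$ this gap is below $\delta\gamma$, and \eqref{eq:union-entry-margin} forces $a\in\mathcal U_h^\star(y)$. Let $b_y$ be the optimal branch point of a bin at $y$ that contains $a$. Strong concavity on that convex bin and first-order optimality of $b_y$ give $\tfrac{\delta\bar m}{2}\|b_y-a\|^2\le V_h^\star(y)-Q_h^\star(y,a)\to0$, so $b_y\to a$. Optimality at $x$ and at $y$ then gives the sandwich
\[
  Q_h^\star(y,a)-Q_h^\star(x,a)\le V_h^\star(y)-V_h^\star(x)\le Q_h^\star(y,b_y)-Q_h^\star(x,b_y).
\]
By joint $C^1$ regularity and $b_y\to a$, both ends equal $\langle\nabla_xQ_h^\star(x,a),y-x\rangle+o(\|y-x\|)$. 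Hence $V_h^\star$ is Fr\'echet differentiable at $x$ with gradient $\nabla_xQ_h^\star(x,a)$ for every optimal $a$. This is \eqref{eq:optimal-active-gradient-coherence}, and Danskin's theorem is not needed for this step.
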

\begin{proof}
  The proof is given in \cref{app:optimal-envelope-regularity}.
\end{proof}

The uniform positive margin in \eqref{eq:union-entry-margin}, together
with concavity inside each bin, ensures that every optimal action at a
state is approached by optimal actions at nearby states.  This forces the active state gradients to agree.  The lemma gives $C^1$
regularity for each $(h,\delta)$ and a uniform gradient bound.

\subsubsection{Coverage and self-exploration}
For a deterministic Markov policy $\pi$, define its per-episode population
information matrix
\begin{equation*}
  \Gamma(\pi)
  :=
  \E_\pi\left[
    \frac1H\sum_{h=0}^{H-1}
    \Phi_\mu(X_h,\pi_h(X_h))^\top
    \Phi_\mu(X_h,\pi_h(X_h))
  \right].
\end{equation*}
For analysis, define the policy neighborhood of the optimal-action sets for a $\varrho>0$:
\begin{equation*}
  \Pi_{\rm loc}(\varrho)
  :=
  \left\{
    \pi:
    \sup_{h<H}\sup_{x\in\mathbb R^n}
    \dist(\pi_h(x),\mathcal A_h^\star(x))
    \leq \varrho
  \right\}.
\end{equation*}
The learner need not know these sets or any policy in this class. We prove its 
fitted greedy policies belong to such neighborhood after the initial exploration phase.

\begin{assumption}[Coverage and self-exploration of optimal policies]
\label{ass:exploration-self-excitation}
There are a known Borel exploration policy $\pi^{\rm exp}$ and known constants
$\lambda_{\rm exp},\lambda_{\rm loc}>0$, uniform over the admissible
timesteps, such that
\begin{equation}
  \Gamma(\pi^{\rm exp})\succeq2\lambda_{\rm exp}I_p,
  \label{eq:exploration-population-coverage}
\end{equation}
and every Borel optimal policy $\pi^\star$ satisfies
\begin{equation}
  \Gamma(\pi^\star)\succeq4\lambda_{\rm loc}I_p.
  \label{eq:optimal-policy-self-excitation}
\end{equation}
Here an optimal policy is a deterministic Markov policy with
$\pi_h^\star(x)\in\mathcal A_h^\star(x)$ for every $(h,x)$.
Set $\lambda_\mu=\min\{\lambda_{\rm exp},\lambda_{\rm loc}\}$. 
\end{assumption}

The two conditions have distinct roles. Coverage \eqref{eq:exploration-population-coverage}
asks for a known policy whose trajectories are informative about 
the unknown drift parameter; it imposes no requirement that this policy 
be close to optimal, either in its actions or its return. By contrast, 
self-exploration \eqref{eq:optimal-policy-self-excitation} of optimal policies is a structural property of the true environment. 
Similar coverage and self-exploration conditions appear in \citet[(H.3, Theorem 3.8)]{szpruch2021exploration}, 
\citet[(H.4)]{guo2021linearconvex}, and \citet[(H.1(2))]{basei2022logarithmic}.

We do \emph{not} assume self-exploration of learned policies. 
Instead, they are derived once entry has been established, from the following extension to a neighborhood
of the optimal-action sets.

\begin{lemma}[Neighborhood self-exploration from optimal policies]
\label{lem:optimal-policy-neighborhood-excitation}
Suppose
\cref{ass:localized-parametric-model,ass:localized-optimal-branches,ass:exploration-self-excitation}
hold. There is a known radius $\varrho_{\rm cov}>0$,
independent of $\delta$, such that, uniformly over admissible timesteps,
\begin{equation}
  \Gamma(\pi)\succeq2\lambda_{\rm loc}I_p,
  \qquad \pi\in\Pi_{\rm loc}(\varrho_{\rm cov}).
  \label{eq:local-self-excitation}
\end{equation}
\end{lemma}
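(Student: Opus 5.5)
The plan is a perturbation argument: compare an arbitrary $\pi\in\Pi_{\rm loc}(\varrho)$ with a Borel optimal policy $\pi^\star$ obtained by projecting $\pi$ onto the optimal-action sets. Then bound $\Gamma(\pi)-\Gamma(\pi^\star)$ in operator norm by a multiple of $\varrho$ that does not depend on $\delta$. Choosing $\varrho_{\rm cov}$ so that this multiple is at most $2\lambda_{\rm loc}$ and applying \eqref{eq:optimal-policy-self-excitation} gives $\Gamma(\pi)\succeq 4\lambda_{\rm loc}I_p-2\lambda_{\rm loc}I_p$.

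\emph{Step 1 (measurable selection).} By \cref{lem:optimal-envelope-regularity}, $\mathcal A_h^\star(x)$ is nonempty and compact, and its graph is closed. Since $\pi_h$ is Borel, the set $\{a\in\mathcal A_h^\star(x):\|a-\pi_h(x)\|=\dist(\pi_h(x),\mathcal A_h^\star(x))\}$ is a nonempty compact-valued correspondence with Borel graph. The Kuratowski--Ryll-Nardzewski theorem (or a measurable-maximum argument) then yields a Borel selector $\pi_h^\star$ with $\|\pi_h(x)-\pi^\star_h(x)\|\le\varrho$ for all $x$. Hence $\pi^\star$ is a Borel optimal policy and $\Gamma(\pi^\star)\succeq4\lambda_{\rm loc}I_p$.

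\emph{Step 2 (pathwise coupling).} Run both policies on the same $X_0$ and the same noises $\xi_{h}$. Write $F=\Phi_\mu^\top\Phi_\mu$. Then $\|\partial_xF\|\le 2B_{\Phi_\mu}L_{\Phi_\mu,x}$ and $\|\partial_aF\|\le 2B_{\Phi_\mu}L_{\Phi_\mu,a}$. The state difference $D_h=X_h-X_h^\star$ satisfies $D_0=0$ and
\[
\|D_{h+1}\|\le(1+\delta L_{\mu,x})\|D_h\|+\delta\bigl\|\mu^\star(X_h^\star,\pi_h(X_h))-\mu^\star(X_h^\star,\pi^\star_h(X_h^\star))\bigr\|.
\]
This is where the main obstacle lies. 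The two policies are evaluated at different states, and $\pi^\star_h$ need not be continuous (branches can switch), so one cannot directly bound $\pi_h(X_h)-\pi^\star_h(X_h^\star)$ by $\varrho$. The drift term, however, is bounded crudely by $2\delta B_{\Phi_\mu}B_\theta$, which makes $\|D_h\|\lesssim t_h$ small only for short times. I would therefore avoid a pathwise coupling that requires continuity of the policies.

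\emph{Step 2$'$ (preferred: performance-difference for the information functional).} Treat $F_{ij}$ entrywise, or $v^\top Fv$ for unit $v$, as a bounded "reward" $\delta\cdot\frac{1}{T}v^\top F v$ with zero terminal reward. Then apply the telescoping argument of \cref{lem:optimal-advantage-pdi} with the value functions $G_h^v$ of the fixed policy $\pi^\star$ for this reward:
\[
v^\top(\Gamma(\pi)-\Gamma(\pi^\star))v=\sum_h\E_\pi\bigl[\tfrac{\delta}{T}\bigl(v^\top F(X_h,\pi_h)v-v^\top F(X_h,\pi^\star_h)v\bigr)+\bigl(\mathcal P^\star G^v_{h+1}\bigr)(X_h,\pi_h)-\bigl(\mathcal P^\star G^v_{h+1}\bigr)(X_h,\pi^\star_h)\bigr].
\]
The first term is at most $\frac{\delta}{T}\,2B_{\Phi_\mu}L_{\Phi_\mu,a}\varrho$ per stage. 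For the second, note that the two Gaussian kernels share covariance $\delta\Sigma^\star$ and have means differing by $\delta(\mu^\star(x,a)-\mu^\star(x,b))$, of size at most $\delta L_{\mu,a}\varrho$. This mean shift is small relative to the standard deviation $\sqrt{\delta\lambda_0}$. Since $\|G^v\|_\infty\le B_{\Phi_\mu}^2$, the total-variation bound $\|\mathcal N(m_1,\delta\Sigma)-\mathcal N(m_2,\delta\Sigma)\|_{TV}\le\|m_1-m_2\|/\sqrt{\delta\lambda_0}$ gives only $\sqrt\delta\,\varrho$ per stage, which sums to $T\varrho/\sqrt\delta$ and is not uniform. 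I will improve this by showing $G^v_{h+1}$ is Lipschitz in $x$ uniformly in $\delta$. The Gaussian smoothing argument does this: $\nabla_x\mathcal P^\star g$ is bounded by $\|g\|_\infty/\sqrt{\delta\lambda_0}$ only at one step, but the semigroup over remaining time $T-t_{h+1}$ smooths at scale $\sqrt{T-t_{h+1}}$. This gives $\|\nabla G^v_{h+1}\|\lesssim B_{\Phi_\mu}^2\bigl(1+(T-t_{h+1})^{-1/2}\bigr)$ with constants depending on $L_{\Phi_\mu,x}$ and $\lambda_0$. The second term is then at most $\delta L_{\mu,a}\varrho\,\|\nabla G^v_{h+1}\|$, and $\sum_h\delta(T-t_{h+1})^{-1/2}\lesssim\sqrt T+\sqrt\delta$ stays bounded uniformly in $\delta$. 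This uniform Lipschitz (Bismut/Gaussian-gradient) estimate for the Euler chain is the step I expect to be hardest. I would first try it by coupling with a reflection or shift of the Gaussian increments and absorbing the drift Jacobian via Grönwall.

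\emph{Step 3.} Combining the bounds gives $\|\Gamma(\pi)-\Gamma(\pi^\star)\|_{\op}\le C_\Gamma\varrho$, with $C_\Gamma$ depending only on $T,B_{\Phi_\mu},L_{\Phi_\mu,x},L_{\Phi_\mu,a},B_\theta,\lambda_0,\Lambda_0,n,\delta_{\max}$. Setting $\varrho_{\rm cov}:=2\lambda_{\rm loc}/C_\Gamma$ gives a known radius that is independent of $\delta$, which proves \eqref{eq:local-self-excitation}.
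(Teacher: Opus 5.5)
Your Step 1 and Step 3 match the paper, but Step 2$'$ has a genuine gap. The estimate you rely on, $\|\nabla G^v_{h+1}\|\lesssim B_{\Phi_\mu}^2\bigl(1+(T-t_{h+1})^{-1/2}\bigr)$, is false as stated. $G^v_{h+1}$ is the value function of the Borel policy $\pi^\star$, and the nearest-point selection $\pi^\star_{h+1}(x)\in\mathcal A^\star_{h+1}(x)$ can jump between branches. Consequently both the immediate term $\frac{\delta}{T}v^\top F(x,\pi^\star_{h+1}(x))v$ and the transition mean $x+\delta\mu^\star(x,\pi^\star_{h+1}(x))$ are discontinuous in $x$, so $G^v_{h+1}$ need not even be continuous. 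Your proposed repair, a shift or reflection coupling plus Gr\"onwall on the drift Jacobian, runs into exactly the obstacle you identified in Step 2. The closed-loop drift $x\mapsto\mu^\star(x,\pi^\star_h(x))$ is only bounded and measurable, so there is no Jacobian to absorb. What you would actually need is a uniform-in-$\delta$ smoothing estimate for Euler chains with merely bounded measurable drift, applied to $\mathcal P^\star G^v_{h+1}$ rather than to $G^v_{h+1}$. Such heat-kernel estimates exist but are far beyond the sketch, and they would not give an explicit, known radius.

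The missing idea is to aggregate the per-stage discrepancies quadratically rather than in absolute value. A per-stage total-variation bound costs $\|m_a-m_b\|/\sqrt{\delta\lambda_0}\sim\sqrt\delta\,\varrho$, which sums to $T\varrho/\sqrt\delta$. By contrast, the relative entropy between the two Gaussian transitions at the same state is at most $\delta L_{\mu,a}^2\varrho^2/(2\lambda_0)$, because it is quadratic in the mean shift. The chain rule then gives $D_{\rm KL}(\mathbb P_\pi\|\mathbb P_{\pi^\star})\le TL_{\mu,a}^2\varrho^2/(2\lambda_0)$ for the state-trajectory laws. Pinsker's inequality yields $\|\mathbb P_\pi-\mathbb P_{\pi^\star}\|_{\rm TV}\le\frac{L_{\mu,a}\varrho}{2}\sqrt{T/\lambda_0}$, uniformly in $\delta$. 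Now split $\Gamma(\pi)-\Gamma(\pi^\star)$ into two pieces:
\begin{itemize}
\item the action-replacement term under $\mathbb E_\pi$ at the same states, which your bound $2B_{\Phi_\mu}L_{\Phi_\mu,a}\varrho$ controls;
\item the difference, under the two trajectory laws, of the expectations of the bounded state functional $\frac1H\sum_hF(X_h,\pi^\star_h(X_h))$, which is at most $2B_{\Phi_\mu}^2$ times the total-variation distance.
\end{itemize}
This is the paper's route. It needs no regularity of $\pi$, $\pi^\star$, or any value function. It gives the explicit constant $L_\Gamma=2B_{\Phi_\mu}L_{\Phi_\mu,a}+B_{\Phi_\mu}^2L_{\mu,a}\sqrt{T/\lambda_0}$ and the radius $\varrho_{\rm cov}=2\lambda_{\rm loc}/L_\Gamma$.
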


\begin{proof}
The quantitative bound and an explicit choice of radius are
given in \cref{app:optimal-envelope-regularity}.
\end{proof}

In the remainder of the paper, fix a known positive radius $\varrho_{\rm cov}$
as in \cref{lem:optimal-policy-neighborhood-excitation}.

\subsubsection{Critic class and feasible envelopes}
Fix an admissible timestep $\delta > 0$. At epoch $j$ and
stage $h$, use the affine critic family with coefficient dimension
$q_j\in\mathbb N$:
\begin{equation}
  Q_{h,j,w}(x,a)
  :=q_{0,h,j}(x,a)+\Psi_{h,j}(x,a)^\top w,
  \qquad
  w\in\mathcal C_{h,j}\subset\mathbb R^{q_j}.
  \label{eq:epoch-critic-sieve}
\end{equation}
The underlying sieve level is chosen deterministically from $(n_j,\delta)$
before the Monte Carlo labels are generated. For each $\delta$, the level
is nondecreasing in $n_j$, and the coefficient dimension is nondecreasing
with the sieve level. At a fixed sieve level, the features $\Psi_{h,j}(x,a)$ do not scale
with $\delta$, while offsets $q_{0,h,j}(x,a)$ and feasible sets $C_{h,j}$ may.
This lets the critic carry an order-one value level and order-$\delta$ action differences, 
which is the separation of scales behind continuous-time $q$-learning \citep{jia2023qlearning}. 
For example, $q_{0,h,j}(x,a)=\delta r(x,a)$ builds in the known reward.

\begin{assumption}[Envelope-stable shape-preserving critic class]
\label{ass:envelope-stable-critic-class}
For every admissible timestep and every $(j,h)$, the set
$\mathcal C_{h,j}$ is nonempty, compact, and convex.
The offsets and features have $C^1$ extensions to
$\mathbb R^n\times\mathcal A^{\mathrm o}$. At each fixed sieve level,
the quantities
\begin{equation*}
  \sup_h\sup_{w\in\mathcal C_{h,j}}\|w\|,
  \qquad \sup_h\|\Psi_{h,j}\|_\infty,
  \qquad \sup_h\|\nabla_x q_{0,h,j}\|_\infty,\quad \sup_h\|\nabla_a q_{0,h,j}\|_\infty
\end{equation*}
have known finite upper bounds independent of the admissible timestep.
The feature derivatives satisfy
\begin{equation*}
  \sup_h\sup_{x,a}\|\nabla_s\Psi_{h,j}(x,a)\|_{\rm op}
  \leq L_{\Psi,s,j},
  \quad s\in\{x,a\}.
\end{equation*}
Here $L_{\Psi,s,j}$ is finite and known; it may depend on the sieve
level but is independent of the admissible timestep at a fixed level.

There are constants $G_0,M_0$, independent of the sieve level and the
admissible timestep, such that active state gradients are bounded and
Lipschitz across states: uniformly in $(j,h,w)$, for every $x,y$ and
any actions $a_x,a_y$ maximizing
$Q_{h,j,w}(x,\cdot)$ and $Q_{h,j,w}(y,\cdot)$, respectively,
\begin{equation}
  \|\nabla_xQ_{h,j,w}(x,a_x)\|\leq G_0,
  \qquad
  \|\nabla_xQ_{h,j,w}(x,a_x)
    -\nabla_xQ_{h,j,w}(y,a_y)\|
  \leq M_0\|x-y\|.
  \label{eq:fitted-active-gradient-Lipschitz}
\end{equation}
The terminal reward satisfies the same $C^{1,1}$ bounds 
$\|\nabla u\|_\infty\le G_0$ and $[\nabla u]_{\rm Lip}\le M_0$. 

\end{assumption}

The compact convex feasible set $\mathcal C_{h,j}$ with
uniform bounds is analogous to Definition 1.1 and Definition 3.1 of
\citet{szpruch2021exploration}: in both cases the
purpose is to keep the learned object in a set on which regularity
constants are uniform.
The fitted-output consequences of this class condition are proved in
\cref{subsec:main-criteria-verification}, where
\cref{prop:algorithm-output-interface}
verifies \cref{def:fitted-envelope-interface} for the algorithm.

\subsubsection{Synthetic design and projected Bellman approximation}
The design matrices below enter the approximation assumption. Simulation
budgets are discussed in \cref{subsec:estimation-scheme}.

\begin{assumption}[Predictable synthetic critic design]
\label{ass:predictable-critic-design}
For every $(h,j)$, the learner can choose design points
$(Z_{j,h,i})_{i\leq N_{j,h}^Q}$ in
$\mathbb R^n\times\mathcal A$, measurable with respect to the pre-simulation
history and fixed before the fresh stage-$h$ draws, with
$N_{j,h}^Q\geq q_j$, such that
\begin{equation}
  S_{j,h}^Q
  :=\sum_{i=1}^{N_{j,h}^Q}
    \Psi_{h,j}(Z_{j,h,i})\Psi_{h,j}(Z_{j,h,i})^\top
  \succeq c_{Q,j}N_{j,h}^QI_{q_j},
  \label{eq:deterministic-critic-gram}
\end{equation}
where $c_{Q,j}>0$ is known and uniform in $h$ and $\delta$, but may depend on the sieve level.  
\end{assumption}

The positive-definite Gram matrix in
\cref{ass:predictable-critic-design} makes the unpenalized least-squares
objective strictly convex, so its minimizer over the nonempty compact
convex feasible set is unique.  We therefore use constrained least
squares for the theoretical critic update. Conditioning on the entire design before sampling
the fresh labels gives a concentration bound normalized by
$S_{j,h}^Q$, as proved in \cref{app:MC-concentration-proof}.

Requiring the learner to place design points so that the
Gram matrix is well conditioned is a
\emph{synthetic} design principle. The points $Z_{j,h,i}$ are simulated, so \eqref{eq:deterministic-critic-gram} is something
the learner can enforce by construction at no regret cost, rather than a
hypothesis about the environment or the data-generating policy.

Using the covariance bounds fixed in \cref{ass:localized-parametric-model},
set
\[
  \mathcal S_0
  :=\{S=S^\top:\lambda_0I_n\preceq S\preceq\Lambda_0I_n\}.
\]
For each admissible timestep, an \emph{admissible kernel} belongs to the
Gaussian family
\begin{equation}
  P_{\theta,\Sigma}(\cdot\mid x,a)
  :=\mathcal N\!\left(x+\delta\Phi_\mu(x,a)\theta,\delta\Sigma\right),\qquad
  \|\theta\|\leq B_\theta,\qquad \Sigma\in\mathcal S_0.
  \label{eq:admissible-model-family}
\end{equation}
The approximation condition below is imposed over this entire family.
For a feasible continuation $v$, an admissible kernel $P$ with associated
operator $\mathcal P$, and a stage $h$, define
\begin{equation*}
  G_{h}^{P,v}(x,a)
  :=\delta r(x,a)+(\mathcal{P}v)(x,a).
\end{equation*}
Let $\mathcal V_{h+1,j}$ be the set of envelopes generated by the feasible
stage-$(h+1)$ epoch-$j$ critic class in \eqref{eq:epoch-critic-sieve}; at
$h=H-1$, take $\mathcal V_{H,j}=\{u\}$.

For each realization of the prescribed predictable design, define the
noiseless constrained least-squares projection by
\begin{equation}
  \bar w_{j,h}^{P,v}
  :=\argmin_{w\in\mathcal C_{h,j}}
    \sum_{i=1}^{N_{j,h}^Q}
    \left|G_h^{P,v}(Z_{j,h,i})-Q_{h,j,w}(Z_{j,h,i})\right|^2,
  \qquad \bar Q_{j,h}^{P,v}
  :=Q_{h,j,\bar w_{j,h}^{P,v}}.
  \label{eq:noiseless-constrained-projection}
\end{equation}
By \cref{ass:envelope-stable-critic-class,ass:predictable-critic-design}, 
the minimizer is unique, measurable, and fixed before the Monte Carlo labels.

The vector $\boldsymbol\rho$ used below is specified with
the computational budgets in \cref{subsec:estimation-scheme}.

\begin{assumption}[Two-scale Bellman approximation]
\label{ass:projected-two-scale-Bellman-approximation}
There are constants
\(
  b_{\rm entry},\allowbreak c_a^{\rm glob},\allowbreak
  c_x^{\rm loc},\allowbreak {c_a^{\rm loc}\geq0},
\)
independent of $(\delta,j,h,K,\boldsymbol\rho)$,
with the following property.  For every admissible $\delta$, every
epoch--stage pair $(j,h)$, every kernel $P$ in
\eqref{eq:admissible-model-family}, every
$v\in\mathcal V_{h+1,j}$, and every realization of the learner's
prescribed predictable design in \cref{ass:predictable-critic-design},
the projection \eqref{eq:noiseless-constrained-projection} satisfies
\begin{align}
  \sup_{x,a}\|\nabla_a(G_h^{P,v}-\bar Q_{j,h}^{P,v})(x,a)\|
  &\leq\delta\left(
    b_{\rm entry}+\frac{c_a^{\rm glob}}{\sqrt{n_j}}
  \right),
  \label{eq:projected-entry-approximation}\\
  \sup_x\sup_{a\in\mathcal U_h^\star(x)}
    \|\nabla_s(G_h^{P,v}-\bar Q_{j,h}^{P,v})(x,a)\|
  &\leq\frac{\delta c_s^{\rm loc}}{\sqrt{n_j}},
  \qquad s\in\{x,a\}.
  \label{eq:projected-local-approximation}
\end{align}
The entry floor is strictly below the normalized entry margin:
\begin{equation*}
  \diam(\mathcal A)b_{\rm entry}<\frac{\gamma}{2}.
\end{equation*}
\end{assumption}
Here, ``glob'' refers to bounds over the full
action set $\mathcal A$, while ``loc'' refers to bounds over the union
of optimal-action bins $\mathcal U_h^\star(x)$. Both bounds are uniform
over all states $x\in\mathbb R^n$.

This is a derivative form of approximate Bellman completeness, 
uniform over the kernel family and continuation class, as in 
discrete-time analogues like ``for all $f$ in the class''. 
Derivatives are needed because the greedy policy is driven by the action gradient. 
The $\delta$ factor makes the condition scale-free, and the global 
floor $b_{\rm entry}$ needs only to leave a strict entry margin.
For the constructed example in \cref{subsec:controlled-multibranch-model} with two optimal branch points,
\cref{app:example-design-projection} verifies it using a Fourier--Chebyshev sieve, with
constants uniform in $\delta $.

\subsection{Estimation scheme and learning algorithm}
\label{subsec:estimation-scheme}
At the start of epoch $j$, the learner checks empirical coverage.
If it passes, the learner uses all previously collected real
transitions to estimate the dynamics.  It then generates fresh
synthetic Bellman targets under the estimated kernel, fits the critics
backward in time, and deploys the greedy policy for the rest of the epoch.
If the check fails, it instead deploys the exploration policy and skips both fits.
We specify the construction here, and prove finite-sample accuracy
in \cref{subsec:main-criteria-verification}.

\paragraph{Dynamics estimation from real transitions.}
After $n_j$ episodes, there are $n_jH=n_jT/\delta$
observed Euler transitions.  Flatten the episode--stage indices into a
single chronological index: for episode $k\in\{1,\ldots,n_j\}$ and stage
$h\in\{0,\ldots,H-1\}$, let
\[
  i=(k-1)H+h+1,
  \qquad
  X_i:=X_{k,h},
  \quad
  A_i:=A_{k,h},
  \quad
  X_{i,+}:=X_{k,h+1}.
\]
The coverage check uses the empirical model information after $n$ episodes,
\[
  \widehat\Gamma_n
  :=\frac1{nH}\sum_{k=1}^{n}\sum_{h=0}^{H-1}
    \Phi_\mu(X_{k,h},A_{k,h})^\top
    \Phi_\mu(X_{k,h},A_{k,h}).
\]
The observed regression response is the state increment
\begin{equation}
  Y_i
  :=X_{i,+}-X_i
  =\delta\Phi_\mu(X_i,A_i)\theta^\star+\sqrt\delta\,\eta_i,
  \qquad
  \eta_i\mid\mathcal F_{i-1} \sim\mathcal N(0,\Sigma^\star).
  \label{eq:epoch-increment-regression}
\end{equation} 
Both $Y_i$ and $\Phi_\mu(X_i,A_i)$ are known after transition $i$ is observed;
$\theta^\star$ and $\eta_i$ are unknown.

Define the projected
least-squares estimator directly by
\begin{equation}
  \widehat\theta_j
  =
  \Pi_{\{\theta:\|\theta\|\leq B_\theta\}}
  \left(
    \operatorname*{arg\,min}_{\theta\in\mathbb R^p}
    \sum_{i\leq n_jH}
    \|Y_i-\delta\Phi_\mu(X_i,A_i)\theta\|^2
  \right).
  \label{eq:epoch-drift-estimator}
\end{equation}
Here $\argmin$ denotes the minimum-norm least-squares solution, so the estimator is well defined and measurable even for a singular Gram matrix.

Let $\Pi_{\mathcal S_0}^{\Fr}$ denote metric projection in Frobenius norm
onto the covariance set fixed above.  Define
\begin{equation}
  \overline\Sigma_j
  :=\frac{1}{n_jH\delta}
  \sum_{i\leq n_jH}
  (Y_i-\delta\Phi_\mu(X_i,A_i)\widehat\theta_j)
  (Y_i-\delta\Phi_\mu(X_i,A_i)\widehat\theta_j)^\top,
  \qquad
  \widehat\Sigma_j
  :=\Pi_{\mathcal S_0}^{\Fr}(\overline\Sigma_j).
  \label{eq:epoch-covariance-estimator}
\end{equation}
The learned drift and kernel are
\begin{equation}
  \widehat\mu_j(x,a)=\Phi_\mu(x,a)\widehat\theta_j,
  \qquad
  \widehat P_j(\cdot\mid x,a)
  =\mathcal N\left(
     x+\delta\widehat\mu_j(x,a),
     \delta\widehat\Sigma_j
   \right).
  \label{eq:epoch-learned-kernel}
\end{equation}
Write $\widehat{\mathcal{P}}_j$ for the expectation operator associated
with $\widehat P_j$.
The projections in \eqref{eq:epoch-drift-estimator}--
\eqref{eq:epoch-covariance-estimator} ensure that every learned kernel
belongs to \eqref{eq:admissible-model-family} for every data realization.

\paragraph{Synthetic labels and backward critic fitting.}
The two sample counts serve different purposes. At stage $h$, $N_{j,h}^Q$ is the number of design rows $Z_{j,h,i}=(x_i,a_i)$ (repeats allowed), and $M_{j,h}$ is the number of fresh draws from $\widehat{P}_j$ (estimated using $n_jH$ observed transitions) per row, so the fit uses $N_{j,h}^QM_{j,h}$ synthetic transitions and no additional environment interaction.

Starting from $\widehat V_{j,H}=u$, work backward over $h=H-1,\ldots,0$.
At each stage, fix the predictable design from
\cref{ass:predictable-critic-design} before drawing fresh conditionally
independent samples and forming the labels
\begin{align}
  X'_{j,h,i,m}
  &\sim\widehat P_j(\cdot\mid Z_{j,h,i}),
  \qquad i\leq N_{j,h}^Q,\quad m\leq M_{j,h},
  \label{eq:epoch-synthetic-transitions}\\
  Y_{j,h,i}^Q
  &:=\delta r(Z_{j,h,i})
    +\frac1{M_{j,h}}\sum_{m=1}^{M_{j,h}}
      \widehat V_{j,h+1}(X'_{j,h,i,m}).
  \notag
\end{align}
The continuation $\widehat V_{j,h+1}$ is fixed by the preceding backward
steps. Compute the exact constrained least-squares fit
\begin{equation}
  \widehat w_{j,h}
  :=\argmin_{w\in\mathcal C_{h,j}}
    \sum_{i=1}^{N_{j,h}^Q}
      \bigl(Y_{j,h,i}^Q-Q_{h,j,w}(Z_{j,h,i})\bigr)^2,
  \label{eq:epoch-constrained-least-squares}
\end{equation}
set $\widehat Q_{j,h}:=Q_{h,j,\widehat w_{j,h}}$, and define its
full-action envelope and a measurable greedy policy by
\begin{equation}
  \begin{aligned}
    \widehat V_{j,h}(x)&:=\max_{a\in\mathcal A}\widehat Q_{j,h}(x,a),\\
    \pi_h^{(j)}(x)&\in\argmax_{a\in\mathcal A}\widehat Q_{j,h}(x,a),
    \qquad h=0,\ldots,H-1.
  \end{aligned}
  \label{eq:epoch-greedy-policy}
\end{equation}
The algorithm below combines these updates with the coverage check.

\begin{algorithm}[htbp]
\caption{Controlled-diffusion fitted-$Q$ (CD-FQ)}
\label{alg:explore-enter-greedy}
\begin{algorithmic}[1]
  \REQUIRE Episode budget $K$; number of exploration episodes $n_0$; rewards $(r,u)$;
  exploration policy $\pi^{\rm exp}$; coverage threshold $\lambda_\mu$;
  critic-class schedule and design rule; budgets
$(N_{j,h}^Q,M_{j,h})$.
  \STATE Execute $\pi^{\rm exp}$ for $\min\{n_0,K\}$ episodes and retain all
  transitions; stop if $K\leq n_0$.
  \FOR{$j=0,\ldots,J_K$}
    \STATE Set $n_j\gets2^j n_0$ and compute $\widehat\Gamma_{n_j}$.
    \IF{$\widehat\Gamma_{n_j}\not\succeq\lambda_\mu I_p$}
      \STATE Set $\pi^{(j)}\gets\pi^{\rm exp}$.
    \ELSE
      \STATE Select the prescribed epoch-$j$ critic class and fix the design rule.
      \STATE Estimate $\widehat P_j$ from all $n_jH$ observed transitions
      using \eqref{eq:epoch-drift-estimator}--\eqref{eq:epoch-learned-kernel}.
      \STATE Set $\widehat V_{j,H}\gets u$.
      \FOR{$h=H-1,H-2,\ldots,0$}
        \STATE Fix the predictable design as in \cref{ass:predictable-critic-design}.
        \STATE Draw fresh conditionally independent samples and form labels
        via \eqref{eq:epoch-synthetic-transitions}.
        \STATE Solve \eqref{eq:epoch-constrained-least-squares} exactly; form
        $\widehat Q_{j,h}$, $\widehat V_{j,h}$, and $\pi_h^{(j)}$
        as in \eqref{eq:epoch-greedy-policy}.
      \ENDFOR
    \ENDIF
    \STATE Execute $\pi^{(j)}$ for $\min\{n_j,K-n_j\}$ episodes and retain
    the new transitions.
  \ENDFOR
\end{algorithmic}
\end{algorithm}

On the main high-probability event, the coverage check passes after the
exploration episodes. The learner neither knows nor tests the bins;
entry and containment follow from the analysis, and the guarantees hold
for any measurable greedy selection
(\cref{prop:algorithm-output-interface}).

\paragraph{Computational budgets.} 
Fix component failure probabilities
$\rho_{\rm cov},\rho_\theta,\rho_\Sigma,\rho_{\rm MC}\in(0,1)$ whose sum is
less than one, and write
\begin{equation*}
  \boldsymbol\rho
  :=(\rho_{\rm cov},\rho_\theta,\rho_\Sigma,\rho_{\rm MC}).
\end{equation*}
The budgets below limit the Monte Carlo fitting error to
$\mathcal{O}(\delta/\sqrt{n_j})$ in both state and action derivatives.

\begin{assumption}[Monte Carlo and design budgets]
\label{ass:localized-MC-budget}

Fix a universal constant $C_{\rm MC}\geq2$.
There are constants \(c_N, c_{{\rm MC},x}, c_{{\rm MC},a}>0\), independent of $(\delta,j,h,K,\boldsymbol\rho)$,
such that, for every $(j,h)$, the positive integer budgets
$N_{j,h}^Q$ and $M_{j,h}$ satisfy $N_{j,h}^Q\geq\max\{c_Nn_j,q_j\}$.

Define the dimension and confidence factor
\begin{equation*}
  \kappa_j^Q
  :=2q_j\log2
    +4\log\frac{C_{\rm MC}\bar J_KH}{\rho_{\rm MC}}.
\end{equation*}
The simulation budgets satisfy, for $s\in\{x,a\}$,
\begin{equation}
  \frac{L_{\Psi,s,j}G_0}{\delta\sqrt{c_{Q,j}N_{j,h}^Q}}
  \sqrt{\frac{\delta\Lambda_0\kappa_j^Q}{M_{j,h}}}
  \leq\frac{c_{{\rm MC},s}}{\sqrt{n_j}}.
  \label{eq:localized-MC-normalized-budget}
\end{equation}
If $N_{j,h}^Q\asymp n_j$ and
$L_{\Psi,s,j}/\sqrt{c_{Q,j}}$ is uniformly bounded, a sufficient
choice is
\(
  M_{j,h}\asymp \kappa_j^Q/\delta
  =\widetilde{\mathcal{O}} \left(q_j/\delta\right).
\)
\end{assumption}

This condition constrains the learner's simulation sizes
$N_{j,h}^Q$ and $M_{j,h}$.  For any design satisfying
\cref{ass:predictable-critic-design} and the design-size lower bound,
\eqref{eq:localized-MC-normalized-budget} can be met by increasing
$M_{j,h}$.  The factor
$G_0\sqrt{\delta\Lambda_0\kappa_j^Q/M_{j,h}}$ bounds the Monte Carlo
coefficient error in the $S_{j,h}^Q$ norm; the remaining factor
converts it into a $\delta$-normalized derivative error
(\cref{app:MC-concentration-proof}).
The two-level sample, with $N_{j,h}^Q$ design points
and $M_{j,h}$ next-state draws per point, is that of sampling-based fitted
value iteration \citep{munos2008finite}.  Conditional on the entire
design, \citet[Theorem~1]{abbasiyadkori2011improved} applies with initial
matrix $S_{j,h}^Q$ and final normalizing matrix $2S_{j,h}^Q$, yielding
the factor $\kappa_j^Q$ above.  This matrix choice is part of the
concentration argument; the regression objective is unpenalized.
Under the calibration above, an epoch uses
$\sum_hN_{j,h}^QM_{j,h}=\widetilde{\mathcal{O}}(Tn_jq_j/\delta^2)$ synthetic
transitions and no additional episodes.

\subsection{Two-scale residual control and main guarantee}
\label{subsec:two-scale-main-guarantee}

\paragraph{Two-scale residual control.}
The local geometry calls for the following two-scale counterpart of
\cref{def:true-residual-certificate}.  For an epoch-$j$ fitted critic,
write $\widehat Q_{j,h}$ and $\widehat V_{j,h}$ for its critic and
full-action envelope, using the notation of
\cref{def:fitted-envelope-interface} with $k$ replaced by $j$.
The true Bellman residual is
\begin{equation}
  e_{j,h}
  :=
  \delta r+\mathcal{P}^\star\widehat V_{j,h+1}-\widehat Q_{j,h},
  \qquad h<H.
  \label{eq:epoch-true-residual}
\end{equation}
The canonical-extension convention following \eqref{eq:true-residual}
also applies here. 

\begin{criterionvariant}[Two-scale residual control]
\label{def:two-scale-true-residual-certificate}
Suppose the epoch-$j$ critic satisfies
\cref{def:fitted-envelope-interface}.  For nonnegative radii
$d_{a,j}^{\rm glob}$ and $d_{s,j}^{\rm loc}$, $s\in\{x,a\}$, it satisfies
the \emph{two-scale certificate} if every $e_{j,h}$ has a $C^1$ extension
to $\mathbb R^n\times\mathcal A^{\rm o}$ and
\begin{align*}
  \sup_{h<H}\sup_{x,a}\|\nabla_ae_{j,h}(x,a)\|
  &\leq\delta d_{a,j}^{\rm glob},
  \\
  \sup_{h<H}\sup_x\sup_{a\in\mathcal U_h^\star(x)}
  \|\nabla_se_{j,h}(x,a)\|
  &\leq\delta d_{s,j}^{\rm loc},
  \qquad s\in\{x,a\}.
\end{align*}
The global action-derivative bound is used for bin entry;
the local certificate is restricted to the union of bins $\mathcal U_h^\star(x)$.
\end{criterionvariant}

The criterion does not require decay on its radii.  The local rates needed for
the regret guarantee are established in \cref{thm:two-scale-true-residual},
including their confidence dependence.

\paragraph{Main guarantee.}
Define the model confidence factor
\begin{equation}
  \ell_{\rm mdl}
  :=p+\log\frac{\bar J_Kn^2}
                    {\rho_\theta\wedge\rho_\Sigma}.
  \label{eq:localized-model-log-factor}
\end{equation}
The Monte Carlo confidence level $\rho_{\rm MC}$ is absorbed into the
simulation budget in \cref{ass:localized-MC-budget}; after that budget is
imposed, it does not enlarge the statistical regret numerator.

Fix $T$ and the structural parameters, we display the dependence of coefficients
and thresholds on $(K,\boldsymbol\rho)$ only for simplicity. 
The notation $\mathcal{O}(\cdot)$ hides constants independent of
$(K,\boldsymbol\rho,\delta)$.
The theorem involves two deterministic quantities constructed in
\cref{sec:main-theorem-proof}.  The exploration threshold
$n_\star(K,\boldsymbol\rho)$ is the number of exploration episodes after
which empirical coverage holds and every fitted greedy policy enters the
union of optimal bins.  The coefficient
$C_{\mathrm{grad\_err}}(K,\boldsymbol\rho)$ controls the action-gradient
error of each fitted critic at its greedy actions. On the event in the
following theorem, for every fitted epoch $j$ and stage $h<H$,
\[
  \frac1\delta\sup_x
  \left\|\nabla_a(\widehat Q_{j,h}-Q_h^\star)
    (x,\pi_h^{(j)}(x))\right\|
  \leq\frac{C_{\mathrm{grad\_err}}(K,\boldsymbol\rho)}{\sqrt{n_j}}.
\]
For notational simplicity, we use this same coefficient to bound both the local action-gradient error and the decaying part of the global entry radius. The global entry radius also contains the separate, nondecaying term \(b_{\rm entry}\) (\cref{cor:entry-threshold}). The threshold depends on $K$ only through
$\log\log K$ factors.

\begin{theorem}[High-probability
$\widetilde{\mathcal{O}}(\log K)$ localized regret]
\label{thm:end-to-end-localized-regret}
Fix $K\geq2$ and an admissible timestep $\delta$, and hence the experiment with
$H=T/\delta$ stages per episode. 
Suppose
\cref{ass:basic,ass:localized-parametric-model,ass:localized-optimal-branches,ass:exploration-self-excitation,ass:envelope-stable-critic-class,ass:predictable-critic-design,ass:projected-two-scale-Bellman-approximation,ass:localized-MC-budget}
hold.  Let $n_\star(K,\boldsymbol\rho)$ be the exploration threshold in
\eqref{eq:overall-warm-start-threshold} and
$C_{\mathrm{grad\_err}}(K,\boldsymbol\rho)$ the gradient-error coefficient in
\cref{cor:entry-threshold}.  Assume
$K>n_\star(K,\boldsymbol\rho)$, set
$n_0=n_\star(K,\boldsymbol\rho)$, and run
\cref{alg:explore-enter-greedy} with the doubling schedule
\eqref{eq:doubling-schedule}.  There is an event
$\mathcal E_{\rm all}(\delta)$ satisfying
\[
  \Pp\left(\mathcal E_{\rm all}(\delta)^{\,c}\right)
  \leq
  \rho_{\rm cov}+\rho_\theta+\rho_\Sigma+\rho_{\rm MC}
\]
on which
\begin{equation}
  \Reg(K,\delta)
  \leq
  2B_Jn_\star(K,\boldsymbol\rho)
  +\frac{T}{2\bar m}
    \{C_{\mathrm{grad\_err}}(K,\boldsymbol\rho)\}^2(J_K+1).
  \label{eq:end-to-end-regret-bound}
\end{equation}
Consequently, if the physical horizon $T$, the dimensions, structural constants, and component
confidence levels are fixed, then on the same event
\begin{equation*}
  \Reg(K,\delta)
  =\mathcal{O}(\log K\log\log K)
  =\widetilde{\mathcal{O}}(\log K).
\end{equation*}
The regret bound has no polynomial dependence on $1/\delta$.
The order above concerns $K$ at fixed $T$.
Its implicit constant may depend on $T$, including exponentially
through gradient propagation. An explicit bound on the horizon
dependence is given in \cref{rem:physical-horizon-dependence}.
\end{theorem}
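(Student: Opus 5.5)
The plan is to build a single good event $\mathcal E_{\rm all}(\delta)$ as the intersection of four component events and prove the regret bound deterministically on it. The four events are: coverage, with failure probability $\rho_{\rm cov}$; drift estimation, $\rho_\theta$; covariance estimation, $\rho_\Sigma$; and Monte Carlo critic fitting, $\rho_{\rm MC}$. A union bound then gives the probability statement.

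\textbf{Coverage event.} Matrix concentration for $\widehat\Gamma_{n}$ around its population counterpart, for the exploration phase, uses \eqref{eq:exploration-population-coverage}. For later epochs, once fitted policies lie in $\Pi_{\rm loc}(\varrho_{\rm cov})$, it uses \cref{lem:optimal-policy-neighborhood-excitation}. This yields $\widehat\Gamma_{n_j}\succeq\lambda_\mu I_p$ for every $j\le J_K$, uniformly via a union over $\bar J_K$ epochs.

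\textbf{Model events.} On the coverage event, the self-normalized martingale bound of \citet{abbasiyadkori2011improved}, applied to the increment regression \eqref{eq:epoch-increment-regression}, gives $\|\widehat\theta_j-\theta^\star\|\lesssim\sqrt{\ell_{\rm mdl}/(\lambda_\mu n_jT)}$. The $\delta$ factors cancel because there are $n_jH$ samples, each of signal $\delta$ and noise $\sqrt\delta$. A residual-covariance concentration argument similarly gives $\|\widehat\Sigma_j-\Sigma^\star\|\lesssim\sqrt{\ell_{\rm mdl}/n_j}$.

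\textbf{Monte Carlo event.} Conditional on the predictable design, this uses the concentration of \cref{app:MC-concentration-proof} together with \eqref{eq:localized-MC-normalized-budget}. It bounds the fitted-minus-projected derivative error by $\delta c_{{\rm MC},s}/\sqrt{n_j}$.

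\textbf{Two-scale certificate.} On this event I would verify \cref{def:two-scale-true-residual-certificate} with $d^{\rm glob}_{a,j}=b_{\rm entry}+O(1/\sqrt{n_j})$ and $d^{\rm loc}_{s,j}=O(1/\sqrt{n_j})$. The residual $e_{j,h}$ splits into three parts. The first is the Monte Carlo error $\widehat Q_{j,h}-\bar Q^{\widehat P_j,\widehat V_{j,h+1}}_{j,h}$. The second is the projection error $G^{\widehat P_j,v}_h-\bar Q$, controlled by \cref{ass:projected-two-scale-Bellman-approximation}. The third is the model error $(\mathcal P^\star-\widehat{\mathcal P}_j)\widehat V_{j,h+1}$. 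The model-error derivatives are $O(\delta(\|\widehat\theta_j-\theta^\star\|+\|\widehat\Sigma_j-\Sigma^\star\|))$. This uses Gaussian mean/covariance perturbation and the $G_0,M_0$ regularity \eqref{eq:fitted-active-gradient-Lipschitz} of the fitted envelope; the $C^{1,1}$ bound is what lets the covariance change act at order $\delta$. Criterion~\ref{def:fitted-envelope-interface} is supplied by \cref{prop:algorithm-output-interface}.

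\textbf{Bin entry (main obstacle).} Next comes the localized backward transfer: I would rerun Step 1 of \cref{thm:residual-regret-transfer} jointly with entry by backward induction on $h$. The induction hypothesis is $E_{j,h+1}=\sup\|\nabla\Delta V_{j,h+1}\|\le\Phi_T(L_E)O(1/\sqrt{n_j})$. Given it, the action-gradient error of $\widehat Q_{j,h}$ is globally at most $\delta(b_{\rm entry}+O(1/\sqrt{n_j}))$, and we want to show that greedy actions lie in $\mathcal U_h^\star(x)$. The idea is that $\widehat Q_{j,h}(x,\cdot)-Q^\star_h(x,\cdot)$, up to an $a$-independent constant, varies over $\mathcal A$ by at most $\diam(\mathcal A)$ times that gradient bound. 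That is below $\delta\gamma/2$ once $n_j\ge n_\star$, using $\diam(\mathcal A)b_{\rm entry}<\gamma/2$. The margin \eqref{eq:union-entry-margin} then excludes actions outside the union. Once inside a bin, local strong concavity gives \cref{ass:benchmark-error-bounds} there with $\kappa=\bar L_{xa}/\bar m$ (as in \cref{prop:strong-concavity-sufficient}). The local radii then close the recursion \eqref{eq:E-recursion} exactly as before, and the gradient coherence \eqref{eq:optimal-active-gradient-coherence} is used for the envelope identity. Strong concavity also bounds $\dist(\pi^{(j)}_h(x),\mathcal A_h^\star(x))\le d_h/(\delta\bar m)=O(1/\sqrt{n_j})$, placing $\pi^{(j)}\in\Pi_{\rm loc}(\varrho_{\rm cov})$. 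This closes the self-exploration loop needed by the coverage step; that loop is handled inductively over epochs. The resulting $n_\star$ collects all thresholds and depends on $K$ only through $\log\bar J_K$.

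\textbf{Regret accounting.} Finally, \cref{cor:quadratic-performance-gradient-bound} and \cref{lem:optimal-advantage-pdi}, with gradient error $\delta C_{\rm grad\_err}/\sqrt{n_j}$, give per-episode loss $\frac{T}{2\bar m}C_{\rm grad\_err}^2/n_j$ in epoch $j$. Epoch $j$ runs at most $n_j$ episodes, so it contributes at most $\frac{T}{2\bar m}C^2_{\rm grad\_err}$, and the exploration episodes contribute at most $2B_Jn_\star$. Summing over the $J_K+1=O(\log K)$ epochs gives \eqref{eq:end-to-end-regret-bound}. The $\log\log K$ factor comes from $C_{\rm grad\_err}^2\propto\ell_{\rm mdl}$ and from $n_\star$.
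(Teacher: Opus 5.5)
\paragraph{The gap.} Your outline has the paper's architecture: the union of coverage, model, and Monte Carlo events; the split of $e_{j,h}$ into model, projection, and Monte Carlo errors; bin entry from the global action radius before local bounds are used; and the sum over doubling epochs. The model-error step as you state it does not give $\delta$-uniform radii. Your claim that the derivatives of $(\mathcal P^\star-\widehat{\mathcal P}_j)\widehat V_{j,h+1}$ are $O(\delta(\|\widehat\theta_j-\theta^\star\|+\|\widehat\Sigma_j-\Sigma^\star\|))$ fails for the state derivative. By \cref{lem:kernel-differentiation} applied to both kernels,
\[
  \nabla_x(\mathcal P^\star-\widehat{\mathcal P}_j)v
  =(I_n+\delta\partial_x\mu^\star)^\top(\mathcal P^\star-\widehat{\mathcal P}_j)[\nabla v]
   +\delta\{\partial_x(\mu^\star-\widehat\mu_j)\}^\top\widehat{\mathcal P}_j[\nabla v],
\]
and the first prefactor is $I_n+O(\delta)$, not $O(\delta)$.

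Take two Gaussians with a common mean and covariances $\delta\Sigma^\star$, $\delta\widehat\Sigma_j$. For an $M_0$-Lipschitz $\nabla v$, the synchronous coupling gives only $M_0\sqrt\delta\,\|(\Sigma^\star)^{1/2}-\widehat\Sigma_j^{1/2}\|_{\rm F}$. This is sharp: in one dimension take $\nabla v(y)=\min\{M_0|y-c|,G_0\}$ at a state whose Euler mean is $c$. The $C^{1,1}$ bound does give a factor $\delta$ for the values $(\mathcal P^\star-\widehat{\mathcal P}_j)v$. The certificate, however, concerns their derivatives, where half an order is lost, and recovering $\delta$ would need a Lipschitz Hessian, which the critic class does not supply. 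With your rate $\|\widehat\Sigma_j-\Sigma^\star\|\lesssim\sqrt{\ell_{\rm mdl}/n_j}$, the correct sensitivity yields only $d_{x,j}^{\rm loc}\lesssim\sqrt{\ell_{\rm mdl}/(\delta n_j)}$. That bound blows up as $\delta\downarrow0$ and contaminates $D_j^{\rm loc}$, $n_\star$, and the regret.

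\paragraph{The fix.} The covariance, like the drift, is estimated from all $N=n_jH=n_jT/\delta$ increments, so its rate is $N^{-1/2}$, not $n_j^{-1/2}$. Write $d_i=\Phi_\mu(X_i,A_i)(\theta^\star-\widehat\theta_j)$.
\begin{itemize}
\item The leading term $N^{-1}\sum_i(\eta_i\eta_i^\top-\Sigma^\star)$ is $O(\sqrt{\ell_{\rm mdl}/N})$.
\item The cross terms $\sqrt\delta N^{-1}\sum_i(\eta_id_i^\top+d_i\eta_i^\top)$ are $O(\sqrt\delta\|\widehat\theta_j-\theta^\star\|)=O(\sqrt{\ell_{\rm mdl}/N})$ by Cauchy--Schwarz.
\item The quadratic term is $O(\ell_{\rm mdl}/N)$.
\end{itemize}
Hence $\|\widehat\Sigma_j-\Sigma^\star\|_{\rm F}=O(\sqrt{\delta\ell_{\rm mdl}/(n_jT)})$ once $n_jT\geq\delta_{\max}\ell_{\rm mdl}$. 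The $\sqrt\delta$ coupling factor times this rate is exactly $\delta\sqrt{\ell_{\rm mdl}/(n_jT)}$, as in \cref{lem:epoch-model-transfer}.

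\paragraph{A smaller issue of the same kind.} The self-normalized bound of \citet{abbasiyadkori2011improved} with a fixed regularizer carries a log-determinant term of order $p\log(1+NB_{\Phi_\mu}^2/\lambda)$. That is a $\log(1/\delta)$ factor not contained in $\ell_{\rm mdl}$. Either use a fixed-endpoint sub-Gaussian net bound on the score $Z_j$, as the paper does, or use a regularizer proportional to $N$, such as $\lambda_\mu NI_p$. With these corrections the rest of your argument goes through as in the paper.
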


\section{Proof of the main theorem}
\label{sec:main-theorem-proof}

Fix $(K,\delta)$ and the assumptions of
\cref{thm:end-to-end-localized-regret}.  The proof closes a loop over
epochs: empirical coverage at the epoch boundary $n_j$ gives
\cref{def:fitted-envelope-interface,def:two-scale-true-residual-certificate}
at epoch $j$; these criteria give bin entry and local gradient control
for $\pi^{(j)}$; and the resulting proximity of $\pi^{(j)}$ to the
optimal-action sets implies coverage
at the next fitted boundary $n_{j+1}$, when $n_{j+1}<K$.

\Cref{subsec:main-criteria-verification} verifies the two criteria on a simultaneous deviation event; \cref{subsec:main-criteria-consequences} derives, deterministically for one epoch, bin entry, local gradient control, and proximity to the optimal-action set; \cref{subsec:main-coverage-induction} closes the loop by forward induction over epochs; and \cref{subsec:main-regret-proof} sums the one-episode losses over epochs. Concentration arguments are in \cref{app:detailed-concentration} and detailed proofs and threshold calculations in \cref{app:main-theorem-details}.
The residual, transfer, and threshold constants are summarized in
\cref{tab:localized-notation}.

\subsection{Verification of criteria on fitted critics}
\label{subsec:main-criteria-verification}

\begin{proposition}[Verification of fitted-envelope regularity]
\label{prop:algorithm-output-interface}
Under \cref{ass:envelope-stable-critic-class,ass:predictable-critic-design},
every fit performed by \cref{alg:explore-enter-greedy} has a unique,
measurable coefficient minimizer in $\mathcal C_{h,j}$.
For every realization, each fitted critic sequence satisfies
\cref{def:fitted-envelope-interface}, with the stronger uniform bounds
\begin{equation}
  \widehat V_{j,h}\in C^{1,1}(\mathbb R^n),
  \qquad
  \|\nabla\widehat V_{j,h}\|_\infty\leq G_0,
  \qquad
  [\nabla\widehat V_{j,h}]_{\rm Lip}\leq M_0,
  \label{eq:verified-fitted-envelope-bounds}
\end{equation}
including $\widehat V_{j,H}=u$.
The greedy policy $\pi^{(j)}$ can be chosen jointly Borel in the history
and state.  For every $h<H$ and $x\in\mathbb R^n$, it satisfies the
envelope identity:
\[
  \nabla\widehat V_{j,h}(x)
  =\nabla_x\widehat Q_{j,h}\bigl(x,\pi_h^{(j)}(x)\bigr).
\]
\end{proposition}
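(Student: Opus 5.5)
The plan has four parts: uniqueness and measurability of the fit, regularity of the envelope, a Borel greedy selection, and the envelope identity.

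\textbf{Unique, measurable minimizer.} The least-squares objective in \eqref{eq:epoch-constrained-least-squares} is a quadratic in $w$ with Hessian $2S_{j,h}^Q$. By \cref{ass:predictable-critic-design}, $S_{j,h}^Q\succeq c_{Q,j}N_{j,h}^QI_{q_j}\succ0$, so the objective is strictly convex. It therefore has a unique minimizer over the nonempty compact convex set $\mathcal C_{h,j}$. Moreover, $\widehat w_{j,h}=\Pi^{S}_{\mathcal C_{h,j}}(\tilde w)$, where $\tilde w$ is the unconstrained solution, which depends affinely on the labels; the offset $q_{0,h,j}$ enters only through the labels. Projection in the $S$-norm is continuous, so $\widehat w_{j,h}$ is a continuous (hence Borel) function of the design points, the labels and $S$. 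Because $\widehat V_{j,h+1}$ feeds into the labels, measurability in the history follows by backward induction in $h$. The same argument covers the noiseless projection \eqref{eq:noiseless-constrained-projection}.

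\textbf{Envelope regularity.} Fix the realized $w=\widehat w_{j,h}$ and set $Q=Q_{h,j,w}$. The function $Q$ is $C^1$ on $\mathbb R^n\times\mathcal A^{\mathrm o}$ by the feature assumptions, which gives \cref{def:fitted-envelope-interface}(i). For (ii) I use a Danskin-type argument. Take any maximizers $a_x$ at $x$ and $a_y$ at $y$. Then
\[
Q(y,a_x)-Q(x,a_x)\le \widehat V(y)-\widehat V(x)\le Q(y,a_y)-Q(x,a_y).
\]
Write $g(z):=\nabla_xQ(z,a_z)$, which is well defined by \eqref{eq:fitted-active-gradient-Lipschitz}: two maximizers at the same state differ by at most $M_0\cdot0$. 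Expand both sides along the segment. For the left side,
\[
Q(y,a_x)-Q(x,a_x)=\int_0^1\nabla_xQ(x+t(y-x),a_x)\cdot(y-x)\,dt .
\]
The integrand involves the non-active pair $(x+t(y-x),a_x)$, so \eqref{eq:fitted-active-gradient-Lipschitz} cannot be applied to it directly.

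\textbf{Main obstacle and how I handle it.} The fix is a supergradient/subgradient sandwich. First, $\widehat V$ is a maximum of $C^1$ functions with locally bounded gradient, so it is locally Lipschitz and directionally differentiable, with Danskin's formula
\[
\widehat V'(x;v)=\max_{a\in\operatorname{argmax}}\nabla_xQ(x,a)\cdot v .
\]
By the uniqueness of $g(x)$ just noted, this equals $g(x)\cdot v$, which is linear in $v$. So $\widehat V$ is Gateaux differentiable with gradient $g$. Since $g$ is $M_0$-Lipschitz, $\widehat V$ is Lipschitz with Lipschitz Gateaux gradient, hence $C^{1,1}$ with $[\nabla\widehat V]_{\rm Lip}\le M_0$ and $\|\nabla\widehat V\|_\infty\le G_0$. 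This gives \eqref{eq:verified-fitted-envelope-bounds}, and the terminal case $\widehat V_{j,H}=u$ follows from the assumed $C^{1,1}$ bounds on $u$. The one delicate point is Danskin's theorem over the compact set $\mathcal A$ with $Q$ jointly $C^1$; that is standard, and I would cite it. As a fallback, one can prove directly that $\widehat V(x+tv)-\widehat V(x)-t\,g(x)\cdot v=o(t)$ using compactness of $\mathcal A$ and upper semicontinuity of the argmax correspondence.

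\textbf{Greedy selection and envelope identity.} The map $(\text{history},x,a)\mapsto\widehat Q_{j,h}(x,a)$ is Carathéodory: continuous in $a$ and jointly measurable. The action set $\mathcal A$ is compact. The measurable maximum theorem (Kuratowski--Ryll-Nardzewski selection for the argmax correspondence, which has nonempty compact values and a measurable graph) therefore yields a selection $\pi_h^{(j)}$ that is jointly Borel in the history and the state. Finally, every maximizer $a$ satisfies $\nabla\widehat V_{j,h}(x)=g(x)=\nabla_x\widehat Q_{j,h}(x,a)$ by the Gateaux computation above, so in particular this holds at $a=\pi_h^{(j)}(x)$, which is the envelope identity; \cref{lem:fitted-envelope-identity} then applies.
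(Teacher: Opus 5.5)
Your proposal is correct and follows essentially the same route as the paper. In both, the Gram condition gives strict convexity and a unique constrained minimizer. The $y=x$ case of \eqref{eq:fitted-active-gradient-Lipschitz} together with Danskin's formula gives a common active gradient, hence $C^{1,1}$ envelopes with the stated bounds and the envelope identity at every maximizer. A measurable-maximum selection then gives the greedy policy.

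The only differences are cosmetic. You obtain measurability of $\widehat w_{j,h}$ from continuity of the $S$-norm projection of the unconstrained solution, where the paper invokes the measurable argmin theorem. You also select directly on (history, state), where the paper selects on $(w,x)$ and then composes with $\widehat w_{j,h}$.
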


\begin{proof}
First establish regularity for every
feasible critic, apply \cref{lem:fitted-envelope-identity}, and then
pass these properties to the measurable constrained-regression outputs.
See \cref{app:fitted-envelope-verification-proof}.
\end{proof}

For the realized continuation value at epoch $j$, define the exact learned-model
Bellman target
\[
  G_{j,h}:=\delta r+
    \widehat{\mathcal P}_j\widehat V_{j,h+1}.
\]
The true residual in \eqref{eq:epoch-true-residual} splits into model
estimation and critic fitting errors:
\begin{equation}
  e_{j,h}
  =
  (\mathcal{P}^\star-\widehat{\mathcal{P}}_j)\widehat V_{j,h+1}
  +(G_{j,h}-\widehat Q_{j,h}).
  \label{eq:epoch-true-residual-decomposition}
\end{equation}
Under the assumptions of the theorem below, the
model event $\mathcal E_{\rm mdl}
:=\mathcal E_\theta\cap\mathcal E_\Sigma$ controls drift score and
covariance noise at every epoch endpoint; and the Monte Carlo event
$\mathcal E_{\rm MC}$ controls Monte Carlo noise in all fits performed.  They satisfy
\[
  \Pp\bigl((\mathcal E_{\rm mdl})^c\bigr)
  \leq\rho_\theta+\rho_\Sigma,
  \qquad
  \Pp\bigl((\mathcal E_{\rm MC})^c\bigr)
  \leq\rho_{\rm MC}.
\]
Their constructions impose no empirical coverage condition.
On $\mathcal E_{\rm mdl}$, simultaneously at every epoch with
$\widehat\Gamma_{n_j}\succeq\lambda_\mu I_p$ and
$n_jT\geq\delta_{\max}\ell_{\rm mdl}$,
\begin{equation}
  \frac1\delta
  \|\nabla_s(\mathcal P^\star-\widehat{\mathcal P}_j)v\|_\infty
  \leq \mathcal O\!\left(\sqrt{\frac{\ell_{\rm mdl}}{n_jT}}\right),
  \qquad s\in\{x,a\},
  \label{eq:main-model-derivative-bound}
\end{equation}
uniformly over all $v\in C^{1,1}(\mathbb R^n)$ with
$\|\nabla v\|_\infty\leq G_0$ and $[\nabla v]_{\rm Lip}\leq M_0$.
This includes the adaptive fitted continuations in
\eqref{eq:verified-fitted-envelope-bounds}.
The constant hidden in \(\mathcal O(\cdot)\) is structural and uniform in
$(j,h,\delta,K,\boldsymbol\rho)$, with the horizon and structural parameters fixed.
On $\mathcal E_{\rm MC}$, simultaneously for all fitted critics,
\begin{equation}
\begin{aligned}
  \frac1\delta\sup_{x,a}
  \|\nabla_a(G_{j,h}-\widehat Q_{j,h})(x,a)\|
  &\leq b_{\rm entry}+\frac{c_a^{\rm glob}+c_{{\rm MC},a}}{\sqrt{n_j}},\\
  \frac1\delta\sup_x\sup_{a\in\mathcal U_h^\star(x)}
  \|\nabla_s(G_{j,h}-\widehat Q_{j,h})(x,a)\|
  &\leq\frac{c_s^{\rm loc}+c_{{\rm MC},s}}{\sqrt{n_j}},
  \quad s\in\{x,a\}.
\end{aligned}
  \label{eq:main-fitting-derivative-bounds}
\end{equation}
The approximation and Monte Carlo coefficients are those in
\cref{ass:projected-two-scale-Bellman-approximation,ass:localized-MC-budget};
they are independent of $(j,h,\delta,K,\boldsymbol\rho)$.
The explicit choices, constructions, and proofs of these estimates are given in
\cref{app:kernel-derivative-transfer-proof,app:MC-concentration-proof}.
Combining the estimates gives the following verification of
\cref{def:two-scale-true-residual-certificate}.

\begin{theorem}[Verification of \cref{def:two-scale-true-residual-certificate}]
\label{thm:two-scale-true-residual}
Suppose
\cref{ass:localized-parametric-model,ass:localized-optimal-branches,ass:envelope-stable-critic-class,ass:predictable-critic-design,ass:projected-two-scale-Bellman-approximation,ass:localized-MC-budget}
hold.  There is a nonnegative deterministic residual coefficient
$C_{\rm res}(K,\boldsymbol\rho)$ such that
\begin{equation}
  C_{\rm res}(K,\boldsymbol\rho)
  =\mathcal{O}\left(1+\sqrt{\ell_{\rm mdl}/T}\right).
  \label{eq:verified-certificate-numerator-size}
\end{equation}
This coefficient is independent of $(j,h,\delta)$ and depends on
$(K,\boldsymbol\rho)$ only through $\ell_{\rm mdl}$.
The intersection
$\mathcal E_{\rm mdl}\cap\mathcal E_{\rm MC}$ satisfies
\begin{equation}
  \Pp\bigl((\mathcal E_{\rm mdl}
    \cap\mathcal E_{\rm MC})^c\bigr)
  \leq\rho_\theta+\rho_\Sigma+\rho_{\rm MC}.
  \label{eq:certificate-event-probability}
\end{equation}
On this intersection event, the following holds simultaneously at
every epoch:
if $\widehat\Gamma_{n_j}\succeq\lambda_\mu I_p$ and
$n_jT\geq\delta_{\max}\ell_{\rm mdl}$, then the fitted output satisfies
\cref{def:two-scale-true-residual-certificate}, with radii satisfying
\begin{align}
  d_{a,j}^{\rm glob}
  &\leq b_{\rm entry}+
  \frac{C_{\rm res}(K,\boldsymbol\rho)}{\sqrt{n_j}},
  \label{eq:verified-entry-residual-rate}\\
  d_{s,j}^{\rm loc}
  &\leq
  \frac{C_{\rm res}(K,\boldsymbol\rho)}{\sqrt{n_j}},
  \quad s\in\{x,a\}.
  \label{eq:verified-local-residual-rate}
\end{align}
\end{theorem}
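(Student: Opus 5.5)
The plan is to combine the residual decomposition \eqref{eq:epoch-true-residual-decomposition} with the two already-stated estimates: the model-derivative bound \eqref{eq:main-model-derivative-bound} on $\mathcal E_{\rm mdl}$ and the fitting bounds \eqref{eq:main-fitting-derivative-bounds} on $\mathcal E_{\rm MC}$. The probability statement \eqref{eq:certificate-event-probability} is then a union bound over the complements of the two events, using $\Pp((\mathcal E_{\rm mdl})^c)\le\rho_\theta+\rho_\Sigma$ and $\Pp((\mathcal E_{\rm MC})^c)\le\rho_{\rm MC}$. The remaining work is deterministic on the intersection, at a fixed epoch $j$ with $\widehat\Gamma_{n_j}\succeq\lambda_\mu I_p$ and $n_jT\ge\delta_{\max}\ell_{\rm mdl}$.

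\emph{Regularity.} By \cref{prop:algorithm-output-interface}, the fitted critics satisfy \cref{def:fitted-envelope-interface}, and every continuation $\widehat V_{j,h+1}$ (including $\widehat V_{j,H}=u$) lies in $C^{1,1}$ with $\|\nabla\widehat V_{j,h+1}\|_\infty\le G_0$ and Lipschitz constant at most $M_0$. The learned kernel $\widehat P_j$ is an admissible Gaussian kernel, as is $P^\star$. Hence \cref{lem:kernel-differentiation}, applied to both kernels (the proof only uses the Gaussian mean structure, so it holds for $\widehat P_j$ too), shows that $\mathcal P^\star\widehat V_{j,h+1}$ and $\widehat{\mathcal P}_j\widehat V_{j,h+1}$ are $C^1$ on $\mathbb R^n\times\mathcal A^{\rm o}$ and finite by linear growth. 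Since $r$ and $\widehat Q_{j,h}$ have $C^1$ extensions by \cref{ass:localized-parametric-model,ass:envelope-stable-critic-class}, each $e_{j,h}$ has the required $C^1$ extension.

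\emph{Derivative bounds.} Differentiate \eqref{eq:epoch-true-residual-decomposition} and apply the triangle inequality. The first term $(\mathcal P^\star-\widehat{\mathcal P}_j)\widehat V_{j,h+1}$ is bounded globally in $(x,a)$ by \eqref{eq:main-model-derivative-bound}. That bound is uniform over the $C^{1,1}$ ball, so the adaptivity of the data-dependent continuation causes no difficulty. It gives $\delta\,C_{\rm mdl}\sqrt{\ell_{\rm mdl}/(n_jT)}$ for a structural constant $C_{\rm mdl}$. The second term is bounded by \eqref{eq:main-fitting-derivative-bounds}: globally by $\delta(b_{\rm entry}+(c_a^{\rm glob}+c_{{\rm MC},a})/\sqrt{n_j})$ for the action derivative, and on $\mathcal U_h^\star(x)$ by $\delta(c_s^{\rm loc}+c_{{\rm MC},s})/\sqrt{n_j}$ for $s\in\{x,a\}$. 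Define
\[
C_{\rm res}:=\max\bigl\{c_a^{\rm glob}+c_{{\rm MC},a},\,c_x^{\rm loc}+c_{{\rm MC},x},\,c_a^{\rm loc}+c_{{\rm MC},a}\bigr\}+C_{\rm mdl}\sqrt{\ell_{\rm mdl}/T}.
\]
Summing the two contributions then gives \eqref{eq:verified-entry-residual-rate} and \eqref{eq:verified-local-residual-rate}. This $C_{\rm res}$ is $\mathcal O(1+\sqrt{\ell_{\rm mdl}/T})$, depends on $(K,\boldsymbol\rho)$ only through $\ell_{\rm mdl}$, and is independent of $(j,h,\delta)$.

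\emph{Main obstacle.} The real content lies in the two estimates being invoked, particularly the model bound \eqref{eq:main-model-derivative-bound}. There one must show, via \cref{lem:kernel-differentiation} and a Gaussian coupling of the two kernels, that the derivative difference is $\delta$ times the sum of $\|\widehat\theta_j-\theta^\star\|$ (weighted by $L_{\Phi_\mu,s}$, $B_{\Phi_\mu}$, $G_0$, $M_0$) and $\|\widehat\Sigma_j-\Sigma^\star\|$ scaled by $\sqrt\delta$. One then converts the $n_jH$-sample least-squares rates into $\sqrt{\ell_{\rm mdl}/(n_jT)}$ using empirical coverage; the condition $n_jT\ge\delta_{\max}\ell_{\rm mdl}$ absorbs lower-order terms. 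For the present theorem I take these as established in the cited appendices, so the only care needed is to check that the events are defined without reference to coverage, so that simultaneity over epochs holds on their fixed intersection.
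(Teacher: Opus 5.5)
Your proposal is correct and matches the paper's proof: it decomposes the residual via \eqref{eq:epoch-true-residual-decomposition}, applies the uniform model-derivative bound to the adaptive continuation (legitimate because of the envelope bounds in \cref{prop:algorithm-output-interface}), adds the fitting bounds, and handles the event probability by a union bound. The two differences are minor and fine: your $C_{\rm res}$, the maximum of the fitting constants plus a common model term, is a slightly coarser choice than the paper's maximum of the three sums, and your explicit $C^1$-extension check for $e_{j,h}$ fills in a step the paper leaves implicit.
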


We use the common upper bound $C_{\rm res}(K,\boldsymbol\rho)$
for notational simplicity. The three decaying residual coefficients
may differ, and we may simply take their maximum.
\begin{proof}
The uniform envelope bounds in \cref{prop:algorithm-output-interface}
permit application of \eqref{eq:main-model-derivative-bound} to the adaptive
continuation.  Add \eqref{eq:main-fitting-derivative-bounds}
using \eqref{eq:epoch-true-residual-decomposition}.  The fitting numerators
are structural constants, while the model numerators are structural
multiples of $\sqrt{\ell_{\rm mdl}/T}$;
taking the maximum of the three resulting sums gives
\eqref{eq:verified-certificate-numerator-size} and the claimed rates.
The event probability follows by a union bound.  An explicit
choice of $C_{\rm res}(K,\boldsymbol\rho)$ is given in
\cref{app:two-scale-residual-verification-proof}.
\end{proof}

The sample-size condition in \cref{thm:two-scale-true-residual} holds at
every epoch with $n_j\geq n_\Sigma$, where
\begin{equation}
  n_\Sigma:=\left\lceil\frac{\delta_{\max}\ell_{\rm mdl}}{T}\right\rceil\vee1.
  \label{eq:model-sample-threshold}
\end{equation}
The coverage condition $\widehat\Gamma_{n_j}\succeq\lambda_\mu I_p$ is
established at every fitted epoch in \cref{subsec:main-coverage-induction}.

\subsection{Bin entry and local gradient control}
\label{subsec:main-criteria-consequences}
For a single epoch satisfying \cref{def:fitted-envelope-interface,def:two-scale-true-residual-certificate}, we derive bin entry and local gradient control, then a sample threshold beyond which the verified rates meet the required radius conditions. Write 
\begin{equation*}
  \Delta Q_{j,h}=\widehat Q_{j,h}-Q_h^\star,
  \qquad
  \Delta V_{j,h}=\widehat V_{j,h}-V_h^\star,
  \qquad
  \Delta V_{j,H}=0.
\end{equation*}
Take $\kappa=\bar L_{xa}/\bar m$ in
\cref{thm:residual-regret-transfer} and write
$L_E:=L_{\mu,x}+\kappa L_{\mu,a}$. Define the local and global
action-gradient radii after backward transfer by
\begin{equation}
\begin{aligned}
  D_j^{\rm loc}
  &:=d_{a,j}^{\rm loc}
    +L_{\mu,a}\Phi_T(L_E)
       \bigl(d_{x,j}^{\rm loc}+\kappa d_{a,j}^{\rm loc}\bigr),\\
  D_j^{\rm glob}
  &:=d_{a,j}^{\rm glob}
    +L_{\mu,a}\Phi_T(L_E)
       \bigl(d_{x,j}^{\rm loc}+\kappa d_{a,j}^{\rm loc}\bigr).
\end{aligned}
  \label{eq:entry-transfer-radius}
\end{equation}

\begin{theorem}[Backward union entry and local gradient transfer]
\label{thm:backward-union-entry}
Suppose
\cref{ass:basic,ass:localized-optimal-branches,ass:localized-parametric-model}
hold.  Fix an epoch-$j$ fitted critic satisfying
\cref{def:fitted-envelope-interface,def:two-scale-true-residual-certificate}.
If $\diam(\mathcal A)D_j^{\rm glob}\leq\gamma/2$,
then \(\pi_h^{(j)}(x)\in\mathcal U_h^\star(x)\) for every $h<H$ and $x\in\mathbb R^n$. 
Furthermore,
\begin{align}
  \sup_x\|\nabla\Delta V_{j,h}(x)\|
  &\leq
  \Phi_{T-t_h}(L_E)\bigl(d_{x,j}^{\rm loc}+\kappa d_{a,j}^{\rm loc}\bigr),
  \label{eq:localized-value-gradient-transfer}\\
  \sup_x
  \|\nabla_a\Delta Q_{j,h}
      (x,\pi_h^{(j)}(x))\|
  &\leq\delta D_j^{\rm loc},
  \label{eq:localized-action-gradient-transfer}\\
  \sup_x
  \dist(\pi_h^{(j)}(x),\mathcal A_h^\star(x))
  &\leq\frac{D_j^{\rm loc}}{\bar m}.
  \label{eq:localized-policy-set-distance}
\end{align}
\end{theorem}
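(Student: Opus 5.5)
The plan is a single backward induction over $h=H-1,\ldots,0$, modeled on Step 1 of the proof of \cref{thm:residual-regret-transfer}. The induction hypothesis at stage $h+1$ is the gradient bound \eqref{eq:localized-value-gradient-transfer}, written $E_{j,h+1}:=\sup_x\|\nabla\Delta V_{j,h+1}(x)\|\le\Phi_{T-t_{h+1}}(L_E)b_j$ with $b_j:=d_{x,j}^{\rm loc}+\kappa d_{a,j}^{\rm loc}$. The base case is $\Delta V_{j,H}=0$. At stage $h$, I first prove bin entry, then local gradient control, and finally the value-gradient recursion.

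For entry, I write $\Delta Q_{j,h}=\mathcal P^\star\Delta V_{j,h+1}-e_{j,h}$ and apply \cref{lem:kernel-differentiation}. This is legitimate because $\Delta V_{j,h+1}$ is $C^1$ with bounded gradient, by \cref{prop:algorithm-output-interface} and \cref{lem:optimal-envelope-regularity}. Combined with the global residual bound, it gives for all $(x,a)\in\mathbb R^n\times\mathcal A$
\[
\|\nabla_a\Delta Q_{j,h}(x,a)\|\le\delta d_{a,j}^{\rm glob}+\delta L_{\mu,a}E_{j,h+1}\le\delta D_j^{\rm glob}.
\]
Next I fix $x$, an optimal action $a^\star\in\mathcal A_h^\star(x)$, and an arbitrary $a\in\mathcal A$. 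Integrating the action gradient along the segment from $a^\star$ to $a$, which stays in the convex set $\mathcal A$, gives
\[
|\Delta Q_{j,h}(x,a)-\Delta Q_{j,h}(x,a^\star)|\le\delta D_j^{\rm glob}\diam(\mathcal A)\le\delta\gamma/2 .
\]
Now suppose $a\notin\mathcal U_h^\star(x)$. The margin \eqref{eq:union-entry-margin} gives $Q_h^\star(x,a^\star)-Q_h^\star(x,a)\ge\delta\gamma$, and therefore
\[
\widehat Q_{j,h}(x,a^\star)-\widehat Q_{j,h}(x,a)\ge\delta\gamma-\delta\gamma/2>0,
\]
so such an $a$ is not a greedy maximizer. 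I expect this to be the main obstacle. The non-strict hypothesis $\le\gamma/2$ leaves only the margin $\delta\gamma/2$, so I must check that strict positivity really holds, or else use a sup-over-complement argument with the closedness of the bins. Subject to that check, $\pi_h^{(j)}(x)\in\mathcal U_h^\star(x)$.

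Once $\widehat a:=\pi_h^{(j)}(x)$ lies in some bin $B=B_{h,\ell}^\star(x)$, the local residual radii apply at $\widehat a$. This yields
\[
\|\nabla_a\Delta Q_{j,h}(x,\widehat a)\|\le\delta(d_{a,j}^{\rm loc}+L_{\mu,a}E_{j,h+1})\le\delta D_j^{\rm loc},
\]
which is \eqref{eq:localized-action-gradient-transfer}; here $\Phi_{T-t_{h+1}}\le\Phi_T$. For the distance bound, let $a_\ell^\star$ be the optimal branch point in $B$. Since $a_\ell^\star$ maximizes over $\mathcal A$, we have $\nabla_aQ^\star_h(x,a_\ell^\star)\in N_{\mathcal A}(a_\ell^\star)$, and greedy stationarity gives $\nabla_a\widehat Q_{j,h}(x,\widehat a)\in N_{\mathcal A}(\widehat a)$. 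The strong concavity on $B$ in \cref{ass:localized-optimal-branches}(ii), paired with $\widehat a-a_\ell^\star$, then gives
\[
\delta\bar m\|\widehat a-a_\ell^\star\|^2\le\langle\nabla_a\Delta Q_{j,h}(x,\widehat a),\,a_\ell^\star-\widehat a\rangle ,
\]
and hence $\|\widehat a-a_\ell^\star\|\le D_j^{\rm loc}/\bar m$, which is \eqref{eq:localized-policy-set-distance}. The same inequality, with $\nabla_aQ^\star_h(x,\widehat a)$ projected onto the normal cone, also gives $d_h(x,\widehat a)\le\|\nabla_a\Delta Q_{j,h}(x,\widehat a)\|$.

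For the value-gradient step, \cref{prop:algorithm-output-interface} and \eqref{eq:optimal-active-gradient-coherence} give
\[
\nabla\Delta V_{j,h}(x)=\nabla_x\Delta Q_{j,h}(x,\widehat a)+\nabla_xQ_h^\star(x,\widehat a)-\nabla_xQ_h^\star(x,a_\ell^\star).
\]
By \eqref{eq:branch-mixed-gradient}, the last difference is at most $\delta\bar L_{xa}\|\widehat a-a_\ell^\star\|\le\delta\kappa(d_{a,j}^{\rm loc}+L_{\mu,a}E_{j,h+1})$, where the bound on $\|\widehat a-a_\ell^\star\|$ uses the sharper stage-$h$ radius $d_{a,j}^{\rm loc}+L_{\mu,a}E_{j,h+1}$ rather than $D_j^{\rm loc}$. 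The state-derivative identity, with the local $x$-residual bound at $\widehat a$, bounds the first term by $\delta d_{x,j}^{\rm loc}+(1+\delta L_{\mu,x})E_{j,h+1}$. Together these give $E_{j,h}\le\delta b_j+(1+\delta L_E)E_{j,h+1}$. Summing the geometric series exactly as in \eqref{eq:value-gradient-transfer} gives \eqref{eq:localized-value-gradient-transfer} and closes the induction.
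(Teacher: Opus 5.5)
Your argument is correct and is essentially the paper's proof. It runs the same backward induction: current-stage entry from the global action-gradient bound and the margin, then the local action-gradient bound, the distance bound from strong concavity on the bin containing $\widehat a$, and the recursion $E_{j,h}\le\delta b_j+(1+\delta L_E)E_{j,h+1}$; the one variation is that you pair strong concavity directly with $v=\nabla_a\widehat Q_{j,h}(x,\widehat a)$ instead of routing through $d_h$, which is the same computation.

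A few small corrections. The strict-positivity worry is moot, because $\delta\gamma/2>0$. The inner product in your strong-concavity display should be taken against $\widehat a-a^\star_\ell$ rather than $a^\star_\ell-\widehat a$; this sign slip is harmless after Cauchy--Schwarz. You also need not invoke \cref{prop:algorithm-output-interface}, whose assumptions are not hypotheses of this theorem: the induction hypothesis already gives $E_{j,h+1}<\infty$, and \cref{lem:fitted-envelope-identity} supplies the fitted envelope identity.
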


\begin{proof}[Proof outline]
We follow the backward gradient-transfer argument of
\cref{thm:residual-regret-transfer}, establishing bin entry at each
stage before using the local benchmark inequalities.
For this proof outline, set
\[
  E_{j,h}:=\sup_x\|\nabla\Delta V_{j,h}(x)\|,
  \qquad
  A_{j,h}:=\sup_x
    \|\nabla_a\Delta Q_{j,h}(x,\pi_h^{(j)}(x))\|.
\]
Proceed backward from $E_{j,H}=0$.  Assuming the stated value-gradient
bound at $h+1$, kernel differentiation and the full-action residual bound give
\[
  \sup_{x,a}\|\nabla_a\Delta Q_{j,h}(x,a)\|
  \leq\delta\bigl(d_{a,j}^{\rm glob}+L_{\mu,a}E_{j,h+1}\bigr)
  \leq\delta D_j^{\rm glob}.
\]
For any $a^\star\in\mathcal A_h^\star(x)$ and
$a\in\mathcal A\setminus\mathcal U_h^\star(x)$, convexity of
$\mathcal A$, the entry margin, and the entry condition
$\diam(\mathcal A)D_j^{\rm glob}\leq\gamma/2$ give
\[
  \widehat Q_{j,h}(x,a^\star)-\widehat Q_{j,h}(x,a)
  \geq\delta\bigl(\gamma-\diam(\mathcal A)D_j^{\rm glob}\bigr)
  \geq\frac{\delta\gamma}{2}>0.
\]
Hence every fitted maximizer enters the union of bins, so the
local residual bounds in
\cref{def:two-scale-true-residual-certificate}
apply at the selected greedy actions. Greedy optimality and
strong concavity in a containing bin give the distance bound below;
the mixed-gradient bound and envelope identities give the recursion:
\begin{align*}
  A_{j,h}
  &\leq\delta\bigl(d_{a,j}^{\rm loc}+L_{\mu,a}E_{j,h+1}\bigr)
   \leq\delta D_j^{\rm loc},\\
  \sup_x\dist\bigl(\pi_h^{(j)}(x),\mathcal A_h^\star(x)\bigr)
  &\leq\frac{A_{j,h}}{\delta\bar m}
   \leq\frac{D_j^{\rm loc}}{\bar m},\\
  E_{j,h}
  &\leq\delta\bigl(d_{x,j}^{\rm loc}+\kappa d_{a,j}^{\rm loc}\bigr)
   +(1+\delta L_E)E_{j,h+1}.
\end{align*}
Backward summation closes the induction with the factor
$\Phi_{T-t_h}(L_E)$.
No optimal branch point is fixed in advance.  The full proof is in
\cref{app:backward-union-entry-proof}.
\end{proof}

The coverage argument also needs $D_j^{\rm loc}\leq\bar m \varrho_{\rm cov}$, so that \eqref{eq:localized-policy-set-distance} places the fitted policy in $\Pi_{\rm loc}(\varrho_{\rm cov})$. The next result gives a common sample threshold for this and the entry condition.

\begin{corollary}[Finite entry threshold and post-entry containment]
\label{cor:entry-threshold}
Suppose
\cref{ass:basic,ass:localized-optimal-branches,ass:localized-parametric-model}
hold.  Let the epoch-$j$ fitted critic satisfy
\cref{def:fitted-envelope-interface,def:two-scale-true-residual-certificate},
with radii satisfying
\eqref{eq:verified-entry-residual-rate}--\eqref{eq:verified-local-residual-rate}
for the fixed coefficient $C_{\rm res}(K,\boldsymbol\rho)$ above. Assume
$\diam(\mathcal A)b_{\rm entry}<\gamma/2$.
There is a nonnegative deterministic coefficient
$C_{\mathrm{grad\_err}}(K,\boldsymbol\rho)$, uniform over epochs and
independent of $\delta$, such that
\begin{equation}
  D_j^{\rm loc}
  \leq\frac{C_{\mathrm{grad\_err}}(K,\boldsymbol\rho)}{\sqrt{n_j}},\qquad
  D_j^{\rm glob}
  \leq b_{\rm entry}
      +\frac{C_{\mathrm{grad\_err}}(K,\boldsymbol\rho)}{\sqrt{n_j}}.
  \label{eq:entry-local-transfer-rates}
\end{equation}
For this coefficient, there is a finite deterministic episode threshold,
independent of $\delta$, such that
\[
  \diam(\mathcal A)D_j^{\rm glob}\leq\frac\gamma2,
  \qquad D_j^{\rm loc}\leq\bar m \varrho_{\rm cov}
\]
whenever $n_j$ exceeds that threshold.
For such epochs, all conclusions of \cref{thm:backward-union-entry} hold
and $\pi^{(j)}\in\Pi_{\rm loc}(\varrho_{\rm cov})$.
\end{corollary}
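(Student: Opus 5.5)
The plan is to substitute the verified residual rates \eqref{eq:verified-entry-residual-rate}--\eqref{eq:verified-local-residual-rate} directly into the definitions \eqref{eq:entry-transfer-radius}. Write $C_{\rm res}=C_{\rm res}(K,\boldsymbol\rho)$ and $\Phi:=\Phi_T(L_E)$, with $\kappa=\bar L_{xa}/\bar m$ and $L_E=L_{\mu,x}+\kappa L_{\mu,a}$. Using $d_{x,j}^{\rm loc},d_{a,j}^{\rm loc}\le C_{\rm res}/\sqrt{n_j}$, we get
\[
  D_j^{\rm loc}\le\frac{C_{\rm res}\bigl(1+L_{\mu,a}\Phi(1+\kappa)\bigr)}{\sqrt{n_j}}.
\]
Using in addition $d_{a,j}^{\rm glob}\le b_{\rm entry}+C_{\rm res}/\sqrt{n_j}$, the same coefficient gives
\[
  D_j^{\rm glob}\le b_{\rm entry}+\frac{C_{\rm res}\bigl(1+L_{\mu,a}\Phi(1+\kappa)\bigr)}{\sqrt{n_j}}.
\]
So I will set
\[
  C_{\mathrm{grad\_err}}(K,\boldsymbol\rho):=C_{\rm res}(K,\boldsymbol\rho)\bigl(1+L_{\mu,a}\Phi_T(L_E)(1+\kappa)\bigr).
\]
This is deterministic and nonnegative. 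It is uniform over epochs because $C_{\rm res}$ does not depend on $(j,h,\delta)$. It is independent of $\delta$ because $L_{\mu,a}$, $L_{\mu,x}$ (via \cref{lem:parametric-growth-moment}), $\bar m$, $\bar L_{xa}$ and $T$ are all $\delta$-free. This proves \eqref{eq:entry-local-transfer-rates}.

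Next I construct the threshold. The first condition, $\diam(\mathcal A)D_j^{\rm glob}\le\gamma/2$, follows from
\[
  \diam(\mathcal A)\frac{C_{\mathrm{grad\_err}}}{\sqrt{n_j}}\le\frac{\gamma}{2}-\diam(\mathcal A)b_{\rm entry}=:\gamma_{\rm slack},
\]
and $\gamma_{\rm slack}>0$ by the assumed strict inequality. The second condition, $D_j^{\rm loc}\le\bar m\varrho_{\rm cov}$, follows from $C_{\mathrm{grad\_err}}/\sqrt{n_j}\le\bar m\varrho_{\rm cov}$. I therefore take the threshold
\[
  n_{\rm entry}:=\max\left\{\frac{\diam(\mathcal A)^2C_{\mathrm{grad\_err}}^2}{\gamma_{\rm slack}^2},\ \frac{C_{\mathrm{grad\_err}}^2}{\bar m^2\varrho_{\rm cov}^2}\right\}.
\]
It is finite and deterministic. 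It is $\delta$-free because $\varrho_{\rm cov}$ is $\delta$-independent by \cref{lem:optimal-policy-neighborhood-excitation} and $\gamma$, $b_{\rm entry}$ are $\delta$-free by assumption. If $\diam(\mathcal A)=0$, the first term is dropped, and that case is trivial.

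For $n_j\ge n_{\rm entry}$, the hypothesis of \cref{thm:backward-union-entry} holds, so all of its conclusions apply. In particular, \eqref{eq:localized-policy-set-distance} gives
\[
  \sup_{h,x}\dist\bigl(\pi_h^{(j)}(x),\mathcal A_h^\star(x)\bigr)\le\frac{D_j^{\rm loc}}{\bar m}\le\varrho_{\rm cov},
\]
which is exactly membership $\pi^{(j)}\in\Pi_{\rm loc}(\varrho_{\rm cov})$.

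There is no real obstacle here. The only point needing care is bookkeeping. The transfer term in $D_j^{\rm glob}$ uses only local radii, which is why it decays. That in turn rests on \cref{thm:backward-union-entry} bounding $E_{j,h+1}$ with the local radii once entry is established at later stages. This is already built into the definition \eqref{eq:entry-transfer-radius}, so the corollary is purely algebraic.
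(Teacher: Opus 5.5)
Your proposal is correct and follows the paper's proof essentially line for line. It uses the same coefficient $C_{\mathrm{grad\_err}}=\bigl[1+L_{\mu,a}\Phi_T(L_E)(1+\bar L_{xa}/\bar m)\bigr]C_{\rm res}(K,\boldsymbol\rho)$, the same slack $\gamma/2-\diam(\mathcal A)b_{\rm entry}>0$, and the same two-term threshold (the paper takes its ceiling), and then invokes \cref{thm:backward-union-entry} together with \eqref{eq:localized-policy-set-distance} to get entry and $\pi^{(j)}\in\Pi_{\rm loc}(\varrho_{\rm cov})$.
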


\begin{proof}
Substitute the residual rates into \eqref{eq:entry-transfer-radius}.
Since $\gamma/2-\diam(\mathcal A)b_{\rm entry}>0$,
taking $n_j$ sufficiently large makes
$\diam(\mathcal A)C_{\mathrm{grad\_err}}(K,\boldsymbol\rho)/\sqrt{n_j}$
no larger than this positive difference, and makes
$C_{\mathrm{grad\_err}}(K,\boldsymbol\rho)/\sqrt{n_j}$ no larger than
$\bar m\varrho_{\rm cov}$. These are the entry and proximity conditions,
respectively. Then apply \cref{thm:backward-union-entry}.
The common coefficient and threshold calculation are given in
\cref{app:entry-threshold-calibration}.
\end{proof}

\subsection{Coverage at every epoch}
\label{subsec:main-coverage-induction}

\Cref{thm:two-scale-true-residual} assumes empirical coverage
$\widehat\Gamma_{n_j}\succeq\lambda_\mu I_p$ at the epoch.
We now show that this holds at every fitted epoch.  Coverage at $n_j$
depends on the population information of all policies deployed before
$n_j$; each is either the exploration policy or a fitted policy from an
earlier epoch.

The exploration policy satisfies $\Gamma(\pi^{\rm exp})\succeq2\lambda_{\rm exp}I_p$ under
\cref{ass:exploration-self-excitation}.  Whenever the conclusions of
\cref{cor:entry-threshold} hold at epoch $j$, the fitted policy $\pi^{(j)}$ lies in
$\Pi_{\rm loc}(\varrho_{\rm cov})$, and
\cref{lem:optimal-policy-neighborhood-excitation} gives $\lambda_{\min}\Gamma(\pi^{(j)})
\geq2\lambda_{\rm loc}$.  Since
$\lambda_\mu=\min\{\lambda_{\rm exp},\lambda_{\rm loc}\}$, each such
policy $\pi\in\{\pi^{\rm exp}\}\cup\Pi_{\rm loc}(\varrho_{\rm cov})$ satisfies $\Gamma(\pi)\succeq2\lambda_\mu I_p$.
The following deviation event transfers this population
bound to the empirical matrix once the sample count meets the
deterministic coverage threshold calibrated in
\cref{app:coverage-concentration-proof}.

\begin{lemma}[Coverage deviation event]
\label{lem:main-coverage-event}
Suppose \cref{ass:localized-parametric-model} holds, and let $\pi_k$ be
the policy deployed in episode $k$; its population information
conditional on the pre-episode history is $\Gamma(\pi_k)$.  There is an
event $\mathcal E_{\rm cov}$ with
$\Pp((\mathcal E_{\rm cov})^c)\leq\rho_{\rm cov}$ on which,
simultaneously at every prescribed endpoint $n_j<K$
whose sample count is at least the deterministic episode threshold in
\eqref{eq:warm-start-thresholds},
\begin{equation}
  \frac1{n_j}\sum_{k=1}^{n_j}\Gamma(\pi_k)
  \succeq2\lambda_\mu I_p
  \quad\Longrightarrow\quad
  \widehat\Gamma_{n_j}\succeq\lambda_\mu I_p.
  \label{eq:coverage-event-implication}
\end{equation}
\end{lemma}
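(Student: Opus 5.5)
The plan is to control $\widehat\Gamma_{n_j}-\frac1{n_j}\sum_{k\le n_j}\Gamma(\pi_k)$ by a matrix martingale concentration inequality, applied at each of the at most $\bar J_K$ prescribed endpoints and combined by a union bound.

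\emph{Martingale structure.} For episode $k$ define the per-episode empirical information
\[
  G_k:=\frac1H\sum_{h=0}^{H-1}\Phi_\mu(X_{k,h},A_{k,h})^\top\Phi_\mu(X_{k,h},A_{k,h}),
\]
so that $\widehat\Gamma_{n}=\frac1n\sum_{k\le n}G_k$. The policy $\pi_k$ is measurable with respect to the pre-episode history $\mathcal G_{k-1}$, which includes the auxiliary simulation randomness. Each episode uses a fresh initial state and fresh noise. Hence $\E[G_k\mid\mathcal G_{k-1}]=\Gamma(\pi_k)$, and $D_k:=G_k-\Gamma(\pi_k)$ is a matrix martingale difference sequence. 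By \cref{ass:localized-parametric-model}, $0\preceq G_k\preceq B_{\Phi_\mu}^2I_p$ almost surely, and the same holds for $\Gamma(\pi_k)$. Therefore $\|D_k\|_{\op}\le B_{\Phi_\mu}^2$, and the conditional variance is at most $B_{\Phi_\mu}^4 I_p$ in Loewner order.

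These bounds do not depend on $H$ or $\delta$, because we average over stages within an episode and treat episodes, not transitions, as the martingale index. This is what makes the resulting threshold independent of $\delta$. Treating stages as the index would instead create dependence on $H$, since the stages of a trajectory are dependent.

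\emph{Concentration.} Apply matrix Freedman (Tropp), or matrix Azuma, to $\sum_{k\le n}D_k$ at a fixed $n=n_j$. This gives
\[
  \Pp\Bigl(\lambda_{\max}\Bigl(-\sum_{k\le n}D_k\Bigr)\ge t\Bigr)\le p\exp\Bigl(-\frac{t^2/2}{nB_{\Phi_\mu}^4+B_{\Phi_\mu}^2t/3}\Bigr).
\]
Set $t=n\lambda_\mu$. The right-hand side is then at most $p\exp(-c\,n\lambda_\mu^2/B_{\Phi_\mu}^4)$ for a structural constant $c$.

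Allocate confidence $\rho_{\rm cov}/\bar J_K$ to each prescribed endpoint. Since the endpoints $n_j=2^jn_0$ are deterministic once $n_0$ is fixed, there are at most $\bar J_K$ of them, and the deterministic count $\bar J_K$ avoids circularity in choosing $n_0$. This yields the deterministic threshold
\[
  n\ge n_{\rm cov}:=\Bigl\lceil C\,\frac{B_{\Phi_\mu}^4}{\lambda_\mu^2}\log\frac{p\bar J_K}{\rho_{\rm cov}}\Bigr\rceil ,
\]
which is the form expected in \eqref{eq:warm-start-thresholds}. Let $\mathcal E_{\rm cov}$ be the intersection, over endpoints with $n_j\ge n_{\rm cov}$, of the events $\lambda_{\min}\bigl(\frac1{n_j}\sum_{k\le n_j}D_k\bigr)>-\lambda_\mu$. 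A union bound gives $\Pp(\mathcal E_{\rm cov}^c)\le\rho_{\rm cov}$.

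\emph{Conclusion on the event.} On $\mathcal E_{\rm cov}$, if $\frac1{n_j}\sum_k\Gamma(\pi_k)\succeq2\lambda_\mu I_p$, then by Weyl's inequality
\[
  \widehat\Gamma_{n_j}=\frac1{n_j}\sum_k\Gamma(\pi_k)+\frac1{n_j}\sum_kD_k\succeq(2\lambda_\mu-\lambda_\mu)I_p=\lambda_\mu I_p,
\]
which is \eqref{eq:coverage-event-implication}.

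The event makes no reference to whether the premise holds. The premise is only used deterministically afterwards, so the adaptive policy choices pose no problem.

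\emph{Main obstacle.} The delicate point is measurability and adaptivity. Fitted policies depend on all past data and auxiliary randomness, so the filtration must be $\mathcal G_{k-1}$. One must also check that $\Gamma(\pi_k)$, defined through a fresh episode under the true kernel, really is the conditional expectation of $G_k$. This uses the joint Borel measurability of the greedy policy in history and state from \cref{prop:algorithm-output-interface}, together with a regular conditional probability argument.
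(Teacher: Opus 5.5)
Your proposal matches the paper's proof. Both use episode-indexed, stage-averaged martingale differences, apply matrix Freedman with $t=n_j\lambda_\mu$, take a union bound over the at most $\bar J_K$ deterministic endpoints, and finish with a pathwise Weyl step; your range and variance bounds ($B_{\Phi_\mu}^2$ and $nB_{\Phi_\mu}^4$) are slightly sharper than the paper's ($2B_{\Phi_\mu}^2$ and $4nB_{\Phi_\mu}^4$), and they still fit the paper's threshold. One small gap: a structural constant $c$ in the exponent requires $\lambda_\mu\le B_{\Phi_\mu}^2$, so you should add, as the paper does, that for $\lambda_\mu>B_{\Phi_\mu}^2$ the event holds deterministically because $D_k\succeq-B_{\Phi_\mu}^2I_p$.
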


\begin{proof}
The construction in \cref{app:coverage-concentration-proof} assumes
neither population nor empirical coverage.  The conditional information
is computed under the original probability law; the event is used only
for the pathwise implication \eqref{eq:coverage-event-implication}.
\end{proof}

Let $n_\star(K,\boldsymbol\rho)$ be the deterministic
exploration threshold calibrated in
\eqref{eq:overall-warm-start-threshold}. It satisfies the model sample
requirement $n_\star(K,\boldsymbol\rho)\geq n_\Sigma$ from
\eqref{eq:model-sample-threshold}, the sample requirement for entry
and proximity in \cref{cor:entry-threshold}, and the coverage sample
requirement in \cref{lem:main-coverage-event}.
For $K>n_\star(K,\boldsymbol\rho)$, choose
$n_0=n_\star(K,\boldsymbol\rho)$, meeting all three thresholds.

The following theorem combines the deviation events with coverage
and post-entry exploration to verify the criteria throughout the run.

\begin{theorem}[End-to-end coverage and entry event]
\label{thm:coverage-entry-induction}
Fix the admissible timestep $\delta$ and its associated experiment.
Suppose
\cref{ass:basic,ass:localized-parametric-model,ass:localized-optimal-branches,ass:exploration-self-excitation,ass:envelope-stable-critic-class,ass:predictable-critic-design,ass:projected-two-scale-Bellman-approximation,ass:localized-MC-budget} hold.  Assume
$K>n_\star(K,\boldsymbol\rho)$ and choose
$n_0=n_\star(K,\boldsymbol\rho)$.  Then there is
an event
$\mathcal E_{\rm all}(\delta)$ with
\begin{equation}
  \Pp\left(\mathcal E_{\rm all}(\delta)^{\,c}\right)
  \leq
  \rho_{\rm cov}+\rho_\theta+\rho_\Sigma+\rho_{\rm MC}
  \label{eq:all-event-probability}
\end{equation}
on which, at every fitted epoch boundary $n_j<K$,
\begin{enumerate}[label=(\roman*),leftmargin=2.2em]
  \item $\widehat\Gamma_{n_j}\succeq\lambda_\mu I_p$, so the coverage check
  in \cref{alg:explore-enter-greedy} passes and no exploration fallback
  is used after the exploration episodes;
  \item the output satisfies
  \cref{def:two-scale-true-residual-certificate} with the rates
  \eqref{eq:verified-entry-residual-rate}--
  \eqref{eq:verified-local-residual-rate};
  \item the transfer rates
  \eqref{eq:entry-local-transfer-rates}, the entry condition
  $\diam(\mathcal A)D_j^{\rm glob}\leq\gamma/2$, and all conclusions of
  \cref{thm:backward-union-entry} hold, with
  $\pi^{(j)}\in\Pi_{\rm loc}(\varrho_{\rm cov})$.
\end{enumerate}
\end{theorem}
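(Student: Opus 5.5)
We define $\mathcal E_{\rm all}(\delta):=\mathcal E_{\rm cov}\cap\mathcal E_{\rm mdl}\cap\mathcal E_{\rm MC}$. Then \eqref{eq:all-event-probability} follows from a union bound using \cref{lem:main-coverage-event} and \eqref{eq:certificate-event-probability}. None of these events assumes coverage, so this definition is not circular. The rest of the argument is pathwise on $\mathcal E_{\rm all}(\delta)$. We proceed by forward induction over the fitted epoch boundaries $n_0<n_1<\cdots<n_{J_K}$, all of which are $<K$. The induction hypothesis at boundary $n_j$ is that every policy deployed in episodes $1,\ldots,n_j$ lies in $\{\pi^{\rm exp}\}\cup\Pi_{\rm loc}(\varrho_{\rm cov})$.

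\emph{Base case.} At $n_0=n_\star(K,\boldsymbol\rho)$, the first $n_0$ episodes all use $\pi^{\rm exp}$, so the hypothesis holds trivially.

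\emph{Inductive step.} Suppose the hypothesis holds at $n_j$. By \cref{ass:exploration-self-excitation} and \cref{lem:optimal-policy-neighborhood-excitation}, together with $\lambda_\mu=\min\{\lambda_{\rm exp},\lambda_{\rm loc}\}$, every such policy satisfies $\Gamma(\pi_k)\succeq2\lambda_\mu I_p$. Hence their average also does. This is where we must check that the conditional population information $\Gamma(\pi_k)$ given the pre-episode history equals $\Gamma$ evaluated at the realized policy. That holds because $\pi_k$ is a $\mathcal G$-measurable Borel policy (\cref{prop:algorithm-output-interface}) and each episode uses fresh randomness. Since $n_j\ge n_0=n_\star$ meets the coverage threshold, the implication \eqref{eq:coverage-event-implication} on $\mathcal E_{\rm cov}$ gives $\widehat\Gamma_{n_j}\succeq\lambda_\mu I_p$. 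This is (i), so the algorithm fits critics and does not fall back to exploration.

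Next, $n_j\ge n_\star\ge n_\Sigma$ gives $n_jT\ge\delta_{\max}\ell_{\rm mdl}$. So \cref{thm:two-scale-true-residual}, applied on $\mathcal E_{\rm mdl}\cap\mathcal E_{\rm MC}$, yields \cref{def:two-scale-true-residual-certificate} with rates \eqref{eq:verified-entry-residual-rate}--\eqref{eq:verified-local-residual-rate}. This is (ii). The envelope regularity required there comes from \cref{prop:algorithm-output-interface}.

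Since $n_j$ also exceeds the entry/proximity threshold, \cref{cor:entry-threshold}, with $\diam(\mathcal A)b_{\rm entry}<\gamma/2$ from \cref{ass:projected-two-scale-Bellman-approximation}, gives \eqref{eq:entry-local-transfer-rates}, the entry condition, the conclusions of \cref{thm:backward-union-entry}, and $\pi^{(j)}\in\Pi_{\rm loc}(\varrho_{\rm cov})$. This is (iii). Episodes $n_j+1,\ldots,n_{j+1}$ use $\pi^{(j)}$, so the hypothesis propagates to $n_{j+1}$ whenever $n_{j+1}<K$.

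\emph{Main obstacle.} The delicate point is that the events must hold simultaneously across all epochs, even though the epochs use adaptively chosen data. Policies depend on past data, so coverage at $n_{j+1}$ depends on the events at earlier epochs. This is handled because each event's construction is uniform over the prescribed endpoints: a union bound over $\bar J_K$ epochs is already built into $\ell_{\rm mdl}$, $\kappa_j^Q$, and the coverage threshold. Each event is also a statement about martingale or Monte Carlo deviations that does not presuppose coverage. So once we are on $\mathcal E_{\rm all}(\delta)$, the induction only invokes deterministic implications, and no conditioning on future-dependent events is needed. The other thing to confirm is that $n_\star$ dominates all three thresholds uniformly in $j$, which holds because they are deterministic and independent of $\delta$ and of $j$.
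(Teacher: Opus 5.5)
Your proposal is correct and follows essentially the same route as the paper. You define $\mathcal E_{\rm all}(\delta)=\mathcal E_{\rm cov}\cap\mathcal E_{\rm mdl}\cap\mathcal E_{\rm MC}$, bound its complement by a union bound before any coverage conclusion is imposed, and then run a pathwise forward induction: membership of all previously deployed policies in $\{\pi^{\rm exp}\}\cup\Pi_{\rm loc}(\varrho_{\rm cov})$ yields coverage via \eqref{eq:coverage-event-implication}, hence the residual certificate, entry, and proximity via $n_\star\geq\max\{n_{\rm gram},n_\Sigma,n_{\rm ent}\}$, with your induction hypothesis being a repackaging of the paper's ``conclusions hold through epoch $j$.''
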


\begin{proof}[Proof outline]
Define
\[
  \mathcal E_{\rm all}(\delta)
  :=\mathcal E_{\rm cov}
    \cap\mathcal E_{\rm mdl}
    \cap\mathcal E_{\rm MC}.
\]
\Cref{lem:main-coverage-event} and
\eqref{eq:certificate-event-probability} give
\eqref{eq:all-event-probability} by a union bound, before any coverage
conclusion is imposed.
On this intersection, exploration gives conditional population
information of at least $2\lambda_\mu I_p$. Since
$n_0=n_\star(K,\boldsymbol\rho)$ meets the coverage sample requirement,
\eqref{eq:coverage-event-implication} gives empirical coverage at the
first fitted epoch. The same choice ensures $n_0\geq n_\Sigma$, so
\cref{thm:two-scale-true-residual} supplies the residual rates. It also
meets the sample requirement of \cref{cor:entry-threshold}, which then
gives entry and proximity. The first fitted policy therefore enters
the union of bins and lies in $\Pi_{\rm loc}(\varrho_{\rm cov})$.

Suppose these conclusions hold through epoch $j$.  Every policy used so
far is either the exploration policy or belongs to the self-exploration
neighborhood, so its population information is at least
$2\lambda_\mu I_p$.  By \eqref{eq:coverage-event-implication}, empirical
coverage holds at the next fitted endpoint $n_{j+1}<K$.  Since sample counts are nondecreasing,
both remaining thresholds still hold, and the verification and entry arguments apply
again.  Thus a first failure cannot occur.  The full induction, including
the coverage check and endpoint details, is in
\cref{app:coverage-entry-induction-proof}.
\end{proof}

\subsection{Proof of the main theorem's regret bound}
\label{subsec:main-regret-proof}

The preceding coverage theorem supplies entry and local gradient control
at every fitted epoch.  We now bound one policy's loss, sum the losses
over the doubling epochs, and evaluate the sizes of the constants.

\begin{proof}[Proof of \cref{thm:end-to-end-localized-regret}]
For the choice $n_0=n_\star(K,\boldsymbol\rho)<K$,
\cref{thm:coverage-entry-induction} supplies $\mathcal E_{\rm all}(\delta)$
with the probability in \eqref{eq:all-event-probability}.
Work on this event throughout.  At every fitted epoch, the entry and
gradient bounds of \cref{thm:backward-union-entry} and the rates
\eqref{eq:entry-local-transfer-rates} hold.

\medskip
\underline{\emph{Step 1: one-episode loss.}}
Fix a fitted epoch $j$.  Within an entered bin, strong concavity and
greedy stationarity give, for every $h<H$ and $x$,
\[
  V_h^\star(x)-Q_h^\star(x,\pi_h^{(j)}(x))
  \leq\frac{1}{2\delta\bar m}
    \|\nabla_a\Delta Q_{j,h}(x,\pi_h^{(j)}(x))\|^2
  \leq\frac{\delta}{2\bar m}(D_j^{\rm loc})^2,
\]
where the last step uses \eqref{eq:localized-action-gradient-transfer}.
The binwise comparison is detailed in
\cref{app:localized-one-episode-regret-proof}.
By \cref{lem:optimal-advantage-pdi}, summing under the learner's trajectory
law over $H=T/\delta$ stages and applying
\eqref{eq:entry-local-transfer-rates} gives
\begin{equation}
  J_\delta^\star-J_\delta(\pi^{(j)})
  \leq\frac{T}{2\bar m}(D_j^{\rm loc})^2
  \leq\frac{T\{C_{\mathrm{grad\_err}}(K,\boldsymbol\rho)\}^2}{2\bar m n_j}.
  \label{eq:localized-one-episode-loss-rate}
\end{equation}
The first inequality holds for any fitted policy under the hypotheses
of \cref{thm:backward-union-entry} alone.
The direct bound through $D_j^{\rm loc}$ uses only local
residual errors. The common upper bound also includes the decaying
entry-error coefficient, while the floor $b_{\rm entry}$ affects only
the entry condition.

\medskip
\underline{\emph{Step 2: summing the epoch losses.}}
Each exploration episode costs at most $2B_J$, and each fitted policy
is used for at most $n_j$ episodes.  Thus applying
\eqref{eq:localized-one-episode-loss-rate} gives \eqref{eq:end-to-end-regret-bound}:
\begin{align*}
  \Reg(K,\delta)
  &\leq
  2B_Jn_\star(K,\boldsymbol\rho)
  +\sum_{j=0}^{J_K}n_j
  \frac{T
  \{C_{\mathrm{grad\_err}}(K,\boldsymbol\rho)\}^2}{2\bar m n_j}\\
  &=
  2B_Jn_\star(K,\boldsymbol\rho)
  +\frac{T}{2\bar m}
  \{C_{\mathrm{grad\_err}}(K,\boldsymbol\rho)\}^2(J_K+1).
\end{align*}

\medskip
\underline{\emph{Step 3: sizes of the constants and the final rate.}}
For fixed $T$, the coefficient and threshold calculations in
\cref{app:localized-rate-bookkeeping-proof} give
\begin{align}
  \{C_{\mathrm{grad\_err}}(K,\boldsymbol\rho)\}^2
  &=\mathcal{O}\left(1+\frac{\ell_{\rm mdl}}{T}\right),
  \label{eq:transfer-numerator-size}\\
  n_\star(K,\boldsymbol\rho)
  &=\mathcal{O}\left(
      1+\frac{\ell_{\rm mdl}}{T}
      +\log\frac{2p\bar J_K}{\rho_{\rm cov}}
    \right).
  \label{eq:warm-start-size}
\end{align}
Substitute these bounds into \eqref{eq:end-to-end-regret-bound} and use
$J_K+1\leq\bar J_K=\mathcal{O}(\log(2K))$ to obtain, on $\mathcal E_{\rm all}(\delta)$,
\begin{align}
  \Reg(K,\delta)
  &=\mathcal{O}\left(
      1+\frac{\ell_{\rm mdl}}{T}
      +\log\frac{2p\bar J_K}{\rho_{\rm cov}}
      +(T+\ell_{\rm mdl})(J_K+1)
    \right)
  \notag\\
  &=\mathcal{O}\left(
    \left[
      1+p
      +\log\frac{n^2\log(2K)}
                      {\rho_\theta\wedge\rho_\Sigma}
      +\log\frac{p\log(2K)}{\rho_{\rm cov}}
    \right]\log(2K)
  \right).
  \label{eq:headline-explicit-rate}
\end{align}
This is $\mathcal{O}(\log K\log\log K)$ for fixed dimensions, structural constants, and confidence levels. All bounds are uniform in $\delta$. The dependence on $H$ is absorbed by the Monte Carlo budget.
\end{proof}

\begin{remark}[Dependence on the physical horizon]
\label{rem:physical-horizon-dependence}
The rates above are for fixed $T$.  Suppose the stage and terminal
rewards are uniformly bounded, $T\geq1$, $L:=L_E>0$, and all structural constants and $\delta_{\max}$
are held fixed as $T$ varies, with
$\varrho_{\rm cov}$ chosen explicitly as in \cref{app:optimal-envelope-regularity}.
Then, on $\mathcal E_{\rm all}(\delta)$ and for
$K>n_\star(K,\boldsymbol\rho)$,
\[
  \Reg(K,\delta)
  =\mathcal{O}\!\left(e^{2LT}(T+\ell_{\rm mdl})[T+\log(2K)]
     +T\log\frac{2p\bar J_K}{\rho_{\rm cov}}\right),
\]
with $e^{2LT}$ is replaced by $T^2$ when $L=0$.
This bound explicitly shows how the regret grows with the physical horizon $T$, with implicit constants that are independent of the discretization parameter $\delta$. See
\cref{app:localized-rate-bookkeeping-proof} for the proof.

This exponential factor comes from backward propagation of gradient errors used to certify pointwise bin entry and continued coverage. Note that since we consider a smaller policy class here, it may be possible to improve this exponential dependence, whereas in the general setting of Section \ref{sec:model-global-regret}, we showed that it matches the lower bound. We leave this question for future investigation. %
\end{remark}

\section{Numerical experiments}
\label{sec:localized-discussion}
\providecommand{\ourmethod}{\mbox{CD-FQ}}
In this section, we perform two sets of experiments. The first provides an end-to-end numerical validation of the theory in a model satisfying all assumptions (\cref{subsec:controlled-multibranch-model}); the second benchmarks \ourmethod{} against baseline methods in a queueing model (\cref{subsec:halfin-whitt-numerics}).

\subsection{Constructive verification: a model with two branch points}
\label{subsec:controlled-multibranch-model}

\paragraph{Model and critic family.}
Take $T=1$, $0<\delta\leq1$ with $H=1/\delta\in\mathbb N$,
$\mathcal A=[-\sqrt2,\sqrt2]$, and $X_0=\pi/2$ in every episode.
The dynamics and rewards are
\begin{equation}
  X_{h+1}=X_h+\delta\theta^\star\left(1+\frac{A_h^2}{4}\right)
                 +\sqrt\delta\,\sigma^\star\xi_{h+1},
  \qquad \xi_{h+1}\sim\mathcal N(0,1),
  \label{eq:example-controlled-model}
\end{equation}
\begin{equation}
  r(x,a)=-2(a^2-1)^2,\qquad u(x)=\cos x.
  \label{eq:example-rewards}
\end{equation}
The noises are independent across stages and episodes.  The unknown
parameters satisfy $|\theta^\star|\leq1$ and
$1\leq(\sigma^\star)^2\leq2$.
The drift feature is $\Phi_\mu(x,a)=1+a^2/4$.

The effective action $b=a^2\in[0,2]$ has a unique interior optimal value
$b_h^\star(x)$, giving
\(
  \mathcal A_h^\star(x)
  =\{-\sqrt{b_h^\star(x)},\sqrt{b_h^\star(x)}\}.
\)
The two branch points have identical rewards and transitions, while the
unknown drift affects their common magnitude.  We use critics of the form
$Q_{h,L,w}(x,a)=-2\delta(a^2-1)^2+P_{L,w}(x,a^2)$, where
$P_{L,w}$ is a real Fourier--Chebyshev expansion with state frequencies
and effective-action polynomial degree at most $L$, and
$q_L=(2L+1)(L+1)$ coefficients.

\begin{proposition}[Joint verification for the model with two branch points]
\label{prop:example-benchmark}
For \eqref{eq:example-controlled-model}--\eqref{eq:example-rewards},
the construction in \cref{app:controlled-multibranch-example} satisfies all
assumptions of \cref{thm:end-to-end-localized-regret}.
For fixed component confidence levels, \cref{alg:explore-enter-greedy}
with this construction and $n_0=n_\star(K,\boldsymbol\rho)<K$ exploration episodes
satisfies
\[
  \Reg(K,\delta)=\mathcal{O}(\log K\log\log K)
\]
with probability at least
$1-\rho_{\rm cov}-\rho_\theta-\rho_\Sigma-\rho_{\rm MC}$,
with an implied constant independent of $\delta$.
\end{proposition}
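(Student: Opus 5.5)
The plan is to verify each hypothesis of \cref{thm:end-to-end-localized-regret} for the model \eqref{eq:example-controlled-model}--\eqref{eq:example-rewards} and then read off the rate from \eqref{eq:end-to-end-regret-bound} and \eqref{eq:headline-explicit-rate}. The environment assumptions are the natural starting point. \cref{ass:localized-parametric-model} holds with $p=n=1$, $\Phi_\mu=1+a^2/4$ (bounded with bounded derivatives on a neighborhood of $\mathcal A$), $B_\theta=1$, $\lambda_0=1$, $\Lambda_0=2$, a smooth reward, and $u=\cos\in C^{1,1}$. \cref{lem:parametric-growth-moment} then gives \cref{ass:basic}. In fact, since $r$ and $u$ are bounded, we may even take $W\equiv1$ and $c_W=0$, because $X_0$ is deterministic.

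For \cref{ass:localized-optimal-branches}, I would pass to the effective action $b=a^2$ and write $Q_h^\star(x,a)=-2\delta(b-1)^2+\Gamma_h(x,b)$, where
\[
\Gamma_h(x,b)=\E\, V_{h+1}^\star\bigl(x+\delta\theta^\star(1+b/4)+\sqrt\delta\sigma^\star\xi\bigr).
\]
The key analytic step is a uniform-in-$\delta$ bound $\|\partial_b\Gamma_h\|,\|\partial_b^2\Gamma_h\|,\|\partial_x\partial_b\Gamma_h\|\le C\delta$, with $C$ small compared with the curvature $4\delta$ of the reward term. I would get this from $\partial_b\Gamma_h=\tfrac{\delta\theta^\star}{4}\,\E\,\partial_xV_{h+1}^\star(\cdot)$ together with uniform bounds on $\partial_xV^\star$ and $\partial_x^2V^\star$. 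Those bounds should follow from the Gaussian smoothing and the bounded terminal data $\cos x$, propagated backward over physical time $T=1$ by a \cref{lem:kernel-differentiation}-type recursion with constants $e^{O(T)}$.

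Given these bounds, $b\mapsto Q_h^\star$ is strongly concave in $b$ with a unique interior maximizer $b_h^\star(x)$ near $1$, which stays inside $[1/2,3/2]$ since $|\theta^\star|/4$ is small. I would then choose bins $B_{h,\pm}^\star(x)=\pm[\sqrt{1/4},\sqrt2]$, intersected with $\mathcal A$. In $a$, the second derivative is roughly $\partial_b^2 Q\cdot4a^2+2\partial_bQ$, and this is $\le-\delta\bar m$ whenever $a^2\ge1/4$ and $b$ is near $b^\star$. The margin \eqref{eq:union-entry-margin} on $|a|<1/2$ comes from $-2(b-1)^2\le-2(3/4)^2$ against $-2(b^\star-1)^2+O(\delta)$-small perturbations. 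The mixed bound \eqref{eq:branch-mixed-gradient} follows from the $\partial_x\partial_b$ estimate. For \cref{ass:exploration-self-excitation}, since $p=1$ the quantity $\Gamma(\pi)=\E\frac1H\sum(1+A_h^2/4)^2\ge1$ for every policy, so both coverage constants are immediate with $\pi^{\rm exp}\equiv0$.

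The learner-side assumptions come next. For \cref{ass:envelope-stable-critic-class} I would take $\mathcal C_{h,j}$ to be a compact convex box of Fourier--Chebyshev coefficients whose $x$- and $b$-parts are scaled so that $P_{L,w}$ has the form $c_0(x)+\delta(\cdot)$, with the action part of size $O(\delta)$. I would impose, through linear constraints, that the $b$-derivatives are dominated by the $4\delta$ curvature. This makes each critic maximizer interior in $b$ and locally unique, and gives the active-gradient bounds $G_0,M_0$ uniformly in the sieve level by the same implicit-function computation as for $Q^\star$. The design for \cref{ass:predictable-critic-design} would be a tensor Chebyshev/equispaced grid on $[0,2\pi]\times[0,\sqrt2]$ with $N\ge c_Nn_j$ repetitions, which yields $c_{Q,j}>0$. \cref{ass:localized-MC-budget} is then enforced simply by choosing $M_{j,h}\asymp\kappa_j^Q/\delta$.

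The main obstacle is \cref{ass:projected-two-scale-Bellman-approximation}. Here every $G_h^{P,v}$, for an admissible kernel and a feasible envelope $v$, must be approximated by the constrained projection in $C^1$ to accuracy $\delta/\sqrt{n_j}$ on the bins, and to within $\delta b_{\rm entry}$ globally. The difficulty is that $G_h^{P,v}$ is not periodic in $x$. I would first try to restrict the state to the relevant periodic structure: $v$ is $2\pi$-periodic (since $u$ and the Fourier features are), and Gaussian convolution preserves periodicity, so $G-\delta r$ is periodic in $x$ and smooth, indeed analytic in $b$ after smoothing. Spectral convergence of Fourier--Chebyshev truncation, with $L\asymp\log n_j$ so that $q_L$ grows polylogarithmically, should then give errors $\delta e^{-cL}\le\delta/\sqrt{n_j}$. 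A factor $\delta$ comes out of $b$-differences because $\partial_b G=O(\delta)$. For $x$-derivatives, I would handle the $O(1)$ level with a $\delta$-scaled error bound via the identity $\partial_xG=\mathcal P\partial_xv$, using smoothness of order $\delta$ relative to $v$; making this step quantitative is where I expect most of the work. I must also check that the truncation lies inside the constraint box, so the projection is not distorted, using a stability bound for discrete least squares given $c_{Q,j}$. Once these hold, \cref{thm:end-to-end-localized-regret} applies directly, and \eqref{eq:headline-explicit-rate} with fixed $n=p=1$ and $T=1$ gives $\mathcal O(\log K\log\log K)$ with constants independent of $\delta$.
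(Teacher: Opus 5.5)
Your environment, exploration and design checks are fine. The step you flag as the main obstacle, \cref{ass:projected-two-scale-Bellman-approximation}, has a genuine gap, and the resolution you sketch does not work.

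\textbf{Approximation step.} The continuation $v\in\mathcal V_{h+1,j}$ is not in the stage-$h$ span. So the order-one state dependence of $G_h^{P,v}$ must itself be matched in $\partial_x$ to absolute accuracy $\delta c_x^{\rm loc}/\sqrt{n_j}$, uniformly over feasible envelopes and admissible kernels. The identity $\partial_xG_h^{P,v}=\mathcal P[\partial_xv]$ yields no factor $\delta$. The envelopes $v=\max_bF(\cdot,b)$ are not band-limited, since the maximization creates frequencies above the sieve level. Nothing in your construction gives their Fourier tails spectral decay uniform in $\delta$, so $L\asymp\log n_j$ with error $\delta e^{-cL}$ is unsupported. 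The only uniform smoothing is the kernel's damping $e^{-\delta\sigma^2k^2/2}$, which acts only at $k\gtrsim\delta^{-1/2}$.

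The missing idea is to let the sieve level depend on $\delta$. This is allowed, because only the normalized coefficients must be $\delta$-uniform. The paper uses $\sigma^2\geq\lambda_0=1$ and $|\widehat v_k|\leq\|v\|_\infty$, truncates the Fourier modes of $\mathcal Pv$, and Taylor-expands $e^{ik\delta\theta(b-1)/4}$ to degree $L$ in $b-1$. This gives a tensor-span approximant with $C^2$ error $\varepsilon_L(\delta)=8\{\sum_{k>L}k^2e^{-\delta k^2/2}+(L+1)^3(e/4)^L\}$. The level is then $L_j=\min\{L\geq2:(3+D_L)\varepsilon_L(\delta)\leq\delta/(100\sqrt{n_j})\}$.

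This bounds the constrained projection only if the approximant is feasible. So the class must be Bellman-invariant with slack over all admissible kernels and all $T/\delta$ stages. A coefficient box gives no sup-norm $C^2$ bounds uniform in $L$. Fixed bounds are not invariant either, because $v_F''=F_{xx}-F_{xb}^2/F_{bb}$ gains $\mathcal O(\delta)$ per stage. The paper therefore imposes sup-norm shape constraints with graded bounds $\mathsf B_h,\mathsf g_h,\mathsf M_h$. These include $-6\delta\leq F_{bb}\leq-2\delta$ on all of $[0,2]$, so the $b$-maximizer is globally unique, not just locally. It checks that targets meet the constraints with $\delta/100$ slack, and obtains $\|\bar w-w_L\|\leq\varepsilon_L$ from the variational inequality with identity Gram matrix.

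Because $L_{\Psi,s,j}\leq D_{L_j}$ grows with the level, your budget $M_{j,h}\asymp\kappa_j^Q/\delta$ is also too small. To keep $c_{{\rm MC},s}$ fixed in \cref{ass:localized-MC-budget}, it must be enlarged to order $D_{L_j}^2\kappa_j^Q/\delta$.

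\textbf{Bins.} Your bins $\pm[1/2,\sqrt2]$ are also wrong, though this is fixable. Since $\partial_a^2\{-2\delta(a^2-1)^2\}=-8\delta(3a^2-1)$, the reward part is convex in $a$ for $a^2<1/3$ and contributes $+2\delta$ at $a=1/2$. Your $\Gamma_h$ terms contribute $2\partial_b\Gamma_h+4a^2\partial_b^2\Gamma_h$. Here $|\partial_b\Gamma_h|\leq\delta/4$, because $\|\nabla V_{h+1}^\star\|_\infty\leq\|\nabla u\|_\infty=1$, and $\partial_b^2\Gamma_h=\mathcal O(\delta^2)$. Hence $\partial_a^2Q_h^\star(x,1/2)>0$, and the strong concavity in part (ii) of \cref{ass:localized-optimal-branches} fails. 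Your own caveat that $b$ be near $b_h^\star$ is violated at $b=1/4$.

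The bins must keep $a^2$ well above $1/3$. The paper uses $\pm[\sqrt{b_h^\star-\eta},\sqrt{b_h^\star+\eta}]$ with $\eta=1/16$, where $a^2\geq11/16$ and $Q_{h,aa}^\star\leq-19\delta/4$. It derives the margin from strong concavity in $b$ on all of $[0,2]$: $V_h^\star-Q_h^\star\geq\delta|a^2-b_h^\star|^2\geq\delta/256$. Your margin sketch should be made rigorous the same way. The difference $\Gamma_h(x,b_h^\star)-\Gamma_h(x,b)$ is of the same order $\delta$ as the main term, not an $\mathcal O(\delta)$-small perturbation.
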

Since $\Phi_\mu(x,a)=1+a^2/4\geq1$, every policy in this example
satisfies $\Gamma(\pi)\geq1$. The experiment therefore illustrates
learning with multiple optimal actions in a setting where all
policies provide information about the unknown drift.

\paragraph{Experimental set-up and results.}
For each $H\in\{4,8,16,32,64\}$, we run $64$ independent seeds of
$K=8192$ episodes with $\theta^\star=0.75$ and $(\sigma^\star)^2=1.5$.
After $n_0=16$ exploration episodes at $a=1$, the learner refits at
$n=16,32,\ldots,4096$ and deploys each fitted policy for $n$ episodes,
with a fixed level $L=6$ ($91$ coefficients).  Unlike the construction in
\cref{prop:example-benchmark}, the implementation uses this practical choice
of $n_0$, a fixed critic space, deterministic labels, and numerical
constraint checks; details are in \cref{app:example-numerics}.

\Cref{fig:example-policy-gap} shows the mean deployed-policy gap
decaying at rate approximately $n^{-1}$ at every resolution; the mean
fitted log--log slope over $n=128,\ldots,4096$ is $-1.009$.  Final
learning regret at $K=8192$ ranges from $0.0136$ to $0.0168$ across $H$,
and all pairwise bootstrap intervals contain zero, so it shows no
detectable change across the sixteenfold grid refinement.  Exploration
regret approaches an extrapolated limit of $0.0170$ at approximately
first order in $\delta$, reflecting convergence of the Euler benchmark.
Cumulative regret grows approximately linearly in $\log K$
(\cref{app:example-numerics}).

\begin{figure}[!htbp]
  \centering
  \begin{minipage}[c]{0.65\linewidth}
    \centering
    \includegraphics[width=\linewidth]{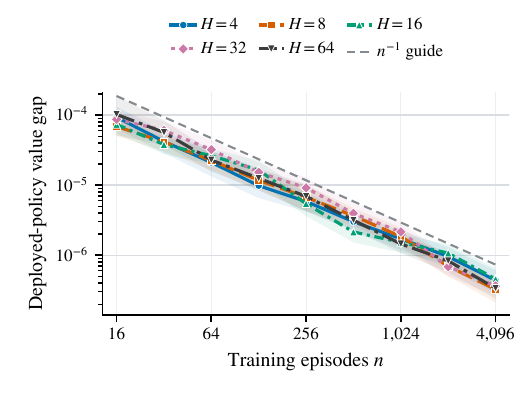}
  \end{minipage}\hfill
  \begin{minipage}[c]{0.32\linewidth}
    \caption{Mean deployed-policy gap against the true-model optimum
    after $n$ training episodes, in the model with two branch points.  Each
    curve averages 64 independent runs at resolution $H=1/\delta$; shading
    gives pointwise 95\% percentile bootstrap intervals from 2,000
    resamples of complete runs.  The gray dashed line is an $n^{-1}$ rate
    reference.}
    \label{fig:example-policy-gap}
  \end{minipage}
\end{figure}

\FloatBarrier
\subsection{Halfin--Whitt queueing example}
\label{subsec:halfin-whitt-numerics}

We next study service-speed control in a many-server queue under
Halfin--Whitt scaling \citep{halfin1981heavy}. To assess the framework's learning efficiency and robustness,   we examine cumulative regret, stability under timestep refinement, performance relative to two value-based baselines, and transferrability of Euler-trained policies to finite queues.

\subsubsection{Experimental setup}\label{subsec:hw-draft-setup}
The state is the diffusion-scaled number of customers relative to the
number of servers, and the action increases service speed.  With
$x^+=\max\{x,0\}$ and $x^-=\max\{-x,0\}$, training uses the Euler model
\begin{equation}
 X_{h+1}=X_h+\delta(-\beta+\mu X_h^- -\kappa A_h)
              +\sqrt{2\mu\delta}\,\varepsilon_h,
 \qquad \varepsilon_h\overset{\mathrm{iid}}{\sim}N(0,1),
 \label{eq:hw-draft-euler}
\end{equation}
over $H=T/\delta$ stages, with $T=5$, $X_0=0$, $\mu=1$,
$\beta=1/2$, and $\kappa=1$.  We minimize
$J_\delta(\pi)=\E_\pi\sum_{h=0}^{H-1}\delta\ell(X_h,A_h)$, where
$\ell(x,a)=qx^++ca^2/2$ and $q=c=1$, with zero terminal cost.
In the reward convention of \cref{sec:model-global-regret}, $r=-\ell$.
Actions lie on $\mathcal A_{0.25}=\{0,0.25,\ldots,2\}$.

Our learner, controlled-diffusion fitted-$Q$ (\ourmethod{}), estimates $(\beta,\kappa)$ and the diffusion variance from
fixed-stage observations, retaining the known drift term $\mu x^-$.
After initial exploration (64 episodes in
Experiments 1(a)--1(b), and 32 in 2(a)--2(b)), it fits constrained
cubic-spline critics using fresh Gaussian next-state samples from the estimated
model and updates the greedy policy at dyadic episode counts.
Numerical parameters are fixed across timesteps
within each study.

For the policy $\pi_k$ deployed in episode $k$, we measure
\begin{equation}
 R_K=\sum_{k=1}^K
 \{J_\delta(\pi_k)-J_\delta(\pi^{\mathrm{ref}}_\delta)\},
 \label{eq:hw-draft-excess}
\end{equation}
where $\pi^{\mathrm{ref}}_\delta$ is a numerical Euler reference on the
same action grid and timestep, computed by dynamic programming using the true model parameters. Thus $R_K$ is the regret \eqref{eq:regret} with $J_\delta^\star$ replaced by $J_\delta(\pi^{\mathrm{ref}}_\delta)$, and $J_\delta(\pi)-J_\delta(\pi^{\mathrm{ref}}_\delta)$ is the policy gap of $\pi$. 
These are evaluated using 32,768 paired simulations episodes, with the learned and reference policies receiving identical Gaussian noise draws within each pair.

Experiments 1(a) and 1(b) each use eight independent training
seeds per timestep, with separate seed sets. Experiments 2(a) and 2(b)
use 15 new matched seeds per timestep and a
512-episode budget; Experiment 2(b) transfers the policies from 2(a).
Reported intervals are marginal 95\% Student-$t$ intervals across the
training seeds. 

Appendix~\ref{app:hw-draft-scaling} gives the queue scaling,
Appendix~\ref{app:hw-draft-implementation} the implementation, and
Appendix~\ref{app:hw-draft-evaluation} the reference solver and
evaluation details.
This model lies outside our theory: the action set is a finite grid,
the cost $qx^+$ is not $C^1$, and the known drift term $\mu x^-$ is
unbounded and not $C^1$.  The experiments are therefore a stress test of
\ourmethod{} beyond the assumptions of \cref{thm:end-to-end-localized-regret}.

\subsubsection{Experiment 1(a): growth of cumulative regret}\label{subsec:hw-draft-1a}
We run $K=16384$ episodes at $\delta\in\{0.2,0.1,0.05\}$.
Figure~\ref{fig:hw-draft-long-horizon} plots $R_K-R_{64}$, excluding
the initial exploration cost.

\paragraph{Regret grows near-logarithmically in $K$.}
The mean curves are
approximately affine in $\log_2K$ at larger budgets.  The last doubling
adds $0.82$, $1.00$, and $0.97$ times the previous increment, whereas
$\sqrt K$ or linear growth would give about $1.41$ or $2$.  The fitted
slopes per doubling, $1.82$, $1.68$, and $1.41$
(Appendix~\ref{app:hw-draft-diagnostics}), do not increase as $\delta$
decreases. Both the near-logarithmic growth in $K$ and the stability
under timestep refinement are consistent with our theoretical
$O(\log K\log\log K)$ regret bound, whose implied constant is independent
of $\delta$.

\begin{figure}[!htbp]
  \centering
  \begin{minipage}[c]{0.68\linewidth}
    \centering
    \includegraphics[width=\linewidth]{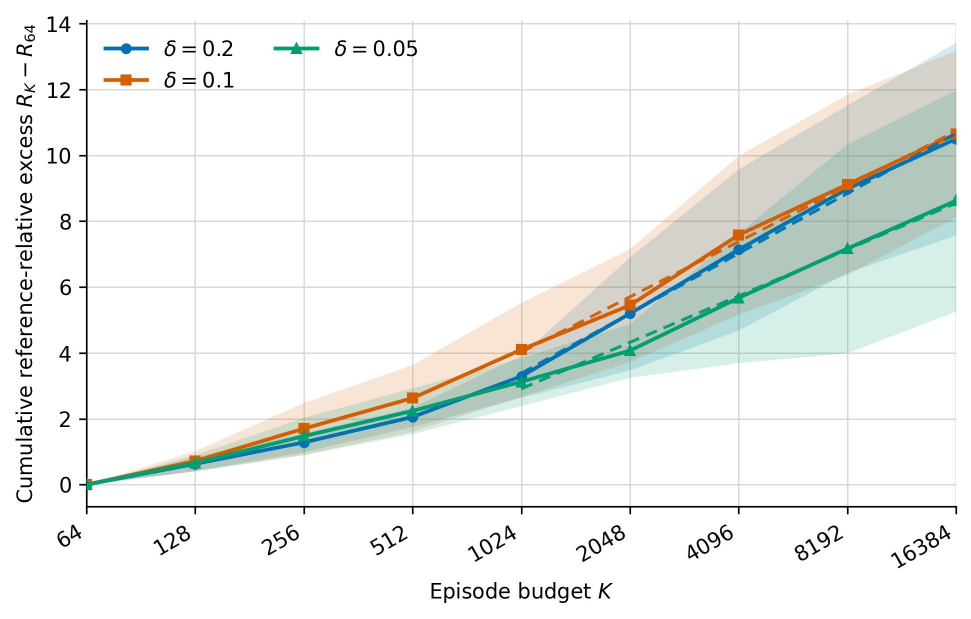}
  \end{minipage}\hfill
  \begin{minipage}[c]{0.3\linewidth}
    \caption{Experiment 1(a): cumulative regret after the 64 exploration
 episodes, at fixed physical horizon $T=5$.  Lines are seed means;
 shading gives 95\% intervals.
 Each timestep uses its own matched Euler reference.  Dashed line segments
 are linear fits in $\log_2 K$ over the last 5 values of $K=1024,\ldots,16384$.}
    \label{fig:hw-draft-long-horizon}
  \end{minipage}
\end{figure}

\subsubsection{Experiment 1(b): stability under timestep refinement}\label{subsec:hw-draft-1b}
We fix $K=2048$ and $T=5$ and vary $\delta$ from $0.2$
to $0.025$.

\paragraph{Stable regret as $\delta$ decreases.}
Table~\ref{tab:hw-draft-timestep}
shows comparable cumulative regret and terminal policy
gaps across different discretizations.
These results support stability over the tested range, while the number
of decisions per episode increases from 25 to 200.  

\paragraph{Little loss after exploration.}
On the scale of the problem, the terminal gaps are
$0.05$--$0.08\%$ of the reference cost. 
The 64 exploration episodes account for about $96\%$ of $R_{2048}$. Over the remaining $1984$ episodes, the average policy gap is $0.0034$--$0.0042$. This illustrates entry into the fast rate regime soon after an initial exploration phase, exploiting the diffusion structure of the underlying mode. 

\begin{table}[!htbp]
 \centering
 \caption{Experiment 1(b): timestep refinement at $T=5$ and $K=2048$.
 The terminal gap evaluates the recommendation fitted after
 episode 2048; cumulative regret includes only deployed policies.}
 \label{tab:hw-draft-timestep}
 \small
 \begin{tabular}{@{}rrcc@{}}
  \toprule
  $\delta$ & $H$ & $R_{2048}-R_{64}$ & Terminal gap \\
  \midrule
  0.200 & 25  & $7.64\ [4.06,11.22]$ & $0.001183\ [0.000507,0.001858]$ \\
  0.100 & 50  & $8.36\ [5.06,11.67]$ & $0.000851\ [0.000645,0.001057]$ \\
  0.050 & 100 & $7.37\ [4.75,9.99]$  & $0.000644\ [0.000380,0.000908]$ \\
  0.025 & 200 & $6.82\ [4.20,9.45]$  & $0.000824\ [0.000510,0.001139]$ \\
  \bottomrule
 \end{tabular}
\end{table}

\subsubsection{Experiment 2(a): common-budget Euler learning}
\label{subsec:hw-draft-2a}
We compare \ourmethod{} with two families of baselines at a common interaction budget.
\ourmethod{} assumes the parametric controlled-diffusion form of
\eqref{eq:hw-draft-euler}, which the Euler training model satisfies
exactly, and is given the known drift term $\mu x^-$; the baselines use
neither. Experiment 2(b) tests whether the advantage
persists when the learned policies are transferred to finite queues, which 
the diffusion only approximates.

The first baseline is backward fitted-$Q$ iteration following
\citet{duan2024taming} (DW). Source-style DW fits only the newest epoch's
episodes at each refit. DW + replay pools all past episodes, including
initial exploration, while re-using the other DW settings. The second
family is a finite-action implementation of little-$q$
\citep{jia2023qlearning}, which learns a value function and action score
from martingale orthogonality conditions. Online little-$q$ updates after
every episode; batch little-$q$ solves the conditions on all observed
data at dyadic refits. The main comparison uses online little-$q$'s native Gibbs policy at temperature
$\nu=0.2$. Batch little-$q$ also uses $\nu=0.2$ and ridge coefficient $\rho=10^{-5}$ with a greedy policy for the terminal comparison 
(Appendix~\ref{app:hw-draft-comparators}).

Every run receives the same type of fixed-stage tuple
$(X_h,A_h,\delta\ell(X_h,A_h),X_{h+1})$ on the same action grid,
$K=512$ episodes, and $n_0=32$ initial exploration episodes.
The 15 seed labels are matched across methods and timesteps, though trajectories can differ when actions differ.
This gives $KT=2560$ units of physical interaction and $512H$ observed
transitions per run at each horizon. Here \ourmethod{} uses exactly zero coefficient ridge. 
Timings and solver diagnostics appear in
Appendix~\ref{app:hw-draft-comparator-diagnostics}.

The primary endpoint is the terminal value gap: the expected cost of the
policy fitted after episode 512 minus that of the matched numerical Euler
reference. It is estimated on the Euler model using paired simulation paths.
Each reported
interval is a marginal 95\% Student-$t$ interval across the 15 independent training seeds.
Statistical estimation error from finite randomized training samples 
and Monte Carlo error from the finite evaluation samples are not estimated separately.

\begin{table}[!htbp]
 \centering
 \caption{Experiment 2(a): percentage terminal value gaps after $K=512$
 episodes. Each method entry is $100$ times the mean paired-simulation
 gap divided by the deterministic Euler reference cost $V_\delta^{\rm grid}$;
 both endpoints of its marginal 95\% interval across 15 training seeds
 are scaled by the same factor. All method entries are percentages;
 comparator entries are rounded to three significant digits. 
 Absolute gaps and reference costs
 appear in Table~\ref{tab:hw-draft-common-euler} in
 Appendix~\ref{app:hw-draft-comparator-diagnostics}.}
 \label{tab:hw-draft-common-euler-percent}
 \footnotesize
 \setlength{\tabcolsep}{5pt}
 \renewcommand{\arraystretch}{1.12}
 \begin{tabular}{@{}rccccc@{}}
  \toprule
  $\delta$ & \shortstack{\ourmethod{}\\(ours)} & \shortstack{DW\\without replay}
  & \shortstack{DW\\+ replay} & \shortstack{Online little-$q$\\native Gibbs}
  & \shortstack{Batch little-$q$\\greedy} \\
  \midrule
  0.200 & $0.21\;[0.11,0.30]$ & $162\;[139,185]$ & $44.2\;[39.5,49.0]$ & $43.3\;[43.1,43.5]$ & $25.9\;[22.5,29.2]$ \\
  0.100 & $0.17\;[0.12,0.23]$ & $196\;[181,212]$ & $62.0\;[54.0,70.0]$ & $46.1\;[45.9,46.3]$ & $24.1\;[21.8,26.3]$ \\
  0.050 & $0.18\;[0.14,0.23]$ & $224\;[216,232]$ & $89.6\;[78.0,101]$ & $47.7\;[47.6,47.9]$ & $24.8\;[22.9,26.6]$ \\
  0.025 & $0.12\;[0.08,0.16]$ & $233\;[226,241]$ & $129\;[118,139]$ & $48.5\;[48.3,48.6]$ & $23.7\;[21.7,25.6]$ \\
  \bottomrule
 \end{tabular}
\end{table}

\paragraph{\ourmethod{} has the smallest terminal value gap.}
Its mean gap is $0.12$--$0.21\%$ of the reference cost across the four
timesteps (Table~\ref{tab:hw-draft-common-euler-percent}). Replay reduces source-style DW's mean gap by
45--73\%, but its Euler gap exceeds online little-$q$'s at every tested
timestep. Batch little-$q$ has the smallest mean gap among the baselines, but it is still
about $24$--$26\%$ of the reference cost and
125--195 times \ourmethod{}'s gap. 

\paragraph{Coverage under the instantiated exploration schedule.}
All methods receive the same 32 uniform-action exploration episodes.
Thereafter \ourmethod{} deploys greedy policies; its smallest saved
information-matrix eigenvalue is $0.1104$, above the coverage threshold
$0.025$. This diagnostic concerns its cumulative design, including the
initial exploration data.

The online construction of
\citet[Section~3.2 and footnote~6]{duan2024taming} also uses an exploratory
burn-in followed by greedy deployment and blockwise FQI refits on the newest
epoch. It is presented as a conceptual consequence of their stability
framework rather than as a fully specified practical algorithm.  
Our source-style DW instantiates
this using the common 32-episode prefix. DW + replay
retains the prefix observations and improves performance without additional
exploration. Accordingly, the comparison
characterize our finite-budget instantiation and its data-reuse choices,
not the source paper's general stability framework or its conditional
guarantee.

\paragraph{The DW gaps increase under finer discretization.}
As $\delta$ decreases from $0.2$ to $0.025$, the source-style DW mean gap
increases from $2.2811$ to $3.1641$, and replay's from $0.6229$ to $1.7441$.
\ourmethod{}'s gap has no upward trend, while batch little-$q$'s greedy gap
remains between $0.3211$ and $0.3642$. Online little-$q$'s gap rises modestly,
from $0.6098$ to $0.6575$, but its policy remains close to initialization
(Appendix~\ref{app:hw-draft-comparator-diagnostics}). These observations
are consistent with the difficulty of regressing the ordinary
$Q$-function's $\mathcal{O}(\delta)$ action signal through $T/\delta$ stages.
They do not isolate that mechanism from differences in model knowledge,
function approximation, regularization, or computation.

\paragraph{\ourmethod{} also has the lowest realized training excess.}
For the complete learning trajectory, define
$\widehat R^{\rm real}_{512}=\sum_{k=1}^{512}C_k-512V_\delta^{\rm grid}$,
where $C_k$ is the observed cost of the policy deployed in episode $k$
and $V_\delta^{\rm grid}$ is the deterministic Euler Bellman-grid value.
This measure includes the common exploration phase. It equals $R_{512}$ in
\eqref{eq:hw-draft-excess} plus mean-zero episode sampling noise and an
offset between the reference policy's expected cost and the numerical
Bellman value, so its mean across runs estimates $\E[R_{512}]$. Appendix~\ref{app:hw-draft-realized-excess} gives
the exact relation and Table~\ref{tab:hw-draft-realized-excess}.
The mean ordering is \ourmethod{}, batch
little-$q$, online little-$q$, DW + replay, then source-style DW at every
timestep.
\ourmethod{}'s mean excess is $91.24$--$101.93$, versus $558.51$--$1254.88$
for replay. All primary comparator-minus-\ourmethod{} paired intervals
are positive. Removing the shared exploration prefix leaves
\ourmethod{} with mean realized excess $10.04$--$19.46$ over 480 episodes.

\FloatBarrier
\subsubsection{Experiment 2(b): transfer to finite queues}
\label{subsec:hw-draft-2b}
We freeze the terminal policies from the 512-episode, 15-seed Euler
comparison and deploy them directly at
$n\in\{100,400,1600\}$ servers. The queue has arrival rate
$n\mu-\beta\sqrt n$, per-busy-server service rate $\mu+\kappa A/\sqrt n$,
initial population $N_0=n$, and capacity $n+\lceil6\sqrt n\rceil$.
Each policy observes $X=(N-n)/\sqrt n$ and holds its action between $\delta$ boundaries. We evaluate physical cost
$J_n(\pi)=\E_\pi\int_0^T\ell(X_t,A_t)\,dt$ by finite-state CTMC 
uniformization, comparing with the numerical queue reference on the same
action grid and decision clock. Seed intervals reflect variability
in the learned policies; this evaluator has no Monte Carlo sampling error. 

\begin{table}[!htbp]
 \centering
 \caption{Experiment 2(b): physical-cost gaps of policies frozen after 512 Euler
 episodes over the matched fixed-stage queue reference. Entries give means and
 marginal 95\% intervals across 15 training seeds; comparator entries are rounded
 to three significant digits. Batch little-$q$ uses greedy extraction. Online
 little-$q$ is omitted because its terminal deployed policy remains effectively
 unchanged from initialization; see
 Appendix~\ref{app:hw-draft-comparator-diagnostics}. Reference costs range from
 $1.337$ to $1.392$.}
 \label{tab:hw-draft-transfer}
 \footnotesize
 \setlength{\tabcolsep}{8pt}
 \renewcommand{\arraystretch}{1.12}
 \begin{tabular}{@{}rrcccc@{}}
  \toprule
$\delta$ & $n$ & \shortstack{\ourmethod{}\\(ours)} & \shortstack{DW\\without replay} & \shortstack{DW\\+ replay} & \shortstack{Batch little-$q$\\greedy} \\
  \midrule
  0.200 & 100 & $0.0049\;[0.0036,0.0063]$ & $2.31\;[1.97,2.64]$ & $0.596\;[0.531,0.662]$ & $0.349\;[0.304,0.394]$ \\
   & 400 & $0.0040\;[0.0027,0.0053]$ & $2.29\;[1.96,2.62]$ & $0.608\;[0.543,0.673]$ & $0.357\;[0.310,0.404]$ \\
   & 1600 & $0.0038\;[0.0024,0.0052]$ & $2.28\;[1.95,2.61]$ & $0.614\;[0.550,0.677]$ & $0.360\;[0.312,0.408]$ \\
  \addlinespace[3pt]
  0.100 & 100 & $0.0039\;[0.0032,0.0045]$ & $2.75\;[2.52,2.97]$ & $0.844\;[0.733,0.956]$ & $0.320\;[0.291,0.349]$ \\
   & 400 & $0.0030\;[0.0024,0.0035]$ & $2.72\;[2.50,2.95]$ & $0.846\;[0.736,0.956]$ & $0.324\;[0.295,0.354]$ \\
   & 1600 & $0.0028\;[0.0021,0.0035]$ & $2.71\;[2.49,2.94]$ & $0.847\;[0.737,0.956]$ & $0.326\;[0.296,0.356]$ \\
  \addlinespace[3pt]
  0.050 & 100 & $0.0038\;[0.0026,0.0050]$ & $3.09\;[2.98,3.20]$ & $1.23\;[1.07,1.39]$ & $0.328\;[0.304,0.353]$ \\
   & 400 & $0.0028\;[0.0020,0.0036]$ & $3.07\;[2.96,3.18]$ & $1.22\;[1.06,1.38]$ & $0.331\;[0.306,0.355]$ \\
   & 1600 & $0.0026\;[0.0019,0.0032]$ & $3.06\;[2.95,3.17]$ & $1.22\;[1.06,1.38]$ & $0.332\;[0.308,0.356]$ \\
  \addlinespace[3pt]
  0.025 & 100 & $0.0031\;[0.0027,0.0035]$ & $3.20\;[3.09,3.30]$ & $1.77\;[1.63,1.91]$ & $0.314\;[0.289,0.338]$ \\
   & 400 & $0.0021\;[0.0017,0.0024]$ & $3.18\;[3.08,3.28]$ & $1.75\;[1.61,1.90]$ & $0.316\;[0.290,0.341]$ \\
   & 1600 & $0.0018\;[0.0014,0.0022]$ & $3.17\;[3.07,3.27]$ & $1.75\;[1.61,1.89]$ & $0.317\;[0.291,0.343]$ \\
  \bottomrule
 \end{tabular}
\end{table}

\paragraph{\ourmethod{} transfers to finite queues best.}
Its mean queue gap is $0.0018$--$0.0049$, the smallest in all 12
configurations (Table~\ref{tab:hw-draft-transfer}).
Batch little-$q$ is the strongest comparator in every cell ($0.314$--$0.360$). Replay improves on source-style DW
throughout.
\ourmethod{}'s mean queue gap decreases with $n$ at each timestep.
At $n=1600$, it is $0.0038$ versus the Euler gap $0.0029$ at
$\delta=0.2$, and $0.0018$ versus $0.0017$ at $\delta=0.025$.
These point estimates are consistent with a residual Euler discretization
effect: the queue approaches the diffusion, whereas training uses its
Euler discretization. They do not establish a convergence rate.
The comparator queue gaps remain within about $5\%$ of their respective
Euler gaps, indicating that transfer changes their reference-relative
costs much less than the separation between learned policies.

\begin{table}[!htbp]
 \centering
 \caption{Experiment 2(b): customer-level outcomes after 512 Euler training episodes. Entries are ranges of point estimates over the 12 $(\delta,n)$ cells, not uncertainty intervals. Waits and delay probabilities are pooled customer ratios; effort is integrated over $[0,T]$; physical cost is from uniformization. Figure~\ref{fig:hw-draft-operational-transfer} gives per-cell operational means and intervals.}
 \label{tab:hw-draft-operational}
 \footnotesize
 \setlength{\tabcolsep}{8pt}
 \renewcommand{\arraystretch}{1.12}
 \begin{tabular}{@{}lcccc@{}}
  \toprule
Method & Scaled mean wait & Delay probability & Effort & Physical cost \\
  \midrule
  \ourmethod{} (ours) & $0.165$--$0.189$ & $0.264$--$0.280$ & $0.511$--$0.538$ & $1.340$--$1.396$ \\
  DW without replay & $0.106$--$0.148$ & $0.162$--$0.209$ & $2.992$--$3.989$ & $3.672$--$4.532$ \\
  DW + replay & $0.145$--$0.284$ & $0.209$--$0.340$ & $0.651$--$2.378$ & $1.978$--$3.106$ \\
  Online little-$q$, native Gibbs & $0.322$--$0.339$ & $0.336$--$0.346$ & $0.409$ & $1.970$--$2.007$ \\
  Batch little-$q$, greedy & $0.199$--$0.258$ & $0.283$--$0.320$ & $0.533$--$0.683$ & $1.651$--$1.752$ \\
  \bottomrule
 \end{tabular}
\end{table}

\paragraph{\ourmethod{} balances congestion and effort.}
A separate customer-level simulation estimates scaled mean wait, delay
probability, and effort (Table~\ref{tab:hw-draft-operational}). Wait and
delay are pooled ratios over admitted arrivals, with 95\% seed-cluster
delta-method intervals ($t_{14}$), marginal or paired within seed;
customers still waiting at $T$ are served at zero effort, so effort cost
accumulates only through $T$. Source-style DW has shorter waits and lower
delay probability than \ourmethod{} but uses $5.8$--$7.4$ times the effort. Online little-$q$
uses 20--24\% less effort but roughly doubles the scaled mean wait.
Replay's tradeoff depends on resolution: at $\delta=0.025$ it waits less
than \ourmethod{} but at much greater effort. Batch little-$q$ has longer
waits and higher delay probability in all 12 cells without saving effort ($0.533$--$0.683$ vs.\
$0.511$--$0.538$). \ourmethod{} has the lowest physical cost throughout.
Note that these results concern frozen-policy transfer.

\FloatBarrier

\section*{Acknowledgments}
J. Blanchet gratefully acknowledges support from the Office of Naval Research (ONR) under award N00014-24-1-2655 and the National Science Foundation (NSF) under grants CCF-2312204 and CCF-2403007. R. Xu gratefully acknowledges support from the NSF CAREER Award DMS-2614933.
\bibliography{bibliography}

\newpage
\appendix

\section{Auxiliary analytic proofs}
\label{app:auxiliary-analytic-proofs}

\subsection{Performance-difference identity}
\label{app:performance-difference-proof}

Throughout the appendix, write
\[
  \mathcal D_h^\star(x,a):=V_h^\star(x)-Q_h^\star(x,a)\geq0,
  \qquad h<H,
\]
for the optimal-advantage loss, the negative of the conventional signed
advantage.  For a deterministic Borel Markov policy $\pi$, let
\[
  \begin{aligned}
    (\mathcal P_h^\pi f)(x)&:=(\mathcal P^\star f)(x,\pi_h(x)),
      \qquad V_H^\pi=u,\\
    V_h^\pi(x)&=\delta r(x,\pi_h(x))
      +(\mathcal P_h^\pi V_{h+1}^\pi)(x),\qquad h<H.
  \end{aligned}
\]
Writing $\mathbb E_{j,x}^\pi$ for expectation under the controlled chain
started from $X_j=x$, the statewise form of \cref{lem:optimal-advantage-pdi} is,
under \cref{ass:basic},
\begin{equation}
  V_j^\star(x)-V_j^\pi(x)
  =\mathbb E_{j,x}^\pi\left[
    \sum_{h=j}^{H-1}\mathcal D_h^\star(X_h,\pi_h(X_h))\right],
  \qquad j\leq H,
  \label{eq:statewise-optimal-advantage-pdi}
\end{equation}
where the sum is zero for $j=H$.

\begin{proof}[Proof of \cref{lem:optimal-advantage-pdi}]
Since $\pi_h$ is Borel, $x\mapsto P^\star(\cdot\mid x,\pi_h(x))$
is a Borel probability kernel. Backward induction gives Borel
functions $V_h^\pi$ and finite constants $C_h$ such that
\[
  |V_h^\star(x)|,\ |V_h^\pi(x)|\leq C_hW(x).
\]
Indeed, one may take $C_H=B_u$ and
$C_h=\delta B_r+(1+c_W\delta)C_{h+1}$.
For every Borel function $f$ satisfying $|f|\leq CW$,
kernel integration is measurable and
\[
  (\mathcal P_h^\pi|f|)(x)
  \leq C(1+c_W\delta)W(x).
\]
Together with
$\mathbb E_\pi W(X_h)\leq e^{c_Wt_h}\int W\,dp_0$,
these bounds ensure absolute integrability of the values,
their differences, and their next-state conditional expectations.
They therefore justify the tower property and the integrations
used below.
Set $D_h^\pi:=V_h^\star-V_h^\pi$, so $D_H^\pi=0$.
Subtracting the Bellman recursions gives
\begin{equation*}
  D_h^\pi(x)=\mathcal D_h^\star(x,\pi_h(x))
             +(\mathcal P_h^\pi D_{h+1}^\pi)(x).
\end{equation*}
Iteration and the Markov property prove
\eqref{eq:statewise-optimal-advantage-pdi}.  Integrating at $j=0$ against
$p_0$ gives \eqref{eq:episode-optimal-advantage-pdi}, since
$J_\delta^\star=\int V_0^\star\,dp_0$ and
$J_\delta(\pi)=\int V_0^\pi\,dp_0$.
\end{proof}

\subsection{Verification under strong concavity}
\label{app:global-geometry-proof}

\begin{proof}[Proof of \cref{prop:strong-concavity-sufficient}]
Assume the hypotheses of \cref{prop:strong-concavity-sufficient} and
fix $h<H$. For the envelope statement, compactness and continuity give
a maximizer $a_h^\star(x)$ at every $x$, unique by strong concavity.
If $x_i\to x$, compactness gives every subsequence of
$a_h^\star(x_i)$ a convergent further subsequence, whose limit is
optimal at $x$ by joint continuity, hence equal to $a_h^\star(x)$.
Thus $a_h^\star$ is continuous. Danskin's theorem gives
\eqref{eq:optimal-envelope-gradient}, whose right-hand side is continuous,
so $V_h^\star\in C^1$.

For the quadratic-advantage bound, fix $x$ and $a,b\in\mathcal A$.
Integrating the strong-concavity gradient inequality along
$a+t(b-a)$, $t\in[0,1]$, gives
\begin{equation}
  Q_h^\star(x,b)\leq Q_h^\star(x,a)
  +\langle\nabla_aQ_h^\star(x,a),b-a\rangle
  -\frac{\delta\bar m}{2}\|b-a\|^2.
  \label{eq:function-strong-concavity}
\end{equation}

Write $a^\star=a_h^\star(x)$; every $v\in N_{\mathcal A}(a)$ satisfies
$\langle v,a^\star-a\rangle\leq0$. Applying
\eqref{eq:function-strong-concavity} with $b=a^\star$ and completing
the square gives
\begin{equation*}
  \mathcal D_h^\star(x,a)
  \leq\langle\nabla_aQ_h^\star(x,a)-v,a^\star-a\rangle
          -\frac{\delta\bar m}{2}\|a-a^\star\|^2
  \leq\frac{\|\nabla_aQ_h^\star(x,a)-v\|^2}{2\delta\bar m}.
\end{equation*}
Taking the infimum over $v$ proves
\eqref{eq:quadratic-advantage-normal-cone}.

For value-gradient stability, we first derive a distance bound.
Optimality at $a^\star$ gives
\begin{equation*}
  \langle\nabla_aQ_h^\star(x,a^\star),a-a^\star\rangle\leq0.
\end{equation*}
Combining this with strong concavity and
$\langle v,a^\star-a\rangle\leq0$ gives
\[
  \delta\bar m\|a-a^\star\|^2
  \leq\langle\nabla_aQ_h^\star(x,a)-v,a^\star-a\rangle
  \leq\|\nabla_aQ_h^\star(x,a)-v\|\,\|a-a^\star\|.
\]
Dividing when $a\ne a^\star$ and taking the infimum over $v$ yields
\begin{equation}
  \|a-a^\star\|\leq\frac{d_h(x,a)}{\delta\bar m},
  \label{eq:distance-from-stationarity}
\end{equation}
which also holds when $a=a^\star$. The envelope identity, the
$\delta\bar L_{xa}$-Lipschitz bound on $\nabla_xQ_h^\star(x,\cdot)$, and
\eqref{eq:distance-from-stationarity} give
\[
  \|\nabla_xQ_h^\star(x,a)-\nabla V_h^\star(x)\|
  \leq\delta\bar L_{xa}\|a-a^\star\|
  \leq\frac{\bar L_{xa}}{\bar m}d_h(x,a).
\]
This proves \eqref{eq:value-gradient-stability} with
$\kappa=\bar L_{xa}/\bar m$ and completes the verification of
\cref{ass:benchmark-error-bounds}.
\end{proof}

\subsection{Greedy selection and the envelope identity}
\label{app:fitted-envelope-proof}

\begin{proof}[Proof of \cref{lem:fitted-envelope-identity}]
For $h<H$, define
\[
  \widehat{\mathcal M}_{k,h}(x)
  :=\argmax_{a\in\mathcal A}\widehat Q_{k,h}(x,a).
\]
Joint continuity and compactness give nonempty compact values; the
measurable maximum theorem on the Polish spaces $\mathbb R^n$ and
$\mathcal A$ supplies a Borel greedy selection.
By Danskin's directional formula and differentiability of the envelope,
\[
  \max_{a\in\widehat{\mathcal M}_{k,h}(x)}
    \langle\nabla_x\widehat Q_{k,h}(x,a),v\rangle
  =D\widehat V_{k,h}(x;v)
  =\langle\nabla\widehat V_{k,h}(x),v\rangle.
\]
For each active $a$, the inner product is at most the right-hand side;
applying this to $v$ and $-v$ gives equality for every
$v\in\mathbb R^n$, proving \eqref{eq:selected-envelope-identity} for
every maximizing action.
\end{proof}

\subsection{Differentiation of the Euler kernel}
\label{app:kernel-differentiation-proof}

\begin{proof}[Proof of \cref{lem:kernel-differentiation}]
For $\xi\sim\mathcal N(0,I_n)$,
\[
  (\mathcal P^\star v)(x,a)
  =\mathbb E v\bigl(x+\delta\mu^\star(x,a)
                    +\sqrt\delta(\Sigma^\star)^{1/2}\xi\bigr).
\]
Bounded $\nabla v$ implies at most linear growth and finiteness of this
expectation.  The fundamental theorem of calculus bounds directional
difference quotients of the integrand, in directions $z_x,z_a$, by
\[
  \|\nabla v\|_\infty(1+\delta L_{\mu,x})\|z_x\|,
  \qquad
  \delta\|\nabla v\|_\infty L_{\mu,a}\|z_a\|,
\]
respectively, for sufficiently small perturbations within the open
domain.  Dominated convergence therefore gives both derivative formulas
in the lemma; applied again to those formulas, with continuity of the
drift Jacobians and $\nabla v$, it proves continuity of the derivatives.
\end{proof}

\subsection{Nonuniform time grids}
\label{app:nonuniform-time-grids}

For a deterministic partition $0=t_0<\cdots<t_H=T$ with
$\Delta_h:=t_{h+1}-t_h\leq\delta$, \cref{thm:residual-regret-transfer}
retains its bounds relative to the grid's Euler optimum if the model,
rewards, and stagewise hypotheses use $\Delta_h$ in place of $\delta$,
with constants uniform over partitions. Backward induction gives
\[
  E_{k,h}
  \leq\Delta_h b_{E,k}+(1+L_E\Delta_h)E_{k,h+1}
  \leq b_{E,k}\Phi_{T-t_h}(L_E),
\]
and the episode loss is at most $Tc_k^2/(2\bar m)$ since
$\sum_h\Delta_h=T$. No minimum step size is needed, but residual derivative
bounds must scale with each $\Delta_h$ rather than the upper bound $\delta$.

\subsection{Sharpness of exponential physical time dependence}
\label{app:physical-time-lower-bound-proof}

\begin{proof}[Proof of \cref{prop:physical-time-lower-bound}]
Fix $\lambda,m,c_x>0$ and $\sigma\geq0$.  For $T>0$ and admissible
$\delta$ with $H=T/\delta$, take $X_0=0$ and
\begin{equation*}
  \mathcal A=[-1,1],\quad \mu^\star(x,a)=\lambda x+a,\quad
  \Sigma^\star=\sigma^2,\quad r(x,a)=-\frac m2a^2,\quad u\equiv0.
\end{equation*}
The zero policy is optimal, with $V_h^\star=0$ and
$Q_h^\star(x,a)=-\delta ma^2/2$.
The hypotheses of \cref{prop:strong-concavity-sufficient} hold with
$(\bar m,\bar L_{xa})=(m,0)$, so
\cref{ass:benchmark-error-bounds} holds with
$(\bar m,\kappa)=(m,0)$. Since
$(L_{\mu,x},L_{\mu,a})=(\lambda,1)$, we have $L_E=\lambda$.
The standing assumptions hold with $W=1$, $B_r=m/2$, and $B_u=c_W=0$.
These primitive constants are independent of $T,\delta$, while
$B_J=mT/2$.

Define
\begin{equation*}
  F_{j,\delta}:=\delta\sum_{i=0}^{j-1}(1+\lambda\delta)^i
  =\frac{(1+\lambda\delta)^j-1}{\lambda},\qquad j\geq0,
\end{equation*}
with $F_{0,\delta}=0$.  For $\eta_k=c_x/\sqrt{k}$, construct backward
from $\widehat V_{k,H}=0$:
\begin{equation*}
  \begin{aligned}
    \widehat Q_{k,h}(x,a)
    &:=-\frac{\delta m}{2}a^2
       +(\mathcal P^\star\widehat V_{k,h+1})(x,a)+\delta\eta_kx,\\
    \widehat V_{k,h}(x)&:=\max_{a\in[-1,1]}\widehat Q_{k,h}(x,a).
  \end{aligned}
\end{equation*}
If $\widehat V_{k,h+1}(x)=b_{k,h+1}x+q_{k,h+1}$ and
$\rho=1+\lambda\delta$, then
\[
  \widehat Q_{k,h}(x,a)
  =(\rho b_{k,h+1}+\delta\eta_k)x+q_{k,h+1}
    +\delta b_{k,h+1}a-\frac{\delta m}{2}a^2.
\]
Hence the envelope is affine, with
$b_{k,h}=\rho b_{k,h+1}+\delta\eta_k=\eta_kF_{H-h,\delta}$ and
$b_{k,H}=0$, and its unique greedy policy is
\begin{equation*}
  \pi_{k+1,h}(x)
  =\min\left\{\frac{\eta_kF_{H-h-1,\delta}}m,1\right\}.
\end{equation*}
Smooth critics and affine envelopes verify
\cref{def:fitted-envelope-interface}.  The true residual
$e_{k,h}(x,a)=-\delta\eta_kx$ verifies
\cref{def:true-residual-certificate} on the full state-action space with
$d_{x,k}=c_x/\sqrt{k}$ and $d_{a,k}=0$.  Moreover,
\begin{align*}
  \|\nabla\Delta V_{k,h}\|_\infty
  &=\frac{c_x}{\sqrt{k}}F_{H-h,\delta},
  \\
  \sup_x|\nabla_a\Delta Q_{k,h}(x,\pi_{k+1,h}(x))|
  &=\frac{\delta c_x}{\sqrt{k}}F_{H-h-1,\delta}.
\end{align*}
Thus \eqref{eq:action-error-bound} and \eqref{eq:E-recursion} hold with
equality, unaffected by the centered Gaussian noise.

The state-independent policy has exact loss
\[
  J_\delta^\star-J_\delta(\pi_{k+1})
  =\frac{\delta m}{2}\sum_{h=0}^{H-1}
    \min\left\{\frac{c_x^2F_{H-h-1,\delta}^2}{m^2k},1\right\}.
\]
Set
\begin{equation*}
  \mathcal B_{T,\delta}:=\frac{\delta}{2m}\sum_{j=0}^{H-1}F_{j,\delta}^2,
  \qquad
  k_0:=\max\left\{1,\left\lceil
    \left(\frac{c_xF_{H-1,\delta}}m\right)^2\right\rceil\right\}.
\end{equation*}
For $k\geq k_0$, every fraction in the minimum is at most one, so
\begin{equation*}
  J_\delta^\star-J_\delta(\pi_{k+1})
  =\frac{c_x^2}{k}\mathcal B_{T,\delta},\qquad k\geq k_0.
\end{equation*}
Choose $\pi_{1,h}=0$.  The earlier losses have a finite nonnegative sum
for fixed $T,\delta,c_x$, so for $K>k_0$,
\begin{equation}
  \Reg(K,\delta)\geq c_x^2\mathcal B_{T,\delta}
       \sum_{k=k_0}^{K-1}\frac1k,
  \qquad
  \lim_{K\to\infty}\frac{\Reg(K,\delta)}{c_x^2\log K}
  =\mathcal B_{T,\delta}.
  \label{eq:physical-time-regret-lower-bound}
\end{equation}
The threshold $k_0$ may grow exponentially with $T$, but each episode's
loss is at most $mT/2$; this coefficient describes the small-residual regime.

At fixed $T$, $F_{j,\delta}-\Phi_{j\delta}(\lambda)\to0$ uniformly for
$0\leq j\leq H$ along admissible refinements.  Riemann summation gives
\begin{align*}
  \mathcal B_{T,\delta}\longrightarrow\mathcal B_T
  &:=\frac1{2m}\int_0^T\Phi_t(\lambda)^2\,dt\nonumber\\
  &=\frac1{2m\lambda^2}\left\{
    \frac{e^{2\lambda T}-1}{2\lambda}
    -\frac{2(e^{\lambda T}-1)}\lambda+T\right\}
    \sim\frac{e^{2\lambda T}}{4m\lambda^3}\quad(T\to\infty).
\end{align*}
Together with \eqref{eq:physical-time-regret-lower-bound}, this proves
\eqref{eq:physical-time-continuum-lower-bound}.
Any bound
\[
  \Reg(K,\delta)\leq A(T,\delta,c_x)+c_x^2M(T)\log K
\]
valid for all fitted-critic sequences under
\cref{thm:residual-regret-transfer} with
$c_a=0$, $A$ independent of $K$, and $M$ uniform in $\delta$, must satisfy
$M(T)\geq\mathcal B_T$: first send $K\to\infty$, then $\delta\downarrow0$.
Thus no subexponential regret coefficient suffices for this class.
The refinement limit also attains $\Phi_T(\lambda)$ for gradient transfer.
These sharpness conclusions concern $\kappa=0$; they establish
neither the full exponent for $\kappa>0$, nor necessity of the extra
factor $T$ in the upper bound, nor a minimax lower bound over learning
algorithms, including the algorithm in \cref{sec:localized-algorithm}.
\end{proof}

\subsection{Reward growth and moment bounds}
\label{app:parametric-growth-moment-proof}

\begin{proof}[Proof of \cref{lem:parametric-growth-moment}]
The reward bounds follow from the fundamental theorem of calculus and
$1,\|x\|\leq W(x)$, while $W(x)\leq1+\|x\|$ ensures
$\int W\,dp_0<\infty$.  Since $\|\nabla W\|\leq1$ and
$0\preceq\nabla^2W\preceq I_n$, Taylor's inequality and the centered
Gaussian increment give
\begin{align*}
  \int W(y)\,P^\star(dy\mid x,a)
  &\leq W(x+\delta\mu^\star(x,a))
       +\frac\delta2\operatorname{tr}\Sigma^\star\\
  &\leq W(x)+\delta c_W\leq(1+c_W\delta)W(x),
\end{align*}
using $\|\mu^\star\|_\infty\leq B_{\Phi_\mu}B_\theta$ and
$\operatorname{tr}\Sigma^\star\leq n\Lambda_0$.
This is \eqref{eq:weight-moment-drift}. For clause (iii) of
\cref{ass:basic}, $r$, $u$, $W$, and
$\mu^\star=\Phi_\mu\theta^\star$ are continuous. For
$\xi\sim\mathcal N(0,I_n)$, define the Euler draw
$Y_{x,a}:=x+\delta\mu^\star(x,a)+\sqrt\delta(\Sigma^\star)^{1/2}\xi$.
Then $W(Y_{x,a})\leq1+\|x\|+\delta B_{\Phi_\mu}B_\theta
+\sqrt{\delta\Lambda_0}\,\|\xi\|$. This bound is integrable and locally
uniform in $(x,a)$, so dominated convergence makes
$(x,a)\mapsto\int W\,dP^\star(\cdot\mid x,a)$ continuous.
\end{proof}

\subsection{Optimal-envelope regularity and self-exploration}
\label{app:optimal-envelope-regularity}

\begin{proof}[Proof of \cref{lem:optimal-envelope-regularity}]
Fix an admissible $\delta$.  We first construct the Bellman recursion
using only \cref{ass:localized-parametric-model}.  Set
$L_{\mu,x}=B_\theta L_{\Phi_\mu,x}$ and
$L_h=\operatorname{Lip}(V_h^\star)$, starting from the globally Lipschitz
terminal reward $V_H^\star=u$, with $L_H\leq\|\nabla u\|_\infty$.
If $V_{h+1}^\star$ is finite and globally Lipschitz, then
$|V_{h+1}^\star(z)|\leq|V_{h+1}^\star(0)|+L_{h+1}\|z\|$.
The finite first moments of the Euler transitions, uniformly over
$a\in\mathcal A$ for fixed $x$, make the Bellman expectation finite.
Coupling transitions with the same Gaussian noise, using continuity of
$r$ and $\mu^\star$, gives joint continuity of $Q_h^\star$ on
$\mathbb R^n\times\mathcal A^{\mathrm o}$. Compactness of $\mathcal A$
then gives a finite maximum and nonempty compact optimal-action sets
with a closed graph. The same coupling yields
\begin{equation}
  L_h\leq\delta\|\nabla_xr\|_\infty
       +(1+\delta L_{\mu,x})L_{h+1}.
  \label{eq:optimal-value-lipschitz-recursion}
\end{equation}
This closes the Lipschitz induction and proves continuity of every
$V_h^\star$.  The construction is pointwise in the starting state and
does not integrate against $p_0$ or use \cref{ass:basic}.

We now prove $C^1$ regularity by backward induction, again starting
from $V_H^\star=u\in C^1$ with bounded gradient.
If $V_{h+1}^\star$ has these properties, the
drift and reward regularity in \cref{ass:localized-parametric-model} and
\cref{lem:kernel-differentiation} give
$Q_h^\star=\delta r+\mathcal P^\star V_{h+1}^\star
\in C^1(\mathbb R^n\times\mathcal A^{\mathrm o})$.

Fix $x$ and $a\in\mathcal A_h^\star(x)$.  Continuity gives
$V_h^\star(y)-Q_h^\star(y,a)\to0$ as $y\to x$, so for nearby $y$ this
gap is less than $\delta\gamma$ and \eqref{eq:union-entry-margin}
forces $a\in\mathcal U_h^\star(y)$.
Choose a bin there containing $a$, with optimal branch point $b_y$.
Strong concavity on the convex bin and optimality at $b_y$ imply
\[
  \frac{\delta\bar m}{2}\|b_y-a\|^2
  \leq Q_h^\star(y,b_y)-Q_h^\star(y,a)
  =V_h^\star(y)-Q_h^\star(y,a),
\]
hence $b_y\to a$, without assuming differentiability of $V_h^\star$.
Optimality at $x$ and $y$ now gives the sandwich
\[
  Q_h^\star(y,a)-Q_h^\star(x,a)
  \leq V_h^\star(y)-V_h^\star(x)
  \leq Q_h^\star(y,b_y)-Q_h^\star(x,b_y).
\]
By the fundamental theorem of calculus, joint $C^1$ regularity, and
$b_y\to a$, both endpoints equal
$\langle\nabla_xQ_h^\star(x,a),y-x\rangle+o(\|y-x\|)$.
Thus $V_h^\star$ is Fr\'echet differentiable; since $a$ was arbitrary,
\eqref{eq:optimal-active-gradient-coherence} follows.

For $x_i\to x$, choose $a_i\in\mathcal A_h^\star(x_i)$.
Every subsequence has a further subsequence with $a_i\to a$; continuity
makes $a$ optimal at $x$.  Along it,
$\nabla V_h^\star(x_i)=\nabla_xQ_h^\star(x_i,a_i)
\to\nabla_xQ_h^\star(x,a)=\nabla V_h^\star(x)$.
This common subsequential limit proves continuity of the gradient.

Since $V_h^\star$ is globally $L_h$-Lipschitz,
$\|\nabla V_h^\star\|_\infty\leq L_h<\infty$, closing the
bounded-gradient induction.  Geometric summation in
\eqref{eq:optimal-value-lipschitz-recursion} yields
\[
  \|\nabla V_h^\star\|_\infty
  \leq e^{L_{\mu,x}(T-t_h)}\|\nabla u\|_\infty
       +\Phi_{T-t_h}(L_{\mu,x})\|\nabla_xr\|_\infty.
\]
For $L\geq0$, $(1+\delta L)^{H-h}\leq e^{L(T-t_h)}$ and
$\delta\sum_{i=0}^{H-h-1}(1+\delta L)^i\leq\Phi_{T-t_h}(L)$;
at $L=0$ the sum equals $T-t_h$.  The bound is uniform in $h,\delta$.
\end{proof}

\begin{proof}[Proof of \cref{lem:optimal-policy-neighborhood-excitation}]
With $L_{\mu,a}=B_\theta L_{\Phi_\mu,a}$, define the
information-sensitivity constant
\[
  L_\Gamma
  :=2B_{\Phi_\mu}L_{\Phi_\mu,a}
    +B_{\Phi_\mu}^2L_{\mu,a}\sqrt{T/\lambda_0}.
\]
Fix $\varrho>0$, admissible $\delta$, and
$\pi\in\Pi_{\rm loc}(\varrho)$. We show that
$\lambda_{\min}\Gamma(\pi)\geq4\lambda_{\rm loc}-L_\Gamma\varrho$.
By \cref{lem:optimal-envelope-regularity}, the optimal-action
correspondence has nonempty compact values and a closed graph.
The measurable minimum theorem therefore gives a Borel nearest-optimal
policy with
$\pi_h^\star(x)\in\mathcal A_h^\star(x)$ and
$\|\pi_h(x)-\pi_h^\star(x)\|\leq \varrho$.
No policy continuity is required.

Let $\mathbb P_\pi,\mathbb P_{\pi^\star}$ be the state-trajectory laws
on $(X_0,\ldots,X_H)$ with common initial law $p_0$.
At the same state, action Lipschitzness bounds the difference of
transition means by $\delta L_{\mu,a}\varrho$, while their common covariance
is at least $\delta\lambda_0I_n$.
The Gaussian relative entropy formula, chain rule, and Pinsker inequality
give
\[
  D_{\rm KL}(\mathbb P_\pi\|\mathbb P_{\pi^\star})
  \leq\frac{T L_{\mu,a}^2\varrho^2}{2\lambda_0},
  \qquad
  \|\mathbb P_\pi-\mathbb P_{\pi^\star}\|_{\rm TV}
  \leq\frac{L_{\mu,a}\varrho}{2}\sqrt{\frac T{\lambda_0}},
\]
where total variation is the supremum over measurable events.
Replacing the action at a fixed state changes
$\Phi_\mu^\top\Phi_\mu$ by at most
$2B_{\Phi_\mu}L_{\Phi_\mu,a}\varrho$ in operator norm; the trajectory average
evaluated at $\pi^\star$ has norm at most $B_{\Phi_\mu}^2$.
Comparing actions first and then trajectory laws gives
\[
  \|\Gamma(\pi)-\Gamma(\pi^\star)\|_{\rm op}
  \leq2B_{\Phi_\mu}L_{\Phi_\mu,a}\varrho
      +2B_{\Phi_\mu}^2\|\mathbb P_\pi-\mathbb P_{\pi^\star}\|_{\rm TV}
  \leq L_\Gamma \varrho.
\]
By \eqref{eq:optimal-policy-self-excitation},
$\lambda_{\min}\Gamma(\pi)\geq4\lambda_{\rm loc}-L_\Gamma \varrho$.
The same bound holds for the infimum over $\Pi_{\rm loc}(\varrho)$.
Any positive radius satisfying
$L_\Gamma\varrho_{\rm cov}\leq2\lambda_{\rm loc}$ therefore yields
\eqref{eq:local-self-excitation}. In particular, choose
\[
  \varrho_{\rm cov}=\frac{2\lambda_{\rm loc}}{L_\Gamma}
  \quad\text{if }L_\Gamma>0;
\]
if $L_\Gamma=0$, any fixed positive radius is valid. This choice uses
only known bounds; all constants are independent of $\delta$ because
the transition bound accumulates through $H\delta=T$. For the
physical-horizon calculation in \cref{rem:physical-horizon-dependence},
keep the radius fixed as $T$ varies when $L_\Gamma=0$.
\end{proof}

\section{Detailed concentration and event proofs}
\label{app:detailed-concentration}

All deviation events below are defined without coverage assumptions;
coverage is used only to invert the realized model Gram matrix.

\subsection{Filtrations and fresh randomness}

Let $\mathcal H_{k-1}$ contain all real and synthetic observations and
learner randomization available before episode $k$.
Conditional on this history, the episode's policy is fixed,
$X_{k,0}\sim p_0$ is fresh, and its transition innovations are independent.
In the chronological increment filtration,
$\Phi_\mu(X_i,A_i)$ is $\mathcal F_{i-1}$-measurable and
$\eta_i\mid\mathcal F_{i-1}\sim\mathcal N(0,\Sigma^\star)$.

Before a synthetic fit, the level, feasible class, learned kernel,
stage design, and continuation $\widehat V_{j,h+1}$ are fixed.
Conditional on this history, the draws have the independent laws in
\eqref{eq:epoch-synthetic-transitions}.  The doubling endpoints are
deterministic functions of $(K,n_0)$, and \eqref{eq:epoch-count-bound}
bounds their number by $\bar J_K$ and the number of fits by $\bar J_KH$.

\subsection{Adaptive matrix coverage}
\label{app:coverage-concentration-proof}
\label{subsec:empirical-model-coverage}

For episode $k$, set
\[
  \mathsf G_k:=\frac1H\sum_{h=0}^{H-1}
  \Phi_\mu(X_{k,h},A_{k,h})^\top
  \Phi_\mu(X_{k,h},A_{k,h}),
  \qquad 0\preceq\mathsf G_k\preceq B_{\Phi_\mu}^2I_p.
\]

\begin{lemma}[Adaptive matrix concentration at epoch boundaries]
\label{lem:adaptive-matrix-coverage}
Let $\mathcal H_{k-1}$ be the history before episode $k$, and fix
$\lambda>0$. Consider at most $L$ prescribed episode-count endpoints
$N$, and set $C_{\rm cov}:=28/3$.  There is an event $\mathcal E_{\rm dev}$ with
$\Pp((\mathcal E_{\rm dev})^c)\leq\rho_{\rm cov}$ such that,
simultaneously at every endpoint satisfying
\begin{equation}
  N
  \geq
  C_{\rm cov}
  \frac{B_{\Phi_\mu}^4}{\lambda^2}
  \log\frac{2pL}{\rho_{\rm cov}},
  \label{eq:coverage-sample-threshold}
\end{equation}
one has
\begin{equation}
  \frac1N\sum_{k=1}^N
  \left(
    \mathsf G_k-
    \E[\mathsf G_k\mid\mathcal H_{k-1}]
  \right)
  \succeq-\lambda I_p.
  \label{eq:coverage-martingale-deviation}
\end{equation}
Consequently, whenever
\begin{equation}
  \frac1N\sum_{k=1}^N
  \E[\mathsf G_k\mid\mathcal H_{k-1}]
  \succeq2\lambda I_p,
  \label{eq:conditional-population-information}
\end{equation}
the empirical information satisfies, on the same event, 
\begin{equation}
  \frac1N\sum_{k=1}^N\mathsf G_k
  \succeq\lambda I_p.
  \label{eq:empirical-information-lower-bound}
\end{equation}
\end{lemma}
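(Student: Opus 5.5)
Since the statement is a matrix martingale deviation bound, I would apply a matrix Freedman inequality (Tropp's matrix Freedman for self-adjoint martingales) to the lower tail, at each prescribed endpoint separately, and then take a union bound over the at most $L$ endpoints.

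Define the matrix martingale differences
\[
  \mathsf D_k:=\E[\mathsf G_k\mid\mathcal H_{k-1}]-\mathsf G_k ,
\]
adapted to $(\mathcal H_k)$. Since $0\preceq\mathsf G_k\preceq B_{\Phi_\mu}^2I_p$ and the conditional mean lies in the same interval, we have $\|\mathsf D_k\|_{\rm op}\leq B_{\Phi_\mu}^2=:R$. The predictable quadratic variation satisfies
\[
  \E[\mathsf D_k^2\mid\mathcal H_{k-1}]\preceq\E[\mathsf G_k^2\mid\mathcal H_{k-1}]\preceq B_{\Phi_\mu}^4I_p ,
\]
so after $N$ episodes $\|\sum_{k\le N}\E[\mathsf D_k^2\mid\mathcal H_{k-1}]\|\leq NB_{\Phi_\mu}^4=:\sigma^2$. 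This variance bound is deterministic, which removes the usual need to intersect with a variance event in Freedman's inequality.

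Matrix Freedman then gives
\[
  \Pp\Bigl(\lambda_{\max}\bigl(\textstyle\sum_{k\le N}\mathsf D_k\bigr)\geq N\lambda\Bigr)
  \leq p\exp\!\left(-\frac{N^2\lambda^2/2}{NB_{\Phi_\mu}^4+RN\lambda/3}\right).
\]
The event $\lambda_{\max}(\sum_{k\le N}\mathsf D_k)<N\lambda$ is exactly \eqref{eq:coverage-martingale-deviation}. To simplify the exponent, I would consider two cases. If $\lambda\leq B_{\Phi_\mu}^2$, then $R\lambda/3\leq B_{\Phi_\mu}^4/3$, so the exponent is at least $3N\lambda^2/(8B_{\Phi_\mu}^4)$. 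The condition $N\geq(8/3)(B_{\Phi_\mu}^4/\lambda^2)\log(pL/\rho_{\rm cov})$ then bounds the probability by $\rho_{\rm cov}/L$, and $C_{\rm cov}=28/3$ has ample room. If $\lambda>B_{\Phi_\mu}^2$, then $\|\mathsf D_k\|\le B_{\Phi_\mu}^2<\lambda$ makes the deviation statement deterministically true, so the event is nonempty for free. I would allow the factor $2$ inside the logarithm to cover a two-sided bound, or simply slack. Defining $\mathcal E_{\rm dev}$ as the intersection over endpoints satisfying \eqref{eq:coverage-sample-threshold} and applying the union bound over at most $L$ endpoints gives $\Pp(\mathcal E_{\rm dev}^c)\le\rho_{\rm cov}$.

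The consequence is then immediate. On $\mathcal E_{\rm dev}$, \eqref{eq:conditional-population-information} together with \eqref{eq:coverage-martingale-deviation} gives
\[
  \frac1N\sum_{k\le N}\mathsf G_k\succeq(2\lambda-\lambda)I_p ,
\]
which is \eqref{eq:empirical-information-lower-bound}.

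I expect the main obstacle to be technical and in two places. The first is confirming that the endpoints are deterministic (they are, by the doubling schedule), so no optional stopping is needed. The second is that the policies are adaptive, but matrix Freedman only requires predictability, which holds because $\pi_k$ is $\mathcal H_{k-1}$-measurable and the episode randomness is fresh.
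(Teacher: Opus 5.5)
Your proposal is correct and follows essentially the same route as the paper: matrix Freedman applied to the lower tail at each deterministic endpoint, the split between $\lambda\le B_{\Phi_\mu}^2$ and the trivially true case $\lambda>B_{\Phi_\mu}^2$, and a union bound over the at most $L$ endpoints. The only difference is in the constants. Your bounds $\|\mathsf D_k\|_{\rm op}\le B_{\Phi_\mu}^2$ and $\E[\mathsf D_k^2\mid\mathcal H_{k-1}]\preceq B_{\Phi_\mu}^4I_p$ are sharper by factors $2$ and $4$ than the paper's ($2B_{\Phi_\mu}^2$ and $4NB_{\Phi_\mu}^4$ for the summed variance), so you need only the constant $8/3$ where the paper needs $28/3$, and the stated threshold leaves you slack.
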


\begin{proof}
The matrices
$Z_k=\mathsf G_k-\E[\mathsf G_k\mid\mathcal H_{k-1}]$
are martingale differences satisfying
\[
  \|Z_k\|_{\rm op}\leq2B_{\Phi_\mu}^2,
  \qquad
  \left\|\sum_{k=1}^N
    \E[Z_k^2\mid\mathcal H_{k-1}]\right\|_{\rm op}
  \leq4NB_{\Phi_\mu}^4.
\]
The matrix Freedman inequality \citep[Theorem~1.2]{tropp2011freedman}
therefore gives
\[
  \Pp\!\left(\lambda_{\min}\!\left(\sum_{k=1}^NZ_k\right)\leq-t\right)
  \leq p\exp\!\left\{
    -\frac{t^2}{2(4NB_{\Phi_\mu}^4+2B_{\Phi_\mu}^2t/3)}\right\}.
\]
For $0<\lambda\leq B_{\Phi_\mu}^2$, take $t=N\lambda$.  Since
$2B_{\Phi_\mu}^2t/3\leq2NB_{\Phi_\mu}^4/3$, the bound is at most
$p\exp\{-N\lambda^2/(C_{\rm cov}B_{\Phi_\mu}^4)\}$ with $C_{\rm cov}=28/3$,
which is at most $\rho_{\rm cov}/(2L)$ under
\eqref{eq:coverage-sample-threshold}.
For $\lambda>B_{\Phi_\mu}^2$, the deviation bound holds deterministically.
A union bound over at most $L$ endpoints satisfying
\eqref{eq:coverage-sample-threshold} proves
\eqref{eq:coverage-martingale-deviation}; adding
\eqref{eq:conditional-population-information} gives
\eqref{eq:empirical-information-lower-bound}.
\end{proof}

\paragraph{Coverage event for the prescribed experiment.}
Define the deterministic coverage threshold by
\begin{equation}
  n_{\rm gram}:=\left\lceil C_{\rm cov}\frac{B_{\Phi_\mu}^4}{\lambda_\mu^2}
     \log\frac{2p\bar J_K}{\rho_{\rm cov}}\right\rceil,
  \label{eq:warm-start-thresholds}
\end{equation}
with $C_{\rm cov}=28/3$. The empirical matrix satisfies
$\widehat\Gamma_N=N^{-1}\sum_{k=1}^N\mathsf G_k$.
Define
\begin{equation}
  \mathcal E_{\rm cov}
  :=\bigcap_{\substack{j:n_j<K\\n_j\geq n_{\rm gram}}}
  \left\{
    \widehat\Gamma_{n_j}
    \succeq\frac1{n_j}\sum_{k=1}^{n_j}
      \E[\mathsf G_k\mid\mathcal H_{k-1}]
      -\lambda_\mu I_p
  \right\}.
  \label{eq:main-coverage-deviation-event}
\end{equation}
By the definition of $n_{\rm gram}$, every endpoint in the
intersection meets \eqref{eq:coverage-sample-threshold}, so
\cref{lem:adaptive-matrix-coverage} with $\lambda=\lambda_\mu$ and
$L=\bar J_K$ gives $\mathcal E_{\rm dev}\subseteq\mathcal E_{\rm cov}$
and hence
\[
  \Pp((\mathcal E_{\rm cov})^c)\leq\rho_{\rm cov}.
\]
If $\pi_k$ is the policy deployed in episode $k$, the independent reset
gives
\[
  \E[\mathsf G_k\mid\mathcal H_{k-1}]=\Gamma(\pi_k),
\]
where $\mathcal H_{k-1}$ is the pre-episode history.
Substitution in \eqref{eq:main-coverage-deviation-event} proves
\eqref{eq:coverage-event-implication} and hence
\cref{lem:main-coverage-event}.  Its population premise follows
initially from \eqref{eq:exploration-population-coverage}, and subsequently
from \eqref{eq:local-self-excitation} once entry places deployed policies
in $\Pi_{\rm loc}(\varrho_{\rm cov})$; see \cref{thm:coverage-entry-induction}.

\subsection{Model estimation and kernel derivative transfer}
\label{app:kernel-derivative-transfer-proof}

Define the model event using the Gaussian noise in
\eqref{eq:epoch-increment-regression}.  Put
\[
  Z_j:=\sum_{i\leq n_jH}
       \Phi_\mu(X_i,A_i)^\top\eta_i,
  \qquad
  U_j:=\frac1{n_jH}\sum_{i\leq n_jH}
       (\eta_i\eta_i^\top-\Sigma^\star).
\]
For a sufficiently large universal constant $c_{\rm mdl}$, let 
$ \mathcal E_{\rm mdl} :=\mathcal E_\theta\cap\mathcal E_\Sigma$ where
\begin{align}
  \mathcal E_\theta
  &:=\bigcap_{j:n_j<K}
    \left\{\|Z_j\|\leq c_{\rm mdl}B_{\Phi_\mu}
      \sqrt{\Lambda_0 n_jH\ell_{\rm mdl}}\right\},
  \notag\\
  \mathcal E_\Sigma
  &:=\bigcap_{j:n_j<K}
    \left\{\|U_j\|_{\rm F}\leq c_{\rm mdl}\Lambda_0 n
      \left(\sqrt{\frac{\ell_{\rm mdl}}{n_jH}}
                  +\frac{\ell_{\rm mdl}}{n_jH}\right)\right\}.
  \label{eq:model-noise-event}
\end{align}
The intersections run over the prescribed epoch endpoints of the fixed
$(K,\delta)$ experiment.

\begin{lemma}[Epochwise kernel and derivative transfer]
\label{lem:epoch-model-transfer}
Suppose \cref{ass:localized-parametric-model} holds. For a sufficiently
large universal $c_{\rm mdl}$, the event
\eqref{eq:model-noise-event} satisfies
\begin{equation}
  \Pp(\mathcal E_\theta^c)\leq\rho_\theta,
  \qquad
  \Pp((\mathcal E_\Sigma)^c)\leq\rho_\Sigma,
  \qquad
  \Pp((\mathcal E_{\rm mdl})^c)
  \leq\rho_\theta+\rho_\Sigma.
  \label{eq:model-noise-event-probability}
\end{equation}
There are finite nonnegative structural constants $D_x,D_a$ such that,
on $\mathcal E_{\rm mdl}$, simultaneously at every epoch with
\[
  \widehat\Gamma_{n_j}\succeq\lambda_\mu I_p,
  \qquad
  n_jT\geq\delta_{\max}\ell_{\rm mdl},
\]
and for every $v\in C^{1,1}(\mathbb R^n)$ satisfying
$\|\nabla v\|_\infty\leq G_0$ and
$[\nabla v]_{\rm Lip}\leq M_0$,
\begin{equation}
  \frac{1}{\delta}
  \|\nabla_s(\mathcal{P}^\star-\widehat{\mathcal{P}}_j)v\|_\infty
  \leq
  D_s\sqrt{\frac{\ell_{\rm mdl}}{n_jT}},
  \qquad s\in\{x,a\}.
  \label{eq:epoch-normalized-model-transfer}
\end{equation}
The constants depend only on the model bounds, the fixed timestep ceiling
$\delta_{\max}$, the coverage level $\lambda_\mu$, and the envelope bounds
$G_0,M_0$; they are independent of
$(j,h,\delta,K,\boldsymbol\rho)$ and uniform over
$0<\delta\leq\delta_{\max}$.  The same event works for all such $v$,
including the adaptive fitted continuations.
\end{lemma}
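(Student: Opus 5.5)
The proof has two parts: probability bounds for the model events, then a deterministic transfer of parameter errors to kernel-derivative errors.

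\textbf{Probability bounds.} For $\mathcal E_\theta$, the sum $Z_j=\sum_{i\le n_jH}\Phi_\mu(X_i,A_i)^\top\eta_i$ is a vector martingale with predictable coefficients of operator norm at most $B_{\Phi_\mu}$ and conditionally Gaussian increments with covariance at most $\Lambda_0 I_n$. A crude but $\delta$-free bound suffices. The predictable quadratic variation is at most $B_{\Phi_\mu}^2\Lambda_0 n_jH$, and this bound is deterministic. I would therefore apply a vector Freedman/Azuma inequality via a $1/2$-net on the unit sphere of $\mathbb R^p$ (giving the factor $p$), and then take a union bound over the at most $\bar J_K$ endpoints. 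This yields the deviation $c\,B_{\Phi_\mu}\sqrt{\Lambda_0 n_jH\ell_{\rm mdl}}$.

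For $\mathcal E_\Sigma$, the terms $\eta_i\eta_i^\top-\Sigma^\star$ are i.i.d.\ sub-exponential matrices. They are independent of the past because $\eta_i\mid\mathcal F_{i-1}\sim\mathcal N(0,\Sigma^\star)$ has a fixed law. Bernstein's inequality on each of the $n^2$ entries, plus a union bound over entries and endpoints, gives the Frobenius bound $c\Lambda_0 n(\sqrt{\ell/N}+\ell/N)$ with $N=n_jH$. The $n^2$ inside $\ell_{\rm mdl}$ pays for the entry union bound.

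\textbf{Parameter errors.} On the event, coverage gives $\sum_i\Phi_\mu^\top\Phi_\mu=n_jH\widehat\Gamma_{n_j}\succeq n_jH\lambda_\mu I_p$. The unprojected least-squares error is
\[
\widehat\theta^{\rm LS}-\theta^\star=\delta^{-1/2}\Big(\sum_i\Phi_\mu^\top\Phi_\mu\Big)^{-1}Z_j ,
\]
and projection onto the convex ball containing $\theta^\star$ does not increase it. Hence
\[
\|\widehat\theta_j-\theta^\star\|\lesssim \frac{\sqrt{\ell_{\rm mdl}}}{\lambda_\mu\sqrt{\delta n_jH}}=\frac{\sqrt{\ell_{\rm mdl}}}{\lambda_\mu\sqrt{n_jT}},
\]
using $\delta H=T$. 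This is the key cancellation.

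For the covariance, expand the residual $Y_i-\delta\Phi_\mu\widehat\theta_j=\sqrt\delta\eta_i+\delta\Phi_\mu(\theta^\star-\widehat\theta_j)$. Then $\overline\Sigma_j-\Sigma^\star=U_j+(\text{cross term})+(\text{drift term})$:
\begin{itemize}
\item the cross term is bounded by $\sqrt\delta\,B_{\Phi_\mu}\|\widehat\theta_j-\theta^\star\|\sqrt{\Lambda_0}$-type quantities via Cauchy--Schwarz;
\item the drift term is $O(\delta\|\widehat\theta_j-\theta^\star\|^2)$.
\end{itemize}
Both are at most $O(\sqrt{\ell/(n_jT)})$ once $\delta\le\delta_{\max}$. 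The $U_j$ term is $O(\sqrt{\ell/(n_jH)}+\ell/(n_jH))$. This is $\le O(\sqrt{\ell/(n_jT)})$ because $H\ge T/\delta_{\max}$, and the linear term is dominated under the condition $n_jT\ge\delta_{\max}\ell_{\rm mdl}$. The Frobenius projection onto the convex set $\mathcal S_0$ is nonexpansive, which transfers the bound to $\widehat\Sigma_j$.

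\textbf{Kernel-derivative transfer.} Write $\mathcal Pv(x,a)=\E v(x+\delta\Phi_\mu\theta+\sqrt\delta\Sigma^{1/2}\xi)$. By \cref{lem:kernel-differentiation}, the action derivative is $\delta\,\partial_a(\Phi_\mu\theta)^\top\E\nabla v(\cdot)$. The difference between the true and learned kernels then splits into two pieces:
\begin{itemize}
\item $\delta\|\partial_a\Phi_\mu\|\,G_0\,\|\theta-\widehat\theta\|$;
\item $\delta L_{\mu,a}\,\E\|\nabla v(Y)-\nabla v(\widehat Y)\|$, where $Y,\widehat Y$ are coupled through the same $\xi$.
\end{itemize}
By the $M_0$-Lipschitz property of $\nabla v$, the second piece is at most
\[
\delta L_{\mu,a}M_0\big(\delta B_{\Phi_\mu}\|\theta-\widehat\theta\|+\sqrt\delta\,\|\Sigma^{\star1/2}-\widehat\Sigma^{1/2}\|\,\E\|\xi\|\big).
\]
For the square roots, use $\|\Sigma^{1/2}-\widehat\Sigma^{1/2}\|\le\|\Sigma-\widehat\Sigma\|/(2\sqrt{\lambda_0})$, valid since both matrices lie in $\mathcal S_0$. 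The state derivative is treated identically, with factor $I+\delta\partial_x\mu$; the $I$ part contributes only the $M_0$ coupling term.

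Dividing by $\delta$, every term is $O(\sqrt{\ell/(n_jT)})$ with constants depending only on the model bounds, $\lambda_0$, $\lambda_\mu$, $G_0$, $M_0$ and $\delta_{\max}$. The bound holds uniformly in $v$, because $v$ enters only through $G_0$ and $M_0$; this covers the adaptive fitted continuations.

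\textbf{Main obstacle.} The hardest step is the covariance bound. One must check that the $\ell/(n_jH)$ term and the cross terms stay $\delta$-uniform at the $\sqrt{\ell/(n_jT)}$ rate. This relies on $n_jT\ge\delta_{\max}\ell_{\rm mdl}$ and on $\sqrt\delta$ being bounded by $\sqrt{\delta_{\max}}$.
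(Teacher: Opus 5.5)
There is a genuine gap in the state-derivative step, caused by the covariance rate you settle for. You correctly obtain $\|U_j\|_{\rm F}=O(\sqrt{\ell_{\rm mdl}/(n_jH)})$ and a cross term carrying a factor $\sqrt\delta$. You then weaken everything to $O(\sqrt{\ell_{\rm mdl}/(n_jT)})$ using $H\ge T/\delta_{\max}$, and your ``main obstacle'' paragraph names this as the target rate. That rate is too weak by a factor $\sqrt\delta$.

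In $\nabla_x(\mathcal P^\star-\widehat{\mathcal P}_j)v$, the $I_n$ part of $(I_n+\delta\partial_x\mu^\star)^\top$ carries no factor $\delta$. Its coupling contribution is therefore of order $M_0\sqrt\delta\,\|(\Sigma^\star)^{1/2}-\widehat\Sigma_j^{1/2}\|$. With $\|\widehat\Sigma_j-\Sigma^\star\|=O(\sqrt{\ell_{\rm mdl}/(n_jT)})$ and $\|(\Sigma^\star)^{1/2}-\widehat\Sigma_j^{1/2}\|\le\|\Sigma^\star-\widehat\Sigma_j\|/(2\sqrt{\lambda_0})$, dividing by $\delta$ leaves $\delta^{-1/2}\sqrt{\ell_{\rm mdl}/(n_jT)}$, which blows up as $\delta\downarrow0$. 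So your final claim that every term is $O(\sqrt{\ell_{\rm mdl}/(n_jT)})$ after division by $\delta$ fails for $s=x$. It holds for $s=a$ only because there the whole coupling term is multiplied by $\delta L_{\mu,a}$.

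The repair is to keep the $\sqrt\delta$. Control the covariance at the per-transition rate $\sqrt{\ell_{\rm mdl}/(n_jH)}=\sqrt{\delta\ell_{\rm mdl}/(n_jT)}$, using the fact that the diffusion coefficient is learned from all $n_jH$ increments. Since $\ell_{\rm mdl}/(n_jH)=\delta\ell_{\rm mdl}/(n_jT)\le1$ under $n_jT\ge\delta_{\max}\ell_{\rm mdl}$, each of the three pieces is $O(\sqrt{\delta\ell_{\rm mdl}/(n_jT)})$:
\begin{itemize}
\item the $U_j$ term is $O(\sqrt{\delta\ell_{\rm mdl}/(n_jT)})$ directly;
\item the cross term is at most $\sqrt\delta\,B_{\Phi_\mu}\|\widehat\theta_j-\theta^\star\|\,(N^{-1}\sum_i\|\eta_i\|^2)^{1/2}$, and on $\mathcal E_\Sigma$ you have $N^{-1}\sum_i\|\eta_i\|^2\le n\Lambda_0+\sqrt n\,\|U_j\|_{\rm F}=O(1)$, so it is also $O(\sqrt{\delta\ell_{\rm mdl}/(n_jT)})$;
\item the quadratic drift term is $\delta\cdot O(\ell_{\rm mdl}/(n_jT))\le O(\sqrt{\delta\ell_{\rm mdl}/(n_jT)})$.
\end{itemize}
Nonexpansive projection then gives $\|\widehat\Sigma_j-\Sigma^\star\|_{\rm F}\lesssim\sqrt{\delta\ell_{\rm mdl}/(n_jT)}$, which is exactly the paper's bound. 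The coupling distance becomes $\lesssim\delta\sqrt{\ell_{\rm mdl}/(n_jT)}$, and the $I_n$ term is harmless.

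With this correction, the rest of your argument follows the paper's proof: the martingale score bound with a net, entrywise Bernstein, the $\delta H=T$ cancellation for $\widehat\theta_j$, and the same-noise coupling combined with \cref{lem:kernel-differentiation}.
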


\begin{proof}[Proof of \cref{lem:epoch-model-transfer}]
\emph{Step 1: model estimation.}
Write $N=n_jH=n_jT/\delta$,
$\Phi_i=\Phi_\mu(X_i,A_i)$, and $S_j=\sum_{i\leq N}\Phi_i^\top\Phi_i$.
For unit $v\in\mathbb R^p$ and $\lambda\in\mathbb R$, predictability gives
\[
  \E\!\left[e^{\lambda v^\top\Phi_i^\top\eta_i}
       \mid\mathcal F_{i-1}\right]
  \leq\exp\!\left(\frac{\lambda^2\Lambda_0\|\Phi_iv\|^2}{2}\right).
\]
Thus every unit projection of the score $Z_j$ in
\eqref{eq:model-noise-event} is sub-Gaussian with variance proxy
$NB_{\Phi_\mu}^2\Lambda_0$.  A $1/2$-net of size at most $5^p$
\citep[Corollary~4.2.13]{vershynin2018high} and a union bound over
$\bar J_K$ endpoints yield $\Pp(\mathcal E_\theta^c)\leq\rho_\theta$.
Entrywise Gaussian-product Bernstein bounds
\citep[Lemma~2.7.7 and Corollary~2.8.3]{vershynin2018high}, union-bounded
over $n^2\bar J_K$ entries and endpoints, give
$\Pp(\mathcal E_\Sigma^c)\leq\rho_\Sigma$.
The factor $\ell_{\rm mdl}$ in \eqref{eq:localized-model-log-factor}
bounds both $p\log5$ and
$\log[n^2\bar J_K/(\rho_\theta\wedge\rho_\Sigma)]$ up to universal
multipliers. Increasing $c_{\rm mdl}$ absorbs the numerical factors,
and a union bound proves \eqref{eq:model-noise-event-probability}.

On $\mathcal E_{\rm mdl}$, fix an epoch with
$S_j/N\succeq\lambda_\mu I_p$.
The unprojected parameter error is $\delta^{-1/2}S_j^{-1}Z_j$.
Nonexpansiveness of projection onto the ball containing $\theta^\star$ gives
\begin{equation}
  \|\widehat\theta_j-\theta^\star\|
  \leq\frac{\|Z_j\|}{N\lambda_\mu\sqrt\delta}
  \leq C_\theta\sqrt{\frac{\ell_{\rm mdl}}{n_jT}}
  =:D_{\theta,j},
  \qquad
  C_\theta:=\frac{c_{\rm mdl}B_{\Phi_\mu}\sqrt{\Lambda_0}}{\lambda_\mu}.
  \label{eq:epoch-theta-rate}
\end{equation}

Put $z=\ell_{\rm mdl}/N$,
$a_j=c_{\rm mdl}\Lambda_0 n(\sqrt z+z)$, and
$q_{\Sigma,j}=n\Lambda_0+\sqrt n\,a_j$.
On $\mathcal E_\Sigma$, $\|U_j\|_{\rm F}\leq a_j$ and
$N^{-1}\sum_i\|\eta_i\|^2\leq q_{\Sigma,j}$.
For $d_i=\Phi_i(\theta^\star-\widehat\theta_j)$, expansion gives
\[
  \overline\Sigma_j-\Sigma^\star
  =U_j+\frac{\sqrt\delta}{N}\sum_{i\leq N}
       (\eta_id_i^\top+d_i\eta_i^\top)
      +\frac\delta N\sum_{i\leq N}d_id_i^\top.
\]
Cauchy--Schwarz gives
$\|N^{-1}\sum_i\eta_id_i^\top\|_{\rm F}
 \leq B_{\Phi_\mu}D_{\theta,j}\sqrt{q_{\Sigma,j}}$
without assuming independence of $d_i$ and $\eta_i$.  Hence
\begin{equation*}
  \|\overline\Sigma_j-\Sigma^\star\|_{\rm F}
  \leq a_j+2\sqrt\delta B_{\Phi_\mu}D_{\theta,j}\sqrt{q_{\Sigma,j}}
       +\delta B_{\Phi_\mu}^2D_{\theta,j}^2.
\end{equation*}
Since $D_{\theta,j}\leq C\sqrt{z/\delta}$,
$q_{\Sigma,j}\leq C(1+\sqrt z+z)$, and $z^{3/4}\leq\sqrt z+z$,
the right-hand side is at most $C(\sqrt z+z)$.
Metric projection in Frobenius norm onto $\mathcal S_0\ni\Sigma^\star$
is nonexpansive, so
\begin{equation}
  \|\widehat\Sigma_j-\Sigma^\star\|_{\rm F}
  \leq C_\Sigma\left\{
       \sqrt{\frac{\delta\ell_{\rm mdl}}{n_jT}}
       +\frac{\delta\ell_{\rm mdl}}{n_jT}\right\}.
  \label{eq:epoch-covariance-rate}
\end{equation}
The constants $C_\theta,C_\Sigma$ depend only on model bounds and
$\lambda_\mu$; both estimates hold simultaneously at all covered epochs.

\medskip
\noindent\emph{Step 2: kernel coupling and derivatives.}
Define
\[
  \omega_j:=\delta B_{\Phi_\mu}\|\widehat\theta_j-\theta^\star\|
    +\frac{\sqrt\delta}{2\sqrt{\lambda_0}}
      \|\widehat\Sigma_j-\Sigma^\star\|_{\rm F}.
\]
The common covariance lower bound implies
\[
  \|(\Sigma^\star)^{1/2}-\widehat\Sigma_j^{1/2}\|_{\rm F}
  \leq\frac{\|\Sigma^\star-\widehat\Sigma_j\|_{\rm F}}
                {2\sqrt{\lambda_0}}.
\]
Coupling the transitions with the same $\xi\sim\mathcal N(0,I_n)$ and
applying Cauchy--Schwarz gives, uniformly in $(x,a)$,
\begin{equation*}
\begin{aligned}
  W_1\bigl(P^\star(\cdot\mid x,a),\widehat P_j(\cdot\mid x,a)\bigr)
  &\leq\delta B_{\Phi_\mu}\|\widehat\theta_j-\theta^\star\|\\
  &\quad+\sqrt\delta\,
        \|(\Sigma^\star)^{1/2}-\widehat\Sigma_j^{1/2}\|_{\rm F}
  \leq\omega_j.
\end{aligned}
\end{equation*}

Let $v\in C^{1,1}(\mathbb R^n)$ with
$\|\nabla v\|_\infty\leq G_v$ and $[\nabla v]_{\rm Lip}\leq M_v$.
Since the covariances are constant in $(x,a)$,
\cref{lem:kernel-differentiation} gives
\begin{align*}
  \nabla_x(\mathcal P^\star-\widehat{\mathcal P}_j)v
  &=(I_n+\delta\partial_x\mu^\star)^\top
      (\mathcal P^\star-\widehat{\mathcal P}_j)[\nabla v]
    +\delta\{\partial_x(\mu^\star-\widehat\mu_j)\}^\top
      \widehat{\mathcal P}_j[\nabla v],\\
  \nabla_a(\mathcal P^\star-\widehat{\mathcal P}_j)v
  &=\delta(\partial_a\mu^\star)^\top
      (\mathcal P^\star-\widehat{\mathcal P}_j)[\nabla v]
    +\delta\{\partial_a(\mu^\star-\widehat\mu_j)\}^\top
      \widehat{\mathcal P}_j[\nabla v].
\end{align*}
The coupling bounds the expectation difference by $M_v\omega_j$,
while $\|\widehat{\mathcal P}_j[\nabla v]\|_\infty\leq G_v$ and
$\|\partial_s(\mu^\star-\widehat\mu_j)\|_\infty
 \leq L_{\Phi_\mu,s}\|\widehat\theta_j-\theta^\star\|$.
Using $\|I_n+\delta\partial_x\mu^\star\|_{\rm op}\leq1+\delta L_{\mu,x}$
and $\|\partial_a\mu^\star\|_{\rm op}\leq L_{\mu,a}$ yields
\begin{align}
  \|\nabla_x(\mathcal P^\star-\widehat{\mathcal P}_j)v\|_\infty
  &\leq(1+\delta L_{\mu,x})M_v\omega_j
  +\delta G_vL_{\Phi_\mu,x}\|\widehat\theta_j-\theta^\star\|,
  \label{eq:epoch-model-transfer-x}\\
  \|\nabla_a(\mathcal P^\star-\widehat{\mathcal P}_j)v\|_\infty
  &\leq\delta L_{\mu,a}M_v\omega_j
  +\delta G_vL_{\Phi_\mu,a}\|\widehat\theta_j-\theta^\star\|.
  \label{eq:epoch-model-transfer-a}
\end{align}

\medskip
\noindent\emph{Step 3: normalization.}
Choose
\begin{align*}
  D_\omega&:=B_{\Phi_\mu}C_\theta+\frac{C_\Sigma}{\sqrt{\lambda_0}},\\
  D_x&:=(1+\delta_{\max}L_{\mu,x})M_0D_\omega
       +G_0L_{\Phi_\mu,x}C_\theta,\\
  D_a&:=\delta_{\max}L_{\mu,a}M_0D_\omega
       +G_0L_{\Phi_\mu,a}C_\theta.
\end{align*}
If $n_jT\geq\delta_{\max}\ell_{\rm mdl}$, then
$\delta\ell_{\rm mdl}/(n_jT)\leq1$, so \eqref{eq:epoch-covariance-rate}
gives $\|\widehat\Sigma_j-\Sigma^\star\|_{\rm F}
\leq2C_\Sigma\sqrt{\delta\ell_{\rm mdl}/(n_jT)}$.
Together with \eqref{eq:epoch-theta-rate}, this yields
\begin{equation*}
  \omega_j\leq\delta D_\omega\sqrt{\frac{\ell_{\rm mdl}}{n_jT}}.
\end{equation*}
For $G_v\leq G_0$ and $M_v\leq M_0$, substitution into
\eqref{eq:epoch-model-transfer-x}--\eqref{eq:epoch-model-transfer-a},
division by $\delta$, and $\delta\leq\delta_{\max}$ prove
\eqref{eq:epoch-normalized-model-transfer}.
These pathwise bounds are uniform in $v$, so they also apply to
adaptive fitted continuations.
\end{proof}

\subsection{Conditional Monte Carlo fits}
\label{app:MC-concentration-proof}
\label{subsec:localized-MC-budgets}

\begin{lemma}[Constrained least-squares fitting bounds for \cref{def:two-scale-true-residual-certificate}]
\label{lem:two-scale-fitted-residual}
Suppose
\cref{ass:envelope-stable-critic-class,ass:predictable-critic-design,ass:projected-two-scale-Bellman-approximation,ass:localized-MC-budget}
hold.  There is
an event $\mathcal E_{\rm MC}$, with
$\Pp((\mathcal E_{\rm MC})^c)\leq\rho_{\rm MC}$, on which, simultaneously
for all epoch-stage fits actually performed by the algorithm,
\begin{align}
  \sup_{x,a}
  \|\nabla_a(G_{j,h}-\widehat Q_{j,h})(x,a)\|
  &\leq
  \delta\left(
    b_{\rm entry}+\frac{c_a^{\rm glob}+c_{{\rm MC},a}}{\sqrt{n_j}}
  \right),
  \label{eq:fitted-entry-certificate}\\
  \sup_x\sup_{a\in\mathcal U_h^\star(x)}
  \|\nabla_s(G_{j,h}-\widehat Q_{j,h})(x,a)\|
  &\leq
  \frac{\delta\bigl(c_s^{\rm loc}+c_{{\rm MC},s}\bigr)}{\sqrt{n_j}},
  \qquad s\in\{x,a\}.
  \label{eq:fitted-local-certificate}
\end{align}
The displayed sums combine the finite nonnegative approximation
and Monte Carlo coefficients from
\cref{ass:projected-two-scale-Bellman-approximation,ass:localized-MC-budget}.
After the dimension and confidence factor $\kappa_j^Q$ has been absorbed by the
simulation budget \eqref{eq:localized-MC-normalized-budget}, these sums
are independent of $(j,h,\delta,K,\boldsymbol\rho)$.
\end{lemma}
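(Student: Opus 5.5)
The plan is to split each fitting error into an approximation part and a Monte Carlo part. Fix a performed fit $(j,h)$ and condition on its pre-simulation history. On this history the learned kernel $\widehat P_j$ (admissible by the projections), the continuation $\widehat V_{j,h+1}\in\mathcal V_{h+1,j}$, the critic class and the design $(Z_{j,h,i})$ are all fixed. Let $\bar w:=\bar w_{j,h}^{\widehat P_j,\widehat V_{j,h+1}}$ be the noiseless projection \eqref{eq:noiseless-constrained-projection}, and note $G_h^{\widehat P_j,\widehat V_{j,h+1}}=G_{j,h}$. Then
\[
  G_{j,h}-\widehat Q_{j,h}
  =\bigl(G_{j,h}-\bar Q_{j,h}^{\widehat P_j,\widehat V_{j,h+1}}\bigr)
   +\Psi_{h,j}^\top(\bar w-\widehat w_{j,h}),
\]
because the offsets cancel. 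The first term is bounded directly by \cref{ass:projected-two-scale-Bellman-approximation}: it contributes $\delta(b_{\rm entry}+c_a^{\rm glob}/\sqrt{n_j})$ globally and $\delta c_s^{\rm loc}/\sqrt{n_j}$ on $\mathcal U_h^\star(x)$. That assumption holds for every admissible kernel, every continuation in the class and every design realization, so the adaptivity of these objects causes no difficulty. For the second term, $\|\nabla_s\Psi_{h,j}^\top(\bar w-\widehat w)\|\le L_{\Psi,s,j}\|\bar w-\widehat w\|$, so it suffices to bound the coefficient error.

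For the coefficient error, write $Y^Q_{j,h,i}=G_{j,h}(Z_{j,h,i})+\varepsilon_i$. Here $\varepsilon_i$ is the mean of $M_{j,h}$ centered terms $\widehat V_{j,h+1}(X')-\widehat{\mathcal P}_j\widehat V_{j,h+1}(Z_{j,h,i})$. By \cref{prop:algorithm-output-interface}, $\widehat V_{j,h+1}$ is $G_0$-Lipschitz, and $X'$ is Gaussian with covariance $\delta\widehat\Sigma_j\preceq\delta\Lambda_0 I$. Gaussian Lipschitz concentration therefore makes $\varepsilon_i$ conditionally sub-Gaussian with variance proxy $G_0^2\delta\Lambda_0/M_{j,h}$, independently across $i$ given the design.

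Both $\widehat w$ and $\bar w$ minimize strictly convex quadratics over the convex set $\mathcal C_{h,j}$. Writing the first-order (variational) inequalities at each minimizer and adding them gives the standard projection stability bound
\[
  \|\widehat w-\bar w\|_{S^Q_{j,h}}^2\le\langle \widehat w-\bar w,\textstyle\sum_i\Psi_{h,j}(Z_{j,h,i})\varepsilon_i\rangle .
\]
This yields $\|\widehat w-\bar w\|_{S}\le\|\sum_i\Psi_i\varepsilon_i\|_{S^{-1}}$, the key point being that the constraint does not inflate the error. Using $S\preceq 2S$ and $S^{-1}\preceq 2(2S)^{-1}$, the self-normalized bound of \citet[Theorem~1]{abbasiyadkori2011improved} applies with initial matrix $S^Q_{j,h}$ and final matrix $2S^Q_{j,h}$. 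At confidence $\rho_{\rm MC}/(C_{\rm MC}\bar J_KH)$ the log-determinant term equals $q_j\log 2$, so with probability $1-\rho_{\rm MC}/(C_{\rm MC}\bar J_KH)$,
\[
  \|\widehat w-\bar w\|_{S}\lesssim G_0\sqrt{\delta\Lambda_0\kappa^Q_j/M_{j,h}} .
\]
Then $\|\widehat w-\bar w\|\le\|\widehat w-\bar w\|_S/\sqrt{c_{Q,j}N^Q_{j,h}}$ by \eqref{eq:deterministic-critic-gram}. Multiplying by $L_{\Psi,s,j}$ and invoking \eqref{eq:localized-MC-normalized-budget} gives the derivative bound $\delta c_{{\rm MC},s}/\sqrt{n_j}$, valid uniformly over all $(x,a)$, in particular both globally and on the bins.

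The event $\mathcal E_{\rm MC}$ is the intersection over all potential fits, at most $\bar J_KH$ of them by \eqref{eq:epoch-count-bound}. Each failure probability is bounded conditionally on the history, so the tower property followed by a union bound gives failure probability at most $\rho_{\rm MC}$, with $C_{\rm MC}\ge2$ absorbing the constant in the self-normalized inequality.

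I expect the main obstacle to be the bookkeeping of the self-normalized step. One must confirm that the conditional independence of the fresh draws given the entire predictable design legitimately yields a fixed-design sub-Gaussian vector. One must also match the constants exactly to $\kappa_j^Q$, with the $4\log$ coming from squaring the $\sqrt{2\log(1/\rho)}$ term, and check that fits skipped because of failed coverage are harmlessly included in the union.
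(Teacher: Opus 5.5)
Your proposal is correct and follows essentially the same route as the paper. It conditions on the pre-fit history, splits off the noiseless constrained projection $\bar w_{j,h}$, and uses the two variational inequalities to get $\|\widehat w_{j,h}-\bar w_{j,h}\|_{S^Q_{j,h}}\le\|\sum_i\Psi_{h,j}(Z_{j,h,i})\varepsilon_i\|_{(S^Q_{j,h})^{-1}}$. It then applies the self-normalized bound with initial matrix $S^Q_{j,h}$ and final matrix $2S^Q_{j,h}$, and closes with \eqref{eq:deterministic-critic-gram}, \eqref{eq:localized-MC-normalized-budget}, and a union bound over at most $\bar J_KH$ fits. One small correction: with this matrix choice the self-normalized step gives exactly $G_0\sqrt{\delta\Lambda_0\kappa_j^Q/M_{j,h}}$ with no hidden multiplicative constant, so your $\lesssim$ should be $\le$; this matters because $C_{\rm MC}$ sits inside a logarithm and could not absorb such a constant, and its only role is the slack $\bar J_KH\cdot\rho_{\rm MC}/(C_{\rm MC}\bar J_KH)=\rho_{\rm MC}/C_{\rm MC}\le\rho_{\rm MC}$ in the union bound.
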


\begin{proof}[Proof of \cref{lem:two-scale-fitted-residual}]
Fix a fit and condition on its pre-fit history, including the learned
kernel, the continuation, the sample counts, and the entire design.
By \cref{ass:predictable-critic-design}, all of these are fixed before
the fresh stage-$h$ labels are sampled.  Write
$G_{j,h}=\delta r+\widehat{\mathcal P}_j\widehat V_{j,h+1}$.
The parameter and covariance projections give
$\|\widehat\theta_j\|\leq B_\theta$ and
$\widehat\Sigma_j\in\mathcal S_0$ pathwise, so $\widehat P_j$ belongs to
\eqref{eq:admissible-model-family}; since
$\widehat V_{j,h+1}\in\mathcal V_{h+1,j}$, the approximation assumption
applies with $P=\widehat P_j$ and $v=\widehat V_{j,h+1}$.  Let
$\bar w_{j,h},\bar Q_{j,h}$ be the resulting noiseless projection and
critic, which are fixed under the conditioning.

For $\psi_i=\Psi_{h,j}(Z_{j,h,i})$, set
$\zeta_{j,h,i}=Y_{j,h,i}^Q-G_{j,h}(Z_{j,h,i})$ and
$U_{j,h}=\sum_i\psi_i\zeta_{j,h,i}$.
By \eqref{eq:verified-fitted-envelope-bounds}, Gaussian concentration
\citep[Theorem~5.5]{boucheron2013concentration} applies to the
$G_0$-Lipschitz continuation.  Averaging $M_{j,h}$ independent draws gives
conditionally independent, centered, sub-Gaussian noise variables with variance proxy
$\sigma_{j,h}^2\leq G_0^2\delta\Lambda_0/M_{j,h}$.
Under the conditioning, $S_{j,h}^Q=\sum_i\psi_i\psi_i^\top$ is fixed
and positive definite. Apply the self-normalized inequality
\citep[Theorem~1]{abbasiyadkori2011improved} conditionally with initial
matrix $V=S_{j,h}^Q$. The final normalizing matrix is $2S_{j,h}^Q$,
with determinant ratio $2^{q_j}$; rescaling its norm gives, with
conditional failure probability at most $\alpha\in(0,1)$,
\[
  \|U_{j,h}\|_{(S_{j,h}^Q)^{-1}}
  \leq G_0\sqrt{\frac{\delta\Lambda_0}{M_{j,h}}}
    \left[2q_j\log2+4\log\frac1\alpha\right]^{1/2}.
\]
The initial matrix is used only for concentration and adds no penalty
to the regression objective.
With $\alpha=\rho_{\rm MC}/(C_{\rm MC}\bar J_KH)$ and
$\kappa_j^Q$ from \cref{ass:localized-MC-budget}, define
\begin{equation*}
  \varrho_{j,h}^{\rm MC}
  :=G_0\sqrt{\frac{\delta\Lambda_0\kappa_j^Q}{M_{j,h}}}.
\end{equation*}
Since the conditional bound holds for every pre-fit history, iterated
expectation and a union bound over at most $\bar J_KH$ performed fits
give an unconditional event $\mathcal E_{\rm MC}$,
with complement probability at most
$\bar J_KH\alpha=\rho_{\rm MC}/C_{\rm MC}\leq\rho_{\rm MC}$, on which
$\|U_{j,h}\|_{(S_{j,h}^Q)^{-1}}\leq\varrho_{j,h}^{\rm MC}$ for every
performed fit.

Let $d=\widehat w_{j,h}-\bar w_{j,h}$ and
$b_i=G_{j,h}(Z_{j,h,i})-\bar Q_{j,h}(Z_{j,h,i})$.
The variational inequality for \eqref{eq:noiseless-constrained-projection}
gives $\langle d,\sum_i\psi_ib_i\rangle\leq0$.
Testing the variational inequality for the noisy constrained
least-squares fit \eqref{eq:epoch-constrained-least-squares} against $\bar w_{j,h}$
therefore gives
\begin{equation*}
  \|d\|_{S_{j,h}^Q}^2
  \leq\left\langle d,\sum_i\psi_i(\zeta_{j,h,i}+b_i)\right\rangle
    \leq\langle d,U_{j,h}\rangle
\leq\|d\|_{S_{j,h}^Q}\|U_{j,h}\|_{(S_{j,h}^Q)^{-1}}.
\end{equation*}
The approximation residuals $b_i$ need not vanish: their contribution
is nonpositive by the noiseless projection's variational inequality.
Dividing when $d\ne0$ (the case $d=0$ is immediate) yields
\begin{equation*}
  \|\widehat w_{j,h}-\bar w_{j,h}\|_{S_{j,h}^Q}
  \leq\|U_{j,h}\|_{(S_{j,h}^Q)^{-1}}
  \leq \varrho_{j,h}^{\rm MC}.
\end{equation*}
By \eqref{eq:deterministic-critic-gram} and
\eqref{eq:localized-MC-normalized-budget}, for $s\in\{x,a\}$,
\[
\begin{aligned}
  \sup_{x,a}\|\nabla_s(\widehat Q_{j,h}-\bar Q_{j,h})(x,a)\|
  &\leq L_{\Psi,s,j}\|d\|\\
  &\leq\frac{L_{\Psi,s,j}\varrho_{j,h}^{\rm MC}}
             {\sqrt{c_{Q,j}N_{j,h}^Q}}
  \leq\frac{\delta c_{{\rm MC},s}}{\sqrt{n_j}}.
\end{aligned}
\]
Adding the approximation bounds
\eqref{eq:projected-entry-approximation}--\eqref{eq:projected-local-approximation}
proves \eqref{eq:fitted-entry-certificate}--\eqref{eq:fitted-local-certificate}.
\end{proof}

\subsection{Verification of two-scale residual control and event assembly}
\label{app:two-scale-residual-verification-proof}

\begin{proof}[Proof of \cref{thm:two-scale-true-residual}]
Using the model-transfer constants from
\cref{lem:epoch-model-transfer} and the fitting bounds from
\cref{lem:two-scale-fitted-residual}, set
\begin{equation}
  C_{\rm res}(K,\boldsymbol\rho)
  :=\max\Biggl\{c_a^{\rm glob}+c_{{\rm MC},a}
                   +D_a\sqrt{\frac{\ell_{\rm mdl}}T},\max_{s\in\{x,a\}}\left(
      c_s^{\rm loc}+c_{{\rm MC},s}
        +D_s\sqrt{\frac{\ell_{\rm mdl}}T}\right)\Biggr\}.
  \label{eq:verified-certificate-numerators}
\end{equation}
On $\mathcal E_{\rm mdl}\cap\mathcal E_{\rm MC}$,
fix an epoch with $\widehat\Gamma_{n_j}\succeq\lambda_\mu I_p$ and
$n_jT\geq\delta_{\max}\ell_{\rm mdl}$.
By \cref{prop:algorithm-output-interface}, the model bound applies to
every fitted continuation; adding it to each fitting bound
through \eqref{eq:epoch-true-residual-decomposition} and bounding the
resulting coefficient by $C_{\rm res}(K,\boldsymbol\rho)$ gives
\eqref{eq:verified-entry-residual-rate}--\eqref{eq:verified-local-residual-rate},
verifying \cref{def:two-scale-true-residual-certificate}.
Each of the three component expressions is a fixed nonnegative
constant plus a fixed multiple of $\sqrt{\ell_{\rm mdl}/T}$, proving
\eqref{eq:verified-certificate-numerator-size} and independence of
$(j,h,\delta)$.  A union bound gives \eqref{eq:certificate-event-probability}.
\end{proof}

\section{Detailed proofs and calibration for the main theorem}
\label{app:main-theorem-details}

We give the deterministic proofs and calibrations used in
\cref{sec:main-theorem-proof}, using the concentration bounds of
\cref{app:detailed-concentration}.

\subsection{Verification of fitted-envelope regularity}
\label{app:fitted-envelope-verification-proof}

\begin{proof}[Proof of \cref{prop:algorithm-output-interface}]
\emph{Feasible critics.}
Fix $Q_{h,j,w}$ and write
$V_{h,j,w}(x):=\max_{a\in\mathcal A}Q_{h,j,w}(x,a)$.
In this proof, let
$\mathcal M_{h,j,w}(x):=\argmax_{a\in\mathcal A}Q_{h,j,w}(x,a)$
denote its maximizing-action set.
The class assumption gives a $C^1$ extension of $Q_{h,j,w}$, and
compactness of $\mathcal A$ gives nonempty compact active sets
$\mathcal M_{h,j,w}(x)$.
Taking $y=x$ in \eqref{eq:fitted-active-gradient-Lipschitz} gives a
common active state gradient $g_w(x)$.  Hence Danskin's formula yields
\[
  D V_{h,j,w}(x;z)
  =\max_{a\in\mathcal M_{h,j,w}(x)}
     \langle\nabla_xQ_{h,j,w}(x,a),z\rangle
  =\langle g_w(x),z\rangle.
\]
The same class condition makes $g_w$ continuous and bounded by $G_0$,
so $V_{h,j,w}\in C^1$, \cref{def:fitted-envelope-interface} holds, and
\cref{lem:fitted-envelope-identity} gives, for every
$a\in\mathcal M_{h,j,w}(x)$,
\begin{equation}
  \nabla V_{h,j,w}(x)=\nabla_xQ_{h,j,w}(x,a),
  \qquad
  \|\nabla V_{h,j,w}\|_\infty\leq G_0.
  \label{eq:deterministic-fitted-envelope-identity}
\end{equation}
Then \eqref{eq:fitted-active-gradient-Lipschitz} gives
$[\nabla V_{h,j,w}]_{\rm Lip}\leq M_0$.
The measurable maximum theorem applied to the jointly continuous
$Q_{h,j,w}(x,a)$ on the fixed compact action set gives a Borel selector
$(w,x)\mapsto a_{h,j}(w,x)\in\mathcal M_{h,j,w}(x)$.

\emph{Fitted outputs.}
The constrained least-squares objective in
\eqref{eq:epoch-constrained-least-squares} is continuous on the nonempty
compact convex set $\mathcal C_{h,j}$.  By
\cref{ass:predictable-critic-design}, its Hessian is
\[
  2S_{j,h}^Q\succeq2c_{Q,j}N_{j,h}^QI_{q_j},
\]
so its minimizer is unique.  By the measurable argmin theorem, this
minimizer $\widehat w_{j,h}$ is measurable in the predictable design
and labels, so backward induction from $u$ makes every fitted
coefficient measurable in the history and synthetic draws.
Feasibility transfers the envelope bounds to all fitted
outputs; the terminal bounds follow from the class assumption.
Composing $a_{h,j}(w,x)$ with $\widehat w_{j,h}$ gives the jointly Borel
greedy policy and its identity
\eqref{eq:deterministic-fitted-envelope-identity}.
\end{proof}

\subsection{Backward bin entry and local transfer}
\label{app:backward-union-entry-proof}

In the epoch notation of \cref{subsec:main-criteria-consequences},
subtracting the Bellman equations gives
\begin{equation}
  \Delta Q_{j,h}
  =\mathcal{P}^\star\Delta V_{j,h+1}-e_{j,h}.
  \label{eq:epoch-canonical-error-identity}
\end{equation}

\begin{proof}[Proof of \cref{thm:backward-union-entry}]
\Cref{lem:optimal-envelope-regularity,lem:parametric-growth-moment}
give $C^1$ optimal envelopes with bounded gradients and verify
\cref{ass:true-drift}.  Within this proof, set
\[
  E_{j,h}:=\sup_x\|\nabla\Delta V_{j,h}(x)\|,
  \qquad
  A_{j,h}:=\sup_x
  \|\nabla_a\Delta Q_{j,h}(x,\pi_h^{(j)}(x))\|.
\]
For the backward recursion in this proof, abbreviate
\begin{equation*}
  b_{E,j}^{\rm loc}
  :=d_{x,j}^{\rm loc}+\kappa d_{a,j}^{\rm loc},
  \qquad \kappa=\frac{\bar L_{xa}}{\bar m}.
\end{equation*}
Proceed backward from $E_{j,H}=0$, assuming entry at later stages and
\eqref{eq:localized-value-gradient-transfer} at $h+1$.
The bound $E_{j,h+1}<\infty$ gives at most linear growth of
$\Delta V_{j,h+1}$ and hence integrability under every Euler transition.
The optimal continuation also has at most linear growth by
\cref{lem:optimal-envelope-regularity}, so the fitted continuation is
integrable.  Thus \eqref{eq:epoch-canonical-error-identity} holds on
$\mathbb R^n\times\mathcal A^{\mathrm o}$, and
\cref{lem:kernel-differentiation} applies to the error.

\emph{Current-stage entry.}
By \cref{lem:kernel-differentiation}, differentiating
\eqref{eq:epoch-canonical-error-identity} gives
\begin{equation}
  \nabla_a\Delta Q_{j,h}(x,a)
  =\delta\{\partial_a\mu^\star(x,a)\}^\top
  \mathcal{P}^\star[\nabla\Delta V_{j,h+1}](x,a)
  -\nabla_ae_{j,h}(x,a).
  \label{eq:entry-action-identity}
\end{equation}
The global action-residual bound and the induction hypothesis imply
\begin{equation*}
  \sup_{x,a}\|\nabla_a\Delta Q_{j,h}(x,a)\|
  \leq\delta D_j^{\rm glob}.
\end{equation*}
For $a^\star\in\mathcal A_h^\star(x)$, convexity of $\mathcal A$ yields
\begin{equation*}
  |\Delta Q_{j,h}(x,a)-\Delta Q_{j,h}(x,a^\star)|
  \leq\delta\diam(\mathcal A)D_j^{\rm glob}.
\end{equation*}
Thus \eqref{eq:union-entry-margin} and the entry condition
$\diam(\mathcal A)D_j^{\rm glob}\leq\gamma/2$ give
$\widehat Q_{j,h}(x,a^\star)-\widehat Q_{j,h}(x,a)
\geq\delta\gamma/2>0$ whenever $a\notin\mathcal U_h^\star(x)$.
Every full-action fitted maximizer lies in the union, so the local
residual bound now applies in \eqref{eq:entry-action-identity}:
\begin{equation}
  A_{j,h}
  \leq\delta d_{a,j}^{\rm loc}+\delta L_{\mu,a}E_{j,h+1}.
  \label{eq:localized-A-recursion}
\end{equation}

\emph{Local geometric comparison.}
Fix $a\in\mathcal U_h^\star(x)$ and the optimal branch point
$b=a_{h,\ell}^\star(x)$ in a bin containing $a$.
Strong concavity, $\langle\nabla_aQ_h^\star(x,b),a-b\rangle\leq0$,
and $\langle v,b-a\rangle\leq0$ for $v\in N_{\mathcal A}(a)$ give
\[
  \delta\bar m\|a-b\|^2
  \leq\langle\nabla_aQ_h^\star(x,a)-v,b-a\rangle
  \leq\|\nabla_aQ_h^\star(x,a)-v\|\,\|a-b\|.
\]
Divide by $\|a-b\|$ when $a\ne b$ and take the infimum over $v$.
Using \eqref{eq:benchmark-stationarity-residual} and
$b\in\mathcal A_h^\star(x)$ gives, also when $a=b$,
\begin{equation}
  \dist(a,\mathcal A_h^\star(x))
  \leq\|a-b\|
  \leq\frac{d_h(x,a)}{\delta\bar m}.
  \label{eq:distance-to-optimal-set}
\end{equation}
The optimal-envelope identity at $b$ and
\eqref{eq:branch-mixed-gradient} on the same bin give
\begin{equation*}
  \|\nabla_xQ_h^\star(x,a)-\nabla V_h^\star(x)\|
  \leq\delta\bar L_{xa}\|a-b\|
  \leq\frac{\bar L_{xa}}{\bar m}d_h(x,a).
\end{equation*}

\emph{Close the backward recursion.}
By \cref{lem:fitted-envelope-identity,lem:optimal-envelope-regularity},
\begin{equation*}
  \nabla\Delta V_{j,h}(x)
  =\nabla_x\Delta Q_{j,h}(x,\pi_h^{(j)}(x))
+
  \nabla_xQ_h^\star(x,\pi_h^{(j)}(x))
  -\nabla V_h^\star(x).
\end{equation*}
Full-action greedy optimality gives
$\nabla_a\widehat Q_{j,h}(x,\pi_h^{(j)}(x))
\in N_{\mathcal A}(\pi_h^{(j)}(x))$.
Using this normal vector in \eqref{eq:benchmark-stationarity-residual},
\begin{equation}
  d_h(x,\pi_h^{(j)}(x))
  \leq\|\nabla_a\Delta Q_{j,h}(x,\pi_h^{(j)}(x))\|.
  \label{eq:localized-greedy-stationarity}
\end{equation}
Differentiate \eqref{eq:epoch-canonical-error-identity} in $x$ and use
the local state-residual bound, the preceding envelope comparisons,
and \eqref{eq:localized-A-recursion} to obtain
\begin{equation*}
  E_{j,h}
  \leq\delta d_{x,j}^{\rm loc}
       +(1+\delta L_{\mu,x})E_{j,h+1}
       +\frac{\bar L_{xa}}{\bar m}A_{j,h}
  \leq\delta b_{E,j}^{\rm loc}
       +(1+\delta L_E)E_{j,h+1}.
\end{equation*}
Backward summation proves \eqref{eq:localized-value-gradient-transfer}.
Substitute into \eqref{eq:localized-A-recursion} to obtain
\eqref{eq:localized-action-gradient-transfer};
\eqref{eq:distance-to-optimal-set} and
\eqref{eq:localized-greedy-stationarity} then give
\eqref{eq:localized-policy-set-distance}.
\end{proof}

\subsection{Entry and exploration thresholds}
\label{app:entry-threshold-calibration}

Fix the common residual coefficient $C_{\rm res}(K,\boldsymbol\rho)$
from \cref{thm:two-scale-true-residual}. Define the common gradient-error
coefficient by
\begin{equation}
  C_{\mathrm{grad\_err}}(K,\boldsymbol\rho)
  :=\left[1+L_{\mu,a}\Phi_T(L_E)
    \left(1+\frac{\bar L_{xa}}{\bar m}\right)\right]
    C_{\rm res}(K,\boldsymbol\rho).
  \label{eq:gradient-error-coefficient}
\end{equation}
This choice gives the bounds in
\eqref{eq:entry-local-transfer-rates}, with $b_{\rm entry}$ added only to
the global entry radius.  Set
$\gamma_0:=\gamma/2-\diam(\mathcal A)b_{\rm entry}>0$ and define
\begin{equation*}
  n_{\rm ent}
  :=
  \left\lceil
    \{C_{\mathrm{grad\_err}}(K,\boldsymbol\rho)\}^2
    \max\left\{
      \frac{\diam(\mathcal A)^2}{\gamma_0^2},
      \frac{1}{\bar m^2\varrho_{\rm cov}^2}
    \right\}
  \right\rceil.
\end{equation*}

\begin{proof}[Proof of \cref{cor:entry-threshold}]
Substituting
\eqref{eq:verified-entry-residual-rate}--\eqref{eq:verified-local-residual-rate}
into \eqref{eq:entry-transfer-radius}
gives \eqref{eq:entry-local-transfer-rates}.
Combining \eqref{eq:gradient-error-coefficient} with
\eqref{eq:verified-certificate-numerator-size} gives
\begin{equation}
  C_{\mathrm{grad\_err}}(K,\boldsymbol\rho)
  =\mathcal{O}\!\left(
    [1+\Phi_T(L_E)]
    \left[1+\sqrt{\frac{\ell_{\rm mdl}}T}\right]
  \right),
  \label{eq:transfer-numerator-dependence}
\end{equation}
for fixed $T$; the structural coefficients and $\varrho_{\rm cov}$ need not
remain uniformly controlled as $T$ varies.
For $n_j\geq n_{\rm ent}$, \eqref{eq:entry-local-transfer-rates} gives
\[
  \diam(\mathcal A)D_j^{\rm glob}
  \leq\diam(\mathcal A)b_{\rm entry}+\gamma_0=\frac\gamma2,
  \qquad
  D_j^{\rm loc}\leq\bar m \varrho_{\rm cov}.
\]
Thus \cref{thm:backward-union-entry} gives entry and
$\pi^{(j)}\in\Pi_{\rm loc}(\varrho_{\rm cov})$.
These conclusions persist for nondecreasing sample counts whenever
\cref{def:two-scale-true-residual-certificate} and the rate bounds
\eqref{eq:verified-entry-residual-rate}--\eqref{eq:verified-local-residual-rate}
hold.
\end{proof}

\paragraph{Remaining exploration thresholds.}
The thresholds $n_\Sigma$ and $n_{\rm gram}$ are defined in
\eqref{eq:model-sample-threshold} and
\eqref{eq:warm-start-thresholds}, with $C_{\rm cov}=28/3$ the constant
from \cref{lem:adaptive-matrix-coverage}.
The first threshold guarantees $n_jT\geq\delta_{\max}\ell_{\rm mdl}$;
the second gives the event probability and coverage implication in
\cref{app:coverage-concentration-proof}.
The overall exploration threshold is
\begin{equation}
  n_\star(K,\boldsymbol\rho)
  :=\max\{n_{\rm ent},n_{\rm gram},n_\Sigma\}.
  \label{eq:overall-warm-start-threshold}
\end{equation}
Thus $n_j\geq n_\star(K,\boldsymbol\rho)$ meets the model sample,
entry and proximity, and coverage sample requirements simultaneously.

\subsection{Local one-episode policy loss}
\label{app:localized-one-episode-regret-proof}

\begin{proof}[Proof of the first inequality in \eqref{eq:localized-one-episode-loss-rate}]
Assume the hypotheses of \cref{thm:backward-union-entry}.
For $h<H$, $x$, and $a\in\mathcal U_h^\star(x)$, choose the globally
optimal branch point $b$ in a bin containing $a$.  The segment $[a,b]$
lies in that convex bin, where strong concavity holds, and every
$v\in N_{\mathcal A}(a)$ satisfies $\langle v,b-a\rangle\leq0$.
The square-completion argument of \cref{app:global-geometry-proof},
followed by the infimum over this global normal cone, therefore gives
\begin{equation*}
  V_h^\star(x)-Q_h^\star(x,a)
  \leq\frac{d_h(x,a)^2}{2\delta\bar m},
  \qquad a\in\mathcal U_h^\star(x).
\end{equation*}

Entry permits this comparison at the fitted greedy action.
By \eqref{eq:localized-greedy-stationarity} and
\eqref{eq:localized-action-gradient-transfer}, the right-hand side is at
most $\delta(D_j^{\rm loc})^2/(2\bar m)$.
The performance-difference identity in \cref{lem:optimal-advantage-pdi},
summed over $H=T/\delta$ stages, gives the first inequality in
\eqref{eq:localized-one-episode-loss-rate}.
\end{proof}

\subsection{Size of the fixed-horizon constants}
\label{app:localized-rate-bookkeeping-proof}

\begin{proof}[Details for \eqref{eq:transfer-numerator-size} and \eqref{eq:warm-start-size}]
For fixed $T$, squaring
\eqref{eq:transfer-numerator-dependence} and using
$(1+\sqrt z)^2\leq2(1+z)$ gives \eqref{eq:transfer-numerator-size}.
The entry threshold is the ceiling of the squared common coefficient
times the maximum of two fixed structural multipliers, hence
$n_{\rm ent}=\mathcal{O}(1+\ell_{\rm mdl}/T)$.
Likewise, $\lceil z\rceil\leq z+1$ gives
\[
  n_\Sigma=\mathcal{O}(1+\ell_{\rm mdl}/T),
  \qquad
  n_{\rm gram}=\mathcal{O}\!\left(1+\log\frac{2p\bar J_K}{\rho_{\rm cov}}\right).
\]
Taking the maximum proves \eqref{eq:warm-start-size}.

\end{proof}

\paragraph{General physical-horizon factors.}
To expose the continuation, entry, and exploration factors suppressed
by the fixed-$T$ orders, put
\begin{align*}
  A_T&:=1+L_{\mu,a}\left(1+\frac{\bar L_{xa}}{\bar m}\right)
                  \Phi_T(L_E),\\
  Q_T&:=\max\left\{
     \frac{\diam(\mathcal A)^2}{\gamma_0^2},
     \frac{1}{\bar m^2\varrho_{\rm cov}^2}\right\}.
\end{align*}
If the structural multipliers in
\cref{lem:epoch-model-transfer,lem:two-scale-fitted-residual} remain
bounded as $T$ varies, the coefficient choices in
\eqref{eq:verified-certificate-numerators} and
\eqref{eq:gradient-error-coefficient}
give
\begin{align*}
  \{C_{\mathrm{grad\_err}}(K,\boldsymbol\rho)\}^2
  &=\mathcal{O}\!\left(A_T^2\left[1+\frac{\ell_{\rm mdl}}T\right]\right),\\
  n_\star(K,\boldsymbol\rho)
  &=\mathcal{O}\!\left(
      1+\frac{\delta_{\max}\ell_{\rm mdl}}T
      +\frac{B_{\Phi_\mu}^4}{\lambda_\mu^2}
         \log\frac{2p\bar J_K}{\rho_{\rm cov}}
      +Q_TA_T^2\left[1+\frac{\ell_{\rm mdl}}T\right]
    \right),\\
  \Reg(K,\delta)
  &=\mathcal{O}\!\left(
      B_Jn_\star(K,\boldsymbol\rho)
      +\frac{A_T^2}{\bar m}(T+\ell_{\rm mdl})(J_K+1)
    \right).
\end{align*}
The last bound holds on $\mathcal E_{\rm all}(\delta)$ for
$K>n_\star(K,\boldsymbol\rho)$, with all $\mathcal{O}(\cdot)$ multipliers
independent of $(T,K,\boldsymbol\rho,\delta)$ under the stated uniform
control.  These orders follow, respectively, by squaring the coefficient
bounds, substituting into the three thresholds, and applying
\eqref{eq:end-to-end-regret-bound}.

For fixed drift and curvature bounds with $L_E>0$,
$A_T^2=\mathcal{O}(e^{2L_E T})$; if $L_E=0$, then
$A_T^2=\mathcal{O}((1+T)^2)$ because $\Phi_T(0)=T$.
With fixed primitive bounds and positive lower bounds on
$\gamma_0,\bar m,\lambda_{\rm loc}$, the choice
$\varrho_{\rm cov}=2\lambda_{\rm loc}/L_\Gamma$ from
the calibration in \cref{app:optimal-envelope-regularity} gives
$Q_T=\mathcal{O}(1+T)$ since $L_\Gamma=\mathcal{O}(1+\sqrt T)$; when $L_\Gamma=0$, take a
fixed positive radius.  Exploration also retains the return factor
$B_J=(TB_r+B_u)e^{c_WT}\int W\,dp_0$.
The theorem assumes only fixed-horizon control: structural multipliers
may have further $T$-dependence, so uniformity over timestep refinements
does not imply a uniform or polynomial bound in $T$.

\subsection{Coverage and entry at all fitted epochs}
\label{app:coverage-entry-induction-proof}

\begin{proof}[Details of the induction in \cref{thm:coverage-entry-induction}]
Work on $\mathcal E_{\rm all}(\delta)$, with the probability bound
established in the main proof.  Independent resets give
\[
  \E[\mathsf G_k\mid\mathcal H_{k-1}]=\Gamma(\pi_k)
\]
in the notation of \cref{app:coverage-concentration-proof}, under the
original probability law; the success event is used only for pathwise
implications.

At $n_0\geq\max\{n_{\rm gram},n_\Sigma,n_{\rm ent}\}$, exploration
gives conditional information at least $2\lambda_\mu I_p$.
Thus \eqref{eq:main-coverage-deviation-event} gives empirical coverage,
and \cref{thm:two-scale-true-residual,cor:entry-threshold} give the
residual certificate, bin entry, and membership in
$\Pi_{\rm loc}(\varrho_{\rm cov})$.

Inductively, every policy deployed before the next fitted endpoint
$n_{j+1}<K$ is either $\pi^{\rm exp}$ or an earlier fitted policy in
$\Pi_{\rm loc}(\varrho_{\rm cov})$.
By \eqref{eq:exploration-population-coverage} and
\eqref{eq:local-self-excitation}, each has conditional information at
least $2\lambda_\mu I_p$, so the same deviation event gives coverage.
Since $n_{j+1}\geq n_0$, all thresholds remain satisfied and the same
verification and entry results apply.  Every fitted endpoint therefore
passes the coverage check.  If $n_{j+1}\geq K$, no further fit or
coverage check is performed.
\end{proof}

\section{Verification of the controlled example with two branch points}
\label{app:controlled-multibranch-example}

We verify \cref{prop:example-benchmark} with constants uniform in
$0<\delta\leq1$.  In the effective action $b=a^2\in[0,2]$, the
Bellman target for an admissible kernel $P=P_{\theta,\sigma^2}$ is
\begin{equation}
  F_h^{P,v}(x,b)
  =-2\delta(b-1)^2+
    \E v\left(x+\delta\theta\left(1+\frac b4\right)
                    +\sqrt\delta\,\sigma\xi\right),
  \qquad G_h^{P,v}(x,a)=F_h^{P,v}(x,a^2).
  \label{eq:example-effective-target}
\end{equation}
Write $F_h^\star=F_h^{P^\star,V_{h+1}^\star}$.

\subsection{Feasible critics and envelope regularity}
\label{subsec:example-projected-sieve}
\label{app:example-envelope-geometry}

For $t_h=h\delta$, set
\[
  \mathsf B_h=1+\frac{21}{10}(1-t_h),\qquad
  \mathsf g_h=1+\frac1{10}(1-t_h),\qquad
  \mathsf M_h=1+\frac15(1-t_h).
\]
Require each effective critic $F$ to be real, $2\pi$-periodic in $x$,
and $C^2$ on $\mathbb R\times[0,2]$, with
\begin{align}
  |F|&\leq\mathsf B_h,
  & |F_x|&\leq\mathsf g_h,\nonumber\\
  |F_{xx}|&\leq\mathsf M_{h+1}+\delta/10,
  & |F_{xb}|&\leq\delta(\mathsf M_{h+1}/4+1/10),\nonumber\\
  -6\delta&\leq F_{bb}\leq-2\delta,
  & |F_b(x,1)|&\leq3\delta/2.
  \label{eq:example-shape-constraints}
\end{align}
Let $T_m(\cos t)=\cos(mt)$ and use the normalized bases
\[
\begin{aligned}
  \varphi_L(x)&=(1,\sqrt2\cos x,\sqrt2\sin x,\ldots,
                   \sqrt2\cos Lx,\sqrt2\sin Lx),\\
  \tau_L(b)&=(1,\sqrt2T_1(b-1),\ldots,\sqrt2T_L(b-1)).
\end{aligned}
\]
For $L\geq2$, take
\begin{equation*}
  \Psi_L(x,a)=\varphi_L(x)\otimes\tau_L(a^2),\qquad
  Q_{h,L,w}(x,a)=-2\delta(a^2-1)^2+\Psi_L(x,a)^\top w,
\end{equation*}
with $q_L=(2L+1)(L+1)$ coefficients.  The feasible set
$\mathcal C_{h,L}$ consists of coefficients $w$ with $\|w\|\leq6$
whose effective critics satisfy \eqref{eq:example-shape-constraints};
it is compact and convex and
contains $w=0$.  Extending the Chebyshev factors affinely beyond
$b\in[0,2]$, matching values and first derivatives, gives features
that are $C^1$ on $\mathbb R\times\mathcal A^{\rm o}$, where
$\mathcal A^{\rm o}=(-2,2)$, and satisfy
\begin{equation*}
  \|\Psi_L\|_\infty,\ L_{\Psi,x,L},\ L_{\Psi,a,L}
  \leq D_L:=8(L+1)^3\sqrt{q_L}.
\end{equation*}
Indeed, $|T_m'|\leq m^2$ on $[-1,1]$ bounds each extended tensor
component and its $x$ and $a$ derivatives by
$2(1+2L^2)$, $2L(1+2L^2)$, and $8L^2$, respectively.

\paragraph{Envelope bounds.}
Strong concavity and the slope bound at $b=1$ give a unique maximizer
$b_F(x)\in[1/4,7/4]$.  The implicit function theorem yields
\[
  v_F'=F_x(x,b_F),\qquad
  v_F''=F_{xx}(x,b_F)-\frac{F_{xb}(x,b_F)^2}{F_{bb}(x,b_F)},
  \qquad v_F(x):=F(x,b_F(x)).
\]
Since $\mathsf M_{h+1}\leq6/5$, the second derivative is bounded by
$\mathsf M_{h+1}+9\delta/50\leq\mathsf M_h$.  Thus $v_F$ is
periodic and $C^2$, with
\begin{equation}
  \|v_F\|_\infty\leq\mathsf B_h,\qquad
  \|v_F'\|_\infty\leq\mathsf g_h,\qquad
  \|v_F''\|_\infty\leq\mathsf M_h.
  \label{eq:example-envelope-bounds}
\end{equation}
The critic has state gradient $v_F'(x)$ at both maximizing actions
$\pm\sqrt{b_F(x)}$.  The offset $q_{0,h,L}(x,a)=-2\delta(a^2-1)^2$
satisfies $\|\nabla_x q_{0,h,L}\|_\infty=0$ and
$\|\nabla_a q_{0,h,L}\|_\infty\leq48$ on
$\mathbb R\times\mathcal A^{\rm o}$ for $0<\delta\leq1$.
Thus \cref{ass:envelope-stable-critic-class} holds with
$G_0=11/10$ and $M_0=6/5$.
Also $Q_{aa}(x,0)=2F_b(x,0)\geq\delta$, so global action concavity
is not imposed.

\paragraph{Bellman feasibility.}
For the target $F=F_h^{P,v}$ with a next-stage continuation $v$
satisfying \eqref{eq:example-envelope-bounds}, differentiation of
\eqref{eq:example-effective-target} gives
\begin{align*}
  |F|&\leq\mathsf B_h-\delta/10,
  & |F_x|&\leq\mathsf g_h-\delta/10,\nonumber\\
  |F_{xx}|&\leq\mathsf M_{h+1},
  & |F_{xb}|&\leq\delta\mathsf M_{h+1}/4,\nonumber\\
  |F_{bb}+4\delta|&\leq3\delta/40,
  & |F_b(x,1)|&\leq11\delta/40.
\end{align*}
These bounds use $|\theta|\leq1$ and $\delta\leq1$ and leave at
least $\delta/100$ slack in every constraint.  Hence any periodic
$C^2$ function $f$ with $\|f-F\|_{C^2}\leq\delta/100$ is
shape-feasible, where $\|f\|_{C^2}:=\max_{p+r\leq2}\|\partial_x^p\partial_b^r f\|_\infty$.
Starting from $u=\cos$, backward induction gives the same bounds
for the optimal targets and envelopes.

\subsection{Bin geometry and exploration}
\label{app:example-benchmark-geometry}

The optimal effective action satisfies
$|b_h^\star(x)-1|\leq\mathsf g_{h+1}/8\leq11/80$.
For $\eta=1/16$, use the bins
\[
  B_{h,+}^\star(x)=
  [\sqrt{b_h^\star(x)-\eta},\sqrt{b_h^\star(x)+\eta}],
  \qquad B_{h,-}^\star(x)=-B_{h,+}^\star(x).
\]
Their endpoints are continuous, and each bin contains exactly one
optimal branch point.  On their union, $a^2\geq11/16$ and
$|F_{h,b}^\star|\leq6\delta\eta$, giving
\[
  Q_{h,aa}^\star=2F_{h,b}^\star+4a^2F_{h,bb}^\star
       \leq-19\delta/4,\qquad
  |Q_{h,xa}^\star|\leq(3\sqrt2/5)\delta<\delta.
\]
Outside the union, strong concavity in $b$ gives
$V_h^\star-Q_h^\star\geq\delta|a^2-b_h^\star|^2\geq\delta\eta^2$.
Thus \cref{ass:localized-optimal-branches} holds with
\begin{equation*}
  M_\star=2,\qquad \bar m=4,\qquad
  \bar L_{xa}=1,\qquad \gamma=1/256.
\end{equation*}
Since $\Phi_\mu^2\geq1$, population and empirical information are
at least one under every policy.  Taking
$\pi_h^{\rm exp}=1$, $\lambda_{\rm exp}=1/2$, and
$\lambda_{\rm loc}=\lambda_\mu=1/4$ verifies
\cref{ass:exploration-self-excitation}; the coverage check always
passes.  The constants $B_{\Phi_\mu}=2$,
$L_{\Phi_\mu,a}=L_{\mu,a}=1$, and $T=\lambda_0=1$ give
$L_\Gamma=8$ and the valid radius $\varrho_{\rm cov}=1/16$.
The unknown drift affects the optimal magnitude: differentiating the
last-stage first-order condition at $\theta=0$ gives
$\partial_\theta b_{H-1}^{\star,\theta}(x)|_{\theta=0}
=-e^{-\delta\sigma^2/2}\sin(x)/16$, which is nonzero at $x=\pi/2$.

\subsection{Synthetic design and projected approximation}
\label{app:example-design-projection}

Use the $q_L$ tensor anchors
\begin{equation*}
  x_i=\frac{2\pi i}{2L+1},\quad i=0,\ldots,2L,
  \qquad
  a_m=\sqrt{1+\cos\frac{(m+1/2)\pi}{L+1}},\quad m=0,\ldots,L.
\end{equation*}
Discrete Fourier and cosine orthogonality give
$q_L^{-1}\sum_{i,m}\Psi_L(x_i,a_m)\Psi_L(x_i,a_m)^\top=I_{q_L}$.
Complete repetitions therefore yield $S_{j,h}^Q=N_{j,h}^QI_{q_L}$,
verifying \cref{ass:predictable-critic-design} with $c_{Q,L}=1$.
Positive-action anchors suffice because the features are even in $a$.

Define
\begin{equation*}
  \varepsilon_L(\delta)
  :=8\left\{\sum_{k>L}k^2e^{-\delta k^2/2}
                   +(L+1)^3(e/4)^L\right\}.
\end{equation*}
For a fit using level $L$, we show that whenever
$\varepsilon_L(\delta)\leq\delta/100$, the noiseless constrained
projection satisfies, uniformly over admissible
kernels, feasible next-stage continuations (with $v=u$ at the terminal
stage), and repeated anchor designs,
\begin{equation}
  \sup_{x,a}|\partial_s(G_h^{P,v}-\bar Q_{j,h}^{P,v})(x,a)|
  \leq(3+D_L)\varepsilon_L(\delta),\qquad s\in\{x,a\}.
  \label{eq:example-projection-bound}
\end{equation}
Let $\widehat v_k$ be the Fourier coefficients of $v$, with
$|\widehat v_k|\leq\|v\|_\infty\leq31/10$.  Approximate the
Gaussian continuation by
\[
  S_L(x,b)=\sum_{|k|\leq L}\widehat v_k e^{-\delta\sigma^2k^2/2}
       e^{ik(x+5\delta\theta/4)}
       \sum_{m=0}^L\frac{(ik\delta\theta/4)^m(b-1)^m}{m!}.
\]
Conjugate symmetry makes $S_L$ real and in the tensor span.
Since admissible kernels have $\sigma^2\geq\lambda_0=1$, the
discarded Fourier modes contribute at most
$(31/5)\sum_{k>L}k^2e^{-\delta k^2/2}$ in $C^2$ norm.
For retained modes, Taylor's remainder and
$(L+1)!\geq((L+1)/e)^{L+1}$ give, for $p+r\leq2$,
\[
  |k|^p\left|\partial_b^r\left(
  e^{ic(b-1)}-\sum_{m=0}^L\frac{(ic(b-1))^m}{m!}\right)\right|
  \leq\frac{|k|^p|c|^{L+1}}{(L+1-r)!}
  \leq(L+1)^2(e/4)^{L+1},\quad c=\frac{k\delta\theta}{4}.
\]
Summing over $|k|\leq L$ bounds this contribution by
$8(L+1)^3(e/4)^L$.  Therefore
$F_L=-2\delta(b-1)^2+S_L$ has $C^2$ error at most
$\varepsilon_L$ and is shape-feasible.  Let $w_L$ be the coefficient
vector of $S_L$ in the tensor basis.  Orthonormality under the
uniform--Chebyshev product probability measure gives
$\|w_L\|=\|S_L\|_{L^2}\leq31/10+\varepsilon_L<6$.

For this fit, write $N=N_{j,h}^Q$, $Z_i=Z_{j,h,i}$, and
$\bar w=\bar w_{j,h}^{P,v}$, and put $\Delta=\bar w-w_L$.
The variational inequality and the identity Gram matrix imply
\[
  N\|\Delta\|^2
  \leq\sum_i\Psi_L(Z_i)^\top\Delta\,
                   (G_h^{P,v}-Q_{h,L,w_L})(Z_i)
  \leq N\|\Delta\|\varepsilon_L.
\]
Thus $\|\Delta\|\leq\varepsilon_L$.  Composing with $b=a^2$
multiplies the direct derivative error by at most $2\sqrt2<3$,
and replacing $w_L$ by $\bar w$ adds at most $D_L\varepsilon_L$,
proving \eqref{eq:example-projection-bound}.
Choose the deterministic, nondecreasing level rule
\begin{equation*}
  L_j:=\min\left\{L\geq2:
      (3+D_L)\varepsilon_L(\delta)
       \leq\frac{\delta}{100\sqrt{n_j}}\right\}.
\end{equation*}
For fixed $\delta>0$, it is finite because $\varepsilon_L$ decays
exponentially while $D_L$ grows polynomially.  Hence
\cref{ass:projected-two-scale-Bellman-approximation} holds with
$b_{\rm entry}=0$ and
$c_a^{\rm glob}=c_x^{\rm loc}=c_a^{\rm loc}=1/100$.

\subsection{Budgets and completion of the verification}
\label{subsec:example-budgets-regret}
\label{app:example-budget-verification}

\begin{proposition}[Simulation calibration]
\label{prop:example-budget-calibration}
Write $q_j=q_{L_j}$, and let $\kappa_j^Q$ be the
confidence factor in \cref{ass:localized-MC-budget}.  At every stage,
repeat the complete anchor set and take
\begin{equation*}
  N_{j,h}^Q=N_j:=q_j\left\lceil\frac{n_j}{q_j}\right\rceil,
 \quad
  M_{j,h}:=\left\lceil
        \frac{G_0^2\Lambda_0D_{L_j}^2\kappa_j^Q}{\delta}\right\rceil,
\end{equation*}
where $G_0=11/10$, $\Lambda_0=2$, and $C_{\rm MC}\geq2$ is the constant
in \cref{ass:localized-MC-budget}.  That assumption holds with
$c_N=c_{{\rm MC},x}=c_{{\rm MC},a}=1$.
\end{proposition}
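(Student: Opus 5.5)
We need to verify \cref{ass:localized-MC-budget} with $c_N=c_{{\rm MC},x}=c_{{\rm MC},a}=1$ for the stated choices $N_j$ and $M_{j,h}$. This amounts to checking the design-size lower bound and the normalized budget inequality \eqref{eq:localized-MC-normalized-budget}, using the constants fixed for the example in \cref{app:example-design-projection}.

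\emph{Design size.} By construction, $N_j=q_j\lceil n_j/q_j\rceil$. It is therefore a multiple of $q_j$, and it satisfies $N_j\geq n_j$ and $N_j\geq q_j$. Hence $N_{j,h}^Q\geq\max\{c_Nn_j,q_j\}$ with $c_N=1$. Because $N_j$ consists of complete repetitions of the $q_{L_j}$ tensor anchors, \cref{app:example-design-projection} gives $S_{j,h}^Q=N_jI_{q_j}$. So \cref{ass:predictable-critic-design} holds with $c_{Q,j}=1$ for this repeated design. The design is deterministic given $n_j$ and $\delta$, since $L_j$ is fixed by the deterministic level rule, and so it is predictable.

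\emph{Budget inequality.} We substitute $c_{Q,j}=1$ and $L_{\Psi,s,j}\leq D_{L_j}$, the feature-derivative bound from \cref{subsec:example-projected-sieve}. The left side of \eqref{eq:localized-MC-normalized-budget} is then at most
\[
  \frac{D_{L_j}G_0}{\delta\sqrt{N_j}}\sqrt{\frac{\delta\Lambda_0\kappa_j^Q}{M_{j,h}}}.
\]
The choice of $M_{j,h}$ gives $M_{j,h}\geq G_0^2\Lambda_0D_{L_j}^2\kappa_j^Q/\delta$, so
\[
  \sqrt{\frac{\delta\Lambda_0\kappa_j^Q}{M_{j,h}}}\leq\frac{\delta}{G_0D_{L_j}}.
\]
The display above is then at most $1/\sqrt{N_j}\leq1/\sqrt{n_j}$, which is the required bound with $c_{{\rm MC},s}=1$ for $s\in\{x,a\}$.

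\emph{Independence of the constants.} The constants $c_N$ and $c_{{\rm MC},s}$ all equal $1$, so they are independent of $(\delta,j,h,K,\boldsymbol\rho)$. The dependence on $\delta$, $\rho_{\rm MC}$ and $K$ enters only through $M_{j,h}$ and $\kappa_j^Q$, which the assumption allows. The universal constant $C_{\rm MC}\geq2$ appears only inside $\kappa_j^Q$.

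The only point needing care is the constant bookkeeping. First, $D_L$ must bound both $L_{\Psi,x,L}$ and $L_{\Psi,a,L}$ on the extended domain $\mathbb R\times\mathcal A^{\rm o}$, which is given in \cref{subsec:example-projected-sieve}. Second, the covariance upper bound must be $\Lambda_0=2$, matching $(\sigma^\star)^2\leq2$. Neither step presents a real obstacle.
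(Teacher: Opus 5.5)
Your proposal is correct and follows the paper's proof essentially verbatim. You check the design size via the ceiling, obtain $S_{j,h}^Q=N_jI_{q_j}$ and $c_{Q,j}=1$ from complete anchor repetitions, and use $L_{\Psi,s,j}\le D_{L_j}$ to cancel the budget inequality down to $1/\sqrt{N_j}\le 1/\sqrt{n_j}$. The only step the paper records that you leave implicit is that $M_{j,h}$ is a positive integer. This holds because $D_{L_j}>0$ and $\kappa_j^Q>0$, the latter since $C_{\rm MC}\bar J_KH/\rho_{\rm MC}>1$.
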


\begin{proof}
We have $S_{j,h}^Q=N_jI$, $c_{Q,j}=1$, and
$N_j\geq\max\{n_j,q_j\}$.  The specified constants and $D_{L_j}>0$ give
$M_{j,h}\geq1$.  Using $L_{\Psi,s,j}\leq D_{L_j}$, for $s\in\{x,a\}$,
\[
  \frac{L_{\Psi,s,j}G_0}{\delta\sqrt{N_j}}
       \sqrt{\frac{\delta\Lambda_0\kappa_j^Q}{M_{j,h}}}
       \leq\frac1{\sqrt{N_j}}
       \leq\frac1{\sqrt{n_j}}.
\]
This verifies both simulation-budget inequalities.
\end{proof}

\begin{proof}[Completion of the proof of \cref{prop:example-benchmark}]
The bounded rewards and continuous Gaussian kernel verify
\cref{ass:basic} with $W=1$, $c_W=0$, $B_r=2$, $B_u=1$, and $B_J=3$.
On $\mathcal A^{\rm o}=(-2,2)$, the drift feature has bounds
$(B_{\Phi_\mu},L_{\Phi_\mu,x},L_{\Phi_\mu,a})=(2,0,1)$ and
$|r_a|\leq48$.  Together with $u=\cos$,
$B_\theta=\lambda_0=1$, and
$\Lambda_0=2$, this verifies
\cref{ass:localized-parametric-model}.  The preceding construction
verifies the bin geometry, exploration, critic, design, approximation, and
budget assumptions.  Their structural and normalized error constants
are independent of $\delta$, so
\cref{thm:end-to-end-localized-regret} proves the claim.
\end{proof}

This construction permits timestep-dependent sieve levels and synthetic
budgets; it uses $HN_jM_{j,0}$ synthetic transitions per epoch.  It assumes
exact constrained regression and effective-action maximization, with
any Borel choice of sign; the fixed sieve and numerical constraint
checks of Section~\ref{subsec:controlled-multibranch-model}
approximate it in practice.

\subsection{Numerical details and supplementary results}
\label{app:example-numerics}

The implementation in \cref{subsec:controlled-multibranch-model} uses
$128$ periodic state points, $16$ Fourier modes for envelopes, and
Fourier damping for Gaussian expectations. On repeated tensor anchors,
fits minimize summed squared errors plus $\delta^2\|w\|^2/(144D_6^2)$.
Fourier-amplitude bounds and differentiated Chebyshev coefficients verify
\eqref{eq:example-shape-constraints} and $\|w\|\leq6$ over
$\mathbb R\times[0,2]$, subject to floating-point arithmetic. Every
fitted critic passed these checks.
\Cref{fig:example-cumulative-regret} shows mean cumulative regret, and
\cref{tab:example-final-regret} reports its decomposition at $K=8192$.

\begin{figure}[!htbp]
  \centering
  \begin{minipage}[c]{0.68\linewidth}
    \centering
    \includegraphics[width=\linewidth]{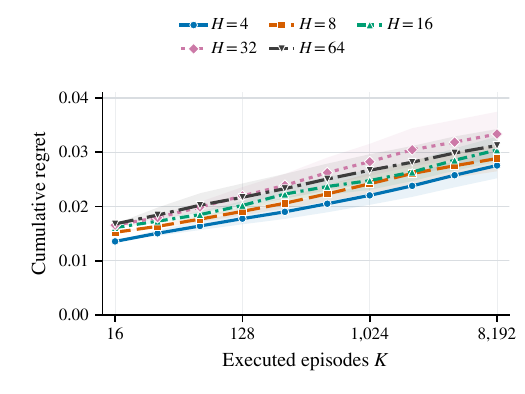}
  \end{minipage}\hfill
  \begin{minipage}[c]{0.3\linewidth}
    \caption{Mean cumulative regret through $K$ executed episodes in the
  model with two branch points, including the 16 exploration episodes.
  Each curve averages 64 independent runs at resolution $H=1/\delta$;
  shading gives pointwise 95\% percentile bootstrap intervals from 2,000
  resamples of complete runs.}
    \label{fig:example-cumulative-regret}
  \end{minipage}
\end{figure}

\begin{table}[!htbp]
  \centering
  \caption{Final cumulative regret at $K=8192$. Exploration regret is
  deterministic; learning regret is reported as its mean and 95\% percentile
  bootstrap interval from 64 independent runs and 2,000 complete-run resamples
  per resolution. Totals may differ from component sums due to rounding.}
  \label{tab:example-final-regret}
  \begin{tabular}{@{}rrrcr@{}}
    \toprule
    Steps $H$  & Exploration regret (16 episodes) & Mean learning regret & 95\% interval
      & Total regret \\
    \midrule
     4  & 0.01356 & 0.01399 & $[0.01163,\,0.01678]$ & 0.02755 \\
     8  & 0.01524 & 0.01360 & $[0.01125,\,0.01621]$ & 0.02884 \\
    16 & 0.01611 & 0.01429 & $[0.01207,\,0.01676]$ & 0.03040 \\
    32 & 0.01656 & 0.01679 & $[0.01348,\,0.02090]$ & 0.03335 \\
    64 & 0.01678 & 0.01445 & $[0.01202,\,0.01748]$ & 0.03124 \\
    \bottomrule
  \end{tabular}
\end{table}

\section{Implementation and evaluation of the Halfin--Whitt experiments}
\label{app:hw-draft-details}

\subsection{Queue scaling and experimental scope}
\label{app:hw-draft-scaling}

Consider an untruncated queue with $n$ servers and customer count $N_t$. Arrivals
are Poisson with rate $n\mu-\beta\sqrt n$, and each of the
$\min\{N_t,n\}$ busy servers completes service at rate
$\mu+\kappa A_t/\sqrt n$.  For $X_t^{(n)}=(N_t-n)/\sqrt n$,
the limiting drift and infinitesimal variance are
$-\beta+\mu x^- -\kappa a$ and $2\mu$, giving the controlled diffusion
\[
 dX_t=(-\beta+\mu X_t^- -\kappa A_t)\,dt+\sqrt{2\mu}\,dW_t,
 \qquad A_t\in[0,2],
\]
with physical cost
$\E\int_0^T(qX_t^++cA_t^2/2)\,dt$ and no terminal cost.
Experiments 1(a)--2(a) use the Euler model \eqref{eq:hw-draft-euler}
with stage cost $\delta\ell$; $n$ does not enter once its coefficients
are fixed. Experiment 2(b) transfers the resulting policies to finite
queues. These studies use nonsmooth coefficients and a finite action
grid; they do not verify the main theorem's smoothness and approximation
hypotheses. The numerical reference uses the learner's action grid.

\subsection{Estimation, critic, and computational budgets}
\label{app:hw-draft-implementation}

The learner observes $(X_h,A_h,\delta\ell(X_h,A_h),X_{h+1})$, retains
all observations across fits, and subtracts the known drift
contribution $\mu X_h^-$ before estimating
$\theta=(\beta,\kappa)^\top$. Index observed transitions by $i$, with
$X_{i,+}$ denoting the successor of $X_i$. With
$\phi(x,a)=(-1,-a)^\top$, the estimator first solves
\[
 \widetilde\theta\in\arg\min_{\theta\in\R^2}
 \sum_i\left\{
 \frac{X_{i,+}-X_i}{\delta}-\mu X_i^-
                  -\phi(X_i,A_i)^\top\theta\right\}^2,
\]
and radially projects onto the Euclidean ball of radius 4; this is not
a constrained least-squares solve. The diffusion variance estimate is
the second moment of residual increments divided by $\delta$, clipped
to $[0.25,8]$. The fitted Gaussian kernel supplies synthetic Bellman
successors.

Using the spline coordinate $u(x)=(1+\tanh(x/4))/2$ and clamped cubic
B-splines $B_j$ in $u$, the cost critic is
\begin{equation}
 C_h(x,a)=\sum_{j=1}^{m}B_j(u(x))
       (w_{0,hj}+a w_{1,hj}+a^2w_{2,hj})
+(H-h)\delta q\,\operatorname{softplus}_{0.25}(x)
                       +\frac{\delta c}{2}a^2,
 \label{eq:hw-draft-critic}
\end{equation}
where $\operatorname{softplus}_{s}(x)=s\log(1+e^{x/s})$.
Bellman targets use the exact stage cost $\delta\ell$, despite the smooth known
offset in \eqref{eq:hw-draft-critic}.  The coefficient constraints are
\[
 |w_{0,hj}|\le20,\qquad |w_{1,hj}|\le20\delta,
 \qquad 0\le w_{2,hj}\le10\delta,
\]
and the coefficient ridge is $10^{-4}\delta^2$ in Experiments 1(a)--1(b)
and exactly zero at every fit in Experiments 2(a)--2(b).
Zero ridge leaves the box constraints,
parameter projection, covariance clipping, and coverage check in place.
The nonnegative quadratic coefficients and the known action-cost term
make the critic strongly convex in the continuous action; deployment
minimizes it over $\mathcal A_{0.25}$.

The spline refinement level after $k$ episodes is
$\lfloor\log_2(k/n_0)/3\rfloor$.  In Experiments 1(a)--1(b),
$n_0=64$ gives 16 segments at fits through episode 256 and 32 at
episode 512.  In Experiments 2(a)--2(b), the common $n_0=32$
advances this first refinement to episode 256, with 16 segments at
fits through episode 128 and 32 at episodes 256 and 512.  These
choices give $m=19$ and $m=35$ basis functions and critic dimension
$d_Q=3m$.  At a fit after $k$ observed episodes, $2m$ state anchors
equally spaced in $u\in[0.005,0.995]$ are crossed with actions
$0,1,2$.  The resulting $6m$ rows are tiled until their count $N$
is at least $k$, and each row uses $M=\lceil d_Q/\delta\rceil$
fresh Gaussian successors at each stage.  At the terminal fit
$k=512$, $H=50$, this gives $N=630$ and $M=1050$, or about
$3.3\times10^7$ synthetic draws.  With critic and anchor dimensions
fixed, both $H$ and $M$ scale as $1/\delta$, so the eightfold
timestep refinement in Experiment 1(b) raises the leading synthetic
work per fit by about a factor of 64.

Experiments 1(a)--1(b) use 64 uniform-action episodes and dyadic
refits at $k=64,128,256,\ldots$. All saved coverage checks pass,
with minimum eigenvalues $0.0839$ in the three long-horizon cohorts
and $0.0947$ in the 32 Experiment 1(b) runs. In Experiment 2(a),
all five methods share 32 exploration episodes, with no additional
exploration. \ourmethod{} refits at $k=32,64,128,256,512$ and holds
policies fixed between refits. Every saved coverage check passes the
$0.025$ threshold; the minimum information eigenvalue is $0.1104$.
The fit after episode 512 is used only for terminal Euler and queue
evaluation; training costs use fits through episode 256. Likewise,
Experiment 1(b)'s terminal fit after episode 2048 does not enter
$R_{2048}$.

\subsection{Euler reference and evaluation protocol}
\label{app:hw-draft-evaluation}

For each timestep, backward dynamic programming computes the Euler
Bellman-grid value $V_\delta^{\rm grid}$ on $x\in[-8,12]$ with state
spacing $0.02$ and the nine actions in $\mathcal A_{0.25}$. Gaussian integration uses
piecewise-linear value interpolation with linear extrapolation and
a nine-standard-deviation tail truncation. The reference policy
$\pi_\delta^{\rm ref}$ uses the action at the nearest state knot, so its deployed actions
stay on the declared grid; it is a numerical benchmark, not a
certified optimum over continuous states and actions.

A resolution check over state spacings $0.02$, $0.01$, and $0.005$
found a paired cost difference of $-1.2\times10^{-5}\pm3.3\times10^{-5}$
between the coarsest and finest reference policies. This supports the
resolution at the reported gap scale but does not certify all reference
errors uniformly in $\delta$.

Euler policy gaps use 32,768 evaluation paths with common Brownian
increments for each learned policy and its matched reference.
Experiments 1(a)--1(b) weight each deployed-policy gap by its epoch
length; reusing gaps correlates Monte Carlo errors across cumulative
checkpoints. Experiment 2(a) evaluates terminal policies this way but
uses observed training costs for cumulative excess, without
re-evaluating intermediate policies. For Experiment 1(b)'s terminal
recommendations, per-run paired Monte Carlo standard errors range
from $5.8\times10^{-5}$ to $2.4\times10^{-4}$.

Experiments 1(a) and 1(b) use separate eight-seed cohorts, matched
across timesteps within each cohort, and Student-$t$ intervals with
seven degrees of freedom. Experiments 2(a)--2(b) share 15 prespecified
seeds across methods and timesteps; all 60 timestep--seed runs per
method are included, with two-sided 95\% Student-$t$ intervals using
14 degrees of freedom. Paired intervals use within-seed
comparator-minus-\ourmethod{} differences. Independent training
seeds are the replicates; timesteps, episodes, and evaluation paths
are not treated as independent training replicates. Intervals are
marginal and unadjusted for multiple comparisons; they include Euler evaluation noise but do not propagate
reference-solver bias.

Numerical settings are selected on separate development seeds and
fixed across timesteps within each study. Comparison methods share
exogenous training streams and the 32-episode exploration prefix;
subsequent trajectories can differ with their actions.

\subsection{Growth fits and endpoint uncertainty}
\label{app:hw-draft-diagnostics}

Table~\ref{tab:hw-draft-long-endpoint} gives uncertainty at the final
checkpoint of Figure~\ref{fig:hw-draft-long-horizon}.
Least-squares fits of the five seed-mean checkpoints
$K=1024,2048,4096,8192,16384$ against $\log_2 K$ have slopes
$1.82$, $1.68$, and $1.41$ per doubling and $R^2$ values
$0.998$, $0.996$, and $0.994$ for
$\delta=0.2,0.1,0.05$, respectively.  These fits are descriptive: the
cumulative measurements are dependent, not independent observations
for a rate test.  Nor do the overlapping endpoint intervals establish
that smaller timesteps improve performance.

\begin{table}[htbp]
 \centering
 \caption{Experiment 1(a): cumulative regret after exploration at
 $K=16384$.  Intervals are pointwise 95\% Student-$t$ intervals
 across eight training seeds.}
 \label{tab:hw-draft-long-endpoint}
 \small
 \begin{tabular}{@{}r r c@{}}
\toprule
$\delta$ & $H$ & $R_{16384}-R_{64}$, mean [pointwise 95\% CI] \\
\midrule
0.200 & 25 & 10.503 [7.580, 13.426] \\
0.100 & 50 & 10.653 [8.127, 13.179] \\
0.050 & 100 & 8.622 [5.264, 11.980] \\
\bottomrule
\end{tabular}
\end{table}

\FloatBarrier

\subsection{Comparison-method implementations}
\label{app:hw-draft-comparators}

The four baselines use Experiment 2(a)'s observation format and action
grid; their terminal policies are frozen for Experiment 2(b).

\paragraph{Source-style DW (without replay).}
Source-style DW uses real Euler successors and the backward cost target
$\delta\ell(X_h,A_h)+\min_{a\in\mathcal A_{0.25}}\widehat C_{h+1}(X_{h+1},a)$;
the final-stage cost is known exactly.  Its 33 features tensorize
linear state hats on $\{-10,-4,-2,-1,0,0.5,1,2,3,4,6\}$ with the
first three Legendre polynomials in $a-1$. The objective is mean-squared
loss plus $10^{-5}$ times the coefficient norm squared. After 32 common
uniform-action episodes it refits at $k=32,64,128,256$, followed by
a terminal fit at $k=512$, on the newest epoch's episodes only, and deploys
a deterministic greedy policy between fits.  It estimates no transition
model and uses no synthetic successors.  The frozen Euler critic,
including its knots, is unchanged during queue transfer.

\paragraph{DW + replay.}
This adaptation pools all observed episodes, including the 32
exploration episodes, and otherwise retains source-style DW's
features, ridge, action grid, refit schedule, and deterministic greedy
deployment. It preserves exploratory observations without action
mixing, at the cost of larger regressions.

\paragraph{Online little-$q$.}
Let $F(t,x)$ tensorize 11 linear time hats in $t/T$ with 16-segment
cubic state splines in $(1+\tanh(x/4))/2$. In the reward convention
$r=-\ell$ used in the main text, the value function and action score are
\[
 V_\vartheta(t,x)=(1-t/T)F(t,x)^\top\vartheta,\qquad
 s_\psi(t,x,a)=aF(t,x)^\top\psi-ca^2/2.
\]
On the common action grid, set
$q_\psi=s_\psi-\nu\log\sum_{a'}\exp(s_\psi(t,x,a')/\nu)$
and $\pi_\psi(a\mid t,x)=\exp(q_\psi(t,x,a)/\nu)$.
Using the pre-update parameters, the episode residuals are
\[
 e_h=V_\vartheta(t_{h+1},X_{h+1})-V_\vartheta(t_h,X_h)
       -\delta\ell(X_h,A_h)-\delta q_\psi(t_h,X_h,A_h).
\]
The updates add $\alpha_{V,k}\sum_h\nabla_\vartheta V_\vartheta\,e_h$
and $\alpha_{q,k}\sum_h\nabla_\psi q_\psi\,e_h$ to the two parameter
vectors after every episode, including the 32 common uniform-action
exploration episodes. The parameters are selected on a separate
development seed and fixed across timesteps:
\[
 \nu=0.2,\qquad
 \alpha_{V,k}=10^{-3}(1+(k-1)/64)^{-0.6},\qquad
 \alpha_{q,k}=5\times10^{-4}(1+(k-1)/64)^{-0.6},
\]
with no gradient clipping or coefficient ridge.
The native Gibbs policy is the primary output; maximizing $s_\psi$
gives the secondary greedy policy.  Both are evaluated under
unregularized cost.

\paragraph{Batch little-$q$.}
This variant retains online little-$q$'s parameterization, action
grid, and 32 exploration episodes, but solves the discretized
martingale orthogonality conditions in batches.
At a refit after $k$ episodes, let
$e_{ih}(\vartheta,\psi)$ denote the residual above for stage $h$ of
episode $i\le k$.  The fit solves
\begin{equation}
 \sum_{i=1}^k\sum_{h=0}^{H-1}\nabla_\vartheta V_\vartheta(t_h,X_{ih})\,
   e_{ih}(\vartheta,\psi)=\lambda_k\vartheta,
 \qquad
 \sum_{i=1}^k\sum_{h=0}^{H-1}F(t_h,X_{ih})\,(A_{ih}-\bar a_{ih})\,
   e_{ih}(\vartheta,\psi)=\lambda_k\psi,
 \label{eq:hw-draft-batch-little-q}
\end{equation}
where $\bar a_{ih}$ is the generating policy's mean action at
$(t_h,X_{ih})$ (equal to $1$ during exploration) and
$\lambda_k=\rho kT$. The martingale condition holds under any behavior
policy, allowing all episodes to be pooled; both instruments in
\eqref{eq:hw-draft-batch-little-q} are known at decision time.
Unlike online little-$q$'s iterate-dependent instrument
$\nabla_\psi q_\psi=F(t_h,X_h)(A_h-\E_{\pi_\psi}[a\mid t_h,X_h])$,
the batch behavior-policy instrument is fixed by the data.
Ridge keeps unvisited basis coefficients at zero.

The equations are linear in $\vartheta$ and smooth in $\psi$; we solve
them by damped Newton iteration with the exact Jacobian, warm-started
from the previous fit, stopping when the relative step falls below
$10^{-10}$.  Refits occur at $k=32,64,128,256$ and after episode
512, with the native Gibbs policy deployed between refits.  Training
costs therefore describe those deployed Gibbs policies, including
when the reported terminal extraction is greedy.

We use $(\nu,\rho)=(0.2,10^{-5})$ for all batch results.
These parameters are selected on a separate development seed at
$H\in\{25,200\}$ by minimizing the larger terminal greedy gap
among converged settings, and are fixed for the reported runs.

\subsection{Comparison diagnostics and realized training excess}
\label{app:hw-draft-comparator-diagnostics}

\paragraph{Terminal value gaps.}
Table~\ref{tab:hw-draft-common-euler} gives the absolute terminal value
gaps and deterministic reference costs underlying the percentage results
in Table~\ref{tab:hw-draft-common-euler-percent}.

\begin{table}[!htbp]
 \centering
 \caption{Experiment 2(a): terminal value gaps after $K=512$ episodes.
 Method entries give the mean and its marginal 95\% Student-$t$ confidence
 interval across 15 training seeds; comparator entries are rounded to three
 significant digits. The reference-cost column gives the deterministic Euler
 Bellman-grid value $V_\delta^{\rm grid}$. Calculations use the unrounded source estimates.}
 \label{tab:hw-draft-common-euler}
 \scriptsize
 \setlength{\tabcolsep}{4pt}
 \renewcommand{\arraystretch}{1.12}
 \begin{tabular}{@{}rrccccc@{}}
  \toprule
$\delta$ & {\shortstack{Reference cost\\$V_\delta^{\rm grid}$}} & \shortstack{\ourmethod{}\\(ours)} & \shortstack{DW\\source style} & \shortstack{DW\\+ replay} & \shortstack{Online little-$q$\\native Gibbs} & \shortstack{Batch little-$q$\\greedy} \\
  \midrule
  0.200 & {$1.408281$} & $0.0029\;[0.0016,0.0043]$ & $2.28\;[1.96,2.60]$ & $0.623\;[0.556,0.690]$ & $0.610\;[0.607,0.613]$ & $0.364\;[0.317,0.411]$ \\
  0.100 & {$1.378613$} & $0.0024\;[0.0017,0.0032]$ & $2.71\;[2.49,2.92]$ & $0.855\;[0.744,0.965]$ & $0.636\;[0.633,0.639]$ & $0.332\;[0.301,0.362]$ \\
  0.050 & {$1.363004$} & $0.0025\;[0.0019,0.0031]$ & $3.05\;[2.94,3.16]$ & $1.22\;[1.06,1.38]$ & $0.651\;[0.649,0.652]$ & $0.338\;[0.313,0.363]$ \\
  0.025 & {$1.355958$} & $0.0017\;[0.0011,0.0022]$ & $3.16\;[3.06,3.27]$ & $1.74\;[1.60,1.89]$ & $0.658\;[0.655,0.660]$ & $0.321\;[0.295,0.348]$ \\
  \bottomrule
 \end{tabular}
\end{table}

\paragraph{Online little-$q$ diagnostics.}
Every evaluated online little-$q$ greedy terminal policy selects
action zero.  Its Euler gaps are
$1.0813\ [1.0755,1.0870]$, $1.1291\ [1.1243,1.1339]$,
$1.1541\ [1.1503,1.1578]$, and $1.1652\ [1.1615,1.1689]$
for $\delta=0.2,0.1,0.05,0.025$. Its queue gaps from uniformization are
$1.03909$--$1.13474$, with no training-seed variation for these
identical policies.  Across the 60 Euler evaluations, the native
sampled mean action is $0.2836$--$0.2846$, close to $0.2840$
for the Gibbs policy with zero learned score,
$\pi(a)\propto\exp\{-ca^2/(2\nu)\}$ on $\mathcal A_{0.25}$.
Thus action rankings change little at this budget, and comparisons
with greedy methods also include entropy-induced randomization cost.

\paragraph{DW coverage.}
For source-style DW, fixed resets and deterministic greedy deployment
give a rank-one first-stage design at the later refits
$k=64,128,256,512$ in all 60 runs.  Both DW variants have first-stage
rank three at the initial fit $k=32$; replay preserves that rank at
later fits by retaining exploratory observations; the full feature dimension is
33, and the fixed initial state prevents full first-stage rank even
with replay. Replay improves the
mean gap at every timestep, but the result depends on data reuse and
coverage and does not isolate an intrinsic property of the estimator
under its theoretical assumptions.

\paragraph{Batch little-$q$ diagnostics.}
Across the four timesteps, mean terminal gaps of the native Gibbs policy range
from $0.5267$ to $0.5414$, compared with $0.3211$--$0.3642$
after greedy extraction. Training costs use the deployed Gibbs
policies. Of 300 fits, 196 meet the stopping rule and 104 do not,
including three of the 60 terminal fits. All prespecified runs,
including these unconverged fits, remain in the reported estimates.

\paragraph{Computational cost.}
Across the four timesteps, mean learning wall times range from
$8.78$ to $708.17$ seconds for \ourmethod{}, $0.96$ to $7.87$
for source-style DW, $0.97$ to $7.80$ for DW + replay,
$2.97$ to $21.64$ for online little-$q$, and $80.25$ to
$175.27$ for batch little-$q$. Timings were collected separately
under potentially different machine loads and do not establish an
equal-computation or simultaneous speed comparison.

\paragraph{Realized excess after exploration.}
Write $J_H^{\rm ref}=V_\delta^{\rm grid}$ for the deterministic Euler
Bellman-grid value and abbreviate
$\widehat R_k^{\rm real}$ by $\widehat R_k$. Table~\ref{tab:hw-draft-expected-cumulative}
reports Experiment 2(a)'s realized excess over episodes 33--512; the full-budget
quantity appears in Table~\ref{tab:hw-draft-realized-excess}. They are
\[
 \widehat R_{512}=\sum_{k=1}^{512} C_k-512J_H^{\rm ref},
 \qquad
 \widehat R_{512}-\widehat R_{32}
 =\sum_{k=33}^{512} C_k-480J_H^{\rm ref}.
\]
Both use observed costs, including Gibbs action sampling for the two
little-$q$ methods, and the deterministic Bellman value rather than
a Monte Carlo reference estimate. Post-exploration, \ourmethod{}'s
mean is $10.04$--$19.46$; intervals at the two coarser timesteps cross
zero and are retained without clipping. Replay reduces DW's mean
at every timestep, though its mean grows from $477.32$ to $1171.23$
under refinement.

\begin{table}[htbp]
 \centering
 \caption{Experiment 2(a): realized post-exploration excess
 $\widehat R_{512}-\widehat R_{32}$ over episodes 33--512.
 Entries are means and 95\% Student-$t$ intervals across the same
 15 seeds as Table~\ref{tab:hw-draft-common-euler}.  Both
 little-$q$ columns describe native Gibbs data collection.}
 \label{tab:hw-draft-expected-cumulative}
 \scriptsize
 \setlength{\tabcolsep}{2pt}
 \renewcommand{\arraystretch}{1.12}
 \begin{tabular}{@{}rccccc@{}}
  \toprule
  $\delta$ & \ourmethod{} & \shortstack{DW\\(source style)} & DW + replay
  & \shortstack{Online\\little-$q$} & \shortstack{Batch\\little-$q$} \\
  \midrule
  0.200 & $10.04\;[-7.03,27.12]$ & $1263.89\;[1186.36,1341.42]$ & $477.32\;[449.72,504.91]$ & $301.06\;[275.36,326.76]$ & $293.42\;[272.29,314.55]$ \\
  0.100 & $15.12\;[-4.18,34.43]$ & $1367.83\;[1280.00,1455.67]$ & $705.24\;[643.89,766.60]$ & $321.53\;[293.12,349.94]$ & $294.66\;[268.41,320.91]$ \\
  0.050 & $19.46\;[7.96,30.96]$ & $1469.01\;[1433.00,1505.02]$ & $958.45\;[910.29,1006.61]$ & $338.93\;[321.11,356.75]$ & $297.07\;[285.21,308.92]$ \\
  0.025 & $18.29\;[4.68,31.90]$ & $1490.74\;[1422.62,1558.86]$ & $1171.23\;[1137.89,1204.58]$ & $341.55\;[318.67,364.42]$ & $294.25\;[271.24,317.26]$ \\
  \bottomrule
 \end{tabular}
\end{table}
\FloatBarrier

\paragraph{Full-budget realized excess.}
\label{app:hw-draft-realized-excess}
Table~\ref{tab:hw-draft-realized-excess} includes the common 32
exploration episodes. Let $\mathcal H_{k-1}$ be the information
available before episode $k$, including its deployed policy $\pi_k$.
Then $\E[C_k\mid\mathcal H_{k-1}]=J_\delta(\pi_k)$, averaging over
trajectory noise and Gibbs action sampling. Thus, with $R_K$ from
\eqref{eq:hw-draft-excess},
\begin{equation*}
 \widehat R_K
 =R_K+\sum_{k=1}^K\{C_k-J_\delta(\pi_k)\}
       +K\{J_\delta(\pi_\delta^{\rm ref})-J_H^{\rm ref}\}.
\end{equation*}
The sum is a mean-zero martingale sampling term; the last term is
the numerical benchmark offset. Consequently,
\[
 \E[\widehat R_K]
 =\E[R_K]+K\{J_\delta(\pi_\delta^{\rm ref})-J_H^{\rm ref}\}.
\]
The policy-dependent $R_K$ remains random. When the benchmark values
coincide, the independent-run mean of $\widehat R_K$ is unbiased for
$\E[R_K]$; this need not hold conditional on the entire adaptive
policy sequence, since later
policies depend on earlier observations. If conditional episode-cost
variances are bounded by $\sigma^2$, the martingale term has standard
deviation at most $\sigma\sqrt K$ and can obscure logarithmic regret
growth in individual runs. Confidence intervals across seeds include learning,
trajectory, and Gibbs-action variation, but not the numerical
benchmark offset. The distinction from Experiment 1's epoch-weighted
policy-gap estimates and the paired-seed protocol are described in
Appendix~\ref{app:hw-draft-evaluation}.

\begin{table}[!htbp]
 \centering
 \caption{Experiment 2(a): realized cumulative excess at $K=512$, including the common 32 exploration episodes. Entries give means and marginal 95\% Student-$t$ confidence intervals across training seeds, including trajectory and action-sampling variation. Both little-$q$ arms collect data with Gibbs policies.}
 \label{tab:hw-draft-realized-excess}
 \scriptsize
 \setlength{\tabcolsep}{2pt}
 \renewcommand{\arraystretch}{1.12}
 \begin{tabular}{@{}rccccc@{}}
  \toprule
$\delta$ & \ourmethod{} & \shortstack{DW\\(without replay)} & \shortstack{DW\\+ replay} & \shortstack{Online little-$q$\\native Gibbs} & \shortstack{Batch little-$q$\\native Gibbs} \\
  \midrule
  0.200 & $91.24\;[74.93,107.54]$ & $1345.08\;[1266.14,1424.03]$ & $558.51\;[530.46,586.56]$ & $382.26\;[357.34,407.17]$ & $374.61\;[353.45,395.78]$ \\
  0.100 & $97.02\;[77.64,116.40]$ & $1449.73\;[1362.83,1536.63]$ & $787.13\;[725.44,848.83]$ & $403.42\;[374.79,432.05]$ & $376.55\;[349.92,403.19]$ \\
  0.050 & $101.54\;[89.52,113.56]$ & $1551.09\;[1515.42,1586.75]$ & $1040.53\;[990.44,1090.62]$ & $421.01\;[402.69,439.32]$ & $379.14\;[367.09,391.20]$ \\
  0.025 & $101.93\;[86.92,116.95]$ & $1574.38\;[1506.64,1642.13]$ & $1254.88\;[1218.90,1290.85]$ & $425.19\;[401.62,448.77]$ & $377.89\;[354.35,401.44]$ \\
  \bottomrule
 \end{tabular}
\end{table}
\FloatBarrier

\subsection{Finite-queue transfer evaluation}
\label{app:hw-draft-transfer-evaluation}

For each $(\delta,n)$, backward dynamic programming minimizes over the
nine grid actions held for a full stage.  The same finite-state
uniformization routine evaluates the frozen policies and the fixed-stage
reference, integrating physical cost over $[0,T]$ with Poisson truncation
tolerance $10^{-16}$.  For online little-$q$, it averages the action-specific
stage costs and transitions under the Gibbs probabilities at the stage's
initial state; the sampled action is then held throughout that stage.

The intervals in Table~\ref{tab:hw-draft-transfer} use the 15
training seeds, with no Monte Carlo evaluation error; reference and
policy costs remain numerical quantities subject to truncation.

\paragraph{Customer-level evaluation.}
Figure~\ref{fig:hw-draft-operational-transfer} uses 16,384 event-driven
paths per policy and training seed.  Actions are chosen only at stage
boundaries, with an independent Gibbs draw for each online little-$q$
path and stage.  Methods share offered-arrival and service-hazard
streams within each seed cluster.  For admitted arrivals before $T$,
we record full FCFS waits $W_j$, including zero waits.  Arrivals stop at
$T$ and action zero drains the admitted cohort to avoid censoring;
effort cost is integrated only over $[0,T]$.  Scaled mean wait and
delay probability are the pooled ratios
$\sqrt n\sum_j W_j/\sum A$ and $\sum D/\sum A$, where $A$
counts admissions and $D$ counts admitted arrivals finding all
servers busy.  Their intervals use the seed-cluster delta method
with $t_{14}$ across 15 clusters, and paired comparisons preserve
within-seed covariance.  Customers are not treated as independent
training replicates.

\begin{figure}[htbp]
 \centering
 \includegraphics[width=\linewidth]{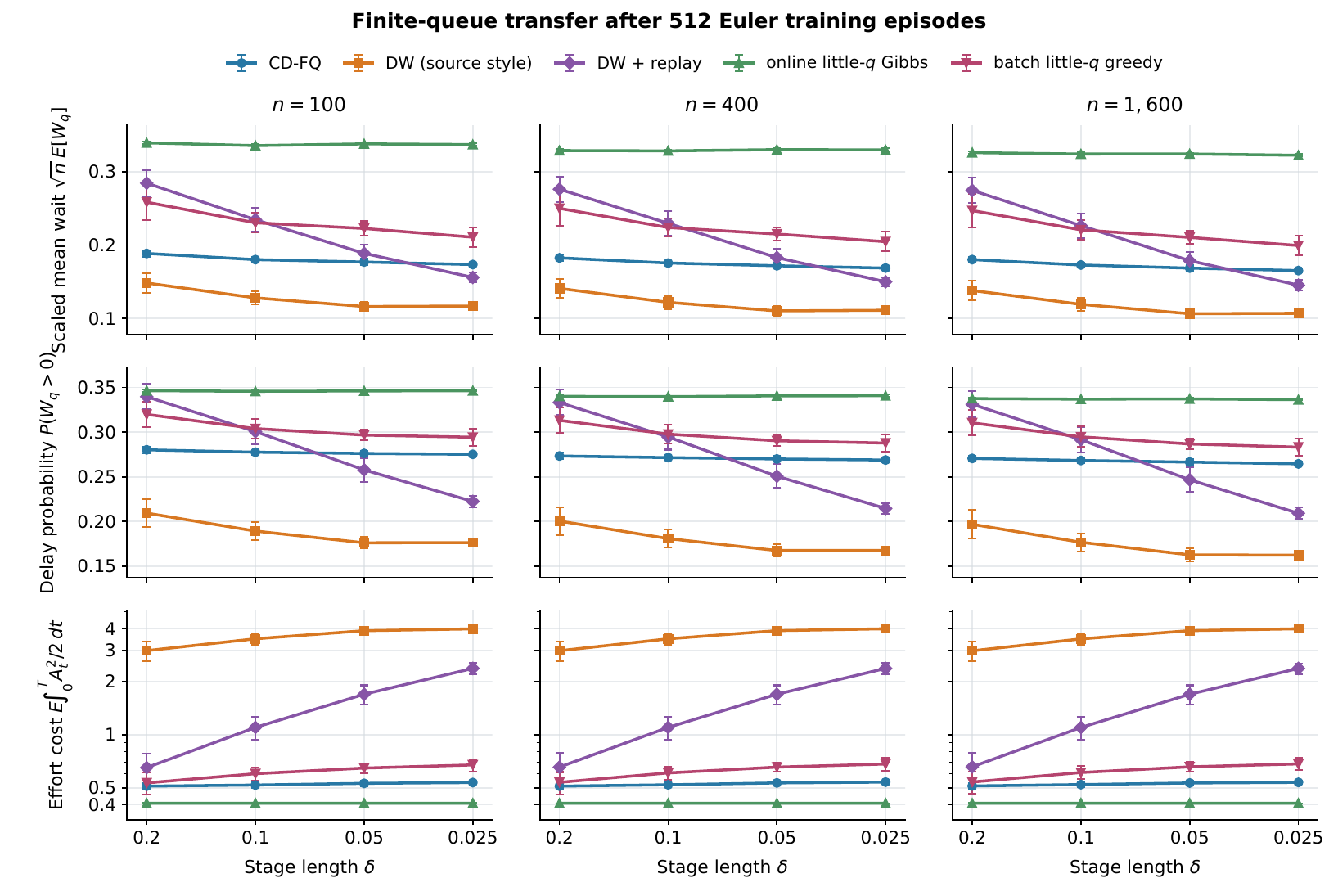}
 \caption{Experiment 2(b): service and effort after $K=512$ Euler
 training episodes and $n_0=32$ common exploration episodes.
 The five frozen policies are \ourmethod{} with zero coefficient
 ridge, source-style DW, DW + replay, online little-$q$ native
 Gibbs, and batch little-$q$ greedy, all on their training
 decision clocks.  Columns are queue sizes; rows show scaled mean
 wait, delay probability, and effort cost (log scale).  Points are
 pooled customer ratios or seed means; bars are pointwise 95\%
 intervals using 15 seed clusters and 16,384 paths per policy/seed.}
 \label{fig:hw-draft-operational-transfer}
\end{figure}

The paired effort intervals for batch little-$q$ relative to
\ourmethod{} cross zero at $\delta=0.2$; all operational intervals
are marginal and unadjusted. The largest pooled blocking-probability
estimate is $6.46\times10^{-5}$. Customer-level sampled costs were
checked against uniformization in all 12 timestep--queue-size cells,
with maximum absolute standardized discrepancy $3.20$.

\FloatBarrier

\clearpage
\section{Notation and constants for localized learning}
\label{app:localized-notation}

\Cref{tab:localized-notation} summarizes the notation in
\cref{sec:localized-algorithm,sec:main-theorem-proof}; references give
full definitions. Here $s\in\{x,a\}$ indexes state or action derivatives,
$\mathrm{glob}$ denotes the full
action set, $\mathrm{loc}$ the optimal-bin union, and $\mathrm{ent}$ bin
entry. Residual and action-gradient radii are normalized by $\delta$.
Timestep uniformity is for fixed $T$, with events defined separately
for each fixed-$\delta$ experiment.

\begingroup
\small
\setlength{\tabcolsep}{4pt}
\renewcommand{\arraystretch}{1.06}
\begin{longtable}{@{}>{\raggedright\arraybackslash}p{0.26\linewidth}
  >{\raggedright\arraybackslash}p{0.53\linewidth}
  >{\raggedright\arraybackslash}p{\dimexpr0.21\linewidth-4\tabcolsep\relax}@{}}
\caption{Notation and constants for Sections~\ref{sec:localized-algorithm}
and~\ref{sec:main-theorem-proof}. References point to definitions or
the corresponding bounds.}
\label{tab:localized-notation}\\
\toprule
Notation & Meaning & Reference \\
\midrule
\endfirsthead
\multicolumn{3}{@{}l}{\tablename~\thetable\ (continued)}\\
\toprule
Notation & Meaning & Reference \\
\midrule
\endhead
\midrule
\multicolumn{3}{r@{}}{\emph{Continued on next page}}\\
\endfoot
\bottomrule
\endlastfoot

\multicolumn{3}{@{}l}{\textbf{Sampling, model, and coverage}}\\*
\addlinespace[1.5pt]
$T$, $\delta$, $H$; $n$, $d$, $p$
& Physical horizon, timestep, stages $H=T/\delta$; state, action, and drift-parameter dimensions.
& \cref{sec:model-global-regret,ass:localized-parametric-model} \\
\addlinespace[1.5pt]
$K$, $n_0$, $n_j$; $J_K$, $\bar J_K$
& Episode budget, exploration count, and observed episodes $n_j=2^jn_0$ before epoch $j$; last epoch index and epoch-count bound.
& \eqref{eq:doubling-schedule},
\eqref{eq:epoch-count-bound} \\
\addlinespace[1.5pt]
$\Phi_\mu$, $\theta^\star$, $B_\theta$
& Known drift features, unknown parameter, and its norm bound.
& \cref{ass:localized-parametric-model} \\
\addlinespace[1.5pt]
$B_{\Phi_\mu}$, $L_{\Phi_\mu,s}$, $L_{\mu,s}$
& Feature magnitude and derivative bounds; drift-derivative bounds $L_{\mu,s}=B_\theta L_{\Phi_\mu,s}$.
& \cref{ass:localized-parametric-model,lem:parametric-growth-moment} \\
\addlinespace[1.5pt]
$\Sigma^\star$, $\lambda_0$, $\Lambda_0$, $\mathcal S_0$
& True covariance, known lower and upper eigenvalue bounds, and admissible covariance set.
& \cref{ass:localized-parametric-model};
\eqref{eq:admissible-model-family} \\
\addlinespace[1.5pt]
$\widehat\theta_j$, $\widehat\Sigma_j$, $\widehat P_j$
& Projected drift-parameter and covariance estimates, and fitted Euler kernel; $\widehat{\mathcal P}_j$ is its expectation operator.
& \eqref{eq:epoch-drift-estimator}--\eqref{eq:epoch-learned-kernel} \\
\addlinespace[1.5pt]
$\mathcal A_h^\star(x)$, $B_{h,\ell}^\star(x)$,
$a_{h,\ell}^\star(x)$, $\mathcal U_h^\star(x)$
& Optimal-action set, local convex bins, unique optimum in each bin, and bin union.
& \cref{ass:localized-optimal-branches} \\
\addlinespace[1.5pt]
$M_\star$, $\bar m$, $\bar L_{xa}$, $\gamma$
& Bin-count bound; local action curvature, mixed-gradient bound, and off-bin margin, each normalized by $\delta$.
& \cref{ass:localized-optimal-branches} \\
\addlinespace[1.5pt]
$\Gamma(\pi)$, $\widehat\Gamma_{n_j}$
& Population information averaged over stages; empirical information averaged over $n_jH$ observed transitions.
& \cref{ass:exploration-self-excitation,subsec:estimation-scheme} \\
\addlinespace[1.5pt]
$\pi^{\rm exp}$, $\lambda_{\rm exp}$,
$\lambda_{\rm loc}$, $\lambda_\mu$
& Exploration policy and excitation constants; coverage threshold $\lambda_\mu=\min\{\lambda_{\rm exp},\lambda_{\rm loc}\}$.
& \cref{ass:exploration-self-excitation} \\
\addlinespace[1.5pt]
$\Pi_{\rm loc}(\varrho)$, $\varrho_{\rm cov}$
& Policies uniformly within distance $\varrho$ of optimal actions; timestep-independent radius ensuring $\Gamma(\pi)\succeq2\lambda_{\rm loc}I_p$ for $\pi\in\Pi_{\rm loc}(\varrho_{\rm cov})$.
& \cref{lem:optimal-policy-neighborhood-excitation} \\
\addlinespace[1.5pt]
$\boldsymbol\rho$
& Failure budgets for coverage, drift, covariance, and Monte Carlo events, with sum less than one.
& \cref{ass:localized-MC-budget} \\
\addlinespace[1.5pt]
$\ell_{\rm mdl}$
& Model confidence factor; $n$ in its definition is the state dimension.
& \eqref{eq:localized-model-log-factor} \\

\addlinespace[6pt]
\multicolumn{3}{@{}l}{\textbf{Critic class, design, and simulation}}\\*
\addlinespace[1.5pt]
$q_j$; $Q_{h,j,w}$,
$q_{0,h,j}$, $\Psi_{h,j}$, $\mathcal C_{h,j}$
& Coefficient dimension, critic affine in $w$, known offset, features, and feasible coefficient set.
& \eqref{eq:epoch-critic-sieve} \\
\addlinespace[1.5pt]
$L_{\Psi,s,j}$, $s\in\{x,a\}$
& Feature-derivative bounds, uniform in stage and timestep at each fixed sieve level.
& \cref{ass:envelope-stable-critic-class} \\
\addlinespace[1.5pt]
$G_0$, $M_0$
& Uniform magnitude and Lipschitz bounds on fitted value gradients.
& \eqref{eq:verified-fitted-envelope-bounds} \\
\addlinespace[1.5pt]
$Z_{j,h,i}$, $N_{j,h}^Q$, $M_{j,h}$
& Predictable design rows, row count, and fresh successors per row; $N_{j,h}^QM_{j,h}$ transitions per fit.
& \eqref{eq:epoch-synthetic-transitions} \\
\addlinespace[1.5pt]
$S_{j,h}^Q$, $c_{Q,j}$
& Critic Gram matrix and conditioning constant: $S_{j,h}^Q\succeq c_{Q,j}N_{j,h}^QI_{q_j}$.
& \eqref{eq:deterministic-critic-gram} \\
\addlinespace[1.5pt]
$G_h^{P,v}$, $\bar Q_{j,h}^{P,v}$
& Exact Bellman target and its noiseless constrained least-squares projection.
& \eqref{eq:noiseless-constrained-projection} \\
\addlinespace[1.5pt]
$\widehat Q_{j,h}$, $\widehat V_{j,h}$, $\pi_h^{(j)}$
& Fitted critic, full-action maximum, and greedy policy.
& \eqref{eq:epoch-constrained-least-squares},
\eqref{eq:epoch-greedy-policy} \\
\addlinespace[1.5pt]
$c_N$, $c_{{\rm MC},s}$
& Design-size lower-bound coefficient; normalized Monte Carlo derivative-error coefficients.
& \cref{ass:localized-MC-budget} \\
\addlinespace[1.5pt]
$\kappa_j^Q$, $C_{\rm MC}$
& Dimension and confidence factor for Monte Carlo fits; universal allocation constant.
& \cref{ass:localized-MC-budget} \\

\addlinespace[6pt]
\multicolumn{3}{@{}l}{\textbf{Approximation and true-residual bounds}}\\*
\addlinespace[1.5pt]
$b_{\rm entry}$, $c_a^{\rm glob}$,
$c_s^{\rm loc}$
& Nondecaying global approximation floor, global decay coefficient, and local decay coefficients.
& \cref{ass:projected-two-scale-Bellman-approximation} \\
\addlinespace[1.5pt]
$G_{j,h}$, $e_{j,h}$
& Learned-model Bellman target and true-kernel residual.
& \eqref{eq:epoch-true-residual},
\eqref{eq:epoch-true-residual-decomposition} \\
\addlinespace[1.5pt]
$d_{a,j}^{\rm glob}$, $d_{s,j}^{\rm loc}$
& Global action-derivative and local $s$-derivative radii for the residual; local bounds hold on $\mathcal U_h^\star(x)$.
& \cref{def:two-scale-true-residual-certificate} \\
\addlinespace[1.5pt]
$C_{\rm res}(K,\boldsymbol\rho)$
& Common coefficient in the $n_j^{-1/2}$ residual-derivative bounds; the global action bound also includes $b_{\rm entry}$.
& \eqref{eq:verified-certificate-numerators};
\eqref{eq:verified-entry-residual-rate}--\eqref{eq:verified-local-residual-rate} \\

\addlinespace[6pt]
\multicolumn{3}{@{}l}{\textbf{Backward transfer and policy accuracy}}\\*
\addlinespace[1.5pt]
$\Delta Q_{j,h}$, $\Delta V_{j,h}$
& Critic and value errors; $\Delta V_{j,H}=0$.
& \cref{subsec:main-criteria-consequences} \\
\addlinespace[1.5pt]
$\kappa$, $L_E$, $\Phi_\tau(L)$
& Local geometry ratio, gradient-propagation rate, and accumulation factor, with $\Phi_\tau(0)=\tau$.
& \cref{subsec:main-criteria-consequences,subsec:direct-log-regret} \\
\addlinespace[1.5pt]
$D_j^{\rm loc}$, $D_j^{\rm glob}$
& Local and global action-gradient radii after backward transfer; the latter controls entry.
& \eqref{eq:entry-transfer-radius} \\
\addlinespace[1.5pt]
$C_{\mathrm{grad\_err}}(K,\boldsymbol\rho)$
& Common coefficient in the $n_j^{-1/2}$ action-gradient bounds; the global bound also includes $b_{\rm entry}$.
& \eqref{eq:gradient-error-coefficient} \\

\addlinespace[6pt]
\multicolumn{3}{@{}l}{\textbf{Exploration thresholds and success events}}\\*
\addlinespace[1.5pt]
$n_\Sigma$
& Covariance-estimation sample threshold.
& \eqref{eq:model-sample-threshold} \\
\addlinespace[1.5pt]
$n_\star(K,\boldsymbol\rho)$
& Overall exploration threshold for model estimation, entry, proximity, and coverage.
& \eqref{eq:overall-warm-start-threshold} \\
\addlinespace[1.5pt]
$\mathcal E_\theta$, $\mathcal E_\Sigma$,
$\mathcal E_{\rm mdl}$
& Drift-score and covariance-noise deviation events; $\mathcal E_{\rm mdl}=\mathcal E_\theta\cap\mathcal E_\Sigma$, with failure probability at most $\rho_\theta+\rho_\Sigma$.
& \eqref{eq:model-noise-event} \\
\addlinespace[1.5pt]
$\mathcal E_{\rm MC}$
& Simultaneous Monte Carlo fitting event; failure probability at most $\rho_{\rm MC}$.
& \cref{lem:two-scale-fitted-residual} \\
\addlinespace[1.5pt]
$\mathcal E_{\rm cov}$
& Event transferring average population coverage to empirical coverage at eligible epoch endpoints; failure probability at most $\rho_{\rm cov}$.
& \cref{lem:main-coverage-event} \\
\addlinespace[1.5pt]
$\mathcal E_{\rm all}(\delta)$
& Intersection of coverage, model, and Monte Carlo events supporting the coverage--entry induction.
& \eqref{eq:all-event-probability} \\
\end{longtable}
\endgroup

The coefficients $C_{\rm res}(K,\boldsymbol\rho)$ and
$C_{\mathrm{grad\_err}}(K,\boldsymbol\rho)$ depend on
$(K,\boldsymbol\rho)$ only through $\ell_{\rm mdl}$ when the horizon,
structural parameters, and tuning parameters are fixed. Suppressing
these dependencies does not imply uniformity over those parameters.

\end{document}